\documentclass[11pt,reqno]{amsart}
\usepackage{amsmath,amssymb,amsthm}
\usepackage[margin=2.5cm]{geometry}
\usepackage[all]{xy}
\usepackage{microtype}
\usepackage[hidelinks,hypertexnames=false]{hyperref}
\hypersetup{pdftitle={Gopakumar-Vafa invariants and Macdonald formula II},pdfauthor={Lutian Zhao}}

\numberwithin{equation}{section}

\theoremstyle{plain}
\newtheorem{thm}{Theorem}[section]
\newtheorem{prop}[thm]{Proposition}
\newtheorem{lem}[thm]{Lemma}
\newtheorem{cor}[thm]{Corollary}
\newtheorem{defi}[thm]{Definition}
\newtheorem{rmk}[thm]{Remark}

\newtheorem*{lem*}{Lemma}

\newcommand{\PP}{\mathbb{P}}
\newcommand{\QQ}{\mathbb{Q}}
\newcommand{\CC}{\mathbb{C}}
\newcommand{\ZZ}{\mathbb{Z}}
\newcommand{\AAA}{\mathbb{A}}
\newcommand{\LL}{\mathbb{L}}
\newcommand{\Gm}{\mathbb{G}_m}
\newcommand{\cO}{\mathcal{O}}
\newcommand{\cC}{\mathcal{C}}
\newcommand{\cA}{\mathcal{A}}
\newcommand{\cS}{\mathcal{S}}
\newcommand{\cG}{\mathcal{G}}
\newcommand{\cR}{\mathcal{R}}
\newcommand{\cM}{\mathcal{M}}
\newcommand{\cP}{\mathcal{P}}
\newcommand{\cZ}{\mathcal{Z}}
\newcommand{\cH}{\mathcal{H}}
\newcommand{\cK}{\mathcal{K}}
\newcommand{\cL}{\mathcal{L}}
\newcommand{\cQ}{\mathcal{Q}}
\newcommand{\cB}{\mathcal{B}}
\newcommand{\cV}{\mathcal{V}}
\newcommand{\cD}{\mathcal{D}}
\newcommand{\cT}{\mathcal{T}}
\newcommand{\bM}{\mathbf{M}}
\newcommand{\bY}{\mathbf{Y}}
\newcommand{\bZ}{\mathbf{Z}}
\newcommand{\bU}{\mathbf{U}}
\newcommand{\fU}{\mathfrak{U}}
\newcommand{\fW}{\mathfrak{W}}
\newcommand{\fZ}{\mathfrak{Z}}
\newcommand{\fS}{\mathfrak{S}}
\newcommand{\fE}{\mathfrak{E}}
\newcommand{\fH}{\mathfrak{H}}
\newcommand{\fX}{\mathfrak{X}}
\newcommand{\fY}{\mathfrak{Y}}
\newcommand{\fg}{\mathfrak{g}}
\newcommand{\fc}{\mathfrak{c}}

\newcommand{\dR}{\mathbf{R}}
\newcommand{\pH}[1]{{}^{p}\mathcal{H}^{#1}}
\newcommand{\IC}{\mathrm{IC}}
\newcommand{\MHM}{\mathrm{MHM}}
\newcommand{\Chow}{\mathrm{Chow}}
\newcommand{\Hilb}{\mathrm{Hilb}}
\newcommand{\Sym}{\mathrm{Sym}}
\newcommand{\Tot}{\mathrm{Tot}}
\newcommand{\Pic}{\mathrm{Pic}}
\newcommand{\Coh}{\mathrm{Coh}}
\newcommand{\Spec}{\mathrm{Spec}}
\newcommand{\Filt}{\mathrm{Filt}}
\newcommand{\Grad}{\mathrm{Grad}}
\newcommand{\Hom}{\mathrm{Hom}}
\newcommand{\Ext}{\mathrm{Ext}}
\newcommand{\End}{\mathrm{End}}
\newcommand{\cHom}{\mathcal{H}om}
\newcommand{\cExt}{\mathcal{E}xt}
\newcommand{\Exp}{\mathrm{Exp}}
\newcommand{\PT}{\mathrm{PT}}
\newcommand{\ev}{\mathrm{ev}}
\newcommand{\gr}{\mathrm{gr}}
\newcommand{\fib}{\mathrm{fib}}
\newcommand{\cofib}{\mathrm{cofib}}
\newcommand{\Cone}{\mathrm{Cone}}
\newcommand{\tr}{\mathrm{tr}}
\newcommand{\Tr}{\mathrm{Tr}}
\newcommand{\id}{\mathrm{id}}
\newcommand{\res}{\mathrm{res}}
\newcommand{\rat}{\mathrm{rat}}
\newcommand{\coker}{\mathrm{coker}}
\newcommand{\Img}{\mathrm{Im}}
\newcommand{\Crit}{\mathrm{Crit}}
\newcommand{\red}{\mathrm{red}}
\newcommand{\sss}{\mathrm{ss}}
\newcommand{\sd}{\mathrm{sd}}
\newcommand{\sh}{\mathrm{sh}}
\newcommand{\vdim}{\mathrm{vdim}}
\newcommand{\Fr}{\mathrm{Fr}}
\newcommand{\sgn}{\mathrm{sgn}}
\newcommand{\Rhom}{\dR\!\Hom}
\newcommand{\Rgam}{\dR\Gamma}
\newcommand{\wt}[1]{\widetilde{#1}}
\newcommand{\ol}[1]{\overline{#1}}

\title[GV invariants and Macdonald formula II]{Gopakumar-Vafa invariants and Macdonald formula II}
\author{Lutian Zhao}
\address{Kavli Institute for the Physics and Mathematics of the Universe, University of Tokyo, 5-1-5 Kashiwanoha, Kashiwa, 277-8583, Japan.}
\email{lutian.zhao@ipmu.jp}
\subjclass[2020]{14N35, 14D23, 14F10}
\keywords{Gopakumar-Vafa invariants, stable pairs, vanishing cycles, perverse sheaves, Macdonald formula}

\begin{document}
\begin{abstract}
We prove the cohomological Gopakumar-Vafa/Pandharipande-Thomas correspondence for the local plane and quadric in every effective curve class and Euler characteristic. The strict supports of the simple perverse constituents of the stable pair vanishing cycle direct images are closures of loci of transverse unions of smooth connected curves. Wall-crossing and duality leave a finite range of Euler characteristics to consider. In this range, a commutation relation for point modifications on bounded stacks of surface pairs excludes all other supports. Applying the family Macdonald formula on the reduced locus gives the correspondence as an identity of semisimplified perverse direct images on the Chow variety.
\end{abstract}
\maketitle

\section{Introduction}\label{sec:intro}

The Macdonald formula~\cite{Mac62} describes the cohomology of the symmetric products of a smooth curve. Its extensions to families of planar curves relate the cohomology of relative Hilbert schemes to that of compactified Jacobians~\cite{MS13,MY14,MSV21}. On a local surface, this connects stable pairs with sheaf-theoretic Gopakumar-Vafa invariants. Simple perverse constituents supported entirely on non-reduced cycles are invisible in the reduced-curve formula. A support theorem for the stable pair direct image is therefore needed to extend the correspondence to all one-cycles.

For the local plane and quadric, we determine the possible strict supports and show that each meets the reduced locus. The reduced-curve calculation then gives the cohomological Gopakumar--Vafa/Pandharipande--Thomas (GV/PT) correspondence of~\cite{Zha26} in all curve classes.

\subsection{Stable pair direct images and their supports}\label{intro:main}
Set
\begin{align}\label{intro:X}
S=\PP^2\ \text{ or }\ S=\mathbb{F}_0:=\PP^1\times\PP^1,\qquad
p\colon X:=\Tot(K_S)\to S.
\end{align}
Every compact curve in $X$ lies set-theoretically in the zero section; see Lemma~\ref{lem:lower}. We therefore identify the Chow variety of one-cycles of class $\beta\in H_2(S,\ZZ)$ with the complete linear system $B_\beta:=|\cO_S(\beta)|$. For an effective class $\beta$, write
\begin{align}\label{intro:numerics}
D_\beta:=\dim B_\beta,\qquad
g_\beta:=1+\frac{\beta\cdot(\beta+K_S)}{2},\qquad
d:=L\cdot\beta,
\end{align}
where $L=\cO_{\PP^2}(1)$ or $L=\cO_{\mathbb{F}_0}(1,1)$. Note that $g_\beta$ may be negative on $\mathbb{F}_0$. Let $j_\beta\colon B_\beta^{\red}\hookrightarrow B_\beta$ denote the open locus of reduced divisors, including reducible and disconnected ones.

A stable pair on $X$ consists of a compactly supported pure one dimensional sheaf $F$ and a section $s\colon\cO_X\to F$ with zero dimensional cokernel. Let $P_\chi(X,\beta)$ be its moduli space for $[F]=\beta$ and $\chi(F)=\chi$. The Hilbert-Chow map
\begin{align}\label{intro:PTmap}
\Pi_{\chi,\beta}\colon P_\chi(X,\beta)\to B_\beta
\end{align}
sends a pair to the fundamental cycle of its sheaf. We use the perverse sheaf of vanishing cycles $\phi^{\PT}_{\chi,\beta}$ with the orientation induced by Toda's shifted cotangent description~\cite{Tod24}. Appendix~\ref{sec:orientation} identifies this orientation with the determinant orientation on $X$. Put
\begin{align}\label{intro:K}
K_{\chi,\beta}:=\dR\Pi_{\chi,\beta*}\phi^{\PT}_{\chi,\beta}\in D^b_c(B_\beta).
\end{align}
The map $\Pi_{\chi,\beta}$ is proper by Lemma~\ref{lem:proper}. For a bounded constructible complex $K$, write
\begin{align}\label{intro:ss}
K^{\sss}:=\bigoplus_{i\in\ZZ}\bigl(\pH{i}K\bigr)^{\sss}[-i],
\end{align}
where $\pH{i}$ denotes perverse cohomology and the superscript on the right denotes Jordan-H\"older semisimplification. For an irreducible closed subvariety $Z$ and an irreducible local system $W$ on a dense smooth open subset of $Z$, the simple perverse sheaf $\IC_Z(W)$ is obtained by intermediate extension of $W[\dim Z]$ and has strict support $Z$.

Let $\Gamma$ be the set of effective classes containing a smooth connected member:
\begin{align}\label{intro:Gamma}
\Gamma_{\PP^2}=\{dH : d\ge 1\},\qquad
\Gamma_{\mathbb{F}_0}=\{(a,b) : a,b>0\}\cup\{(1,0),(0,1)\}.
\end{align}
Here $H$ is the class of a line and $(a,b)$ is the class of $\cO_{\mathbb{F}_0}(a,b)$. We write $\gamma>0$ for a non-zero effective class. A class partition $\lambda=(m_\gamma)_{\gamma>0}$ of $\beta$, written $\lambda\vdash\beta$, consists of non-negative integers satisfying $\sum_\gamma m_\gamma\gamma=\beta$. If every class appearing in $\lambda$ belongs to $\Gamma$, let $\Sigma_\lambda$ be the closure of the locus of unions of distinct smooth connected curves with these component classes, meeting pairwise transversely with no triple points; see Subsection~\ref{sub:strata}.

\begin{thm}\label{intro:thm-support}
\textup{(Theorem~\ref{thm:support})} For $S=\PP^2$ and $S=\mathbb{F}_0$, every simple constituent of every perverse cohomology sheaf $\pH{i}K_{\chi,\beta}$ has strict support $\Sigma_\lambda$ for some partition $\lambda$ of $\beta$ into classes in $\Gamma$. In particular every such support meets $B_\beta^{\red}$, and
\begin{align}\label{intro:NR}
K^{\sss}_{\chi,\beta}\cong {}^{p}j_{\beta!*}\bigl(j_\beta^{*}K_{\chi,\beta}\bigr)^{\sss},
\end{align}
where intermediate extension is applied in each perverse degree. The assertion on strict supports holds in every generic chamber of Toda's stability conditions.
\end{thm}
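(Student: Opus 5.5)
The plan has four steps: reduce to a finite range of $\chi$, constrain the possible supports from the local geometry, exclude the non-reduced supports by a commutation argument, and then read off \eqref{intro:NR}. The abstract announces exactly this architecture.

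\emph{Reduction to finitely many $\chi$.} Toda's wall-crossing formula expresses $K_{\chi,\beta}$, up to semisimplification, through products of lower-$\chi$ and lower-class pieces. Combined with the duality $\chi\leftrightarrow-\chi$ coming from derived dualization of pairs, this means a support statement for all $\chi$ follows from the statement in a bounded window, say $1\le\chi\le g_\beta-1+c_\beta$ for a constant $c_\beta$ depending on $d$. Outside this window, the pair moduli spaces are built from products of lower-class moduli spaces and symmetric products of points. Supports of products are sums of supports, so by induction on $\beta$ they are again of the form $\Sigma_\lambda$. The same argument applies chamber by chamber for Toda's stability conditions, since the wall-crossing terms have the same product shape.

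\emph{Constraining supports from the local geometry.} Let $Z$ be the strict support of a simple constituent, and let $Z^\circ\subset Z$ be a general point with underlying cycle $\sum m_i C_i$. By the decomposition theorem for vanishing cycles of a $(-1)$-shifted symplectic structure, together with a local product structure on $B_\beta$ near such a cycle, the complex near $Z^\circ$ factors as an exterior product of the corresponding complexes for the $m_iC_i$ (Lemma-type compatibility of $\phi$ with disjoint and transverse unions). Since $S$ is del Pezzo, curves of a given class in $\Gamma$ form smooth open families. Ngô-type support inequalities, via $\delta$-regularity of the relative compactified Jacobian acting by tensoring with line bundles, then force $C_i$ generic smooth connected and meeting transversely. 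This reduces the problem to ruling out the case where some $m_i\ge2$.

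\emph{Excluding non-reduced supports (main obstacle).} I expect this step to be the hardest. Suppose a constituent is supported on the locus of cycles $mC+\cdots$ with $m\ge2$. The plan is to use point-modification correspondences. On the bounded stack of pairs on $S$, there are operators adding and removing a point of the cokernel, which relate $K_{\chi,\beta}$ and $K_{\chi\pm1,\beta}$. They satisfy a commutation relation, a Heisenberg/$\mathfrak{sl}_2$-type identity, whose commutator is multiplication by an Euler-characteristic factor proportional to $m$ on the multiple-curve locus. The argument then runs as follows:
\begin{enumerate}
\item In the finite window, the perverse degrees of $K_{\chi,\beta}$ are bounded by the relative dimension.
\item The $\mathfrak{sl}_2$-relation would force a nonzero constituent on $\Sigma_{mC}$ to propagate to perverse degrees outside these bounds.
\item This is a contradiction.
\end{enumerate}
The main difficulty is to prove the commutation relation on stacks where the sheaf need not be scheme-theoretically supported on $S$, and to check compatibility with the orientation of Appendix~\ref{sec:orientation}. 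I would first try to show, using Lemma~\ref{lem:lower}, that in the window all relevant sheaves are pushed forward from a bounded thickening of $S$, on which the modification stacks are smooth.

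\emph{Consequence.} Every $\Sigma_\lambda$ with $\lambda\subset\Gamma$ contains reduced transverse unions, so it meets $B^{\red}_\beta$. A semisimple perverse sheaf all of whose simple summands have strict support meeting an open set $U$ is the intermediate extension of its restriction to $U$. Applying this to each $(\pH{i}K_{\chi,\beta})^{\sss}$ with $U=B^{\red}_\beta$ gives \eqref{intro:NR}.
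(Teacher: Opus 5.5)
Your outline has the paper's shape: induction on degree, a finite window in $\chi$ from wall-crossing and duality, point-modification operators with a commutator, and intermediate extension at the end, which is exactly Corollary~\ref{cor:nonreduced}. The gap is in the step that actually excludes bad supports.

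\textbf{The contradiction mechanism.} An $\mathfrak{sl}_2$-type relation gives no contradiction, because it has finite-dimensional representations; the Lefschetz $\mathfrak{sl}_2$ acts nontrivially on the genuine $\Sigma_\lambda$-constituents. Perverse-degree bounds also play no role, since after the normalization $\langle-\chi\rangle$ both operators have degree zero. What is needed is a Weyl relation $f_\chi e_\chi-e_{\chi-1}f_{\chi-1}=m\,\id$ with $m$ a nonzero \emph{constant}, acting on objects only finitely many of which are non-zero in the $\chi$-direction. Then Lemma~\ref{lem:weyl} kills them all.
\begin{enumerate}
\item On the complexes $K_\chi$ themselves this finiteness fails, since they are non-zero for all large $\chi$. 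You must pass to the Verdier quotient $\cQ$ of $\widehat{D}\MHM(B_{\beta,Z})$ by complexes whose constituents have strict supports $\Sigma_\lambda\cap B_{\beta,Z}$. Here $B_{\beta,Z}$ is the open set of curves avoiding the base locus $Z$ of a pencil in $|L|$; these opens cover $B_\beta$.
\item In $\cQ$ the images $V_\chi$ vanish outside $1-g_\beta\le\chi\le g_\beta-1$. Below this range the moduli space is empty (Lemma~\ref{lem:lower}); above it, duality and chamber independence apply.
\item The slope-restriction maps also become invertible in $\cQ$ (Lemma~\ref{lem:slices}). Without this you cannot even define $e_\chi\colon V_\chi\to V_{\chi+1}$, because creation lowers the slope bound by one.
\item Your idea of restricting the operators to ``a constituent on $\Sigma_{mC}$'' is not well defined for morphisms between non-semisimple complexes. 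The quotient is what makes the operators act on the bad part, and it removes every support other than the $\Sigma_\lambda$ at once.
\item The nonzero scalar comes from removing points only along a fixed smooth $D\in|rL|$ with $r>d$, which gives $D\cdot\beta=rd$ uniformly. Both compositions map to a common relative Quot stack and agree away from the locus where the two modifications cancel (Lemma~\ref{lem:exchange}). Their difference is therefore an endomorphism of the dualizing complex of a connected pair stack, hence a scalar (Proposition~\ref{prop:connected}), which is then computed on smooth curves.
\end{enumerate}

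\textbf{Your Step~2 is neither available nor needed.} There is no Ngô-type support inequality for PT vanishing-cycle direct images over non-reduced or intersecting cycles. No $\delta$-regular group scheme acts on pair moduli, and the complex does not factor as an exterior product over components that meet. The reduced-curve support theory applies only over $B^{\red}_\beta$.

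\textbf{Step~1 is missing its main input.} The wall-crossing terms are convolutions of lower-degree rank one pieces with direct images from semistable one-dimensional sheaves, not symmetric products of points. Their classes can be as large as $\beta$, so induction on $\beta$ does not place them in $\cS_\beta$. The paper uses cohomological integrality, Maulik-Shen's support theorem, and a separate ruling-class computation (Lemma~\ref{lem:rank-zero}).

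\textbf{The operators live on surface pair stacks.} They are not built on the threefold, nor on thickenings with smooth modification stacks. Kinjo's dimensional reduction turns the vanishing-cycle direct image into the direct image of a dualizing complex on a slope-bounded, finite type, quasi-smooth stack of surface pairs. The discrepancy from HN strata lies in $\cS_\beta$ by induction (Theorem~\ref{thm:bounded}, Lemma~\ref{lem:slices}). The correspondences use virtual purity maps on these derived stacks.
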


Equation~\eqref{intro:NR} is condition $(\mathrm{NR})$ of~\cite{Zha26}. Together with the restriction to the supports $\Sigma_\lambda$, it proves the two support assertions left open there.

\subsection{The GV/PT correspondence}\label{intro:corr}\label{intro:history}
Gopakumar and Vafa~\cite{GV98} conjectured that the Gromov-Witten series of a Calabi-Yau 3-fold is determined by integer curve counts $n_{g,\beta}\in\ZZ$. Katz-Klemm-Vafa~\cite{KKV99} proposed a refinement indexed by two representations of $\mathfrak{sl}_2$. Following Hosono-Saito-Takahashi~\cite{HST01}, Katz~\cite{Kat08} and Kiem-Li~\cite{KL12}, Maulik-Toda~\cite{MT18} defined $n_{g,\beta}$ using vanishing cycles. Let $\mathrm{Sh}_\beta(X)$ be the moduli space of stable one dimensional sheaves $E$ with $[E]=\beta$ and $\chi(E)=1$, and let
\begin{align}\label{intro:HC}
\pi\colon \mathrm{Sh}_\beta(X)\to \Chow_\beta(X)
\end{align}
be its Hilbert-Chow map. With $\phi_{\mathcal{S}h}$ its perverse sheaf of vanishing cycles with Calabi-Yau orientation, their definition is
\begin{align}\label{intro:MTdef}
\sum_{i\in\ZZ}\chi\bigl(\pH{i}(\dR\pi_*\phi_{\mathcal{S}h})\bigr)y^i
=\sum_{g\ge 0}n_{g,\beta}\bigl(y^{\frac12}+y^{-\frac12}\bigr)^{2g}.
\end{align}
The perverse filtration and the Lefschetz action on hypercohomology give the two Lefschetz gradings.

Maulik-Toda conjectured the agreement of these invariants with the stable pair invariants of Pandharipande-Thomas~\cite{PT09,PT10}, both globally and over each point of the Chow variety~\cite[Conjectures~3.13 and~3.14]{MT18}. For irreducible one-cycles on local surfaces, they proved the correspondence~\cite[Theorem~1.3]{MT18} using the planar-curve Macdonald formulas of Migliorini-Shende~\cite{MS13} and Maulik-Yun~\cite{MY14}. The family support theorem of Migliorini-Shende-Viviani~\cite{MSV21} supplies the reduced-curve calculation used in~\cite{Zha26} and here. For a reduced locally planar curve, Shende~\cite{She12} expresses the multiplicities of the Severi strata in its versal deformation in terms of Euler characteristics of Hilbert schemes of points. Shen~\cite{Shen25} surveys the perverse filtration on compactified Jacobians of integral locally planar curves and its relation to their Hilbert schemes.

Choi-Katz-Klemm~\cite{CKK14} computed refined stable pair invariants for the local plane and quadric in low degrees and tested a conjectural refined Bogomolny--Prasad--Sommerfield (BPS) product formula. Choi-van Garrel-Katz-Takahashi~\cite{CGKT20} used stable pair wall-crossing to calculate the cohomology of one dimensional sheaf moduli on del Pezzo surfaces in classes of low arithmetic genus. For local curves, Kinjo-Koseki~\cite{KK26} proved $\chi$-independence of GV invariants; they also established $\chi$-independence of BPS cohomology for Higgs bundles.

For ample curve classes on a toric del Pezzo surface, Maulik-Shen~\cite{MS23} proved that the intersection cohomology of the semistable sheaf moduli, with its perverse and Hodge filtrations, is independent of the Euler characteristic. On $\PP^2$, Pi-Shen~\cite{PS23} determined tautological generators for the cohomology ring when the degree and Euler characteristic are coprime. Kononov-Pi-Shen~\cite{KPS23} proposed the equality $P=C$ between the perverse and Chern filtrations in low cohomological degrees. They also verified the agreement of the perverse-sheaf and refined stable pair BPS invariants in degrees three and four.

For $\beta\in\Gamma$, let $U_\beta\subset B_\beta$ be the locus of smooth connected curves, with universal curve $\rho_\beta\colon\cC_\beta|_{U_\beta}\to U_\beta$, and set $V_\beta:=R^1\rho_{\beta*}\QQ$. For a local system $W$ on $U_\beta$, the notation $\IC_{B_\beta}(W)$ denotes the intermediate extension of $W[D_\beta]$.

Let $q$ be a formal variable recording the Euler characteristic.

\begin{defi}\label{intro:def-A}
For $\beta\in\Gamma$, define the formal series
\begin{align}\label{intro:A}
\cA_\beta(q):=\bigoplus_{j=0}^{2g_\beta}\ \bigoplus_{u,v\ge 0}
q^{\,j+u+v+1-g_\beta}\,\IC_{B_\beta}\bigl(\wedge^{j}V_\beta\bigr)[g_\beta-1-j-2v].
\end{align}
For an effective class $\beta\notin\Gamma$, set $\cA_\beta(q):=0$.
\end{defi}

The indices $j,u,v$ record the contributions of $H^1,H^0,H^2$ in the Macdonald formula~\cite{Mac62}. Proposition~\ref{prop:primitive} identifies this series with the cohomological GV data from $\dR\pi_*\IC$ for stable sheaves of Euler characteristic one. For a ruling class $f=(1,0)$ on $\mathbb{F}_0$, one has $g_f=0$ and $V_f=0$, hence
\begin{align}\label{intro:ruling-A}
\cA_f(q)=\bigoplus_{u,v\ge 0}q^{u+v+1}\QQ_{\PP^1}[-2v],\qquad \cA_{mf}(q)=0\ (m>1).
\end{align}
Let $Q^\beta$ record the curve class. To express the contribution of several components, use the addition maps
\begin{align}\label{intro:mu}
\mu_\lambda\colon \prod_{\gamma>0}B_\gamma^{m_\gamma}\to B_\beta,\qquad
G_\lambda:=\prod_{\gamma>0}\mathfrak{S}_{m_\gamma}.
\end{align}
The map $\mu_\lambda$ sends a tuple of divisors to their sum, and $G_\lambda$ permutes the divisors of each class.

\begin{defi}\label{intro:def-Exp}
The symmetric-power series for addition of cycles is
\begin{align}\label{intro:Exp}
\Exp_{\Chow}\Bigl(\bigoplus_{\gamma>0}\cA_\gamma(q)Q^{\gamma}\Bigr)
:=\QQ_{B_0}\oplus\bigoplus_{\beta>0}Q^{\beta}\bigoplus_{\lambda\vdash\beta}
\Bigl[\dR\mu_{\lambda*}\Bigl(\boxtimes_{\gamma>0}\cA_\gamma(q)^{\boxtimes m_\gamma}\Bigr)\Bigr]^{G_\lambda},
\end{align}
where $B_0:=\Spec\CC$. The permutation actions use the Koszul signs of the cohomological shifts, and the formal variables $q,Q$ are even.
\end{defi}

For complexes $A$ on $B_\gamma$ and $B$ on $B_\delta$, convolution means $\dR\mu_*(A\boxtimes B)$, where $\mu$ adds the two divisors. Each $(\chi,\beta)$-coefficient is a finite sum: a fixed class has finitely many partitions, and every $\cA_\gamma(q)$ is bounded below in $q$. Define the stable pair series by
\begin{align}\label{intro:ZPT}
\cZ_{\PT}(q,Q):=\QQ_{B_0}\oplus\bigoplus_{\beta>0,\ \chi\in\ZZ}
q^{\chi}K^{\sss}_{\chi,\beta}[-\chi]\,Q^{\beta}.
\end{align}

\begin{thm}\label{intro:thm-gvpt}
\textup{(Theorem~\ref{thm:gvpt})} For $S=\PP^2$ and $S=\mathbb{F}_0$, there is an isomorphism
\begin{align}\label{intro:gvpt}
\cZ_{\PT}(q,Q)\cong \Exp_{\Chow}\Bigl(\bigoplus_{\beta>0}\cA_\beta(q)Q^{\beta}\Bigr),
\end{align}
i.e. each coefficient is an isomorphism of finite direct sums of shifts of semisimple perverse sheaves on $B_\beta$.
\end{thm}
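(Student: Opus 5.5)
The plan is to deduce Theorem~\ref{intro:thm-gvpt} from the support theorem, Theorem~\ref{intro:thm-support}, together with the reduced-locus calculation of~\cite{Zha26}. The key tool is the family Macdonald formula of Migliorini--Shende--Viviani~\cite{MSV21}. Both sides of~\eqref{intro:gvpt} are, coefficientwise in $q^\chi Q^\beta$, finite direct sums of shifted semisimple perverse sheaves on $B_\beta$, so it suffices to do two things. First, show that every simple constituent on each side has strict support meeting $B_\beta^{\red}$. Second, show that the two sides become isomorphic after applying $j_\beta^*$. Intermediate extension ${}^{p}j_{\beta!*}$ is then an isomorphism-preserving operation on such semisimple objects, which yields the global statement.

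The first step handles the left side. By~\eqref{intro:NR}, $K^{\sss}_{\chi,\beta}\cong {}^{p}j_{\beta!*}(j_\beta^*K_{\chi,\beta})^{\sss}$, so $\cZ_{\PT}$ is determined by its restriction to the reduced locus. For the right side, each $\cA_\gamma(q)$ is a sum of shifted $\IC_{B_\gamma}(\wedge^jV_\gamma)$ with $\gamma\in\Gamma$. The map $\mu_\lambda$ is finite, so the decomposition theorem makes $\dR\mu_{\lambda*}(\boxtimes\cA_\gamma^{\boxtimes m_\gamma})$ a sum of shifted IC sheaves. To see that their supports meet $B_\beta^{\red}$, I will use that the generic point of the image of $\prod B_\gamma^{m_\gamma}$ is a transverse union of distinct smooth curves, hence reduced. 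Over the locus $\mu_\lambda^{-1}(B_\beta^{\red})$, the map $\mu_\lambda$ is étale onto its image, off the small-codimension loci where components acquire tangencies. From this I expect that the direct image restricted there is the intermediate extension of the generic local system. The goal is to argue that no constituent is supported in the non-reduced boundary, using the smallness of $\mu_\lambda$ for distinct classes and the standard symmetric-product computation. Taking $G_\lambda$-invariants preserves this property.

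The second step compares the two sides on $B_\beta^{\red}$. There the stable pair moduli space over reduced planar curves is the relative Hilbert scheme of points on the universal curve in $S$. The vanishing-cycle sheaf restricts to the shifted constant sheaf, with sign fixed by the orientation comparison in Appendix~\ref{sec:orientation}. The relative Hilbert schemes are smooth because the family is versal for locally planar singularities. The family Macdonald formula of~\cite{MS13,MY14,MSV21} then expresses $\bigoplus_n q^n\dR\pi^{[n]}_*\QQ$ in terms of perverse constituents of the compactified Jacobian direct images. On components with $\beta\in\Gamma$, these are the $\IC(\wedge^jV_\beta)$ twisted by the $H^0,H^2$ factors indexed by $u,v$, which is exactly $\cA_\beta$ via Proposition~\ref{prop:primitive}. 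For disconnected reduced curves the Hilbert scheme is a product over connected components. The multiplicative structure is then the restriction of $\Exp_{\Chow}$ to $B_\beta^{\red}$, with the Koszul signs matching the symmetric-product conventions. I would take this reduced-locus identity directly from~\cite{Zha26}, where it was established conditional only on $(\mathrm{NR})$.

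I expect the main obstacle to be the right-hand-side support claim: that $\Exp_{\Chow}$ has no constituents supported inside the non-reduced locus. The concern is the strata where several components of the same class coincide, such as the diagonals of $\Sym^{m}B_\gamma$, and the classes $mf$ on $\mathbb{F}_0$. The latter are handled by $\cA_{mf}=0$ for $m>1$. For the former, the first thing I would try is to note that taking $G_\lambda$-invariants of an external power of a sheaf pushed forward along a finite map of symmetric products yields $\IC$ of the generic local system. This is the standard fact that $\Sym^m$ of a variety is rationally smooth relative to its IC, so finite quotient maps introduce no new supports. The supports are then exactly the $\Sigma_\lambda$, which meet $B_\beta^{\red}$.
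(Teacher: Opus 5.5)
Your proposal follows the paper's proof: Corollary~\ref{cor:middle-ext} makes the left side the intermediate extension of its restriction to $B_\beta^{\red}$, finiteness of $\mu_\lambda$ (Lemma~\ref{lem:serre}) does the same for the right side, and Theorem~\ref{thm:reduced} together with Lemma~\ref{lem:rulings} identifies the two restrictions to $B_\beta^{\red}$. Two corrections on emphasis. The obstacle you anticipate is immediate, since every simple constituent of a finite direct image of an intersection complex with irreducible support has strict support equal to the image. By contrast, the paper does not simply import the reduced-locus identity from~\cite{Zha26}; it proves smoothness of the relative Hilbert schemes on both surfaces via $H^0(\wt{C},\nu^*K_S)=0$ (Proposition~\ref{prop:hilb}), node-smoothing surjectivity (Lemma~\ref{lem:transv}), and the branch-exchange character $\varepsilon_\lambda$ matching the Koszul signs of the factors $\QQ[\gamma^2]$ (Lemma~\ref{lem:nodal-descent}).
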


Over $B_\beta^{\red}$, the identity, with its permutation actions for repeated component classes, is Theorem~\ref{thm:reduced}. By Theorem~\ref{intro:thm-support} and~\cite[Theorem~1.1]{Zha26}, both sides are sums of intersection complexes on the supports $\Sigma_\lambda$. We extend the identity to $B_\beta$ by intermediate extension.

Taking hypercohomology and Euler characteristics gives the numerical correspondence~\cite[Conjecture~3.13]{MT18}; taking stalks first gives its local version~\cite[Conjecture~3.14]{MT18}. The two Lefschetz gradings give the refined product formula in~\eqref{eq:refined-PTGV}.

\subsection{Proof of the support theorem}\label{intro:strategy}
Toda~\cite{Tod10} used wall-crossing to prove rationality of generating series of ordinary Euler characteristics of stable pair moduli. Bridgeland~\cite{Bri11} subsequently proved the numerical Donaldson--Thomas/Pandharipande--Thomas (DT/PT) correspondence and rationality of the associated generating functions using Hall algebras.

The proof is by induction on $d=L\cdot\beta$. Toda's wall-crossing~\cite{Tod24} and hyperbolic localization~\cite{KPS26,Des25} express a change of chamber as convolutions of a rank one factor of smaller degree with semistable sheaf factors. We apply the induction hypothesis to the former and cohomological integrality~\cite{BDINP25} together with Maulik-Shen's support theorem~\cite{MS23} to the latter; ruling classes require the calculation in Subsection~\ref{sub:ruling}. The construction over $B_\beta$ is given in Proposition~\ref{prop:wall}.

Toda's duality exchanges $\chi$ and $-\chi$. Since $P_\chi(X,\beta)$ is empty for $\chi<1-g_\beta$, chamber independence and duality prove the support assertion outside the range
\begin{align}\label{intro:window}
1-g_\beta\le\chi\le g_\beta-1.
\end{align}
For $g_\beta\le0$ the range is empty. Otherwise, we relate consecutive Euler characteristics by point modifications on stacks of surface pairs $\cO_S\to F$, imposing a lower bound on the Harder-Narasimhan (HN) slopes of $F$. Kinjo's dimensional reduction~\cite{Kin22} identifies the shifted cotangent vanishing cycle direct images with direct images of dualizing complexes. The comparison with the PT complexes also involves direct images from unstable HN strata, whose rank one factors have lower degree (Theorem~\ref{thm:bounded}).

Choose a pencil in $|L|$ with base locus $Z$, and let $B_{\beta,Z}$ be the locus of curves avoiding $Z$. In the Verdier quotient by complexes whose perverse constituents have strict supports $\Sigma_\lambda\cap B_{\beta,Z}$, let $V_\chi$ be the image of the direct image from the stack of surface pairs with a lower slope bound, normalized as in~\eqref{eq:K-chi-c}. By Lemma~\ref{lem:slices}, it represents the PT direct image up to normalization, and changes of the slope bound are invertible.

For a smooth divisor $D\in|rL|$ with $r>d$, adding a point and removing a point on $D$ give operators $e_\chi\colon V_\chi\to V_{\chi+1}$ and $f_\chi\colon V_{\chi+1}\to V_\chi$. Section~\ref{sec:hecke} maps the two compositions to the same relative Quot stack. The difference is supported where the modifications cancel. It factors through an endomorphism of the dualizing complex of the pair stack, which connectedness makes a scalar. Counting on smooth curves gives
\begin{align}\label{intro:commutator}
f_\chi e_\chi-e_{\chi-1}f_{\chi-1}=(D\cdot\beta)\,\id_{V_\chi}.
\end{align}
The opens $B_{\beta,Z}$ cover $B_\beta$ as the pencil varies.

The finite range~\eqref{intro:window} forces all $V_\chi$ to vanish: on the lowest non-zero object, the commutator would give $f^ke^k=k!(D\cdot\beta)^k\id$, whereas $e^k=0$ for large $k$ (Lemma~\ref{lem:weyl}). The operators are related to those of Rennemo~\cite{Ren18} and Kivinen~\cite{Kiv19} for a fixed curve and to Toda's $K$-theoretic Hecke relation~\cite{Tod20}; here the relation acts on dualizing complexes over the Chow variety.

\subsection{Conventions}\label{intro:notation}
All varieties, schemes and stacks are defined over $\CC$. We follow~\cite{Zha26} for the Chow varieties and Macdonald series, Toda~\cite{Tod24} for stable pairs and stability parameters, and Kinjo~\cite{Kin22} for shifted cotangent stacks and vanishing cycles. We use
\begin{align}\label{intro:n}
n:=\chi+g_\beta-1.
\end{align}
For a stable pair supported on a divisor on $S$, this is the length of the zero dimensional cokernel.

Boldface letters such as $\bM,\bY$ denote derived stacks, and $t_0$ denotes classical truncation. Fraktur letters denote open substacks and correspondence stacks, specified as classical or derived when introduced. Fiber products of derived stacks are derived. For an algebraic group $G$, $BG$ denotes its classifying stack. We write $\LL_{\fX}$ for the cotangent complex and $T^*[-1]\fX$ for the shifted cotangent stack. In a pair complex $[\cO\to F]$, the terms lie in degrees zero and one; our cone convention is $\fib(f)=\Cone(f)[-1]$ and $\cofib(f)=\Cone(f)$. The zero section of $p\colon X\to S$ is denoted by $i\colon S\hookrightarrow X$.

Constructible sheaves have $\QQ$-coefficients. We use the perverse t-structure of~\cite{BBD82} and mixed Hodge modules~\cite{Sai90}. The notation $M[k]$ denotes a cohomological shift and $M(m)$ a Tate twist; following~\cite{Kha19}, set $M\langle r\rangle:=M[2r](r)$. The constant Hodge complex $\QQ_Y^H$ has rational realization $\QQ_Y$. In generating series, $\LL=[\QQ(-1)]$ is the Tate class and $\LL^{1/2}$ is a formal square root. We write $\omega_Y$ for the dualizing complex and normalize intersection complexes to be perverse. All direct images, exceptional direct images, pull-backs and exceptional pull-backs are derived, denoted by $\dR f_*,\dR f_!,f^*,f^!$.

The complex $K^t_{\chi,\beta}$ is the Chow direct image in Toda's chamber $t$; $K_{\chi,\beta}$ is the stable pair case. The notation $K_\chi(c)$ in Section~\ref{sec:hecke} instead records a lower slope bound $c$ for surface pairs.

\medskip \noindent\textbf{Acknowledgments.} The author thanks Sheldon Katz for his guidence. The author also thanks Yukinobu Toda for discussion. This work was supported by JSPS KAKENHI Grant Number JP25K17226 and by World Premier International Research Center Initiative (WPI), MEXT, Japan.

\medskip
\noindent\textbf{Use of AI tools.}
The author used ChatGPT to assist with drafting and revising the exposition and organizing parts of the manuscript. It was also used to discuss mathematical arguments and suggest points for further verification. The author takes full responsibility for the final text, including the mathematical arguments and references.

\section{Stable pairs and sheaves on the two surfaces}\label{sec:prelim}
On the surfaces~\eqref{intro:X}, every effective linear system is base point free and any two effective classes have non-negative intersection. The intersection inequality bounds the Euler characteristic of a stable pair from below.

\subsection{Curve classes and stable pairs}\label{sub:classes}
Let $L=\cO_{\PP^2}(1)$ on the plane and $L=\cO_{\mathbb{F}_0}(1,1)$ on the quadric. In both cases the canonical bundle is a negative power of $L$, and we write
\begin{align}\label{eq:kappa}
K_S=L^{-\kappa},\qquad \kappa=3\ (S=\PP^2),\quad \kappa=2\ (S=\mathbb{F}_0).
\end{align}
The numbers in~\eqref{intro:numerics} have the following values.
\begin{center}
\renewcommand{\arraystretch}{1.25}
\begin{tabular}{c|c|c|c|c}
$S$ & $\beta$ & $d$ & $D_\beta$ & $g_\beta$\\ \hline
$\PP^2$ & $dH$, $d\ge 1$ & $d$ & $d(d+3)/2$ & $(d-1)(d-2)/2$\\
$\mathbb{F}_0$ & $(a,b)$, $a,b\ge 0$ & $a+b$ & $ab+a+b$ & $(a-1)(b-1)$
\end{tabular}
\end{center}
The zero class is excluded unless stated otherwise. We write $\gamma\le\beta$ if $\beta-\gamma$ is effective, and $\gamma<\beta$ if moreover $\gamma\neq\beta$. The arithmetic genus on $\mathbb{F}_0$ can be negative: a general divisor in class $(m,0)$ is a disjoint union of $m$ fibers, and $g_{(m,0)}=1-m$. The following identities hold on both surfaces:
\begin{align}\label{eq:common-numerics}
1-g_\beta=\frac{\kappa d-\beta^2}{2},\qquad
D_\beta=\frac{\beta^2+\kappa d}{2},\qquad
D_\beta+g_\beta-1=\beta^2.
\end{align}

Let $\cC_\beta\to B_\beta$ be the universal divisor.

\begin{lem}\label{lem:lower}
If $P_\chi(X,\beta)\neq\emptyset$, then $\chi\ge 1-g_\beta$. More precisely, there exist effective classes $\beta_k$ and non-negative integers $\delta_k$, $\ell$ such that $\sum_k\beta_k=\beta$ and
\begin{align}\label{eq:zero-section-estimate}
\chi-(1-g_\beta)=\sum_{i<j}\beta_i\cdot\beta_j+\kappa\sum_{k\ge 1}k\,(L\cdot\beta_k)+\sum_k\delta_k+\ell.
\end{align}
\end{lem}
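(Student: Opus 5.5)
We study a stable pair $(F,s)$ on $X=\Tot(K_S)$ through the filtration of $F$ by powers of the ideal of the zero section. Since $F$ has proper one dimensional support and $p$ is affine, $p_*F$ is a coherent sheaf on $S$. The tautological coordinate $t$ on the fibres acts on $p_*F$ as a nilpotent endomorphism
\begin{align*}
\theta\colon p_*F\to p_*F\otimes K_S .
\end{align*}
Nilpotence holds because the support is compact, and any compact curve in an affine bundle over a projective surface with negative fibre coordinate lies set-theoretically in the zero section. This last point also gives the first sentence of the introduction: $F$ is set-theoretically supported on $i(S)$.

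The plan is to filter $F$ by the subsheaves $F_k:=\ker(t^{k})$, or equivalently by the images $t^kF$. The graded pieces $G_k$ are sheaves on $S$, twisted by $K_S^{\otimes k}$ relative to $G_0$. Let $\beta_k$ be the class of the one dimensional part of the $k$-th graded piece. Then $\sum_k\beta_k=\beta$, and
\begin{align*}
\chi(F)=\sum_k\chi(G_k).
\end{align*}

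The next step is to compare each $\chi(G_k)$ with the Euler characteristic of a structure sheaf. The section $s$ induces a section of $G_0=F/tF$ whose cokernel is zero dimensional. The image of $s$ is $\cO_{C}$ for a subscheme $C\subset X$ with no embedded points, of class $\beta$. Its associated graded for the $t$-adic filtration is a chain of divisors $C_0\supseteq C_1\supseteq\cdots$ on $S$ with $\cO_{C_k}\otimes K_S^{-k}$ as successive pieces. Here $C_k$ has class $\beta_k$ (after possibly regrouping so that $\beta_k$ is the class of $C_k$ minus that of $C_{k+1}$, choosing the indexing consistently).

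For a Cartier divisor $D$ of class $\gamma$, Riemann--Roch gives
\begin{align*}
\chi(\cO_D\otimes K_S^{-k})=-\tfrac{\gamma\cdot(\gamma+K_S)}{2}+k\kappa\,(L\cdot\gamma).
\end{align*}
The first term is $1-g_\gamma$. Summing over the layers and using
\begin{align*}
1-g_{\sum\gamma_i}=\sum(1-g_{\gamma_i})-\sum_{i<j}\gamma_i\cdot\gamma_j,
\end{align*}
we reorganize $\chi(\cO_C)-(1-g_\beta)$ as $\sum_{i<j}\beta_i\cdot\beta_j+\kappa\sum_k k\,L\cdot\beta_k$ plus correction terms $\delta_k\ge0$.

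The main obstacle is pinning down these corrections $\delta_k$. They arise because the graded pieces of $\cO_C$ need not be exactly $\cO_{C_k}$. They are pure one dimensional quotients that may differ from the divisor structure sheaves by zero dimensional pieces, coming from the multiplication maps $t$ failing to be surjective. We will prove that each such discrepancy is a zero dimensional sheaf of nonnegative length $\delta_k$. The argument uses purity of $\cO_C$: the map $t\colon \cO_C/\text{(next)}\to$ previous piece is injective up to a zero dimensional cokernel, so the lengths enter with positive sign.

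Finally, $\ell:=\chi(F)-\chi(\cO_C)=\mathrm{length}(\coker s)\ge0$. Each term on the right of~\eqref{eq:zero-section-estimate} is then nonnegative. Here $\beta_i\cdot\beta_j\ge0$ because any two effective classes on these surfaces intersect nonnegatively, and $L\cdot\beta_k\ge0$ for the same reason. This gives $\chi\ge1-g_\beta$.
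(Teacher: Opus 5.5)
Your outline follows the paper's proof: filter the image $\cO_C$ of $s$ by $I^k\cO_C$ (the paper's $(J+I^k)/J$), apply Riemann--Roch to each layer, and regroup using $\beta^2-\sum_k\beta_k^2=2\sum_{i<j}\beta_i\cdot\beta_j$. However, the step you flag as the main obstacle is where the inequality actually comes from, and the mechanism you propose for it is wrong.

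The layers $E_k=I^k\cO_C/I^{k+1}\cO_C$ are in general not pure. Locally, $\cO_C=\CC[x,y,t]/(x^2,\,t-xy)$ is pure, yet $E_0=\CC[x,y]/(x^2,xy)$ has an embedded point. Multiplication by the fibre coordinate maps $E_k\otimes K_S^{-1}$ \emph{onto} $E_{k+1}$, so it is not ``injective up to a zero dimensional cokernel''. Purity of $\cO_C$ gives no control of the layers. The direction of the zero dimensional discrepancy is exactly what fixes the sign. If $E_k$ sat inside $\cO_{C_k}\otimes K_S^{-k}$ with zero dimensional cokernel $Q$, you would get $\chi(E_k)=\chi(\cO_{C_k}\otimes K_S^{-k})-\mathrm{length}\,Q$.

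The missing idea is that $E_k$ is a quotient of the line bundle $I^k/I^{k+1}\cong K_S^{-k}$. Hence $E_k\cong\cO_{W_k}\otimes K_S^{-k}$ for a closed subscheme $W_k\subset S$. The reflexive hull of the ideal of $W_k$ is $\cO_S(-C_k)$ for an effective divisor $C_k$, possibly zero, and the quotient of the hull by the ideal is zero dimensional. This gives
\begin{align*}
0\to T_k\to E_k\to\cO_{C_k}\otimes K_S^{-k}\to 0,\qquad \delta_k:=\mathrm{length}\,T_k\ge 0.
\end{align*}
Here the torsion is a \emph{subsheaf}, so $\delta_k$ enters with a plus sign, and no purity is used.

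Your bookkeeping of the $\beta_k$ also has to be fixed. The classes in the lemma's formula must be $\beta_k:=[C_k]$, the class of the $k$-th layer of $\cO_C$ itself. They sum to $[\cO_C]=[F]=\beta$, since $\coker s$ and the $T_k$ are zero dimensional. The twist by $K_S^{-k}=L^{k\kappa}$ then contributes exactly $k\kappa\,(L\cdot\beta_k)$.

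Your proposed ``regrouping'' $\beta_k=[C_k]-[C_{k+1}]$ telescopes to $\sum_k\beta_k=[C_0]$. This differs from $\beta$ as soon as some $C_k$ with $k\ge 1$ is non-zero: in the example above $C_0=C_1$ is the line $x=0$ and $\beta$ is twice it. It would also change the $\kappa$-term. The classes of the graded pieces of $F$ under $\ker t^k$, with which you begin, are different again and play no role. Drop that filtration, work only with $\cO_C$, and add $\ell=\mathrm{length}(\coker s)$ at the end.
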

\begin{proof}
Following~\cite[Lemma~2.3]{Zha26}, let $(F,s)$ be a stable pair, let $\cO_C$ be the image of $s$, and let $\ell$ be the length of the cokernel of $s$. Every compact curve in $X$ is set-theoretically contained in the zero section. Hence if $I$ is the ideal of the zero section and $J$ that of $C$, the filtration $J+I^k$ of $\cO_C$ is finite. Its layers
\begin{align*}
E_k:=(J+I^k)/(J+I^{k+1})
\end{align*}
are quotients of $I^k/I^{k+1}\cong K_S^{-k}$, and comparison with the reflexive hull of the kernel gives exact sequences
\begin{align*}
0\to T_k\to E_k\to \cO_{C_k}\otimes K_S^{-k}\to 0,
\end{align*}
where $T_k$ is zero dimensional of length $\delta_k$ and $C_k$ is an effective divisor of class $\beta_k$, possibly zero. By adjunction, $\chi(E_k)=-\beta_k\cdot(\beta_k+K_S)/2-kK_S\cdot\beta_k+\delta_k$. Summing over $k$, adding $\ell$, and using $\beta^2-\sum_k\beta_k^2=2\sum_{i<j}\beta_i\cdot\beta_j$, we obtain~\eqref{eq:zero-section-estimate}. The right hand side is non-negative: effective classes have non-negative intersection, $-K_S=L^{\kappa}$ is ample, and $\delta_k$ and $\ell$ are lengths.
\end{proof}

\begin{lem}\label{lem:proper}
The moduli space $P_\chi(X,\beta)$ is projective, and $\Pi_{\chi,\beta}$ is proper.
\end{lem}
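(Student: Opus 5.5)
The plan is to realize $P_\chi(X,\beta)$ as a closed subscheme of a projective moduli space of stable pairs on a projective compactification of $X$, and to check that the Hilbert--Chow map is a morphism to the projective variety $B_\beta$. Properness of $\Pi_{\chi,\beta}$ then follows formally: a morphism from a projective scheme to a separated scheme is proper.

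\textbf{Step 1 (compactification).} Let $\bar X:=\PP(K_S\oplus\cO_S)$, a smooth projective threefold that contains $X$ as the complement of the section at infinity $S_\infty$. Pandharipande--Thomas~\cite{PT09} construct $P_\chi(\bar X,\beta)$ as a projective scheme; here the class $\beta$ is pushed forward to $\bar X$ and we fix a polarization $\cO_{\bar X}(1)$. Stable pairs on $X$ with compact support are exactly the stable pairs on $\bar X$ whose support is disjoint from $S_\infty$. By Lemma~\ref{lem:lower}, every compact curve of $X$ lies set-theoretically in the zero section.

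\textbf{Step 2 (open and closed).} We show that the locus of pairs on $\bar X$ supported in the zero section $S_0$ is both open and closed in $P_\chi(\bar X,\beta)$. Openness holds because avoiding the closed set $S_\infty$ is an open condition for proper families.

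For closedness, the key point is that any curve $C\subset\bar X$ of class $\beta$, with $\beta$ pushed forward from $S_0$, must lie in $S_0$. We intersect with the divisor $S_\infty$. Since $N_{S_\infty}\cong K_S^{-1}$ restricted appropriately and $S_0\cap S_\infty=\emptyset$, we get $S_\infty\cdot\beta=0$. The curve $C$ cannot meet $S_\infty$ in finitely many points, because each such point would contribute positively and no component lies inside $S_\infty$ with negative contribution. To exclude the remaining case, we use that $\bar X\to S$ is a $\PP^1$-bundle. Any component not contained in $S_0\cup S_\infty$ projects to a curve in $S$, or is a fiber. If it is a fiber, it meets $S_\infty$ positively. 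Otherwise $S_\infty\cdot C'=S_0\cdot C'+\deg(\text{twist})$, and comparing with $S_0|_{S_0}=K_S$, which is anti-ample, shows that such components cannot sum to $S_\infty\cdot\beta=0$ unless they lie in $S_0$.

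I expect this intersection bookkeeping to be the main obstacle. Handling it carefully means writing the class as $\beta+m[\text{fiber}]$ and using $-K_S$ ample. Given Step 2, $P_\chi(X,\beta)$ is a union of connected components of a projective scheme, and is therefore projective.

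\textbf{Step 3 (the map).} The Hilbert--Chow morphism to the Chow variety of $\bar X$ is defined via the support cycle, using the Fitting support of the pushforward $p_*F$ to $S$. Concretely, $\Pi_{\chi,\beta}$ sends a flat family of sheaves $F$ to the divisor $\mathrm{Fitt}_0(p_*F)$ on $S$. Because $p$ is affine on the support and $p_*F$ is a flat family of pure one-dimensional sheaves on the surface $S$, this Fitting divisor is a relative effective Cartier divisor of class $\beta$. It therefore gives a morphism $P_\chi(X,\beta)\to B_\beta$. Since $B_\beta$ is projective, hence separated, $\Pi_{\chi,\beta}$ is proper.
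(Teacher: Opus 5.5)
Your proposal is correct and follows essentially the paper's route. The paper likewise compactifies to $\ol{X}$ and uses projectivity of $P_\chi(\ol{X},i_{0*}\beta)$. It then shows that the open locus of pairs supported away from $S_\infty$ is the whole space, because $S_\infty\cdot i_{0*}\beta=0$.

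The intersection step you flag as the main obstacle becomes short in the paper's version, which only uses that $S_\infty$ is nef. An integral curve not in $S_\infty$ has non-negative intersection with it, with equality iff it is disjoint from $S_\infty$. A curve inside $S_\infty$ has strictly positive intersection, since $\cO_{\ol{X}}(S_\infty)|_{S_\infty}\cong K_S^{-1}$ is ample. This strict positivity, rather than your ``no negative contribution'', is what rules out such components. So you only need disjointness from $S_\infty$, not containment in $S_0$.
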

\begin{proof}
Let $\ol{X}:=\PP_S(\cO_S\oplus K_S)$ be the projective compactification, with zero section $i_0\colon S\hookrightarrow\ol{X}$ and infinity section $S_\infty$, so that $X=\ol{X}\setminus S_\infty$ and
\begin{align*}
\cO_{\ol{X}}(S_\infty)|_{S_\infty}\cong K_S^{-1}.
\end{align*}
Observe that an integral curve not contained in $S_\infty$ has non-negative intersection with this effective divisor; the intersection is zero precisely when the curve is disjoint from it. A curve contained in $S_\infty$ has strictly positive intersection, since $-K_S$ is ample. Thus $S_\infty$ has non-negative intersection with every effective curve. The class $i_{0*}\beta$ has intersection zero with $S_\infty$, so every effective cycle of this class is disjoint from $S_\infty$.

Consider the projective moduli space $P_\chi(\ol{X},i_{0*}\beta)$ of stable pairs on $\ol{X}$~\cite{PT09}. The locus on which the universal sheaf is supported away from $S_\infty$ is open: its complement is the proper image of the intersection of the universal support with $S_\infty$. Every geometric point belongs to this open subset by the intersection calculation above and purity, so the open subset is the whole moduli space as a scheme. Restriction to $X$ and extension by zero are inverse on families over any base, giving the identification with $P_\chi(X,\beta)$. Hence $P_\chi(X,\beta)$ is projective and its morphism to $B_\beta$ is proper.
\end{proof}

\subsection{Slope bounds}\label{sub:slope}
For a pure one dimensional sheaf $F$ on $S$, let $[F]$ be its fundamental class and set
\begin{align*}
\mu(F):=\frac{\chi(F)}{L\cdot[F]}.
\end{align*}
We write $\mu_{\min}(F)$ and $\mu_{\max}(F)$ for the smallest and the largest slopes in the HN filtration of $F$. For a fixed class $\beta$, put
\begin{align}\label{eq:alpha}
\alpha:=\frac{1-g_\beta}{d}
=\begin{cases}(3-d)/2, & S=\PP^2,\\ 1-ab/(a+b), & S=\mathbb{F}_0,\ \beta=(a,b).\end{cases}
\end{align}
\begin{lem}\label{lem:slope}
Every pure quotient of a stable pair sheaf in class $\gamma\le\beta$ has slope at least $\alpha$. Its push-forward to $S$ satisfies $\mu_{\min}\ge\alpha$.
\end{lem}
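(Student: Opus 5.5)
The plan is to reduce both assertions to the estimate of Lemma~\ref{lem:lower} applied to the quotient itself, and then to a monotonicity statement for the function $\gamma\mapsto(1-g_\gamma)/(L\cdot\gamma)$.

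\emph{Reduction for quotients on $X$.} Let $(F,s)$ be a stable pair of class $\beta$, and let $F\twoheadrightarrow G$ be a pure one dimensional quotient of class $0<\gamma\le\beta$. The composite $s'\colon\cO_X\to F\to G$ has cokernel a quotient of $\coker(s)$, so it is zero dimensional. Hence $(G,s')$ is a stable pair of class $\gamma$. Lemma~\ref{lem:lower} then gives $\chi(G)\ge 1-g_\gamma$, so
\[
\mu(G)\ge\frac{1-g_\gamma}{L\cdot\gamma}.
\]

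\emph{Reduction for the push-forward to $S$.} The map $p$ is finite on the support, so $p_*F$ is pure with the same class and $\chi$. Let $Q$ be a pure quotient of $p_*F$, for instance its minimal HN factor. This $Q$ is only an $\cO_S$-quotient, so the argument above does not apply directly. Instead I would use that $p_*\cO_X=\bigoplus_{k\ge0}K_S^{-k}\to p_*F\to Q$ has zero dimensional cokernel, and copy the proof of Lemma~\ref{lem:lower}:
\begin{itemize}
\item Filter $Q$ by the images $Q_m$ of $\bigoplus_{k\le m}K_S^{-k}$.
\item Each layer $Q_m/Q_{m-1}$ is a quotient of $K_S^{-m}$, hence of the form $\cO_{Z_m}\otimes K_S^{-m}$ for a subscheme $Z_m$ whose one dimensional part is a divisor $C_m$ of class $\beta_m$, with $\sum\beta_m=\gamma$.
\item Embedded points only increase $\chi$, and the final cokernel contributes a non-negative length.
\end{itemize}
Adjunction and the same summation then give~\eqref{eq:zero-section-estimate} for $Q$ with $\beta$ replaced by $\gamma$, so again $\chi(Q)\ge 1-g_\gamma$. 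Bookkeeping the embedded points and torsion in these layers is the only mildly delicate step. I expect it to be routine, since every correction term is a length and enters with a positive sign.

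\emph{Monotonicity.} By~\eqref{eq:common-numerics},
\[
\frac{1-g_\gamma}{L\cdot\gamma}=\frac{\kappa}{2}-\frac{\gamma^2}{2\,L\cdot\gamma},
\]
so it suffices to show that $\gamma^2/(L\cdot\gamma)\le\beta^2/(L\cdot\beta)$ whenever $0<\gamma\le\beta$.
\begin{itemize}
\item On $\PP^2$, with $\gamma=eH$ and $\beta=dH$, this reads $e\le d$.
\item On $\mathbb{F}_0$, with $\gamma=(a',b')\le(a,b)$, the quantity is $2a'b'/(a'+b')$. The function $xy/(x+y)$ is non-decreasing in each variable on $x,y\ge0$ (its partial derivatives are $y^2/(x+y)^2$ and $x^2/(x+y)^2$). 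The degenerate cases $a'=0$ or $b'=0$ give the value $0$, which is at most $2ab/(a+b)$.
\end{itemize}
Combining the reductions with this monotonicity gives slope at least $\alpha$ for every pure quotient of class $\gamma\le\beta$, and hence $\mu_{\min}(p_*F)\ge\alpha$.
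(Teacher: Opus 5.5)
Your proof is correct. The first half is exactly the paper's argument: a pure quotient inherits the section, then Lemma~\ref{lem:lower} applies, followed by monotonicity of $\gamma\mapsto(1-g_\gamma)/(L\cdot\gamma)$.

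For the push-forward to $S$ you take a genuinely different route. The paper views $p_*F$ as a Higgs sheaf $(E,\theta)$ and shows that the HN filtration of $E$ is $\theta$-invariant. The reason is that twisting by $K_S=L^{-\kappa}$ lowers slopes by $\kappa$, so $E_1\to(E/E_1)\otimes K_S$ vanishes. The last HN factor is therefore a quotient of $F$ on $X$, and the first assertion applies to it.

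You bypass the Higgs field entirely. You rerun the zero-section filtration of Lemma~\ref{lem:lower} on an arbitrary $\cO_S$-pure quotient $Q$, using only that $p_*s\colon\bigoplus_{k\ge0}K_S^{-k}\to p_*F$ has zero dimensional cokernel. Each layer is a quotient of $K_S^{-m}$. Comparing $\cO_{Z_m}$ with its divisorial part $\cO_{C_m}$, as in the reflexive-hull step of Lemma~\ref{lem:lower}, yields \eqref{eq:zero-section-estimate} for $Q$ with all correction terms non-negative.

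What each approach buys:
\begin{itemize}
\item Your version gives the stronger inequality $\chi(Q)\ge 1-g_{[Q]}$ for every pure $\cO_S$-quotient, not only for those that are quotients of $F$ on $X$. It even subsumes the first assertion: apply it to $p_*G$.
\item The paper's version uses Lemma~\ref{lem:lower} as a black box. It also isolates the $\theta$-invariance of HN filtrations, and that same argument recurs in Lemma~\ref{lem:rank-zero} and Proposition~\ref{prop:HN}.
\item Both arguments rest on the negativity of $K_S$. The paper uses it through the slope drop by $\kappa$; you use it through the term $\kappa\sum_m m\,L\cdot\beta_m$.
\end{itemize}

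One small point of reading. You fixed the stable pair to have class $\beta$. Where the lemma is applied later, to rank one factors of lower degree, the pair itself has class $\gamma'\le\beta$. Your argument applies verbatim in that case, since monotonicity only needs the quotient class to satisfy $\gamma\le\beta$.
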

\begin{proof}
A pure quotient of a stable pair sheaf inherits a section from the original pair, so it is again a stable pair sheaf, in some class $\delta$. By Lemma~\ref{lem:lower} its slope is at least $(1-g_\delta)/(L\cdot\delta)$. On the plane this equals $(3-L\cdot\delta)/2\ge\alpha$, since $L\cdot\delta\le d$. On the quadric, write $\delta=(c,e)$; the function $ce/(c+e)$ is increasing in each non-negative coordinate, so $1-ce/(c+e)\ge 1-ab/(a+b)=\alpha$.

To pass to the surface, recall that a compactly supported sheaf on $X$ is the same as a Higgs sheaf $(E,\theta\colon E\to E\otimes K_S)$ on $S$, with $E$ the push-forward. Since $K_S=L^{-\kappa}$, twisting by $K_S$ lowers every slope by $\kappa$. For the maximal HN term $E_1\subset E$ we have $\mu_{\max}((E/E_1)\otimes K_S)=\mu_{\max}(E/E_1)-\kappa<\mu(E_1)$, so the composite $E_1\to(E/E_1)\otimes K_S$ vanishes. Applying the same argument to the quotient, the whole HN filtration of $E$ is $\theta$-invariant. Its last graded piece is therefore a quotient of $F$ on $X$, and the first assertion gives it slope at least $\alpha$.
\end{proof}

\subsection{Vanishing cycles and orientations}\label{sub:PTsheaf}
Vanishing cycles on a Calabi-Yau moduli space depend on a choice of orientation, namely a square root of the virtual canonical line bundle together with an isomorphism of its square with the virtual canonical bundle~\cite{Joy15, MT18}. The gluing theorem for perverse sheaves on oriented $d$-critical loci is~\cite[Theorem~6.9]{BBDJS15}; its mixed Hodge module form is discussed in~\cite[Section~6.4]{BBDJS15}. For $P_\chi(X,\beta)$ we use the \emph{cotangent orientation} supplied by Toda's shifted cotangent description~\cite{Tod24} of the moduli of rank one objects on $X$. Theorem~\ref{thm:oriented} identifies it with the Calabi-Yau orientation given by the determinant line bundle.

The symbol $\phi^{\PT}_{\chi,\beta}$ denotes the rational vanishing cycle sheaf, and we use the same symbol for its monodromic mixed Hodge lift whenever supports are discussed. Set
\begin{align}\label{eq:v}
v:=\beta^2+\chi,
\end{align}
let $\phi^{\sd}_{\chi,\beta}$ be the Verdier self-dual monodromic vanishing cycle mixed Hodge module for the cotangent orientation, and take
\begin{align}\label{eq:PT-hodge}
\phi^{\PT}_{\chi,\beta}:=\phi^{\sd}_{\chi,\beta}(-v/2)
\end{align}
for the mixed Hodge lift, with the half Tate twist convention of~\cite[Section~5.2.6]{BDINP25}. Over $\Pi_{\chi,\beta}^{-1}(B_\beta^{\red})$, which is smooth by Proposition~\ref{prop:hilb}, the module~\eqref{eq:PT-hodge} is $\QQ^H[D_\beta+n]$, with rational realization $\QQ[\beta^2+\chi]$. Proposition~\ref{prop:DR} translates this convention into the normalization of Kinjo's dimensional reduction.

\subsection{Sheaves in a ruling class}\label{sub:ruling}
Maulik-Shen~\cite{MS23} treat ample classes. The non-ample effective classes on the quadric are the multiples of its two rulings. Set $S=\mathbb{F}_0$ and $f=(1,0)$, so that the members of $|f|$ are the fibers of the first projection $p_1\colon S\to\PP^1$, and we let $p_2\colon S\to\PP^1$ be the second projection. For a class $\gamma$, let $M_\gamma$ be the moduli space of $L$-stable one dimensional sheaves on $S$ of class $\gamma$ and Euler characteristic one, and let $h_\gamma\colon M_\gamma\to B_\gamma$ be the Hilbert-Chow map.

\begin{prop}\label{prop:ruling}
The stack of $L$-semistable pure sheaves of class $mf$ and Euler characteristic $\eta$ is empty unless $m\mid\eta$. If $\eta=m(k+1)$, it is equivalent to the stack of length $m$ sheaves on $\PP^1$, via
\begin{align}\label{eq:ruling-equivalence}
T\mapsto p_1^*T\otimes p_2^*\cO_{\PP^1}(k).
\end{align}
Its stable objects have $m=1$. Consequently
\begin{align*}
M_f=\PP^1,\qquad M_{mf}=\emptyset\ (m>1),\qquad
\dR h_{f*}\IC_{M_f}=\QQ_{\PP^1}[1].
\end{align*}
The same statements hold for the other ruling.
\end{prop}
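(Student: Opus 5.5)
The plan is to show that a semistable sheaf of class $mf$ is pulled back from $\PP^1$ along $p_1$ and twisted by a line bundle from the second factor, and then read off the remaining assertions.

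\emph{Support.} Let $F$ be $L$-semistable and pure of class $mf$. Its scheme-theoretic support is a subscheme of a divisor in $|mf|$. Such a divisor is $p_1^{-1}$ of a length $m$ subscheme of $\PP^1$. Hence $p_{1*}F=:T$ is a length-type object supported on finitely many points, and $F$ is a sheaf on $p_1^{-1}(\mathrm{Spec}\,A)=\mathrm{Spec}\,A\times\PP^1$ for a finite local (or semilocal) algebra $A$ of total length at most $m$.

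\emph{Splitting.} Restrict to a reduced fiber $\{x\}\times\PP^1$. The graded pieces of any filtration of $F$ by $\cO_S(-f)$-adic layers, or of a Jordan--H\"older filtration, are pure sheaves on $\PP^1$, i.e.\ vector bundles $\bigoplus\cO(a_i)$. Each has class a multiple of $f$ and $L$-degree equal to its rank, so its slope is $\chi/\mathrm{rk}=a_i+1$ after summing. Semistability forces every stable factor to be a line bundle $\cO_{\PP^1}(k)$ on a reduced fiber with a common $k$, which gives $\eta=m(k+1)$ and the divisibility $m\mid\eta$. The stable objects are exactly $\cO_{\{x\}\times\PP^1}(k)$, so $m=1$. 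Also $M_f\cong\PP^1$ via $x\mapsto\cO_{\{x\}\times\PP^1}$ with $k=0$, and $M_{mf}=\emptyset$ for $m>1$.

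\emph{Equivalence.} To get the equivalence~\eqref{eq:ruling-equivalence} in families, set $G:=F\otimes p_2^*\cO(-k)$. All factors of $G$ restricted to fibers are $\cO_{\PP^1}$, so
\[
H^1(\{x\}\times\PP^1,G_x)=0,\qquad G_x\ \text{globally generated}.
\]
Hence $T:=p_{1*}G$ is a flat family of length $m$ sheaves commuting with base change. The natural map $p_1^*p_{1*}G\to G$ is an isomorphism on each fiber, since a successive extension of copies of $\cO_{\PP^1}$ is trivial because $H^1(\cO)=0$. This uses cohomology and base change and Nakayama's lemma. Conversely, $p_1^*T\otimes p_2^*\cO(k)$ is pure, is semistable since its factors are the stable sheaves above, and has $\chi=m(k+1)$. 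Both functors are compatible with automorphisms (because $p_{1*}p_1^*=\id$ on $\PP^1$-torsion sheaves by the projection formula), so they give an equivalence of stacks.

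\emph{Direct image.} $h_f\colon M_f=\PP^1\to B_f=|f|\cong\PP^1$ is the identity, so $\dR h_{f*}\IC_{M_f}=\QQ_{\PP^1}[1]$.

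The main obstacle is the middle step, i.e.\ showing that non-reduced support still yields a pullback. I would handle it by the triviality of extensions of $\cO_{\PP^1}$ after the twist, as above. The other ruling is symmetric under swapping factors.
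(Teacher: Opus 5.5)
Your overall route matches the paper's: Jordan--H\"older factors are line bundles on single fibers, equal slopes force a common twist $k$, and then $R^1p_{1*}=0$ and the adjunction map give the equivalence. But the step that carries the whole classification is asserted rather than proved.

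\textbf{The gap.} In the splitting paragraph you claim that the graded pieces of the $\cO_S(-f)$-adic filtration, or of a Jordan--H\"older filtration, are pure sheaves on a reduced fiber. For the adic filtration this is not automatic. For a pure sheaf on a thickened fiber the layers can have zero dimensional torsion. Locally, $F=(u,w)\subset\CC[u,w]/(u^2)$, with $u$ the equation of the reduced fiber, is pure, yet the class of $u$ in $F/uF$ is non-zero and killed by $w$. So purity of the layers could only come from semistability, which is what you are trying to prove. For Jordan--H\"older factors the claim is true, but it is exactly the point at issue. A stable sheaf $G$ of class $m'f$ might a priori be supported on a non-reduced thickening of a fiber. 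Until this is excluded, you cannot say that $G$ is a vector bundle $\bigoplus\cO(a_i)$ on $\PP^1$, nor conclude that $m'=1$.

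\textbf{The missing idea: stable sheaves are simple.} This is the argument the paper uses.
\begin{enumerate}
\item Sheaves with disjoint supports split, so $G$ is set-theoretically supported on a single fiber $f_x$.
\item The pull-back of a local coordinate $u$ at $x$ acts on $G$ by an endomorphism. Since $\End(G)=\CC$, it acts by a scalar, and the scalar is zero because $u$ is nilpotent on $G$.
\item Hence $G$ is an $\cO_{f_x}$-module, and purity and stability give $G\cong\cO_{f_x}(k)$.
\end{enumerate}
Equivalently, if $u\neq 0$ on $G$, then $\ker u$ and $uG\cong G/\ker u$ are non-zero proper pure subsheaves, and stability gives $\mu(uG)<\mu(G)<\mu(G/\ker u)=\mu(uG)$, a contradiction.

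\textbf{A smaller point.} Your justification that $p_1^*p_{1*}G\to G$ is an isomorphism, namely that ``a successive extension of copies of $\cO_{\PP^1}$ is trivial'', is wrong as stated on $S$: $\cO_{2f}$ is a non-split extension of $\cO_f$ by $\cO_f$. What is true is that extensions of $p_1$-pull-backs are $p_1$-pull-backs, since $\Ext^1_S(p_1^*A,p_1^*B)\cong\Ext^1_{\PP^1}(A,B)$ because $\dR p_{1*}\cO_S=\cO_{\PP^1}$. Alternatively, as in the paper, $R^1p_{1*}$ vanishes on every factor, so $p_{1*}$ is exact along the filtration and the five lemma propagates the adjunction isomorphism from the factors to $G$.

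With these two repairs, the rest of your argument (the converse direction, base change for families, and $h_f=\id$) coincides with the paper's proof.
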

\begin{proof}
Let $F$ be stable of class $mf$. Sheaves with disjoint supports split as direct sums, so the support of $F$ is a thickening of a single fiber. On an affine neighborhood of the corresponding point of $\PP^1$, the coordinate function acts on $F$ by an endomorphism; stability gives $\End(F)=\CC$, so this action is a scalar, and $F$ is scheme-theoretically supported on the reduced fiber. By purity, $F$ is a vector bundle on that fiber, which is a $\PP^1$, and a stable vector bundle on $\PP^1$ has rank one. Thus $F$ is the push-forward of $\cO_{\PP^1}(k)$ from that fiber for some $k$.

Now let $F$ be semistable and take a Jordan-H\"older filtration. Each factor is a line bundle on a fiber. Equality of slopes forces all factors to have the same Euler characteristic $k+1$, so $\eta=m(k+1)$; conversely, any iterated extension of such factors is semistable. After twisting by $p_2^*\cO(-k)$, all factors become structure sheaves of fibers. Note that $R^1p_{1*}$ vanishes and the adjunction map $p_1^*p_{1*}F\to F$ is an isomorphism on each factor. The associated exact sequences give both statements for iterated extensions, proving~\eqref{eq:ruling-equivalence} on objects. Cohomology and base change give the same equivalence for families, hence for the stacks. Specializing to Euler characteristic one gives the last assertions.
\end{proof}

\begin{rmk}\label{rmk:ruling-deform}
Every sheaf in~\eqref{eq:ruling-equivalence} can be deformed to one supported on $m$ distinct fibers. Indeed, locally a torsion sheaf on a smooth curve is a finite dimensional vector space with one endomorphism; perturbing each Jordan block to a matrix with distinct eigenvalues, independently near each of the finitely many support points, gives the deformation. The eigenvalues can be chosen to avoid any prescribed finite set of fibers.
\end{rmk}

\section{Supports under wall-crossing}\label{sec:supports}
Wall-crossing leads to convolutions along addition of divisors of rank one and semistable sheaf direct images.

\subsection{Divisors with prescribed component classes}\label{sub:strata}
Recall that $\Gamma$ is the set~\eqref{intro:Gamma} of effective classes with a smooth connected member. A partition of $\beta$ into classes in $\Gamma$ is a class partition $\lambda=(m_\gamma)_{\gamma\in\Gamma}$ with $\sum_\gamma m_\gamma\gamma=\beta$; we write $\lambda\vdash_\Gamma\beta$. On $\PP^2$ every effective class lies in $\Gamma$; on $\mathbb{F}_0$ a class $(m,0)$ must be written as $m$ copies of the ruling $f$. For $\lambda\vdash_\Gamma\beta$ we set
\begin{align}\label{eq:stratum}
\Sigma_\lambda:=\mu_\lambda\Bigl(\prod_{\gamma\in\Gamma}B_\gamma^{m_\gamma}\Bigr)\subset B_\beta,\qquad
U_\lambda\subset\Sigma_\lambda,
\end{align}
where $U_\lambda$ is the locus of divisors which are unions of $\sum_\gamma m_\gamma$ distinct smooth connected curves of the classes prescribed by $\lambda$, meeting pairwise transversely with no triple points. In the notation of~\cite[Section~3.2]{Zha26}, $U_\lambda=\Sigma^{\circ}_\lambda$ and $\Sigma_\lambda=Z_\lambda$. Since $\mu_\lambda$ is finite and its source is irreducible, $\Sigma_\lambda$ is closed and irreducible, and $U_\lambda$ is a dense open subset of it. Distinct partitions give distinct strata. For the one-term partition of $\beta\in\Gamma$ we have $\Sigma_\lambda=B_\beta$ and $U_\lambda=U_\beta$.

\subsection{Complexes with these supports}\label{sub:categories}
For a variety $Y$, we write $\MHM(Y)$ for the abelian category of mixed Hodge modules on $Y$~\cite{Sai90}, using its monodromic version for vanishing cycle mixed Hodge modules. We denote by $\widehat{D}\MHM(B_\beta)$ the bounded below left completion of $D^b\MHM(B_\beta)$ for the perverse t-structure. The cohomology of stabilizer groups need not be bounded, so we work in this completion and make support assertions in each perverse degree. By~\cite[Theorem~3.8(2)]{Tub25}, finite type direct image preserves this category.

Let $\cS_\beta$ be the full subcategory of $\widehat{D}\MHM(B_\beta)$ consisting of the complexes $K$ such that every simple constituent of every $\pH{i}(K)$ has strict support $\Sigma_\lambda$ for some $\lambda\vdash_\Gamma\beta$. For the zero class we set $\cS_0:=\widehat{D}\MHM(B_0)$. The assertion of Theorem~\ref{thm:support} is $K_{\chi,\beta}\in\cS_\beta$.

Verdier duality exchanges the left and right perverse completions~\cite[Proposition~3.13, Corollary~3.14]{Tub25}. We apply it to bounded direct images or finite perverse truncations and then pass to the left completion.

Let
\begin{align*}
\rat\colon\widehat{D}\MHM(B_\beta)\to\widehat{D}_c(B_\beta,\QQ)
\end{align*}
be the rational realization, where $\widehat{D}_c$ is the analogous perverse completion of rational constructible complexes. It is conservative, perverse t-exact, and commutes with the six operations used below~\cite[Theorem~3.1, Proposition~3.16]{Tub25}. A simple Hodge module and the simple constituents of its rational realization have the same strict support, so membership in $\cS_\beta$ can be checked after applying $\rat$.

For $\gamma_1+\cdots+\gamma_r=\beta$, we write
\begin{align}\label{eq:addition}
\mu\colon B_{\gamma_1}\times\cdots\times B_{\gamma_r}\to B_\beta
\end{align}
for addition of divisors.

\begin{defi}\label{def:G}
Let $\cG_\beta\subset\widehat{D}\MHM(B_\beta)$ be the smallest full triangulated subcategory, closed under finite direct sums, direct summands, Tate twists, tensor products with bounded below graded mixed Hodge structures with finite dimensional graded pieces, and invariants under finite groups, which contains every finite convolution
\begin{align*}
\dR\mu_*\bigl(\IC_{B_{\gamma_1}}(L_1)\boxtimes\cdots\boxtimes\IC_{B_{\gamma_r}}(L_r)\bigr),\qquad
\gamma_i>0,\quad \sum_i\gamma_i=\beta,
\end{align*}
where each $L_i$ is a semisimple admissible variation of mixed Hodge structure on a dense open subset of $B_{\gamma_i}$ and $\IC_{B_{\gamma_i}}(L_i)$ has strict support $B_{\gamma_i}$. For $\beta=0$ we set $\cG_0:=\widehat{D}\MHM(B_0)$.
\end{defi}

\begin{lem}\label{lem:serre}
The category $\cS_\beta$ is closed under shifts, cones, direct summands, finite direct sums, Tate twists, and tensor products with mixed Hodge modules on $B_\beta$ whose rational realizations are shifts of local systems. It also satisfies:
\begin{enumerate}
\item[(i)] $\cG_\beta\subset\cS_\beta$;
\item[(ii)] if $K\in\cS_\gamma$, $G\in\cG_\delta$, and $\mu\colon B_\gamma\times B_\delta\to B_{\gamma+\delta}$ is the addition of divisors, then $\dR\mu_*(K\boxtimes G)\in\cS_{\gamma+\delta}$.
\end{enumerate}
Both statements remain true after taking invariants under a finite group.
\end{lem}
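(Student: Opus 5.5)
The plan is to work one perverse degree at a time and reduce everything to one fact: a finite direct image of a simple perverse sheaf is semisimple, and all its constituents have strict support equal to the image of the support. I will refer to this fact as $(\ast)$.

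\emph{Closure properties.} Shifts only reindex perverse cohomology, and Tate twists do not change supports. For a distinguished triangle $K'\to K\to K''$, the long exact sequence of ${}^{p}\mathcal{H}^{i}$ exhibits each $\pH{i}K$ as an extension of a subobject of $\pH{i}K''$ by a quotient of $\pH{i}K'$. Its Jordan--H\"older constituents are therefore among those of $\pH{i}K'$ and $\pH{i}K''$, and this gives closure under cones. Direct sums and summands are immediate because $\pH{i}$ is additive. In the left completion $\widehat{D}\MHM(B_\beta)$, each $\pH{i}$ is still defined and is determined by a finite truncation, so all these arguments apply degreewise.

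For tensor products with $N$ such that $\rat N=\mathcal{L}[k]$ with $\mathcal{L}$ a local system, $-\otimes\mathcal{L}$ is t-exact up to the shift $k$. It sends $\IC_Z(W)$ to $\IC_Z(W\otimes\mathcal{L}|)$, since the intermediate extension conditions are local and unaffected by a local system. Hence strict supports are preserved. Membership is checked after $\rat$, as noted above.

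\emph{(i).} Since $\cS_\beta$ has all the closure properties in Definition~\ref{def:G}, it suffices to check the generators. Finite group invariants are direct summands. Tensoring with a bounded below graded mixed Hodge structure with finite dimensional graded pieces gives, in each perverse degree, a finite sum of shifts and twists.

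For a generator, the exterior product $\boxtimes_i\IC_{B_{\gamma_i}}(L_i)$ is the simple perverse sheaf $\IC_{\prod B_{\gamma_i}}(\boxtimes L_i)$, after splitting $\boxtimes L_i$ into irreducible summands. The map $\mu$ is finite, hence $\dR\mu_*$ is perverse t-exact. By $(\ast)$, the image is semisimple with strict support $\mu(\prod_iB_{\gamma_i})$: on the dense open set where $\mu$ is finite \'etale onto its image, $\dR\mu_*$ of a local system is a local system, and the intermediate extension commutes with finite direct image.

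It remains to identify $\mu(\prod_iB_{\gamma_i})$ with some $\Sigma_\lambda$. On $\PP^2$, every $\gamma_i$ lies in $\Gamma$, and the image is $\Sigma_\lambda$ for the partition $\lambda$ formed by the $\gamma_i$. On $\mathbb{F}_0$, a class $\gamma_i=(m,0)\notin\Gamma$ (or $(0,m)$) has $B_{\gamma_i}=\mu(B_f^m)$, because every divisor in that class is a sum of $m$ fibers. So we replace it by $m$ copies of $f$. The image is then $\Sigma_\lambda$ for the refined partition $\lambda\vdash_\Gamma\beta$.

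\emph{(ii).} Convolution $(K,G)\mapsto\dR\mu_*(K\boxtimes G)$ is triangulated in each variable and commutes with twists and direct summands. Since $\mu$ is finite and $\boxtimes$ is perverse t-exact, $\pH{i}$ of the output is built from the $\pH{a}K\boxtimes\pH{b}G$ with $a+b=i$. Only finitely many pairs contribute, because both arguments are bounded below. This reduces to $K=\IC_{\Sigma_\lambda}(W)$ simple and $G$ a generator of $\cG_\delta$, using the closure properties already proved. Composing with the maps $\mu_\lambda$ and the generator's addition map, $K\boxtimes G$ becomes a summand of the finite direct image of a simple perverse sheaf on $\prod_\gamma B_\gamma^{m_\gamma}\times\prod_iB_{\gamma_i}$, and that sheaf has full support. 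By $(\ast)$ and the refinement argument of (i), its constituents have strict support $\Sigma_{\lambda+\lambda'}$. Finite group invariants are again direct summands.

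The step needing most care is $(\ast)$ in the Hodge-theoretic, completed setting. Specifically, one must check that $\dR\mu_*$ of the extension of a polarizable, but merely admissible mixed, variation is still controlled by the decomposition theorem up to semisimplification. I would handle this by passing to $\rat$ and using only that finite direct image is t-exact and commutes with intermediate extension. This identifies strict supports without requiring actual semisimplicity.
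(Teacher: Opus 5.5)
Your proof is correct and follows essentially the same route as the paper. The shared steps are: closure via the Serre subcategory of perverse sheaves with the prescribed strict supports and the perverse long exact sequence; d\'evissage to $\IC_{\Sigma_\lambda}(W)$ and to generators of $\cG_\delta$; the fact that a finite direct image of an intersection complex with irreducible support has constituents with strict support equal to the image (the paper cites Proposition~4.3 of \cite{Zha26}); and splitting a multiple ruling $mf$ into $m$ copies of $f$ to identify the image with some $\Sigma_\lambda$.

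One correction to your last paragraph. T-exactness and commutation with intermediate extension only give $\dR\mu_*\IC_Z(L)\cong\IC_{\mu(Z)}(\mu_{U*}L)$. You still need the local system $\mu_{U*}L$ on the finite \'etale locus to be semisimple. For a non-semisimple local system, such as a unipotent Jordan block on $\CC^*$, the intermediate extension acquires constituents supported on the boundary. Semisimplicity does hold here, so this is a missing justification rather than a failure: the inputs are semisimple ($W$ is irreducible, and the $L_i$ are semisimple by hypothesis, with semisimple rational realizations by Deligne). Exterior products and finite \'etale push-forward preserve semisimplicity in characteristic zero.
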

\begin{proof}
Consider the perverse Hodge modules whose simple constituents have the prescribed strict supports. They form a Serre subcategory, so the closure assertions follow from the perverse long exact sequence and the fact that tensoring with such a Hodge module preserves strict supports.

Each perverse cohomology object has finite length, and the Postnikov tower converges in the left completion. A finite d\'evissage reduces $K$ in (ii) to a term $\IC_{\Sigma_\lambda}(L)[m]$, and reduces the other factor to a generator of $\cG_\delta$ with source support $\prod_iB_{\gamma_i}$. For each $\gamma_i\in\Gamma$ use the one-term partition $\lambda_i=(\gamma_i)$; for a multiple ruling $\gamma_i=mf$, use the partition into $m$ copies of $f$. These exhaust the effective classes on the two surfaces, and in each case $B_{\gamma_i}=\Sigma_{\lambda_i}$.

By~\eqref{eq:stratum}, addition sends $\prod_iB_{\gamma_i}$ onto $\Sigma_{\lambda_1+\cdots+\lambda_r}$ and sends $\Sigma_\lambda\times\prod_iB_{\gamma_i}$ onto $\Sigma_{\lambda+\lambda_1+\cdots+\lambda_r}$, where partitions are added by adding the multiplicities of each class. The addition map is finite and perverse t-exact. Its direct image of an intersection complex with irreducible source support has only simple constituents with strict support equal to the image of that source support, as in~\cite[Proposition~4.3]{Zha26}. This proves (i) on generators and (ii) after d\'evissage. The operations in Definition~\ref{def:G} preserve the assertion: each fixed perverse degree involves only finitely many terms from a bounded below graded Hodge factor, and finite group invariants are exact over $\QQ$.
\end{proof}

\subsection{Direct images from semistable sheaves}\label{sub:rank-zero}
Davison-Meinhardt~\cite[Theorem~A]{DM20} proved cohomological integrality for quivers with potential. For the sheaf stacks below, we use the cohomological integrality theorem of~\cite{BDINP25}.

Let $\cM^{\sss}_X(\gamma,\eta)$ be the stack of semistable compactly supported pure sheaves on $X$ of class $\gamma>0$ and Euler characteristic $\eta$, let $h_{\eta,\gamma}\colon\cM^{\sss}_X(\gamma,\eta)\to B_\gamma$ be the Hilbert-Chow map, and set
\begin{align}\label{eq:H-eta-gamma}
H_{\eta,\gamma}:=\dR h_{\eta,\gamma*}\phi^{\sh}_{\eta,\gamma}.
\end{align}
Here $\phi^{\sh}_{\eta,\gamma}$ is the vanishing cycle module for the canonical cotangent orientation, compatible with direct sums.

\begin{lem}\label{lem:rank-zero}
We have $H_{\eta,\gamma}\in\cG_\gamma\subset\cS_\gamma$.
\end{lem}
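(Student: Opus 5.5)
The plan is to use cohomological integrality to write $H_{\eta,\gamma}$ as a symmetric-power (plethystic) expression in BPS sheaves, and then to identify each BPS sheaf through Maulik-Shen's support theorem, or through Proposition~\ref{prop:ruling} for the ruling classes.

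\textbf{Integrality.} Apply the cohomological integrality theorem of~\cite{BDINP25} to the good moduli space of $\cM^{\sss}_X(\gamma,\eta)$, composed with the proper map to $B_\gamma$. For fixed slope $\eta/(L\cdot\gamma)$ this gives an isomorphism
\begin{align*}
\bigoplus_{\gamma',\eta'}H_{\eta',\gamma'}\cong\Sym_{\oplus}\Bigl(\bigoplus_{\gamma',\eta'}\mathcal{BPS}_{\eta',\gamma'}\otimes H^*(B\Gm)_{\mathrm{vir}}\Bigr).
\end{align*}
Here the sum runs over pairs $(\gamma',\eta')$ of the same slope, and $\mathcal{BPS}_{\eta',\gamma'}$ is a perverse mixed Hodge module on $B_{\gamma'}$. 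The monoidal structure is convolution $\dR\mu_*(-\boxtimes-)$ along addition of divisors. Since $\gamma$ has finitely many decompositions, the coefficient $H_{\eta,\gamma}$ is a finite sum of $\mathfrak{S}$-invariants of convolutions $\dR\mu_*(\boxtimes_i\mathcal{BPS}_{\eta_i,\gamma_i})$. Each convolution is tensored with the bounded below graded Hodge structure $H^*(B\Gm)$, which has finite dimensional graded pieces. By the closure properties in Definition~\ref{def:G}, it therefore suffices to show that each $\mathcal{BPS}_{\eta',\gamma'}$ lies in $\cG_{\gamma'}$.

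\textbf{BPS sheaves.} Dimensional reduction identifies $\mathcal{BPS}_{\eta',\gamma'}$ with the direct image to $B_{\gamma'}$ of the intersection complex of the good moduli space of semistable sheaves on $X$ (equivalently on $S$, pushed along $p$), when that space is nonempty. I would then split into two cases.

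\emph{Ample classes.} For ample $\gamma'$, Maulik-Shen~\cite{MS23} prove that this direct image is independent of $\eta'$. They also prove a support theorem: every simple constituent has full support $B_{\gamma'}$, given by a semisimple variation of Hodge structure on the smooth locus. Each perverse cohomology sheaf is then a finite sum of $\IC_{B_{\gamma'}}(L)$. Since the complex splits by the decomposition theorem, it lies in $\cG_{\gamma'}$, using the one-term convolution $r=1$.

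\emph{Ruling classes on $\mathbb{F}_0$.} These are $\gamma'=mf$, and Proposition~\ref{prop:ruling} handles them. After the twist~\eqref{eq:ruling-equivalence}, the stack becomes that of length $m$ sheaves on $\PP^1$ pulled back to fibers. On $X$ we must also include the Higgs field, but by the slope argument of Lemma~\ref{lem:slope} it is nilpotent, and it reduces to the $K_S$-direction. The stable objects exist only for $m=1$, so the BPS sheaf vanishes for $m>1$. For $m=1$ it is $\QQ_{\PP^1}[1]=\IC_{B_f}$, up to a Tate twist and the fiber direction. This is consistent with the $\PP^1$-fiber of $X$ over a point of a fiber contributing a shifted constant sheaf. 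Mixed classes $(a,b)$ with $a,b>0$ are ample, so the two cases are exhaustive.

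\textbf{Main obstacle.} The hardest step is the ruling case on $X$ rather than on $S$. There one has to check that the integrality BPS sheaf for $mf$ really vanishes for $m\ge2$. Equivalently, the cohomological Hall algebra of $\mathbb{C}^3$-type local geometry over $\PP^1$ must be freely generated in rank one. I would verify this via the equivalence~\eqref{eq:ruling-equivalence} and the known integrality for zero dimensional sheaves on a curve times $\CC^2$. A smaller obstacle is matching the orientations and half Tate twists of $\phi^{\sh}$ with those of~\cite{BDINP25}. This only affects Tate twists, which $\cG_\gamma$ is closed under. The inclusion $\cG_\gamma\subset\cS_\gamma$ is Lemma~\ref{lem:serre}(i).
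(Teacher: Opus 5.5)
Your architecture matches the paper's: integrality from \cite{BDINP25}, Maulik--Shen for ample classes, Proposition~\ref{prop:ruling} for rulings, and additivity of the Hilbert--Chow map to turn the plethystic decomposition into convolutions lying in $\cG_\gamma$. The gap is in how you identify the BPS pieces, at exactly the step you flagged.

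You make two claims, and neither is justified as stated.
\begin{itemize}
\item[(a)] Dimensional reduction makes $\mathcal{BPS}_{\eta',\gamma'}$ the direct image of the intersection complex of the good moduli space whenever that space is non-empty. This fails for $mf$ with $m\ge2$: the good moduli space $\Sym^m\PP^1$ is non-empty, yet the contribution is zero. This also contradicts your own later conclusion.
\item[(b)] The absence of stable objects for $m\ge2$ kills the BPS sheaf. This is not a valid inference in general. For zero dimensional sheaves on $\CC^3$, the local model behind the $\CC^3$-type geometry you propose to use, there are no stable objects of length $m\ge2$. Yet the BPS sheaf is non-zero in every length.
\end{itemize}
So the verification route you sketch would give the wrong answer. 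Saying the Higgs field is merely nilpotent also does not identify the classical stack.

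The missing idea is that the Higgs field vanishes and the surface stack is smooth.
\begin{itemize}
\item Semistability on $X$ forces $E$ to be semistable on $S$, because the maximal HN term is $\theta$-invariant.
\item Twisting by $K_S$ lowers slopes by $\kappa$, so $\Ext^2_S(E,F)=\Hom(F,E\otimes K_S)^\vee=0$ for semistable $E,F$ of the same slope. In particular $\theta=0$.
\item Hence $\cM^{\sss}_X(\gamma,\eta)$ is the $(-1)$-shifted cotangent stack of a smooth stack. This is also what verifies the hypotheses of the integrality theorem, together with $\dim\Ext^1(S_i,S_j)=\dim\Ext^1(S_j,S_i)$, which follows from $\chi(S_i,S_j)=-[S_i]\cdot[S_j]$.
\item For such stacks, \cite[Theorem~1.2.7]{BDINP25} gives the correct criterion. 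A graded component contributes the intersection complex of its good moduli space if the good moduli map, rigidified by the grading torus, is generically quasi-finite; otherwise it contributes zero.
\end{itemize}
For ample classes, the density of stable sheaves on smooth curves from \cite{MS23} puts you in the first case, and \cite[Theorem~0.4]{MS23} gives full support. For $mf$ with $m\ge2$, the stabilizer over $m$ distinct fibers is $\Gm^m$. After rigidifying by scalars, $\Gm^{m-1}$ remains, so the contribution vanishes.

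The correct local model near a ruling fiber is therefore the shifted cotangent of the smooth stack of length $m$ sheaves on a curve, not a $\CC^3$-type geometry.
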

\begin{proof}
Write a compactly supported sheaf on $X$ as a Higgs pair $(E,\theta)$ on $S$. Twisting by $K_S=L^{-\kappa}$ lowers one dimensional slopes by $\kappa$, so $\Hom(E_1,(E/E_1)\otimes K_S)=0$ for the maximal HN term $E_1$, and $E_1$ is $\theta$-invariant. Thus semistability on $X$ forces $E$ to be semistable on $S$. Now $\Hom(E,E\otimes K_S)=0$, so $\theta=0$. The semistable stack on $X$ is the shifted cotangent of the semistable stack on $S$.

Fix a reduced Hilbert polynomial and take the stack of semistable surface sheaves over all numerical components, including zero. Its components are of finite type, have Simpson good moduli spaces, and the stack is closed under direct sums and summands. For two semistable sheaves $E$, $F$ of this slope, Serre duality and the negativity of $K_S$ give
\begin{align*}
\Ext^2(E,F)=\Hom(F,E\otimes K_S)^\vee=0,
\end{align*}
so the stack is smooth. At a polystable closed point $\bigoplus_iS_i\otimes V_i$, stability makes the dimensions of $\Hom$ between the stable factors symmetric, while $\chi(S_i,S_j)=-[S_i]\cdot[S_j]$, where $\chi(A,B)=\sum_k(-1)^k\dim\Ext^k_S(A,B)$ is the Euler pairing. Thus $\dim\Ext^1(S_i,S_j)=\dim\Ext^1(S_j,S_i)$, and the same holds between polystable objects. These equalities and the moduli properties above verify conditions (i)--(iii) and (v) of the integrality theorem; see~\cite[Section~10.2.3]{BDINP25}. A torus grading of a sheaf $E$ is a homomorphism $T\to\operatorname{Aut}(E)$ from an algebraic torus; it decomposes $E$ into its character eigensheaves. The corresponding graded stack parametrizes such decompositions. The required line bundles whose first Chern classes restrict to a basis of $H^2(BT,\QQ)$ exist by smoothness and~\cite[Corollary~9.1.4]{BDINP25}, including on these graded stacks. Toda's shifted cotangent presentation~\cite[Section~3.4]{Tod24} has the canonical orientation with direct-sum comparison isomorphisms equivariant under permutations, as in~\cite[Lemma~10.2.9]{BDINP25}.

Dimensional reduction~\cite[(6.2.1.4)]{BDINP25} identifies the vanishing cycle module with the Verdier dual of the constant object in the derived category of mixed Hodge modules on the surface stack. The integrality theorem for smooth stacks~\cite[Theorem~10.2.7]{BDINP25} then expresses its good moduli direct image as a sum of intersection complexes of the good moduli spaces of the graded stacks, indexed by the torus gradings specified in that theorem, tensored with the cohomology of the classifying stacks of the grading tori and followed by finite group invariants. All eigensheaves have the fixed slope. By~\cite[Theorem~1.2.7]{BDINP25}, a graded component contributes the intersection complex of its good moduli space if the good moduli map, after rigidification by its grading torus, is generically quasi-finite, and contributes zero otherwise.

Let $M_{\delta,\xi}$ be the Simpson moduli space of semistable surface sheaves of class $\delta$ and Euler characteristic $\xi$. For an ample class $\delta$ and any $\xi$, consider the grading given by scalar multiplication. The irreducibility results of Maulik-Shen~\cite[Theorem~2.3, Proposition~2.10, Section~2.6]{MS23} show that stable sheaves supported on smooth curves form a dense locus, identified with the relative Picard stack of degree $\xi+g_\delta-1$. After rigidification by scalar automorphisms, it is the relative Picard scheme $\Pic^{\xi+g_\delta-1}(\cC_\delta/U_\delta)$, on which the good moduli map is an isomorphism; hence it is generically quasi-finite, and the corresponding summand is $\IC_{M_{\delta,\xi}}$. By~\cite[Theorem~0.4]{MS23}, the Chow direct image of this intersection complex has only the full support $B_\delta$.

For the non-ample classes on $\mathbb{F}_0$, Proposition~\ref{prop:ruling} identifies the semistable stack in class $mf$ with the stack of length $m$ sheaves on $\PP^1$. Its good moduli space is $\Sym^m\PP^1$. Over $m$ distinct fibers the stabilizer is $\Gm^m$; after rigidification by the scalar $\Gm$, it is $\Gm^{m-1}$. For $m>1$ the rigidified good moduli map is therefore not generically quasi-finite, and its contribution in~\cite[Theorem~1.2.7]{BDINP25} is zero. For $m=1$ the direct image has full support $B_f=\PP^1$. The higher classes contribute through symmetric products of this term. Ample classes and multiples of a ruling exhaust the effective classes on the two surfaces.

The Hilbert-Chow map is additive: $h_{\eta,\gamma}\circ\oplus=\mu\circ\prod_ih_{\eta_i,\gamma_i}$ with $\sum_i\gamma_i=\gamma$ and $\sum_i\eta_i=\eta$. Applying Chow direct image to the integrality decomposition therefore gives finite convolutions of the direct images just computed. In each fixed numerical and perverse range, the other factors are graded Hodge structures and finite group invariants, as allowed in Definition~\ref{def:G}. Thus $H_{\eta,\gamma}\in\cG_\gamma$; Lemma~\ref{lem:serre}(i) gives $H_{\eta,\gamma}\in\cS_\gamma$.
\end{proof}

At Euler characteristic one, these direct images identify the series in Definition~\ref{intro:def-A} with the cohomological GV data of Maulik-Toda~\cite{MT18}.

\begin{prop}\label{prop:primitive}
Let $M_\beta$ be the moduli space of $L$-stable pure one dimensional sheaves on $S$ of class $\beta$ and Euler characteristic one, with Hilbert-Chow map $h_\beta\colon M_\beta\to B_\beta$. For $\beta\in\Gamma$ we have
\begin{align*}
\pH{i}\bigl(\dR h_{\beta*}\IC_{M_\beta}\bigr)=\IC_{B_\beta}\bigl(\wedge^{g_\beta+i}V_\beta\bigr),\qquad -g_\beta\le i\le g_\beta,
\end{align*}
and all other perverse degrees vanish. For $\beta\notin\Gamma$, $M_\beta$ is empty. Consequently \eqref{intro:A} is built from the cohomological GV data of~\cite{MT18}, and \eqref{intro:ruling-A} gives the series for the ruling classes.
\end{prop}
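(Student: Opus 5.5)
The plan is to split into the ample case, the single ruling class, and the remaining non-ample classes. For $\beta\in\Gamma$ ample, the proof of Lemma~\ref{lem:rank-zero} already shows the key input. By Maulik--Shen~\cite[Theorem~0.4]{MS23}, $\dR h_{\beta*}\IC_{M_\beta}$ has no strict support other than $B_\beta$, so each perverse cohomology sheaf is the intermediate extension of its restriction to the dense open $U_\beta$. First one checks that $M_\beta$ at $\chi=1$ is smooth. Indeed, stable implies $\Ext^2(E,E)=\Hom(E,E\otimes K_S)^\vee=0$. Hence $\IC_{M_\beta}=\QQ_{M_\beta}[\dim M_\beta]$, with $\dim M_\beta=\beta^2+1=D_\beta+g_\beta$ by~\eqref{eq:common-numerics}. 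Over $U_\beta$ the map $h_\beta$ is the relative Picard scheme $\Pic^{g_\beta}(\cC_\beta/U_\beta)\to U_\beta$, which is a smooth projective family of abelian varieties of dimension $g_\beta$ (a torsor under the Jacobian family). Its cohomology sheaves are $R^kh_*\QQ=\wedge^kR^1h_*\QQ$, and $R^1h_*\QQ\cong V_\beta$ canonically, by the torsor structure and the identification $H^1(\mathrm{Jac}\,C)=H^1(C)$. By Deligne's decomposition for smooth projective maps, $\dR h_*\QQ[D_\beta+g_\beta]|_{U_\beta}\cong\bigoplus_k\wedge^kV_\beta[D_\beta+g_\beta-k]$. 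Setting $k=g_\beta+i$ places $\wedge^{g_\beta+i}V_\beta[D_\beta]$ in perverse degree $i$, for $-g_\beta\le i\le g_\beta$. Intermediate extension then gives the formula.

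For $\beta=f$ or the other ruling, Proposition~\ref{prop:ruling} gives $M_f=\PP^1\to B_f=\PP^1$ as an isomorphism. Thus $\dR h_{f*}\IC=\QQ_{\PP^1}[1]=\IC_{B_f}(\wedge^0 V_f)$, with $g_f=0$, consistent with the formula.

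For $\beta\notin\Gamma$ on $\mathbb{F}_0$, $\beta=mf$ with $m>1$, and Proposition~\ref{prop:ruling} gives $M_{mf}=\emptyset$. On $\PP^2$ every class lies in $\Gamma$.

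For the consequence, one substitutes this into Maulik--Toda's definition~\eqref{intro:MTdef}. Lemma~\ref{lem:rank-zero} shows that the stable sheaves on $X$ with $\chi=1$ are the zero-Higgs-field sheaves on $S$, so $\mathrm{Sh}_\beta(X)=T^*[-1]$-type over $M_\beta$, and by dimensional reduction $\phi_{\mathcal{S}h}$ corresponds to $\IC_{M_\beta}$ up to the normalization shift. The $\wedge^jV_\beta$ with $j=g_\beta+i$ are then exactly the perverse pieces entering~\eqref{intro:A}, with $u,v$ the Macdonald $H^0,H^2$ indices. The ruling case reduces to~\eqref{intro:ruling-A}, since $V_f=0$ and $\cA_{mf}=0$ for $m>1$ because $M_{mf}$ is empty.

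I expect the main obstacle to be matching the orientation and shift conventions. The task is to identify $\dR\pi_*\phi_{\mathcal{S}h}$ with $\dR h_{\beta*}\IC_{M_\beta}$ exactly, rather than up to Tate twist. This is where Appendix~\ref{sec:orientation} and the dimensional reduction normalization are needed. The perverse computation itself is standard.
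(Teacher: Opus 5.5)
Your proposal is correct and follows essentially the same route as the paper: smoothness of $M_\beta$ from $\Ext^2(F,F)=\Hom(F,F\otimes K_S)^\vee=0$, the Maulik--Shen support theorem reducing everything to $U_\beta$, the relative $\Pic^{g_\beta}$ computation with the index conversion $i=j-g_\beta$, and Proposition~\ref{prop:ruling} for the two ruling classes and the emptiness of their higher multiples. There are two minor differences. The paper also records that stability equals semistability at $\chi=1$, so $M_\beta$ is projective and the decomposition theorem applies. For the final identification with the Maulik--Toda data, the paper cites Maulik--Shen's result that the BPS sheaf is $\IC_{M_\beta}$ (which reduces to the Maulik--Toda definition when there are no strictly semistable sheaves) rather than rerunning dimensional reduction.
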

\begin{proof}
At Euler characteristic one, semistability coincides with stability. The moduli space $M_\beta$ is projective and smooth of dimension $\beta^2+1$, since $\Ext^2(F,F)=\Hom(F,F\otimes K_S)^\vee=0$ for a stable sheaf $F$. For an ample class $\beta$, the support theorem of Maulik-Shen says that every perverse cohomology sheaf of the direct image is the intermediate extension of its restriction to $U_\beta$. Over $U_\beta$ the Hilbert-Chow map is the relative Picard variety of degree $g_\beta$ of the family of smooth curves, whose cohomology local systems are the $\wedge^jV_\beta$. The relative dimension $g_\beta$ converts the cohomological index $j$ into the perverse index $i=j-g_\beta$, and the intermediate extension gives the formula. Proposition~\ref{prop:ruling} proves the assertion for the two ruling classes and the emptiness for their higher multiples; these are the only effective classes outside the ample cone.

For a local del Pezzo surface the BPS sheaf is the intersection complex of the moduli space of surface sheaves~\cite[Section~0.3, Theorem~0.6]{MS23}; in the absence of strictly semistable sheaves, its definition specializes to~\cite{MT18}. Since $\chi=1$, the displayed direct image for $M_\beta$ is the complex defining the GV invariants.
\end{proof}

\section{Wall-crossing over the Chow variety}\label{sec:wall}
Toda's HN stratifications~\cite[Section~6.2.1]{Tod24} compare adjacent chambers. Their localization filtrations over $B_\beta$ apply also at the zero wall. Combining these comparisons with duality will relate $\chi$ and $-\chi$. Classical hyperbolic localization for equivariant constructible complexes is due to Braden~\cite{Bra03}; we use its Donaldson-Thomas counterparts~\cite{KPS26,Des25}.

\subsection{Stability chambers}\label{sub:heart}\label{sub:window}
Let $\ol{X}=\PP_S(K_S\oplus\cO_S)$, let $\ol{p}\colon\ol{X}\to S$ be the projection, and let $S_\infty$ be the divisor at infinity. We use Toda's weak stability conditions on the heart
\begin{align}\label{eq:heart}
\cA_X:=\langle\cO_{\ol{X}},\Coh_{\le 1}(X)[-1]\rangle_{\mathrm{ex}}.
\end{align}
Here the bracket means the extension closure, and $\Coh_{\le 1}(X)$ is the category of compactly supported coherent sheaves on $X$ of dimension at most one. For a rank zero object $E[-1]$, the numerical notation $(0,\delta,\eta)$ records $[E]=\delta$ and $\chi(E)=\eta$; for a rank one object whose surface pair is $\cO_S\to F$, we write $(1,[F],\chi(F))$. A flat family in $\cA_X$ is a family given by Toda's diagrams with the surface sheaf flat over the base~\cite[Theorem~4.5.7, Corollary~4.5.9]{Tod24}. Its derived fibers lie in $\cA_X$, and the diagram description commutes with base change. For a rank one class, Toda's weak slope $\mu^\dagger_t$ is the real parameter $t$; for a positive degree rank zero class $(0,\beta,\eta)$ it is $\eta/(L\cdot\beta)$. Let $P^t_\chi(X,\beta)$ be the stack of stable objects in a generic chamber $t$, with Hilbert-Chow map $\Pi^t_{\chi,\beta}$ and oriented vanishing cycle mixed Hodge module $\phi^t_{\chi,\beta}$, and set
\begin{align}\label{eq:chamber-K}
K^t_{\chi,\beta}:=\dR\bigl(\Pi^t_{\chi,\beta}\bigr)_*\phi^t_{\chi,\beta}.
\end{align}
By~\cite[Definition~4.2.5]{Tod24} and the discussion following it, the stack is of finite type and there are finitely many walls for each numerical class. For $t\gg 0$ the chamber is the PT chamber, so $K^t_{\chi,\beta}=K_{\chi,\beta}$ by~\cite[Theorem~4.2.10(i)]{Tod24} and Lemma~\ref{lem:proper}. We write $0^+$ and $0^-$ for the chambers immediately to the right and to the left of zero.

\begin{thm}\label{thm:chamber}
Fix $\beta>0$ and assume that, for every $0\le\gamma<\beta$, every Euler characteristic and every generic chamber, the rank one Chow direct image belongs to $\cS_\gamma$. Then for every $\chi$, the condition $K^t_{\chi,\beta}\in\cS_\beta$ is independent of the generic chamber parameter $t$. In particular
\begin{align*}
K_{\chi,\beta}\in\cS_\beta\ \Longleftrightarrow\ K^{0^+}_{\chi,\beta}\in\cS_\beta.
\end{align*}
Moreover $K^{0^+}_{\chi,\beta}\in\cS_\beta$ if and only if $K^{0^+}_{-\chi,\beta}\in\cS_\beta$.
\end{thm}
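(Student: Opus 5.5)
The plan is to cross one wall at a time, using Toda's finitely many walls for the numerical class $(1,\beta,\chi)$. Crossing a wall at $t_0$ means comparing $K^{t_0^+}_{\chi,\beta}$ with $K^{t_0^-}_{\chi,\beta}$. At the wall the stack of $t_0$-semistable objects has Toda's HN stratifications~\cite[Section~6.2.1]{Tod24} on either side. Each stratum is an attracting locus for the $\Gm$ action on the stack of graded objects. Its graded pieces are:
\begin{itemize}
\item[(a)] a rank one object of class $(1,\gamma,\chi')$ with $\gamma<\beta$, or with $\gamma=\beta$;
\item[(b)] rank zero semistable sheaves $E_i[-1]$ of classes $(0,\delta_i,\eta_i)$, all of slope $\eta_i/(L\cdot\delta_i)=t_0$, with $\gamma+\sum\delta_i=\beta$.
\end{itemize}
Hyperbolic localization in the Donaldson--Thomas setting~\cite{KPS26,Des25} then gives a localization filtration of the direct image from the $t_0$-semistable stack to $B_\beta$. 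Its graded pieces are, up to shifts and Tate twists, convolutions
\[
\dR\mu_*\bigl(K^{t}_{\chi',\gamma}\boxtimes H_{\eta_1,\delta_1}\boxtimes\cdots\boxtimes H_{\eta_r,\delta_r}\bigr).
\]
The Hilbert--Chow maps are additive, so the pushforward to $B_\beta$ factors through addition of divisors. Both chambers $t_0^\pm$ give such filtrations starting from the same semistable object. So, modulo the triangulated closure of these convolutions with $\gamma<\beta$, $K^{t_0^+}_{\chi,\beta}$ and $K^{t_0^-}_{\chi,\beta}$ agree. More precisely, each is obtained from the other by cones with such terms, after the terms with $\gamma=\beta$ and no rank zero factor are identified with $K^{t_0^\pm}$ themselves.

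Membership of the correction terms in $\cS_\beta$ comes from three inputs: the induction hypothesis $K^{t}_{\chi',\gamma}\in\cS_\gamma$ for $\gamma<\beta$; Lemma~\ref{lem:rank-zero}, giving $H_{\eta,\delta}\in\cG_\delta$; and Lemma~\ref{lem:serre}(ii), which closes up under convolution, cones, summands and finite group invariants. The case $\gamma=0$ is included, since then the rank one factor is just $\cO_{\ol X}$ and $\cS_0$ is everything. Since $\cS_\beta$ is closed under cones, $K^{t_0^+}_{\chi,\beta}\in\cS_\beta$ holds if and only if $K^{t_0^-}_{\chi,\beta}\in\cS_\beta$. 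Iterating over the finitely many walls, up to the PT chamber $t\gg0$ where $K^t=K_{\chi,\beta}$, gives chamber independence and the displayed equivalence.

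For the duality statement I would use Toda's derived dual $E\mapsto \dR\cHom(E,\cO_{\ol X})$, suitably shifted and twisted. It identifies $P^{0^+}_\chi(X,\beta)$ with $P^{0^-}_{-\chi}(X,\beta)$, the slope $t$ going to $-t$, compatibly with the Hilbert--Chow maps, since the fundamental cycle is preserved. It also matches the $d$-critical structures and orientations up to a sign-twist by a $2$-torsion line bundle. This gives $K^{0^+}_{\chi,\beta}\cong K^{0^-}_{-\chi,\beta}$, possibly up to Verdier duality and a tensor product with a rank one local system. Both of those preserve $\cS_\beta$: Verdier duality preserves strict supports of simple constituents, applied to finite perverse truncations as in the conventions, and the local system case is covered by Lemma~\ref{lem:serre}. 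Combined with the wall-crossing across the zero wall between $0^-$ and $0^+$ for $-\chi$, this proves $K^{0^+}_{\chi,\beta}\in\cS_\beta\iff K^{0^+}_{-\chi,\beta}\in\cS_\beta$.

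The main obstacle is the zero wall. There the semistable sheaf factors have slope $0$, and the rank one factors can have class $\gamma=\beta$ only with zero-dimensional rank zero pieces; these appear only in class $\delta=0$ and so do not arise. I also need that the localization filtration is compatible with orientations, so that the graded pieces really carry the product vanishing cycle modules. I would address this with the orientation-compatible hyperbolic localization of~\cite{KPS26}, together with the direct-sum compatibility of the cotangent orientation from Lemma~\ref{lem:rank-zero}. A secondary point is that the semistable stack at $t_0$ is only of finite type with unbounded stabilizer cohomology. The argument must therefore take place in $\widehat{D}\MHM(B_\beta)$, degree by degree in the perverse t-structure, where each degree sees finitely many terms.
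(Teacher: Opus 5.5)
Your wall-crossing argument is the paper's. In Proposition~\ref{prop:wall}, each HN order at a wall gives a finite localization filtration of the wall direct image. Its non-open terms are convolutions $\dR\mu_*(K^u_{\chi_0,\gamma}\boxtimes H_{\eta,\delta})\otimes V_\tau$ with $\gamma<\beta$. Lemmas~\ref{lem:rank-zero} and~\ref{lem:serre}, together with the two-out-of-three property of $\cS_\beta$, then compare adjacent chambers exactly as you describe. Two minor points:
\begin{itemize}
\item At a wall the HN slopes strictly decrease, so there is a single rank zero factor of slope $t_0$, not several; this is harmless.
\item The paper reflects negative walls to positive ones by Lemma~\ref{lem:duality} instead of running the HN and localization argument there directly. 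Your uniform treatment needs those inputs at negative walls as well.
\end{itemize}
Your chain $K^{0^+}_{\chi,\beta}\leftrightarrow K^{0^-}_{-\chi,\beta}\leftrightarrow K^{0^+}_{-\chi,\beta}$ is equivalent to the paper's combination of duality with chamber independence.

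The gap is in the duality step. You allow the derived dual to match orientations only up to a $2$-torsion line bundle, and you say the resulting twist is absorbed by Lemma~\ref{lem:serre}. It is not. Two orientations differ by a $\mu_2$-torsor, that is, a rank one local system $\mathcal{T}$ on the moduli space $P^{0^+}_\chi(X,\beta)$, and the vanishing cycle modules differ by $\otimes\,\mathcal{T}$ on that space. Lemma~\ref{lem:serre} only covers tensoring by local systems on $B_\beta$. Unless $\mathcal{T}$ is pulled back from $B_\beta$, $\dR\Pi_*(\phi\otimes\mathcal{T})$ is not $K$ tensored with a local system on $B_\beta$, and its strict supports are not controlled by those of $K$.

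So you must show the twist is trivial. The paper does this in Lemma~\ref{lem:duality}. Toda's derived duality carries the dagger presentation at $(\beta,\chi)$ to the sharp presentation at $(\beta,-\chi)$ and preserves the cotangent orientation. Lemma~\ref{lem:linear-duality} then identifies the sharp and dagger presentations at $(\beta,-\chi)$ as oriented $d$-critical stacks. It uses a common critical chart $\Tot_Y((E^0)^\vee\oplus E^{-1}\oplus V^\vee)$ with one potential, on which both orientation lines equal the chart canonical bundle with the same square isomorphism, compatibly with stabilization and descent. With this exact identification, $K^t_{\chi,\beta}$ and $K^{-t}_{-\chi,\beta}$ agree up to shift and Tate twist, and no Verdier duality or local-system twist is needed.
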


\begin{cor}\label{cor:window}
Under the hypothesis of Theorem~\ref{thm:chamber}, the support assertion $K_{\chi,\beta}\in\cS_\beta$ holds outside the interval
\begin{align}\label{eq:window}
1-g_\beta\le\chi\le g_\beta-1.
\end{align}
If $g_\beta\le 0$, it holds for every $\chi$.
\end{cor}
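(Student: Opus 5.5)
The corollary follows by combining Theorem~\ref{thm:chamber} with the emptiness statement of Lemma~\ref{lem:lower}. There are two ranges of $\chi$ to handle.

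\emph{The range $\chi<1-g_\beta$.} Here Lemma~\ref{lem:lower} shows that $P_\chi(X,\beta)$ is empty. Hence $K_{\chi,\beta}=0$, which lies trivially in $\cS_\beta$.

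\emph{The range $\chi>g_\beta-1$.} Then $-\chi<1-g_\beta$, so the previous case gives $K_{-\chi,\beta}=0\in\cS_\beta$. Next we pass through three applications of Theorem~\ref{thm:chamber}, whose hypotheses are assumed:
\begin{itemize}
\item chamber independence at Euler characteristic $-\chi$ gives $K^{0^+}_{-\chi,\beta}\in\cS_\beta$;
\item the duality clause gives $K^{0^+}_{\chi,\beta}\in\cS_\beta$;
\item chamber independence at Euler characteristic $\chi$ gives $K_{\chi,\beta}\in\cS_\beta$.
\end{itemize}

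The two ranges together cover every $\chi$ outside \eqref{eq:window}.

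\emph{The case $g_\beta\le 0$.} Then $1-g_\beta>g_\beta-1$, so the interval \eqref{eq:window} is empty. Every $\chi$ satisfies either $\chi<1-g_\beta$ or $\chi>g_\beta-1$, and the two cases above prove the support assertion for all $\chi$.

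The only point requiring care is that chamber independence is applied at $-\chi$ as well as at $\chi$. This is legitimate because Theorem~\ref{thm:chamber} is stated for every $\chi$ under a hypothesis that concerns only classes $\gamma<\beta$. There is no real obstacle in this argument.
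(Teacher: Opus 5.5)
Your proof is correct and is the same as the paper's: $P_\chi(X,\beta)$ is empty below the window by Lemma~\ref{lem:lower}, and chamber independence together with the duality clause of Theorem~\ref{thm:chamber} then handles $\chi>g_\beta-1$. You simply spell out the chain $K_{-\chi,\beta}\to K^{0^+}_{-\chi,\beta}\to K^{0^+}_{\chi,\beta}\to K_{\chi,\beta}$, which the paper compresses into ``the reflection gives it also for $\chi>g_\beta-1$.''
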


\subsection{Filtered objects and orientations}\label{sub:determinant}
For a derived stack $\fX$, write $\Filt(\fX):=\operatorname{Map}([\AAA^1/\Gm],\fX)$ and $\Grad(\fX):=\operatorname{Map}(B\Gm,\fX)$ for its stacks of filtered and graded objects~\cite{HL20,KPS26}. Evaluation at $1$ gives $\ev$, and restriction to $0$ gives the associated graded map $\gr$. For a numerical type $\tau$, the notation $\Filt_\tau$ specifies the weights and factor classes, with flat graded factors in $\cA_X$. We write $\Grad_\tau$ for the corresponding stack of graded objects.

\begin{lem}\label{lem:rees}
Let $\bM^\dagger_S$ be Toda's derived stack of pure surface pairs. A framing of a rank one object $I$ is a specified isomorphism $Li_\infty^*I\simeq\cO_S$, where $i_\infty\colon S_\infty\hookrightarrow\ol{X}$ is the inclusion. Lemma~\ref{lem:derived-cotangent} identifies the derived stack of these objects in $\cA_X$ whose corresponding surface sheaf is pure of dimension one with $T^*[-1]\bM^\dagger_S$. The open substack of objects stable for $t\gg0$ is the stable pair moduli space, with its vanishing cycle mixed Hodge module identified by Theorem~\ref{thm:oriented}. Let $\fU$ be the derived open substack remaining after the preceding HN strata in the order of Proposition~\ref{prop:wall} have been removed. Write $\beta=\gamma+\delta$, and fix a two-weight numerical type with graded classes $(1,\gamma,\chi_0)$ and $(0,\delta,\eta)$. Then:
\begin{enumerate}
\item[(i)] on the locus of flat filtrations of the prescribed type, the classical truncation of $\Filt(\fU)$ is Toda's stack of short exact sequences in $\cA_X$ with the prescribed factor classes, and the truncation of the evaluation map sends a sequence to its middle term;
\item[(ii)] the full two-weight component of $\Grad(T^*[-1]\bM^\dagger_S)$ is canonically the derived product of the framed rank one factor and the compactly supported rank zero factor, and the component for $\fU$ is the inverse image derived open substack in this product;
\item[(iii)] the orientation line restricted to the graded component is the tensor product of the factor orientation lines and the determinant of the positive weight part of the cotangent complex, computed in Lemma~\ref{lem:orient}. Consequently the localized orientation of Kinjo-Park-Safronov on the graded stack is the product orientation.
\end{enumerate}
\end{lem}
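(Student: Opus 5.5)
The plan is to prove the three assertions by reducing each to a statement about mapping stacks out of $[\AAA^1/\Gm]$ and $B\Gm$, and then checking it on Toda's diagram description.

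\emph{Part (i).} I would use the general description of $\Filt$ for a moduli stack of objects in an abelian category, as in Halpern-Leistner~\cite{HL20}. A $\Gm$-equivariant flat family over $\AAA^1$ is a filtered object, namely a Rees construction. The fiber at $1$ is the underlying object, and the fiber at $0$ is the associated graded object. The paper states that flat families in $\cA_X$ are given by Toda's diagrams, with derived fibers in $\cA_X$ and with compatibility under base change. Using this, I would show that a two-weight filtration with flat graded factors in $\cA_X$ is exactly a short exact sequence $0\to A\to E\to B\to 0$ in $\cA_X$ of the prescribed classes. The main point is that flatness of the graded pieces forces the Rees module to be an extension in the heart rather than merely a triangle. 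After this identification, $\ev$ is the middle term. Since $\fU$ is open, $\Filt(\fU)$ is the open locus of $\Filt$ of the ambient stack whose evaluation lies in $\fU$, and the statement restricts accordingly.

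\emph{Part (ii).} Here $\Grad=\operatorname{Map}(B\Gm,-)$. For a linear moduli stack, a graded object is a weight decomposition $E=E_1\oplus E_2$. Passing to the derived structure, I would use $\Grad(T^*[-1]\fX)\simeq T^*[-1]\Grad(\fX)$ restricted to the weight-zero part; this holds because mapping out of $B\Gm$ extracts the $\Gm$-fixed part of the cotangent complex. On the surface side, $\Grad$ of the pair stack on the two-weight component is the product of the framed rank one pair stack and the stack of sheaves, because the framing $Li_\infty^*I\simeq\cO_S$ lives entirely in the rank one weight. Combining these, the graded component is the derived product of $T^*[-1]$ of each factor. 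Openness of $\fU$ then gives the inverse image statement.

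\emph{Part (iii).} Restrict the $(-1)$-shifted symplectic orientation line $\det(\LL)^{1/2}$ to $\Grad$. The cotangent complex splits into weights zero, positive and negative. The weight-zero part gives the product of the factor lines. Under the shifted symplectic pairing the positive and negative parts are dual up to shift, so their determinant contributes $\det(\LL^{+})$, which Lemma~\ref{lem:orient} computes. I would then compare this with the definition of the localized orientation in Kinjo-Park-Safronov~\cite{KPS26}, which twists the restricted orientation by exactly this positive-weight determinant. It follows that the localized orientation coincides with the product orientation.

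\emph{Main obstacle.} I expect the hardest step to be part (iii). The canonical isomorphisms have to match, including their signs, and they must be compatible with Theorem~\ref{thm:oriented}, which identifies Toda's cotangent orientation with the determinant one. I would treat this by working entirely in the cotangent presentation. There both sides are canonical square roots of $\det\LL_{\bM^\dagger_S}^{\otimes 2}$, and the comparison reduces to the weight decomposition of $\LL_{\bM^\dagger_S}$ on the surface stack.
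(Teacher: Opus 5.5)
Your proposal follows essentially the paper's route: the Rees equivalence together with Toda's exact diagram description for (i), the graded critical-locus identity $\Grad(T^*[-1]\bM^\dagger_S)\simeq T^*[-1]\Grad(\bM^\dagger_S)$ plus the weight-zero splitting of the graded pair for (ii), and the weight decomposition of the cotangent complex, with Lemma~\ref{lem:orient} supplying $\det\LL^{>0}$ and the matching of square isomorphisms, for (iii).

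One small correction concerns your description of $\Filt(\fU)$. For open $\fU$, a map $[\AAA^1/\Gm]\times T\to\fX$ lands in $\fU$ if and only if its associated graded (the point over $0$) does, so $\Filt(\fU)$ is cut out by $\gr$, not by $\ev$. The two conditions agree on type-$\tau$ flags here only because the split object of an HN flag has the same HN type. You should also state $t_0\Filt(\fU)=\Filt(t_0\fU)$ explicitly before invoking the classical Rees description.
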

\begin{proof}
Since $\AAA^1$ is affine and $\Gm$ is linearly reductive, classical truncation commutes with these mapping stack constructions:
\begin{align*}
t_0\Filt(\fU)=\Filt(t_0\fU),\qquad t_0\Grad(\fU)=\Grad(t_0\fU).
\end{align*}
On the locus of flat filtrations, the Rees equivalence identifies a two-weight filtration with a short exact sequence in $\cA_X$, and the evaluation at $1$ with its middle term. Toda's exact surface functor identifies these with his families of short exact sequences, compatibly with arbitrary classical base change~\cite[Theorem~4.5.7, Corollary~4.5.9, Section~6.1.4]{Tod24}.

There is a natural equivalence
\begin{align}\label{eq:Grad-cotangent}
\Grad\bigl(T^*[-1]\bM^\dagger_S\bigr)\simeq T^*[-1]\Grad\bigl(\bM^\dagger_S\bigr).
\end{align}
This is the graded critical locus identity of~\cite[Section~6.1.12]{BDINP25}, applied to the zero function on $\bM^\dagger_S$. For the prescribed type, the graded pair is the direct sum of a framed pair and a pure sheaf. Its deformation complex has weight zero on the two diagonal blocks and non-zero weights on the cross blocks. Taking weight zero therefore gives the product of the derived factor stacks; applying~\eqref{eq:Grad-cotangent} gives the corresponding product of shifted cotangents. The decomposition works for any finite filtration with distinct weights. For $\fU$, take the inverse image of its classical open substack. If membership is determined by the prescribed HN type of the split object, this is the product of the open substacks where the factors satisfy the prescribed stability conditions, as asserted in (ii).

The weight-zero terms give the factor orientation lines. The positive weight part of the cotangent complex gives the determinant in (iii), whose inverse enters localization. Lemma~\ref{lem:orient} identifies the resulting orientation with the product orientation, including the isomorphisms to the virtual canonical lines.
\end{proof}

\begin{lem}\label{lem:orient}
Let $I$ be a framed rank one object and let $p(I)=(\cO_S\to F)$ be the surface pair obtained from Toda's map in \cite[Theorem~4.1.3]{Tod24}. Let $L_I$ be the orientation line of Theorem~\ref{thm:oriented}. For a surface sheaf $A$, write $L_g(A):=\det\Rhom_S(A,A)$; the orientation of a compactly supported sheaf $E$ on $X$ in Lemma~\ref{lem:rank-zero} is $L_g(p_*E)$. Then
\begin{align*}
L_I=L_g(F)\otimes\bigl(\det\Rgam(S,F)\bigr)^{-1},\qquad L_g(F)=\det\Rhom_S(F,F).
\end{align*}
For an exact sequence $0\to I_0\to I\to E[-1]\to 0$ in the heart $\cA_X$, set $E_S:=p_*E$. There is a functorial isomorphism
\begin{align}\label{eq:orient}
L_I\cong L_{I_0}\otimes L_g(E_S)\otimes\det\Rhom_X(I_0,E[-1]).
\end{align}
For an exact sequence in the opposite order, $0\to E[-1]\to I\to I_0\to 0$, the corresponding isomorphism is
\begin{align}\label{eq:orient-reversed}
L_I\cong L_g(E_S)\otimes L_{I_0}\otimes\det\Rhom_X(E[-1],I_0).
\end{align}
These isomorphisms are associative for iterated filtrations. They agree with the product orientation and the determinant of the positive weight part of the cotangent complex in the localization of filtered objects, and Toda's derived duality preserves them.
\end{lem}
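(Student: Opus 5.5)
The plan is to compute both sides through the determinant description of the orientation on $X$ and reduce everything to surface data via Toda's map $p$. First I will establish the formula for $L_I$. The deformation complex of a framed rank one object $I$ with $p(I)=(\cO_S\to F)$ is $\Rhom_X(I,I)_0$, the traceless part relative to the framing at $S_\infty$. Using the shifted cotangent description $T^*[-1]\bM^\dagger_S$ from Lemma~\ref{lem:derived-cotangent}, the tangent complex of $\bM^\dagger_S$ at the pair is $\Rhom_S(J,F)$, where $J=[\cO_S\to F]$ is the pair complex. The virtual canonical line of the shifted cotangent is $\det(\mathbb{T})^{\otimes 2}$ up to a sign, and the cotangent orientation takes $\det\mathbb{T}^{\pm1}$ as its square root. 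The triangle $J\to\cO_S\to F$ gives $\det\Rhom_S(J,F)\cong\det\Rgam(S,F)^{-1}\otimes\det\Rhom_S(F,F)$, after we also use $\det\Rhom(F,F)$ from the $\cO_S\to F$ cone. Matching this with Theorem~\ref{thm:oriented} gives $L_I=L_g(F)\otimes(\det\Rgam(S,F))^{-1}$, once the sign convention for the dual shift is fixed.

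Next I will prove \eqref{eq:orient}. Given $0\to I_0\to I\to E[-1]\to 0$ in $\cA_X$, Toda's exact surface functor gives an exact sequence of surface pairs, which unwinds to $0\to F_0\to F\to E_S\to 0$ with the section factoring through $F_0$. Multiplicativity of $\det$ in triangles then gives
\[
\det\Rgam(F)=\det\Rgam(F_0)\otimes\det\Rgam(E_S).
\]
It also gives
\[
L_g(F)=L_g(F_0)\otimes L_g(E_S)\otimes\det\Rhom_S(F_0,E_S)\otimes\det\Rhom_S(E_S,F_0).
\]
For the cross terms I will rewrite $\Rhom_X(I_0,E[-1])$ on the surface by adjunction for $p$ together with the Koszul resolution of $X\to S$. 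This gives
\[
\Rhom_X(I_0,E[-1])\simeq \mathrm{Cone}\bigl(\Rhom_S(J_0,E_S)\to\Rhom_S(E_S,J_0\otimes K_S)^\vee[-3]\bigr).
\]
Serre duality identifies the second term with $\Rhom_S(J_0,E_S)^\vee$-type data, and $\det\Rhom_S(J_0,E_S)=\det\Rhom(F_0,E_S)\otimes\det\Rgam(E_S)^{-1}$. Collecting the factors, the two factors involving $\Rgam(E_S)$ cancel against $\det\Rgam(F)$, which produces \eqref{eq:orient}. The reversed sequence \eqref{eq:orient-reversed} is handled by the same bookkeeping with the roles of $\Rhom$ swapped.

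For associativity, I will note that every isomorphism above is built from the canonical determinant isomorphisms of Knudsen–Mumford for triangles. These are associative for filtrations of length three, with the usual Koszul signs. Comparison with the positive weight part in Lemma~\ref{lem:rees}(iii) then follows because, on $\Grad$, the cross block $\Rhom_X(I_0,E[-1])$ is exactly the weight $+1$ part of the deformation complex.

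Compatibility with Toda's derived duality should follow from Grothendieck–Serre duality applied to each determinant factor. The main obstacle is the sign and functoriality bookkeeping. The isomorphisms must agree with the square-root data, and not only with the lines themselves, because orientations are $\ZZ/2$-torsors. I would try first to make every identification canonical at the level of the derived stack $T^*[-1]\bM^\dagger_S$, via the graded critical locus identity \eqref{eq:Grad-cotangent}. Done that way, the square roots are induced functorially and the signs cancel uniformly in families.
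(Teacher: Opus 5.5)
You follow the paper's route: multiplicativity of determinants along $0\to F_0\to F\to E_S\to 0$, plus an adjunction computation of the cross term $D_{01}:=\Rhom_X(I_0,E[-1])$. The gap is that the cross-term formula you display is wrong, and with it the bookkeeping cannot close. By Serre duality on $S$, $\Rhom_S(E_S,J_0\otimes K_S)^\vee[-3]\simeq\Rhom_S(J_0,E_S)[-1]$. So your cone has determinant $(\det\Rhom_S(J_0,E_S))^{-2}$. It contains $\det\Rgam(S,E_S)$ with exponent $\pm 2$ and has no factor $\det\Rhom_S(E_S,F_0)$. On the other side, $L_g(F)$ contains $\det\Rhom_S(E_S,F_0)$, while $(\det\Rgam(S,F))^{-1}$ contains only one copy of $(\det\Rgam(S,E_S))^{-1}$. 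What \eqref{eq:orient} actually requires is
\begin{align*}
\det D_{01}\cong\det\Rhom_S(F_0,E_S)\otimes\det\Rhom_S(E_S,F_0)\otimes\bigl(\det\Rgam(S,E_S)\bigr)^{-1}.
\end{align*}
This differs in families from what your formula gives. Since $K_S$ is non-trivial, $\det\Rhom_S(E_S,F_0)\cong(\det\Rhom_S(F_0,E_S\otimes K_S))^{-1}$ cannot be built from $\Rhom_S(J_0,E_S)$ alone. Your remark that two $\Rgam(E_S)$ factors cancel against $\det\Rgam(F)$ is a symptom of this double count.

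The error comes from treating all of $I_0$ as if it were pushed forward from the zero section and applying the Koszul resolution to $J_0$. The identity $\det\Rhom_X(i_*A,i_*B)\cong\det\Rhom_S(A,B)\otimes\det\Rhom_S(B,A)$ holds for sheaves, and with $A=F_0$ it handles the sheaf part. But the rank one constituent of $I_0$ is $\cO_{\ol X}$, not $i_*\cO_S$, and it contributes $\Rhom_{\ol X}(\cO_{\ol X},E[-1])=\Rgam(S,E_S)[-1]$ exactly once. To make this functorial for an arbitrary framed $I_0$, including one with non-zero cotangent coordinate, you need a resolution of $I_0$ by pull-backs from $S$. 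The paper uses Toda's reconstruction $I_0=\fib(\ol p^*U_0\to\ol p^*F_0(S_\infty))$ with $\cO_S\to U_0\to F_0\otimes K_S^{-1}$. Restricting to $X$ and using $p^*\dashv p_*$ gives $\det D_{01}=\det\Rhom_S(F_0,E_S)\otimes(\det\Rhom_S(U_0,E_S))^{-1}$. Expanding $U_0$ and using $\Rhom_S(F_0\otimes K_S^{-1},E_S)\simeq\Rhom_S(E_S,F_0)^\vee[-2]$ then yields the display.

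The Calabi--Yau identity $D_{10}\simeq D_{01}^\vee[-3]$ handles the reversed order. It also identifies the positive-weight cotangent part $(D_{10}[1])^\vee$ with $D_{01}[2]$, which you need for the localization statement. The square-root compatibility needs an actual check rather than an appeal to canonicity. The paper verifies that the square of \eqref{eq:orient} is $K_I\cong K_{I_0}\otimes K_E\otimes(\det D_{01})^{\otimes 2}$, coming from the endomorphism filtration.

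Separately, in your first step the determinant is inverted. With $J=\fib(s)$ one has $\det\Rhom_S(J,F)=\det\Rgam(S,F)\otimes(\det\Rhom_S(F,F))^{-1}$, and $L_I=\det\LL_{\bM^\dagger_S}$ is its inverse.
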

\begin{proof}
The orientation on the stack of compactly supported sheaves is the shifted cotangent orientation, with its direct-sum and associativity isomorphisms~\cite[Example~8.32, Proposition~8.34, Corollary~8.36]{KPS26}. For surface pairs, the pair triangle identifies
\begin{align*}
\det\LL_{\bM^\dagger_S}=\det\Rhom_S(F,F)\otimes\bigl(\det\Rgam(S,F)\bigr)^{-1}=L_I.
\end{align*}
Its square is identified with the virtual canonical line by the cotangent construction~\cite[Example~3.20]{KPS26}. We compute on the reduced truncation of the universal filtered stack; the graded determinant lines pull back with the exact diagrams.

Write $p(I_0)=(\cO_S\to F_0)$. Toda's exact surface functor turns the given sequence into either $0\to F_0\to F\to E_S\to 0$ or the reverse order. For surface sheaves $A,B$, adjunction along the zero section $i\colon S\hookrightarrow X$ and Serre duality give
\begin{align}\label{eq:surface-cross}
\det\Rhom_X(i_*A,i_*B)=\det\Rhom_S(A,B)\otimes\det\Rhom_S(B,A).
\end{align}
For a general $I_0$, the cross term is computed from Toda's diagram
\begin{align*}
\cO_S\to U_0\to F_0\otimes K_S^{-1},\qquad I_0=\fib\bigl(\ol{p}^*U_0\to\ol{p}^*F_0(S_\infty)\bigr).
\end{align*}
Put $D_{01}:=\Rhom_X(I_0,E[-1])$. Restrict this reconstruction to $X$, where $\cO_{\ol X}(S_\infty)$ is trivial, and apply $\Rhom_X(-,E[-1])$. Adjunction for $p^*$ and $p_*$ gives
\begin{align*}
\det D_{01}=\frac{\det\Rhom_S(F_0,E_S)}{\det\Rhom_S(U_0,E_S)}.
\end{align*}
Calabi-Yau duality gives the same determinant for $D_{10}:=\Rhom_X(E[-1],I_0)$, and expanding $U_0$ through its fiber sequence and applying Serre duality on $S$ yields
\begin{align}\label{eq:framed-cross}
\det D_{01}=\det D_{10}=\frac{\det\Rhom_S(F_0,E_S)\otimes\det\Rhom_S(E_S,F_0)}{\det\Rgam(S,E_S)}.
\end{align}
On the other hand, the multiplicativity of determinants along either surface exact sequence gives
\begin{align*}
L_g(F)=L_g(F_0)\otimes L_g(E_S)\otimes\det\Rhom_S(F_0,E_S)\otimes\det\Rhom_S(E_S,F_0),
\end{align*}
and $\det\Rgam(S,F)=\det\Rgam(S,F_0)\otimes\det\Rgam(S,E_S)$. Substituting these into the definition of $L_I$ proves the two orientation isomorphisms.

Give $I_0$ weight zero and $E[-1]$ weight one. The tangent terms of nonzero weight are $D_{01}[1]$ and $D_{10}[1]$. Writing $\LL^{>0}$ for the positive weight part of the cotangent complex, we have
\begin{align*}
\LL^{>0}=(D_{10}[1])^\vee=D_{01}[2],\qquad \det\LL^{>0}=\det D_{01}.
\end{align*}
Localization tensors $L_I$ by the inverse of this line~\cite[Section~6.5]{KPS26}, and what remains is the tensor product of the factor orientation lines. The reverse order exchanges the roles of $01$ and $10$.

For the square isomorphisms, let $K_I$, $K_{I_0}$, $K_E$ be the virtual canonical lines with orientation lines $L_I$, $L_{I_0}$, $L_g(E_S)$. The endomorphism filtration, with $D_{10}=D_{01}^\vee[-3]$, gives
\begin{align}\label{eq:canonical-square}
K_I=K_{I_0}\otimes K_E\otimes(\det D_{01})^{\otimes 2}.
\end{align}
Taking graded determinants of the same exact diagram shows that the square of~\eqref{eq:orient} is~\eqref{eq:canonical-square}. For three steps, additivity for a double fiber sequence identifies the two orders of taking determinants~\cite[Proposition~2.1, diagram~(2.5)]{KPS26}; induction gives associativity for every finite filtration.

Toda's duality takes the dagger presentation at $v=(\beta,\chi)$ to the sharp presentation at $v^\vee=(\beta,-\chi)$ \cite[Lemmas~4.4.2, 4.4.4]{Tod24}, preserving the cotangent orientation line. Lemma~\ref{lem:linear-duality} identifies the oriented sharp and dagger presentations at $v^\vee$. These comparisons preserve direct sums and weight decompositions and exchange $D_{01}$ and $D_{10}$. They carry~\eqref{eq:orient} to~\eqref{eq:orient-reversed}, together with~\eqref{eq:canonical-square} and the determinant identities for iterated filtrations.
\end{proof}

\begin{lem}\label{lem:linear-duality}
Let $\bM$ be a quasi-smooth derived stack and let $E$ be a perfect complex of amplitude $[-1,0]$. Set
\begin{align*}
\bM^\dagger:=\Spec_{\bM}\Sym(E),\qquad \bM^\sharp:=\Spec_{\bM}\Sym(E^\vee[1]).
\end{align*}
The comparison $t_0T^*[-1]\bM^\dagger\simeq t_0T^*[-1]\bM^\sharp$ in~\cite[Lemma~4.6.13]{Tod24} identifies their $d$-critical sections and their canonical cotangent orientations. Therefore it identifies their oriented vanishing cycle mixed Hodge modules. The comparison commutes with base change, direct sums and the weight decompositions used for the localization of filtered objects.
\end{lem}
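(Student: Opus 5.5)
We would prove Lemma~\ref{lem:linear-duality} by reducing to a local model and checking that the comparison of~\cite[Lemma~4.6.13]{Tod24} is, locally, the classical equality of critical loci coming from a quadratic (bilinear) function, with no quadratic correction term to the orientation. Locally on $\bM$ we fix a presentation of $\bM$ as the derived zero locus of a section $s$ of a vector bundle $V$ over a smooth scheme $A$. We also present $E=[E^{-1}\to E^0]$ by vector bundles with differential $\partial$. Then $\bM^\dagger$ is the derived zero locus of $(s,\partial^\vee)$ on $\Tot_A((E^0)^\vee)$, and $\bM^\sharp$ is the derived zero locus of $(s,\partial)$ on $\Tot_A(E^{-1})$. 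After shrinking $A$ and extending $\partial$, $t_0T^*[-1]\bM^\dagger$ becomes the critical locus of
\begin{align*}
w^\dagger(a,v,\xi,\eta)=\langle s(a),v\rangle+\langle \partial^\vee\xi,\eta\rangle
\end{align*}
on $\Tot_A(V^\vee\oplus (E^0)^\vee\oplus E^{-1})$. The same function, with the roles of $\xi$ and $\eta$ read in the other order, gives $t_0T^*[-1]\bM^\sharp$. The total spaces coincide, so Toda's comparison is the identity on the critical chart. Hence the $d$-critical sections agree, since both are the classes $w^\dagger|_{\Crit}$.

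For the orientations we compare the two canonical lines. The cotangent orientation of $T^*[-1]\bY$ is $\det\LL_{\bY}$, with square identified with $K^{\vir}$ via $\LL_{T^*[-1]\bY}=\LL_{\bY}\oplus\mathbb{T}_{\bY}[1]$. We compute
\begin{align*}
\det\LL_{\bM^\dagger}=\det\LL_{\bM}\otimes\det E^{\vee}{}^{-1}\ \text{(up to sign convention)},\qquad
\det\LL_{\bM^\sharp}=\det\LL_{\bM}\otimes\det(E[-1])^{-1},
\end{align*}
and then check that the two lines agree. Both reduce to $\det\LL_{\bM}\otimes\det E$ after using $\det(E^\vee[1])=\det E$. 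The key step is to show that the resulting isomorphism of orientation lines squares to Toda's identification of virtual canonical bundles. The canonical isomorphism of the critical-chart orientations (via~\cite[Theorem~6.9]{BBDJS15}) is built from $\det T_{\Tot}$. In the chart above this splits into the same factors $\det V$, $\det E^0$, $\det E^{-1}$ for both presentations, differing only by the swap of the $(E^0)^\vee$ and $E^{-1}$ summands. That swap gives the Koszul sign $(-1)^{\mathrm{rk}E^0\cdot\mathrm{rk}E^{-1}}$. The same sign appears in the comparison of squares, so the orientations match.

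Next we check independence of the chart and gluing. Changes of presentation of $E$ by adding acyclic pairs $W\xrightarrow{\id}W$ change $w^\dagger$ by a nondegenerate quadratic form $\langle\xi_W,\eta_W\rangle$ in both presentations simultaneously. The stabilization isomorphisms of~\cite{BBDJS15} for the two presentations then agree. This yields a global identification of $d$-critical loci and orientations, and so of the vanishing cycle mixed Hodge modules, by functoriality of the gluing.

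Compatibility with base change follows because everything is defined by pullback of $(\bM,E)$. Direct sums and weight decompositions follow because the construction is additive in $(\bM,E)$ and $\Gm$-equivariant for the grading: the weight-$k$ parts of $E$ and $E^\vee[1]$ correspond. In the localization, the positive-weight determinants for $\dagger$ and $\sharp$ differ by the same Koszul sign as before, so the localized orientations agree.

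We expect the main obstacle to be sign bookkeeping. Specifically, we must verify that the Koszul sign from swapping the summands is exactly absorbed on both sides, namely in the orientation line isomorphism and in its square compared with $K^{\vir}$. We must also verify that this remains true uniformly across charts and under the weight decomposition.
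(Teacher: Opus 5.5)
Your proposal is essentially the paper's proof: the same common critical chart $Z=\Tot_A(V^\vee\oplus(E^0)^\vee\oplus E^{-1})$ with the same function $w$, and both orientation lines identified with $\det\LL_{\bM}\otimes\det E\cong K_Z$ up to the Koszul sign from reordering the summands. It also uses the same acyclic stabilization $w+\langle\xi_W,\eta_W\rangle$ for changes of the resolution of $E$, and naturality for base change, sums and weights.

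The paper additionally treats two points you leave implicit. First, it handles changes of the zero-locus presentation of $\bM$, via quadratic normal forms and stabilization. Second, since $\bM$ is an Artin stack, it works on quotient charts, where the common factor $(\det\fg^\vee)^{-1}$ and the $G$-equivariance of $w$ give smooth descent. Note also that your identification of the two lines is global, so compatibility with the square isomorphisms is only a local check, and any chartwise sign squares to $1$.
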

\begin{proof}
On a derived affine chart, write $\bU$ as the derived zero locus of a section $s$ of a vector bundle $V$ on a smooth scheme $Y$, and represent $E$ by $[E^{-1}\xrightarrow{\varphi}E^0]$. Toda's construction gives the following common critical chart for the two cotangent stacks:
\begin{align*}
Z=\Tot_Y\bigl((E^0)^\vee\oplus E^{-1}\oplus V^\vee\bigr),\qquad w(y,e,e',v)=\langle\varphi(e'),e\rangle+\langle s(y),v\rangle.
\end{align*}
Both $d$-critical sections are represented by $w$ modulo the square of the critical ideal.

In both presentations the cotangent orientation line, pulled back to this chart, is
\begin{align}\label{eq:common-root}
K_Y\otimes\det V\otimes\det E^0\otimes(\det E^{-1})^{-1}=K_Z.
\end{align}
For the dagger presentation, take the determinant of the cotangent complex of the zero locus on $\Tot((E^0)^\vee)$; for the sharp presentation, use the zero locus on $\Tot(E^{-1})$. Both isomorphisms from the squares of the orientation lines become $K_Z^{\otimes 2}\to K_{\Crit(w),w}$. We use the displayed ordering of the summands of $Z$ in the graded determinant convention; permutations carry the associated Koszul signs. On a quotient chart for a smooth group $G$ with Lie algebra $\fg$, the quotient cotangent complex contributes $(\det\fg^\vee)^{-1}$ to each orientation line. The function $w$ and the identity~\eqref{eq:common-root} are $G$-equivariant, so they descend with this common factor.

Pull-back of $s$, $E$ and their evaluation pairings pulls back $w$ and~\eqref{eq:common-root}. Direct sums use the direct sum determinant identity; taking weight zero retains the diagonal evaluation pairings and their determinant factors.

To change the two-term resolution, pass to a common refinement. It is then enough to consider an isomorphism of complexes and the addition of an acyclic complex $[A\xrightarrow{1}A]$. An isomorphism acts by the same change of variables on $w$ and by its determinant on both copies of~\eqref{eq:common-root}. Adding the acyclic complex replaces $w$ by $w+\langle a,a^*\rangle$; the contraction of $\det A$ with $\det A^\vee$ gives the standard hyperbolic orientation on both sides.

For changes of zero locus presentation, use the common refinement and quadratic normal form comparison for critical charts~\cite[Theorem~3.25]{KPS26}. The corresponding stabilization theorem for vanishing-cycle sheaves is~\cite[Theorem~5.4]{BBDJS15}. An elementary stabilization adds a smooth coordinate $u$ and the equation $u=0$, giving the function $w+uu'$ with $u'$ dual to $u$. The determinant contractions agree in the dagger and sharp presentations. For a general embedding of critical charts, the determinant of the quadratic form on the normal bundle gives the same isomorphism between the squares of the chart canonical bundles in these presentations. The $\mu_2$-torsor of its square roots supplies the twist in the stabilization isomorphism, and the oriented comparisons agree on this torsor.

Associativity of graded determinants gives the cocycle condition for successive embeddings, with signs determined by the chosen ordering. The Lie algebra factor $(\det\fg^\vee)^{-1}$ in each orientation line then gives smooth descent to stacks.
\end{proof}

\begin{lem}\label{lem:duality}
Toda's derived duality identifies $P^t_\chi(X,\beta)\cong P^{-t}_{-\chi}(X,\beta)$ and preserves the fundamental cycle. Consequently
\begin{align*}
K^t_{\chi,\beta}\in\cS_\beta\ \Longleftrightarrow\ K^{-t}_{-\chi,\beta}\in\cS_\beta.
\end{align*}
\end{lem}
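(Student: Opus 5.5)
We first construct the isomorphism of moduli spaces, then compare the vanishing cycle modules, and finally use that the support condition $\cS_\beta$ is unchanged by the resulting duality and twist.

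\emph{Isomorphism of moduli spaces.} Toda's derived duality \cite[Lemmas~4.4.2, 4.4.4]{Tod24} is essentially $I\mapsto\Rhom(I,\cO_{\ol X})$ up to shift and twist by $\cO(S_\infty)$. On a surface pair $\cO_S\to F$ it acts through $F\mapsto\cExt^1_S(F,K_S)$. This operation preserves purity of one-dimensional sheaves and the fundamental cycle, since $[\cExt^1(F,K_S)]=[F]$ by local duality on a Cohen--Macaulay curve. It also sends $\chi(F)$ to $-\chi(F)$ by Serre duality.

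It reverses the roles of subobjects and quotients. Hence it exchanges Toda's weak slope inequalities for rank one classes in the parameter $t$ with those in $-t$, so stable objects in chamber $t$ correspond to stable objects in chamber $-t$. The functor commutes with base change on flat families in $\cA_X$, so this gives an isomorphism of stacks $P^t_\chi(X,\beta)\cong P^{-t}_{-\chi}(X,\beta)$. This isomorphism commutes with the Hilbert--Chow maps $\Pi^t_{\chi,\beta}$ and $\Pi^{-t}_{-\chi,\beta}$ to $B_\beta$.

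\emph{Comparison of vanishing cycle modules.} We compare $\phi^t_{\chi,\beta}$ with the pullback of $\phi^{-t}_{-\chi,\beta}$. Duality takes the dagger presentation at $(\beta,\chi)$ to the sharp presentation at $(\beta,-\chi)$, preserving the cotangent orientation line by Lemma~\ref{lem:orient}. Lemma~\ref{lem:linear-duality} then identifies the oriented sharp and dagger shifted cotangent presentations, including their $d$-critical structures. Together these identify the two oriented vanishing cycle modules. The identification holds up to a Tate twist, because the normalization \eqref{eq:PT-hodge} uses $v=\beta^2+\chi$, which changes under $\chi\mapsto-\chi$. It may also hold only up to a rank one local system from the $\mu_2$-torsor of orientations, though Lemma~\ref{lem:orient} should make this trivial.

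\emph{Conclusion.} Both discrepancies are harmless. By Lemma~\ref{lem:serre}, $\cS_\beta$ is closed under Tate twists, shifts, and tensor products with objects whose rational realizations are shifts of local systems. Moreover, membership in $\cS_\beta$ can be checked after applying $\rat$. Pushing forward along the commuting Hilbert--Chow maps gives $K^t_{\chi,\beta}\cong K^{-t}_{-\chi,\beta}$ up to twist, and the equivalence of membership in $\cS_\beta$ follows.

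The main obstacle is showing that the derived duality preserves stability exactly, with $t\mapsto -t$, for rank one objects in the heart $\cA_X$. This needs care because the dual of an object of $\cA_X$ lands in $\cA_X$ only after Toda's shift and tilt. I would quote Toda's statement directly for this step rather than reprove it.
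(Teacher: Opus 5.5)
Your proposal follows the paper's proof: Toda's isomorphism of moduli stacks commuting with the Hilbert--Chow maps, then the duality from the dagger presentation at $(\beta,\chi)$ to the sharp presentation at $(\beta,-\chi)$ composed with Lemma~\ref{lem:linear-duality}, and finally closure of $\cS_\beta$ under shifts and Tate twists after direct image to $B_\beta$. Drop your fallback that a possible $\mu_2$ orientation discrepancy would be harmless by Lemma~\ref{lem:serre}: such a rank one local system would live on the moduli space rather than on $B_\beta$, so it does not factor out of $\dR\Pi_*$ and could change supports. It is not needed anyway, since Lemma~\ref{lem:orient} and Lemma~\ref{lem:linear-duality} identify the orientations exactly.
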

\begin{proof}
The isomorphism of moduli stacks is~\cite[Lemma~4.4.6]{Tod24}. Duality preserves each irreducible component of the support and its generic multiplicity, so it commutes with the Hilbert-Chow map to $B_\beta$. The derived duality of~\cite[Lemma~4.4.2]{Tod24} takes the vanishing-cycle module for the $\dagger$ presentation in class $(\beta,\chi)$ to the one for the $\sharp$ presentation in class $(\beta,-\chi)$; on classical stacks this is the reflection of~\cite[Lemma~4.4.4]{Tod24}. Lemma~\ref{lem:linear-duality} identifies the sharp and dagger orientations at $(\beta,-\chi)$. Writing $\phi^\dagger_{\chi,\beta}$ and $\phi^\sharp_{\chi,\beta}$ for these vanishing-cycle modules, we obtain
\begin{align*}
\phi^\dagger_{\chi,\beta}\xrightarrow{\ \text{duality}\ }\phi^\sharp_{-\chi,\beta}\xrightarrow{\ \text{linear comparison}\ }\phi^\dagger_{-\chi,\beta}.
\end{align*}
Taking direct images to $B_\beta$ gives the chamber complexes up to the uniform shift and Tate normalization, which preserve strict supports. The two support assertions are therefore equivalent.
\end{proof}

\subsection{Relative flags and wall localization}\label{sub:flags}
\begin{lem}\label{lem:flags}
Let $T$ be a classical base and let $I_T$ be a flat family of rank one objects in $\cA_X$. Fix an ordered sequence of numerical classes $\gamma_1,\ldots,\gamma_m$ whose sum is the class of $I_T$, with exactly one rank one class and all other classes of the form $(0,\delta,\eta)$. The rank one class may occur at any position. Then the functor of filtrations
\begin{align*}
0=I_0\subset I_1\subset\cdots\subset I_m=I_T,\qquad [I_j/I_{j-1}]=\gamma_j,
\end{align*}
is represented by a closed subfunctor of the product of the relative flag-Quot schemes of the coherent sheaves in Toda's diagram. The flag in $\cO_{S\times T}$ is fixed to be zero before the rank one step and $\cO_{S\times T}$ from that step onward, and the closed conditions require every arrow of the diagram to preserve the flags. In particular the evaluation map to the rank one moduli stack is representable, proper and of finite presentation, and the construction commutes with arbitrary classical base change.
\end{lem}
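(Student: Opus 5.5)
We will prove Lemma~\ref{lem:flags} by translating filtrations in the heart $\cA_X$ into flags of sheaves in Toda's diagram, and then realizing the latter inside products of relative flag-Quot schemes. Recall from \cite[Theorem~4.5.7, Corollary~4.5.9]{Tod24} that a flat family $I_T$ of rank one objects corresponds to a diagram on $S\times T$ of the form $\cO_{S\times T}\to U_T\to F_T\otimes K_S^{-1}$, with $F_T$ and $U_T$ flat over $T$, and that this correspondence is an exact equivalence compatible with classical base change. The same description applies to subobjects: for rank zero classes $(0,\delta,\eta)$ the object $E[-1]$ corresponds to a pure sheaf on $S$ with Higgs field. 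A filtration $I_1\subset\cdots\subset I_m$ with flat subquotients therefore corresponds to compatible filtrations of each term of the diagram. In each term the successive quotients are flat with prescribed Hilbert polynomials determined by $\gamma_j$. In the $\cO_{S\times T}$ term the filtration is forced to jump from $0$ to $\cO_{S\times T}$ exactly at the rank one step, since rank zero factors have no $\cO$ component.

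The first step is to define the candidate representing scheme. For each coherent sheaf $G_T$ in the diagram ($U_T$, $F_T$, and the Higgs target), take the relative flag-Quot scheme $\mathrm{FQuot}_{S\times T/T}(G_T;P_1,\ldots,P_m)$, which is projective over $T$ by Grothendieck's construction. Its formation commutes with base change. Then take the fiber product over $T$ of these schemes, and impose that each arrow of the diagram (the section, the map $U\to F\otimes K_S^{-1}$, and the Higgs field) sends the $j$-th flag step into the $j$-th flag step. Each condition says that a composite $G'_j\to G\to H\to H/H'_j$ of flat families vanishes. By standard arguments, for example vanishing of a homomorphism between a coherent sheaf and a $T$-flat sheaf, this cuts out a closed subscheme.

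The second step, which I expect to be the main obstacle, is to check that points of this closed subscheme over any classical base $T'$ are exactly filtrations in $\cA_X$ with flat graded factors of the prescribed classes. Conversely, every such filtration must arise this way. This requires that the subquotient diagrams are again Toda diagrams of the correct type, in particular that purity and flatness of the rank zero factors and the fiber condition defining objects of $\cA_X$ are inherited. I would argue using exactness of Toda's surface functor on short exact sequences in $\cA_X$ \cite[Section~6.1.4]{Tod24} and induction on $m$. Splitting off the last step reduces the problem to the two-step case, which is the short exact sequence statement used in Lemma~\ref{lem:rees}(i). Openness of the purity condition in flat families will show that the extra conditions are open. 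Their closedness then needs separate treatment. I would handle this by fixing the Hilbert polynomials: a flat quotient with the Hilbert polynomial of a pure one-dimensional sheaf, sitting inside the pure sheaf $F_T$, has torsion only in dimension zero. The prescribed $\chi$ then rules this out up to the allowed position of the rank one step. If this argument fails, I would instead restrict to the locus where the factors lie in $\cA_X$; this locus is still proper, because it is defined by valuative limits of flags.

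Finally, representability follows because the functor is a subfunctor of a scheme over $T$. Properness and finite presentation follow from the projectivity of flag-Quot schemes and the closedness of the conditions. Compatibility with base change follows because each ingredient commutes with base change: the diagram \cite[Corollary~4.5.9]{Tod24}, the flag-Quot schemes, and the vanishing conditions. Applying this over a smooth atlas of the rank one stack gives the assertions about the evaluation map.
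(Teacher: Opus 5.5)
\textbf{There is a genuine gap.} Your skeleton matches the paper's: Toda's diagram equivalence, relative flag-Quot schemes of the sheaves in the diagram, closed conditions saying each arrow preserves the flags (vanishing of composites into flat quotients), and descent from an atlas. The gap is in your second step, which is where the content of the lemma lies, and the route you propose there would fail.

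\textbf{The purity detour is wrong.} You treat purity of the rank zero factors as an extra condition to impose, and then try to show it is closed using Hilbert polynomials. But purity is not part of the moduli problem. Rank zero objects of $\cA_X$ are $E[-1]$ with $E\in\Coh_{\le 1}(X)$ arbitrary. The flag stacks used later in the paper (Lemma~\ref{lem:last-chamber}(iv), Proposition~\ref{prop:HN}) do contain quotients with zero dimensional torsion, which are then absorbed. Purity is an open condition, so imposing it would destroy the properness you are trying to prove. Nor can the class $(\delta,\eta)$ exclude zero dimensional torsion: $\cO_C(p)$ and $\cO_C\oplus\cO_p$ have the same class. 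Your fallback, that the locus ``is proper because it is defined by valuative limits of flags'', is not an argument.

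\textbf{What is actually needed.} You must show that a point of the closed subscheme gives sub- and quotient diagrams whose rows $0\to\cO_j\to U_j\to F_j\otimes K_S^{-1}\to 0$ are exact after any classical base change. This is the ``fiber condition'' you mention but do not prove. Once it holds, Toda's equivalence turns the data back into a filtration in $\cA_X$, and no extra condition remains. The missing argument is short:
\begin{itemize}
\item Before the rank one step, $U_j$ has Hilbert polynomial of degree at most one on every fiber. Hence $U_j\cap\cO=0$ by fiberwise torsion freeness of $\cO$, and $U_j\to F_j\otimes K_S^{-1}$ is injective on fibers.
\item From the rank one step on, the same holds for $U_j/\cO$.
\item Equality of the prescribed Hilbert polynomials makes these fiberwise injections isomorphisms.
\item Flatness of the quotients, together with the fiberwise exactness criterion, then gives exact rows over an arbitrary base. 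Exactness of the quotient rows follows from the full row.
\end{itemize}

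\textbf{A minor correction.} The arrows to be preserved are $\cO\to U$, $U\to F\otimes K_S^{-1}$ and $\varphi\colon U\to F$. Flags on $F\otimes K_S^{-1}$ are induced from those on $F$, so no separate flag-Quot factor for a ``Higgs target'' is needed.
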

\begin{proof}
Toda's exact diagram equivalence identifies flat filtrations in $\cA_X$ with filtrations by subdiagrams over any classical base~\cite[Theorem~4.5.7, Corollary~4.5.9]{Tod24}. Form the relative flag-Quot scheme for each coherent sheaf in the diagram.

Concretely, in the diagram
\begin{align*}
0\to\cO\to U\to F\otimes K_S^{-1}\to 0,\qquad \varphi\colon U\to F,
\end{align*}
take flags $U_j\subset U$ and $F_j\subset F$ with the prescribed Hilbert polynomials and flat quotients, and require
\begin{align*}
\cO_j\to U_j,\qquad U_j\to F_j\otimes K_S^{-1},\qquad \varphi(U_j)\subset F_j,
\end{align*}
where $\cO_j=0$ before the rank one step and $\cO_j=\cO$ thereafter. Observe that before the rank one step, $U_j\cap\cO=0$ by the fiberwise torsion freeness of $\cO$. Hence $U_j\to F_j\otimes K_S^{-1}$ is injective on fibers; afterwards the same holds for $U_j/\cO$. Equality of the prescribed Hilbert polynomials then makes these maps isomorphisms, and flatness together with the fiberwise exactness criterion gives exact rows after arbitrary base change.

An arrow preserves a flag step precisely when its composite with the corresponding quotient map vanishes. These composites are defined on the stack and pull back under base change. On a smooth atlas, finite locally free presentations express their vanishing as zero loci of morphisms of coherent modules. The resulting closed subfunctor of the product of projective flag-Quot schemes descends from the atlas, so the evaluation map is representable, proper and of finite presentation.
\end{proof}

\begin{prop}\label{prop:wall}
Fix the class $(1,\beta,\chi)$ and a non-negative wall $\lambda$. Choose generic parameters $t_-<\lambda<t_+$ in the adjacent chambers, simultaneously for this class and for the lower degree rank one classes in its wall filtrations. In the shifted cotangent enhancement of Lemma~\ref{lem:rees}, let $\fW_\lambda$ be the finite type derived open substack over Toda's stack $\cP^\lambda_\chi(X,\beta)$ of wall-semistable objects, with the orientation fixed in Subsection~\ref{sub:PTsheaf}. Denote its Hilbert-Chow map to $B_\beta$ by $\Pi_{\fW}$ and its DT mixed Hodge module by $\phi_{\fW}$.

Then the wall direct image $\dR\Pi_{\fW*}\phi_{\fW}$ has two finite sequences of localization triangles, one for each HN order, whose open terms are the adjacent chamber complexes $K^{t_-}_{\chi,\beta}$ and $K^{t_+}_{\chi,\beta}$. These sequences lie in $\widehat{D}\MHM(B_\beta)$ and give finite perverse spectral sequences. After rational realization, every other term is a finite direct sum of direct summands of the rational realizations of convolutions along addition of divisors
\begin{align}\label{eq:wall-term}
\dR\mu_*\bigl(K^{u}_{\chi_0,\gamma}\boxtimes H_{\eta,\delta}\bigr)\otimes V_\tau,\qquad \chi_0+\eta=\chi,
\end{align}
where $\tau=((1,\gamma,\chi_0),(0,\delta,\eta))$ is the numerical type, $\gamma+\delta=\beta$, $e=L\cdot\gamma$, $u$ is a chamber parameter for the rank one factor, and $V_\tau$ is the one dimensional Tate object with the shift and half Tate twist specified by hyperbolic localization~\cite[Corollary~7.19, Remark~7.20]{KPS26}. On every HN stratum in the complement of $P^{t_\pm}_\chi(X,\beta)$, $e<d$ and $\eta=\lambda(d-e)$. In particular $\eta=0$ at the zero wall.
\end{prop}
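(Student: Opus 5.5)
The plan is to build the two localization sequences from the Harder--Narasimhan stratifications of Toda~\cite[Section~6.2.1]{Tod24} on the wall stack $\fW_\lambda$. For a fixed order (the $t_+$ order, say) the wall-semistable objects that are not $t_+$-stable admit a unique HN filtration with respect to $\mu^\dagger_{t_+}$. The possible HN types are finite, because $\fW_\lambda$ is of finite type and there are finitely many walls per class. We order the types so that each union of the open stratum $P^{t_+}_\chi(X,\beta)$ with the preceding strata is open; this order also defines the opens $\fU$ of Lemma~\ref{lem:rees}. For an open $\fU'\subset\fU$ with closed complement $\fZ$, the triangle $j_!j^!\to\id\to i_*i^*$ applied to $\phi$ and pushed forward along the proper Hilbert--Chow map gives a triangle in $\widehat{D}\MHM(B_\beta)$. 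Iterating produces the finite sequence, and the finite filtration gives the finite perverse spectral sequence. The open term is $K^{t_+}_{\chi,\beta}$. The same construction with the $t_-$ order gives the second sequence.

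Next I identify each closed stratum term. A stratum is the image of $\ev\colon\Filt_\tau(\fU)\to\fU$. By Lemma~\ref{lem:flags} this map is representable and proper. Uniqueness of HN filtrations makes it a closed immersion onto the stratum on classical points, and in fact an isomorphism of reduced stacks onto it, which is enough for constructible complexes. Hyperbolic localization~\cite{KPS26,Des25} then identifies $i^*\phi$ on the stratum, after pushforward, with $\gr_*$ of the vanishing cycle module of $\Grad_\tau$, twisted by $V_\tau$. This is \cite[Corollary~7.19]{KPS26}, used with the attracting correspondence. The orientation on $\Grad_\tau$ is the product orientation, by Lemma~\ref{lem:rees}(iii) and Lemma~\ref{lem:orient}. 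By Lemma~\ref{lem:rees}(ii) the graded stack is a product: rank one factors that are stable for the appropriate chamber parameter $u$, times semistable rank zero factors of slope $\lambda$. The Thom--Sebastiani isomorphism turns the vanishing cycle module into a box product. The Hilbert--Chow map is additive, $\Pi\circ\gr=\mu\circ(\Pi^u\times h_{\eta,\delta})$, so the pushforward is $\dR\mu_*(K^u_{\chi_0,\gamma}\boxtimes H_{\eta,\delta})\otimes V_\tau$.

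HN types with several rank zero factors need one more step. I merge all the rank zero factors of slope $\lambda$ into a single semistable factor, since the other factors lie on the other side of the wall. If that is not possible, I treat them by iterated localization, grouping them and applying Lemma~\ref{lem:rees} with several weights. This yields direct summands of convolutions of the displayed form with one rank zero factor after rational realization, since sums of semistable stacks push forward into $H_{\eta,\delta}$ through the integrality decomposition. This bookkeeping is the step I expect to be the main obstacle: matching multi-step HN types to two-weight types, together with the precise shift in $V_\tau$. I would first try to argue that at a single wall every destabilizing type is rank one and rank zero of slope exactly $\lambda$ in one of the two orders, so that the two-weight types of Lemma~\ref{lem:rees} suffice.

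Finally, the numerics. A rank zero factor of class $(0,\delta,\eta)$ destabilizes at the wall only if $\eta/(L\cdot\delta)=\lambda$, so $\eta=\lambda(d-e)$. Its class satisfies $\delta>0$, because positive-degree rank zero objects are the only ones with finite slope; zero-dimensional sheaves have slope $\infty$ and are handled by Toda's heart conventions. Hence $e<d$. At $\lambda=0$ this gives $\eta=0$.
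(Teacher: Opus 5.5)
Your outline follows the paper's route: HN strata for the two adjacent chambers on $\fW_\lambda$, localization triangles, hyperbolic localization \cite[Corollary~7.19]{KPS26}, Thom--Sebastiani, additivity of cycles, and the same numerics. However, two steps fail as written.

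\textbf{The localization triangle.} The triangle $j_!j^!\to\id\to i_*i^*$ is the wrong one for a statement about $\dR\Pi_{\fW*}$. The map $\Pi_{\fW}$ is not proper in the sense you use. The stack $\fW_\lambda$ contains strictly semistable objects such as $I_0\oplus E[-1]$ with $\Gm$ stabilizers, so you cannot trade $\dR\Pi_{\fW*}$ for $\dR\Pi_{\fW!}$. Consequently the open term of your pushed-forward triangle, $\dR\Pi_{\fW*}j_!j^*\phi_{\fW}$, is not $K^{t_+}_{\chi,\beta}=\dR\Pi_{\fW*}\dR j_*j^*\phi_{\fW}$. Already for $\Pi\colon[\AAA^1/\Gm]\to\Spec\CC$ with $j$ the open point $[\Gm/\Gm]$, one has $\dR\Pi_*j_!\QQ=0$ while $\dR(\Pi j)_*\QQ=\QQ$.

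The closed term is mismatched for the same reason. The localization isomorphism of \cite{KPS26} is for $\gr_*\ev^!$; its Verdier dual is the statement for $\gr_!\ev^*$. By contraction onto the graded locus, $\gr_*\ev^*\phi$ is just the restriction of $\phi$ along the direct-sum map from graded objects, which is not $\phi_{\Grad}\otimes V_\tau$ in general. The fix is the triangle the paper uses, $i_*i^!\phi_{\fW}\to\phi_{\fW}\to\dR j_*j^*\phi_{\fW}$. Its open term pushes forward to $K^{t_\pm}_{\chi,\beta}$, and its closed term is $\dR\ev_{\tau*}\ev_\tau^!\phi_{\fU_\tau}$, exactly the input of the localization theorem.

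\textbf{The stratification.} $\Filt_\tau(\fU)$ parametrizes all filtrations of numerical type $\tau$, not only HN filtrations. Uniqueness of HN filtrations alone therefore gives none of the following: injectivity of $\ev$, that its image is the HN stratum, or the product description $\Grad_\tau(\fU_\tau)=P^u_{\chi_0}(X,\gamma)\times\cM^{\sss}_X(\delta,\eta)$. Lemma~\ref{lem:rees}(ii) only gives an open substack of the full product. What you need is the identity $p_\tau^{-1}(U_\tau)=\fE^{\sss}_\tau$. Here $p_\tau$ sends a type-$\tau$ flag to its middle term, $U_\tau$ is what remains after the preceding types are removed, and $\fE^{\sss}_\tau$ is the locus of flags with stable rank one factor and semistable rank zero factor.

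This identity is what dictates the order, which you leave unspecified. The paper removes types of smaller rank one degree first. Then if a surviving object had a type-$\tau$ flag with non-stable rank one factor, the adjacent-chamber HN filtration of that factor would split off a class $\epsilon$. That would give a flag of a preceding type, so the object would already have been removed. Once this is in place, your topological shortcut is a legitimate substitute for the paper's unramifiedness argument via $\Hom_{\cA_X}(A,B)=0$. The map is proper, radicial and bijective onto a closed substack, hence a universal homeomorphism, which suffices for $\ev_*\ev^!$.

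\textbf{The obstacle you flagged.} It dissolves exactly as your first guess suggests. The $\mu^\dagger_{t_\pm}$-HN factors of a $\lambda$-semistable object have $\mu^\dagger_\lambda$-slope $\lambda$. Rank zero slopes do not depend on $t$, so every rank zero HN factor has slope exactly $\lambda$. Since HN slopes strictly decrease, only a single rank zero factor remains. It comes after the rank one factor for $t_+$ and before it for $t_-$. No integrality decomposition is needed to merge factors, and there are no factors on the other side of the wall.
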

\begin{proof}
Toda's surface functor $E\mapsto\dR\ol{p}_*(E(-S_\infty)[1])$ is exact on flat families in $\cA_X$ and commutes with base change \cite[Theorem~4.5.7, Corollary~4.5.9]{Tod24}. Rydh's norm construction gives the fundamental one-cycle, which is additive in exact sequences \cite[Part~IV, Theorem~7.14, Remark~7.10(ii)]{Ryd08}. Thus the cycle of a filtered object is the sum of the cycles of its factors.

Toda's HN property holds also at $\lambda=0$~\cite[Definition~4.2.5 and the following discussion]{Tod24}. For either adjacent parameter, a wall-semistable object outside the adjacent stable locus has an HN filtration with one rank one factor and rank zero factors of slope $\lambda$. Combining successive rank zero quotients by extension gives one semistable rank zero factor of that slope. Its degree is at most $d$ and its slope fixes its Euler characteristic, leaving finitely many types. As in~\cite[Section~6.2.1]{Tod24}, a type with factor classes $(1,\gamma',\chi_0')$, $(0,\delta',\eta')$ precedes one with factor classes $(1,\gamma,\chi_0)$, $(0,\delta,\eta)$ when
\begin{align*}
\gamma=\gamma'+\epsilon,\qquad \delta'=\delta+\epsilon,\qquad
\chi_0=\chi_0'+\lambda(L\cdot\epsilon),\qquad \eta'=\eta+\lambda(L\cdot\epsilon)
\end{align*}
for an effective class $\epsilon$ with $L\cdot\epsilon>0$. We take the transitive closure of this relation. The rank one degree strictly increases from a preceding type to a subsequent one, so this relation is a partial order. Choose a linear extension in which preceding types are treated first.

For a type $\tau$ with $\gamma+\delta=\beta$ and classes $(1,\gamma,\chi_0)$, $(0,\delta,\eta)$, let $p_\tau\colon\fE_\tau\to t_0\fW_\lambda$ be the map sending a short exact sequence in $\cA_X$ with these factor classes to its middle term. This map is proper and representable by Lemma~\ref{lem:flags}. After removing the closed images of all preceding types, call the remaining open substack $U_\tau$. We claim that
\begin{align}\label{eq:HN-inverse}
p_\tau^{-1}(U_\tau)=\fE^{\sss}_\tau,
\end{align}
where the right hand side consists of the flags whose factors are stable in the induced rank one chamber and pure semistable of rank zero slope $\lambda$. Indeed, the wall-semistability lemma of~\cite[Section~6.1.4]{Tod24} first makes both factors wall-semistable. If the rank one factor were unstable, its adjacent chamber HN filtration would split off a non-zero class $\epsilon$ and produce a flag of a preceding type. Conversely, stable factors give the unique adjacent chamber HN flag. Both sides are open substacks of $\fE_\tau$, so equality on geometric points proves~\eqref{eq:HN-inverse}.

Write $A$ for the subobject and $B$ for the quotient in the two-step HN flag. The restricted map is proper and radicial, and its relative tangent space is $\Hom_{\cA_X}(A,B)=0$ by the strict phase inequality between the factors. The vanishing tangent space makes the restricted map unramified; properness and radiciality then make it a closed immersion. If $\fS_\tau$ is its image and $\fU_\tau$ is the derived open substack over $U_\tau$, the Rees description gives
\begin{align}\label{eq:HN-stratum}
t_0\Filt_\tau(\fU_\tau)\xrightarrow{\ \sim\ }\fS_\tau.
\end{align}
Removing this closed image and continuing through the linear order constructs the finite stratification. The split object of any two factors in $P^u_{\chi_0}(X,\gamma)\times\cM^{\sss}_X(\delta,\eta)$ has type $\tau$, and conversely~\eqref{eq:HN-inverse} identifies every graded point in the open substack with these stability loci. Since derived open substacks are determined by their truncations, Lemma~\ref{lem:rees} yields
\begin{align*}
\Grad_\tau(\fU_\tau)=P^u_{\chi_0}(X,\gamma)\times\cM^{\sss}_X(\delta,\eta),
\end{align*}
where the stability symbols now denote derived open substacks.

At each stage of this ordering, let $i$ denote the inclusion of the closed stratum being removed and $j$ its open complement, and use the localization triangle
\begin{align*}
i_*i^!\phi_{\fW}\to\phi_{\fW}\to\dR j_*j^*\phi_{\fW}\xrightarrow{+1}.
\end{align*}
By~\eqref{eq:HN-stratum}, the closed term is $\dR\ev_{\tau*}\ev_\tau^!\phi_{\fU_\tau}$. The finite sequence of localization triangles gives a convergent spectral sequence in each perverse degree. We compute this term by hyperbolic localization~\cite[Corollary~7.19]{KPS26}, applied to the two-weight component of the attractor correspondence
\begin{align}\label{eq:attractor}
\xymatrix{\Grad(\fX) & \Filt(\fX)\ar[l]_-{\gr}\ar[r]^-{\ev} & \fX}
\end{align}
of an oriented $(-1)$-shifted symplectic stack $\fX$. The localization functor is $\gr_*\ev^!$; the same form is obtained independently in~\cite{Des25} for the correspondence of filtered and graded objects. The morphism $(\gr,\ev)\colon\Filt(\fX)\to\Grad(\fX)\times\fX$ is Lagrangian by~\cite[Corollary~5.19]{KPS26}, and $\fU_\tau$ is of finite type, quasi-separated, and has affine stabilizers. Its localized orientation is the product orientation by Lemma~\ref{lem:orient}.

Use monodromic mixed Hodge localization with half Tate twists~\cite[Remark~7.20]{KPS26}, in the form of~\cite[Section~1.2.13]{BDINP25}. Rational realization commutes with direct image and exceptional pull-back~\cite[Proposition~3.16]{Tub25} and takes this morphism to the rational localization isomorphism. Conservativity yields
\begin{align}\label{eq:wall-iso}
\dR\gr_{\tau*}\ev_\tau^!\phi_{\fU_\tau}\cong\phi_{\Grad_\tau(\fU_\tau)}\otimes V_\tau.
\end{align}
Thom-Sebastiani identifies the rational realization on the graded stack with the exterior product of the factor DT modules~\cite[Corollary~4.4]{KPS26}, and the rational realization detects membership in $\cS_\beta$.

Let $\Pi_{\Grad,\tau}$ be the product of the factor Hilbert-Chow maps to $B_\gamma\times B_\delta$. Additivity of fundamental cycles gives
\begin{align*}
\Pi_{\fW}\circ\ev_\tau=\mu\circ\Pi_{\Grad,\tau}\circ\gr_\tau.
\end{align*}
Applying the direct image to~\eqref{eq:wall-iso}, followed by rational realization and the K\"unneth formula, gives
\begin{align*}
\dR(\Pi_{\fW}\ev_\tau)_*\ev_\tau^!\phi_{\fU_\tau}
&\cong\dR\mu_*\dR\Pi_{\Grad,\tau*}\dR\gr_{\tau*}\ev^!_\tau\phi_{\fU_\tau}\\
&\cong\dR\mu_*\bigl(K^u_{\chi_0,\gamma}\boxtimes H_{\eta,\delta}\bigr)\otimes V_\tau.
\end{align*}
Note that for each of these HN strata, the rank zero factor has positive degree $d-e$ and slope $\lambda$. Thus $e<d$ and $\eta=\lambda(d-e)$; at $\lambda=0$ this gives $\eta=0$.
\end{proof}

\begin{proof}[Proof of Theorem~\ref{thm:chamber}]
At a positive wall, Lemma~\ref{lem:rank-zero} places $H_{\eta,\delta}$ in $\cG_\delta$, and the induction hypothesis places $K^u_{\chi_0,\gamma}$ in $\cS_\gamma$. Lemma~\ref{lem:serre} therefore puts every term~\eqref{eq:wall-term} in the two filtrations of Proposition~\ref{prop:wall} in $\cS_\beta$. In each perverse degree, the finite spectral sequence and the Serre property show that the wall direct image belongs to $\cS_\beta$ if and only if the corresponding open chamber term does. Applying this to both HN orders compares the adjacent chambers.

For the zero wall, choose $0<\epsilon<1/d$. Every nonzero rank zero slope $\eta/f$, with $f\le d$, has absolute value at least $1/d$, so zero is the only wall in $(-\epsilon,\epsilon)$. Proposition~\ref{prop:wall} applies with $\lambda=0$, $\eta=0$ and $\chi_0=\chi$; the terms contributed by its HN strata have the form
\begin{align}\label{eq:zero-wall-term}
\dR\mu_*\bigl(K^u_{\chi,\gamma}\boxtimes H_{0,\delta}\bigr)\otimes V_\tau,\qquad e<d.
\end{align}
The same argument compares $K^{-\epsilon}_{\chi,\beta}$ and $K^{+\epsilon}_{\chi,\beta}$. Lemma~\ref{lem:duality} reflects each negative wall to a positive one. Crossing the finitely many walls proves chamber independence. Finally, duality identifies the support assertion for $K^{0^-}_{\chi,\beta}$ with that for $K^{0^+}_{-\chi,\beta}$, and chamber independence identifies the former with the assertion for $K^{0^+}_{\chi,\beta}$.
\end{proof}

\begin{proof}[Proof of Corollary~\ref{cor:window}]
For $\chi<1-g_\beta$ the stable pair moduli space is empty by Lemma~\ref{lem:lower}, so the PT direct image is zero, and chamber independence gives the support assertion in every chamber for these Euler characteristics. The reflection gives it also for $\chi>g_\beta-1$. If $g_\beta\le 0$, the two ranges cover all integers.
\end{proof}

\section{Bounded surface pairs and dimensional reduction}\label{sec:bounded}
Dimensional reduction identifies the vanishing-cycle direct image with a dualizing complex; the HN strata give a finite filtration of the complement.

\subsection{A lower bound on HN slopes}\label{sub:slice}
We fix $S$ and $\beta$ throughout this section and put $\alpha=(1-g_\beta)/d$ as in~\eqref{eq:alpha}. The stability function is Toda's $\mu^\dagger_t$.

We use Toda's $\bM_S$ for the derived stack of pure one dimensional sheaves on $S$ and $\bM^\dagger_S$ for the derived stack of pairs. For a real number $c$, let $\bM(\chi;c)$ be the derived open substack of sheaves $E$ with $[E]=\beta$, $\chi(E)=\chi$ and $\mu_{\min}(E)\ge c$. Let $\bM^\dagger(\chi;c)$ be the derived stack of pairs $\cO_S\to E$ with $E\in\bM(\chi;c)$, and put $M^\dagger(\chi;c)=t_0\bM^\dagger(\chi;c)$.

\begin{prop}\label{prop:slice}
The stack $\bM(\chi;\alpha)$ is of finite type and has only finitely many HN types. Across all numerical classes, the condition on the smallest HN slope is closed under extensions and pure quotients. Every stable pair in class $\beta$ maps, under Toda's surface functor, into this open substack.
\end{prop}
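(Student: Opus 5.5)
The plan is to prove the three assertions in reverse order, since the last two are formal and the first is a boundedness statement.

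For the last assertion, take a stable pair in class $\beta$ and let $E$ be its image under Toda's surface functor, that is, the push-forward of $F$ to $S$. By Lemma~\ref{lem:slope}, $E$ satisfies $\mu_{\min}(E)\ge\alpha$. We have $[E]=\beta$ and $\chi(E)=\chi$. Hence $E$ lies in $\bM(\chi;\alpha)$, and the pair lies in $\bM^\dagger(\chi;\alpha)$.

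For the closure assertion, we use the standard HN characterization: $\mu_{\min}(E)\ge c$ if and only if every pure quotient $E\twoheadrightarrow Q\ne0$ has $\mu(Q)\ge c$.
\begin{itemize}
\item Pure quotients. A pure quotient of a pure quotient of $E$ is again a pure quotient of $E$, so the condition passes to pure quotients directly.
\item Extensions. Let $0\to E'\to E\to E''\to0$ be exact and let $Q$ be a pure quotient of $E$. Write $Q'$ for the image of $E'$ in $Q$, which is pure since it is a subsheaf of a pure sheaf, and set $Q''=Q/Q'$, a quotient of $E''$.
\begin{itemize}
\item If $Q''$ has zero-dimensional torsion, pass to $Q''/\mathrm{tors}$. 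Its slope has numerator at most $\chi(Q'')$ and the same degree.
\item The cleaner route is to argue with the minimal destabilizing quotient $Q$ of $E$, which is semistable. If $\Hom(E',Q)\ne0$, the image has slope at least $\mu_{\min}(E')\ge c$, and semistability of $Q$ gives $\mu(Q)\ge c$.
\item Otherwise $Q$ is a quotient of $E''$, and again $\mu(Q)\ge c$.
\end{itemize}
\end{itemize}
Openness of the condition in flat families follows from the usual upper semicontinuity of $\mu_{\min}$, since the HN polygon is upper semicontinuous. This justifies calling $\bM(\chi;c)$ open.

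For boundedness, I would bound the HN types first. Let $E$ have $[E]=\beta$, $\chi(E)=\chi$ and $\mu_{\min}\ge\alpha$, with HN factors of classes $\beta_i\le\beta$ and slopes $\mu_1>\dots>\mu_r\ge\alpha$.
\begin{itemize}
\item The classes $\beta_i\le\beta$ range over a finite set, and $r\le d$.
\item The identity $\sum_i\chi_i=\chi$ together with $\chi_i\ge\alpha\,L\cdot\beta_i$ gives the upper bound $\chi_i\le\chi-\alpha(d-L\cdot\beta_i)$. Each $\chi_i$ is therefore confined to a finite interval of integers.
\end{itemize}
This leaves finitely many HN types. For each type, the semistable sheaves of fixed class and Euler characteristic form a bounded family by Simpson/Langer boundedness. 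Iterated extensions of bounded families are bounded, because the relevant $\Ext^1$ spaces have bounded dimension over a finite type base. Hence $\bM(\chi;\alpha)$ is a finite union of finite type HN strata, and so it is of finite type.

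I expect the main care to be needed in the extension-closure step, which I handle via the minimal destabilizing quotient argument above, and in the uniform boundedness of semistable sheaves. For the latter I would cite Langer's theorem for pure sheaves of fixed Hilbert polynomial.
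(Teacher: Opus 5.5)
Your proposal is correct and follows essentially the paper's proof: the bounds $d_i\alpha\le\chi_i\le\chi-(d-d_i)\alpha$ give finitely many HN types, boundedness of the semistable factor stacks and of iterated extensions gives finite type, pure quotients are handled by the quotient characterization of $\mu_{\min}$, and Lemma~\ref{lem:slope} handles stable pairs. The one variation is extension-closure. You argue via the minimal destabilizing semistable quotient, which is complete as written. The paper instead splits an arbitrary pure quotient by the image of the first term and uses additivity. A small wording slip: $\mu_{\min}$ can only drop under specialization, so it is lower (not upper) semicontinuous, and that is what makes the locus $\mu_{\min}\ge c$ open.
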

\begin{proof}
Write the classes of the HN factors as $(\gamma_i,\chi_i)$ and put $d_i=L\cdot\gamma_i$. Then
\begin{align*}
d_i\ge 1,\qquad \sum_i\gamma_i=\beta,\qquad \sum_id_i=d,\qquad d_i\alpha\le\chi_i\le\chi-(d-d_i)\alpha,
\end{align*}
and there are only finitely many numerical types satisfying these constraints. The semistable stacks of fixed type are of finite type. For pure one dimensional sheaves, the Gieseker HN order is the order of the slopes $\chi_i/d_i$ used here. By~\cite[Theorem~5]{Nit11}, the relative Gieseker HN filtration exists uniquely over its schematic stratum and is compatible with arbitrary locally noetherian base change, and~\cite[Theorem~8]{Nit11} globalizes these loci to representable locally closed substacks. Over a product of fixed finite type factor stacks, the relative $\Rhom$ complex is perfect, so the stack parametrizing extensions of the universal factors is of finite type. Iterating this construction proves the first assertion.

For a pure sheaf, the lower HN slope bound is equivalent to that bound on every non-zero pure quotient, so it is preserved by pure quotients. For extensions, intersect a quotient with the image of the first term and pass to the induced quotient of the second; additivity of degree and Euler characteristic gives the bound. Lemma~\ref{lem:slope} treats stable pairs.
\end{proof}

HN semicontinuity makes $\bM^\dagger(\chi;\alpha)$ a derived open in Toda's quasi-smooth, locally finitely presented surface pair stack. By Proposition~\ref{prop:slice}, it is a quasi-smooth derived Artin stack of finite type, so shifted cotangent dimensional reduction applies. Toda's dual obstruction cone theorem~\cite[Theorem~4.1.3]{Tod24} identifies the classical truncation of the shifted cotangent stack with the finite type stack of framed rank one objects over this open substack. A cotangent coordinate is the morphism $E\otimes K_S^{-1}\to\fib(\cO_S\to E)[1]$ in the diagram description in Lemma~\ref{lem:derived-cotangent}. Put
\begin{align}\label{eq:Y}
Y:=M^\dagger(\chi;\alpha),\qquad \bY:=\bM^\dagger(\chi;\alpha),\qquad \wt{Y}:=t_0\bigl(T^*[-1]\bY\bigr).
\end{align}
The relevant maps are
\begin{align}\label{eq:rho}
\xymatrix{\wt{Y}\ar[r]^-{\pi} & Y\ar[r]^-{h_\chi} & B_\beta,} \qquad \rho_\chi:=h_\chi\circ\pi,
\end{align}
where $\pi$ forgets this cotangent coordinate and $h_\chi$ records the fundamental divisor of the surface sheaf. The universal pair has tangent complex $\Rhom_S([\cO_S\to F],F)$, so the Riemann-Roch theorem gives
\begin{align}\label{eq:vdim}
v:=\vdim\bY=\beta^2+\chi.
\end{align}

\subsection{Dimensional reduction}\label{sub:DR}

\begin{prop}\label{prop:DR}
For the canonical cotangent orientation, Kinjo's perverse vanishing cycle sheaf $\varphi^p_{T^*[-1]\bY}$ satisfies
\begin{align}\label{eq:Kinjo-DR}
\dR\pi_!\varphi^p_{T^*[-1]\bY}\cong\QQ_Y[v],\qquad \dR\pi_*\varphi^p_{T^*[-1]\bY}\cong\omega_Y[-v].
\end{align}
For the monodromic mixed Hodge lift $\phi_{\bY}$, normalized on a smooth surface pair base of dimension $v$ by $\QQ^H_Y[v]$, we have
\begin{align*}
\dR\pi_*\phi_{\bY}\cong\omega_Y[-v](-v).
\end{align*}
Consequently, setting $\varphi_\chi:=\phi_{\bY}[v](v)$, we have
\begin{align}\label{eq:section-reduction}
\dR\rho_{\chi*}\varphi_\chi\cong\dR h_{\chi*}\omega_Y.
\end{align}
These identities commute with restriction to an open substack defined by a stronger lower slope bound.
\end{prop}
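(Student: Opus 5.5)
The plan is to deduce the statement from Kinjo's dimensional reduction theorem~\cite{Kin22}, applied to the quasi-smooth finite type derived stack $\bY$ provided by Proposition~\ref{prop:slice}. Kinjo's theorem says that for a quasi-smooth derived Artin stack $\bY$ with the canonical cotangent orientation on $T^*[-1]\bY$, the compactly supported pushforward along the projection $\pi\colon\wt{Y}\to Y$ of the perverse vanishing cycle sheaf is the constant sheaf shifted by the virtual dimension: $\dR\pi_!\varphi^p_{T^*[-1]\bY}\cong\QQ_Y[\vdim\bY]$. By~\eqref{eq:vdim}, $\vdim\bY=v$, which gives the first isomorphism in~\eqref{eq:Kinjo-DR}.

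For the second isomorphism I will use Verdier duality. Since $\varphi^p$ is Verdier self-dual for the chosen orientation, and $\mathbb{D}\dR\pi_!=\dR\pi_*\mathbb{D}$, we get $\dR\pi_*\varphi^p\cong\mathbb{D}(\QQ_Y[v])=\omega_Y[-v]$. Here $\pi$ is not proper, being a cone projection, so the duality has to be applied in the completed category of Subsection~\ref{sub:categories} using~\cite{Tub25}. Alternatively, I would use the $\Gm$-equivariance of $\varphi^p$ along the fibers: it is monodromic with respect to the scaling action on the cotangent fibres, which also gives $\dR\pi_*\cong\dR\pi_!$ up to duality.

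Next I will check that the canonical cotangent orientation coincides with the one fixed in Subsection~\ref{sub:PTsheaf}. This uses Lemma~\ref{lem:orient}: its line $L_I=\det\LL_{\bM^\dagger_S}$ is exactly the cotangent orientation of $T^*[-1]\bY$.

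For the mixed Hodge lift, I would run Kinjo's argument in monodromic mixed Hodge modules, or invoke its Hodge version~\cite[(6.2.1.4)]{BDINP25}. This produces a Tate twist: on a smooth chart the module $\phi_{\bY}$ equals $\QQ^H_Y[v]$ by the chosen normalization, and its dual is $\omega_Y[-v](-v)$. Because $\rat$ is conservative, it is enough to pin down the twist on a smooth chart, and the normalization there is forced.

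Setting $\varphi_\chi=\phi_{\bY}[v](v)$ then gives $\dR\pi_*\varphi_\chi\cong\omega_Y$. Applying $\dR h_{\chi*}$ yields~\eqref{eq:section-reduction}.

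Compatibility with restriction to an open substack defined by a stronger lower slope bound follows because every construction involved is local on $Y$. For an open immersion $Y'\subset Y$, the vanishing cycle functor commutes with smooth, and hence open, pullback. The cotangent stack of the open substack is the restriction of $T^*[-1]\bY$, and $\omega$ restricts to $\omega$.

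The main obstacle is the sign and twist bookkeeping. One must make sure that Kinjo's orientation convention agrees with Toda's cotangent orientation, including the Koszul and determinant ordering, and that the half Tate twist convention of~\cite{BDINP25} gives exactly $(-v)$ with no residual character of $\mu_2$. I would settle this by comparing on a global quotient chart $[U/G]$ with $\bY$ a derived zero locus, where the function is $\langle s,v\rangle$ and the comparison reduces to the standard Thom isomorphism.
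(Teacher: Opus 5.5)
Your proposal is correct and follows the paper's own proof: \cite[Theorem~4.14]{Kin22} for $\dR\pi_!$, Verdier self-duality of $\varphi^p$ for $\dR\pi_*$, the Hodge version \cite[(6.2.1.4)]{BDINP25} with the convention $\phi_{\bY}=\phi^{\sd}_{\bY}(-v/2)$, and the Cartesian square formed by the open inclusion and its shifted cotangent pull-back for the restriction statement. One small correction: the twist $(-v)$ is determined by that explicit normalization of the BDINP formula $\dR\pi_*\phi^{\sd}_{\bY}\cong\omega_Y[-v](-v/2)$. Conservativity of $\rat$ cannot determine it, since $\rat$ forgets Tate twists.
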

\begin{proof}
The first rational identity is~\cite[Theorem~4.14]{Kin22}, and Verdier duality gives the second; see also~\cite[Remark~5.7]{Kin22}. For mixed Hodge modules, write $\phi^{\sd}_{\bY}$ for the self-dual vanishing cycle module in the stack-level dimensional reduction theorem~\cite[(6.2.1.4)]{BDINP25}. In that convention the isomorphism is
\begin{align*}
\dR\pi_*\phi^{\sd}_{\bY}\cong\omega_Y[-v](-v/2).
\end{align*}
Half Tate twists here are taken in the monodromic category of~\cite[Section~5.2.6]{BDINP25}. Our convention is $\phi_{\bY}=\phi^{\sd}_{\bY}(-v/2)$: on a smooth base the Verdier self-dual module is $\QQ^H_Y[v](v/2)$, so the result is $\QQ^H_Y[v]$, as in Subsection~\ref{sub:PTsheaf}. The displayed formula therefore gives $\dR\pi_*\phi_{\bY}=\omega_Y[-v](-v)$, and then $\dR\pi_*\varphi_\chi=\omega_Y$. These identities restrict to stronger lower slope bounds by naturality: the open inclusion and its shifted cotangent pull-back form a Cartesian square. Composing with $h_\chi$ gives~\eqref{eq:section-reduction} in the completed category.
\end{proof}

\begin{thm}\label{thm:bounded}
Fix $\beta>0$ and put $d=L\cdot\beta$. Assume that every rank one Chow direct image in a class $\gamma$ with $L\cdot\gamma<d$ belongs to $\cS_\gamma$, for every Euler characteristic and every generic chamber. Then for every $\chi$ we have
\begin{align}\label{eq:bounded-reduction}
K_{\chi,\beta}\in\cS_\beta\ \Longleftrightarrow\ \dR\rho_{\chi*}\varphi_\chi\in\cS_\beta.
\end{align}
\end{thm}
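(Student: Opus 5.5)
The plan is to compare the two complexes through a finite stratification of $\wt{Y}=t_0(T^*[-1]\bY)$, mirroring Proposition~\ref{prop:wall}.

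\textbf{Step 1: the open stratum.} By Lemma~\ref{lem:rees} and Toda's dual obstruction cone theorem, $\wt{Y}$ is the stack of framed rank one objects in $\cA_X$ whose surface sheaf is pure with $\mu_{\min}\ge\alpha$. By Lemma~\ref{lem:slope} and Proposition~\ref{prop:slice}, the stable pair moduli space $P_\chi(X,\beta)$, which is the $t\gg0$ stable locus, is an open substack of $\wt{Y}$. On it, $\varphi_\chi$ restricts to $\phi^{\PT}_{\chi,\beta}[v](v)$. This uses the identification of orientations in Theorem~\ref{thm:oriented} and the normalization~\eqref{eq:PT-hodge}. A shift and Tate twist do not affect membership in $\cS_\beta$, so the open term of the localization triangle is $K_{\chi,\beta}$ up to normalization.

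\textbf{Step 2: stratify the complement.} I would stratify the complement by Toda's HN types for a parameter $t_+\gg0$, taken in the order of Proposition~\ref{prop:wall}. An object of $\wt{Y}$ that is not $t_+$-stable has an HN filtration with one rank one factor of class $(1,\gamma,\chi_0)$ and a single rank zero factor $(0,\delta,\eta)$, obtained after combining rank zero pieces as in that proof. Here $\delta>0$, so $L\cdot\gamma<d$.

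Finiteness of the set of types follows from Proposition~\ref{prop:slice}: $\bM(\chi;\alpha)$ has finite type and finitely many HN types. Moreover, the slope condition is closed under extensions and pure quotients, so the surface sheaves of both factors have bounded numerics. For each type I would then repeat the argument of Proposition~\ref{prop:wall}:
\begin{itemize}
\item Lemma~\ref{lem:flags} gives properness of $\ev_\tau$.
\item The vanishing of $\Hom(A,B)$ shows that $\ev_\tau$ is a closed immersion on the HN locus.
\item Lemma~\ref{lem:orient} supplies the product orientation.
\item Hyperbolic localization~\cite[Corollary~7.19]{KPS26} computes each closed term as a direct summand of $\dR\mu_*\bigl(K'\boxtimes H_{\eta,\delta}\bigr)\otimes V_\tau$.
\end{itemize}
Lemma~\ref{lem:rank-zero} puts $H_{\eta,\delta}$ in $\cG_\delta$. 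Once $K'$ is known to lie in $\cS_\gamma$, Lemma~\ref{lem:serre}(ii) puts each closed term in $\cS_\beta$.

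\textbf{Step 3: conclude.} The finite sequence of localization triangles gives a finite perverse spectral sequence in each perverse degree. Since $\cS_\beta$ is a Serre-type subcategory, $\dR\rho_{\chi*}\varphi_\chi$ lies in $\cS_\beta$ if and only if the open term $K_{\chi,\beta}$ does.

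\textbf{Main obstacle.} The hard step is identifying the rank one factor complex $K'$. The graded stack is only the open part of $P^u_{\chi_0}(X,\gamma)\times\cM^{\sss}_X(\delta,\eta)$ on which the split object lies in the slice $\mu_{\min}\ge\alpha$. It is therefore not a priori the full chamber moduli space, so $K'$ need not equal $K^u_{\chi_0,\gamma}$.

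I would first try to show that the slope condition is automatic on these factors. For the rank one factor, its surface sheaf is a pure quotient or subsheaf of a stable-type object of lower degree. Lemma~\ref{lem:slope} then gives $\mu_{\min}\ge\alpha_\gamma\ge\alpha$, since $\alpha_\gamma\ge\alpha_\beta$ by the monotonicity shown in that proof. The rank zero factor has a fixed slope, and that slope lies above $\alpha$ by the HN ordering.

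If this fails, the fallback is to refine the stratification by the HN type of the rank one factor's surface sheaf. The hypothesis, which covers all chambers and all Euler characteristics in lower degree, should then absorb the truncated pieces inductively.
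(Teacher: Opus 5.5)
\textbf{Where the proposal agrees with the paper.} Your overall architecture matches the paper's proof:
\begin{itemize}
\item the localization triangle with open part $P_\chi(X,\beta)\subset\wt Y$ and $j^*\varphi_\chi\cong\phi^{\PT}_{\chi,\beta}[v](v)$;
\item a stratification of the complement by $\mu^\dagger_t$-HN types for large $t$;
\item hyperbolic localization;
\item the induction hypothesis together with Lemmas~\ref{lem:rank-zero} and~\ref{lem:serre}.
\end{itemize}
Your handling of what you call the main obstacle is also how the paper argues. Lemma~\ref{lem:slope} covers the PT factor, and the pure-quotient argument covers the rank zero factors. So the graded stack is the full product and $K'=K_{\chi_0,\gamma}$.

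\textbf{The gap.} It is in Step~2, where you carry over two features of Proposition~\ref{prop:wall} that hold only at a wall.
\begin{enumerate}
\item \emph{Merging rank zero factors.} At a wall every object is $\lambda$-wall-semistable, so all rank zero HN factors share the slope $\lambda$ and merge into one semistable factor. An object of $\wt Y$ is arbitrary: its $\mu^\dagger_t$-HN filtration is a rank one factor followed by rank zero factors of strictly decreasing slopes in $[\alpha,t)$. Merging them gives a rank zero quotient that is not semistable. It does not lie in $\cM^{\sss}_X(\delta,\eta)$, so neither $H_{\eta,\delta}$ nor Lemma~\ref{lem:rank-zero} describes the closed term.
\item \emph{Ordering the types.} The order in Proposition~\ref{prop:wall} moves a class $\epsilon$ along the single slope $\lambda$, and it does not order these multi-slope types. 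You therefore have no argument that, on the open substack left after removing earlier strata, every flag of type $\tau$ already has stable and semistable factors. Your closed-immersion step from $\Hom(A,B)=0$ needs exactly that.
\end{enumerate}

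\textbf{How the paper fixes this.} The repair is Lemma~\ref{lem:last-chamber} together with Propositions~\ref{prop:HN} and~\ref{prop:filtration}. It uses:
\begin{itemize}
\item types $(1,\gamma,\chi_0),(0,\delta_1,\eta_1),\ldots,(0,\delta_r,\eta_r)$;
\item graded stacks $P_{\chi_0}(X,\gamma)\times\prod_j\cM^{\sss}_X(\delta_j,\eta_j)$;
\item closed terms $\dR\mu_{\tau*}\bigl(K_{\chi_0,\gamma}\boxtimes H_{\eta_1,\delta_1}\boxtimes\cdots\boxtimes H_{\eta_r,\delta_r}\bigr)\otimes V_\tau$.
\end{itemize}
This requires three ingredients:
\begin{enumerate}
\item $t$ must exceed the bound $b$ on slopes of pure subsheaves of objects in the slice. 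Then no rank zero factor precedes the rank one factor.
\item The list of types must be finite and independent of $t$, which uses Toda's lower bound on $\chi_0$. Then $t$ can be chosen past every wall of every rank one class that occurs, making all rank one factors PT pairs.
\item Types are ordered by rank one degree and then by HN-polygon dominance, and the paper proves the test~\eqref{eq:type-test}: a non-HN flag of type $\tau$ forces a strictly preceding HN type. The proof absorbs the zero dimensional torsion of the quotients and uses Lemma~\ref{lem:last-chamber}(iv).
\end{enumerate}
The test gives $p_\tau^{-1}(U_\tau)=\fE^{\sss}_\tau$. After that, your remaining steps go through with several factors $H_{\eta_j,\delta_j}$ and repeated use of Lemma~\ref{lem:serre}(ii).
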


\subsection{The large chamber HN strata}\label{sub:large-chamber}
For a filtration in $\cA_X$, process its rank zero quotients from last to first. For each quotient $G[-1]$, let $T_0(G)$ be the maximal zero dimensional subsheaf of $G$. Replace the quotient by $(G/T_0(G))[-1]$ and enlarge the preceding subobject to the inverse image of $T_0(G)[-1]$. Continue backwards through the remaining rank zero quotients.

\begin{lem}\label{lem:last-chamber}
There is a positive generic parameter $t$ such that:
\begin{enumerate}
\item[(i)] the $\mu^\dagger_t$-stable locus in $T^*[-1]\bM^\dagger(\chi;\alpha)$ is $P_\chi(X,\beta)$;
\item[(ii)] in every non-trivial HN type in this bounded stack, all rank zero factors have positive curve degree and the unique rank one factor is a PT stable pair of degree strictly smaller than $d$;
\item[(iii)] only finitely many numerical HN types occur, and every graded factor satisfies the lower slope bound $\alpha$;
\item[(iv)] for every filtration in $\cA_X$ of any numerical HN type in (iii), consider the surface sheaves underlying its rank zero quotients, before and after the operations above. Every one dimensional Gieseker HN factor of these sheaves has slope strictly smaller than $t$.
\end{enumerate}
\end{lem}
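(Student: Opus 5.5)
The plan is to choose $t$ larger than every slope that can occur in the bounded stack. Finiteness of numerical data on $\bM^\dagger(\chi;\alpha)$ gives such a bound. By Proposition~\ref{prop:slice}, the stack $\bM(\chi;\alpha)$ has finitely many HN types. Every pure quotient $E\to E''$ of a sheaf in it satisfies $\mu(E'')\ge\alpha$, and so $\chi(E'')\ge\alpha\,(L\cdot[E''])$. Each subsheaf therefore has Euler characteristic at most $\chi-\alpha(d-e)$, where $e$ is its degree. Hence every one dimensional HN factor of a subquotient that can occur has slope at most a constant $c_0$ depending only on $(\beta,\chi)$. Zero dimensional pieces contribute bounded length, since the total Euler characteristic is bounded above and each pure piece is bounded below. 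I would take $t$ generic with $t>c_0$ and also larger than the largest wall for the finitely many rank one classes $(1,\gamma,\chi')$ that can appear. These are finitely many because $\gamma\le\beta$ and $\chi'$ is bounded.

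For (i), I would use Toda's identification~\cite[Theorem~4.2.10(i)]{Tod24}: for $t$ beyond the last wall, $\mu^\dagger_t$-stable objects of class $(1,\beta,\chi)$ are exactly PT pairs. Lemma~\ref{lem:slope} and Proposition~\ref{prop:slice} show that every PT pair lies in the bounded stack, so the stable locus is all of $P_\chi(X,\beta)$.

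For (ii) and (iii), take a $\mu^\dagger_t$-HN filtration of an unstable object. The rank one factor is a subquotient. Its surface sheaf and the rank zero factors are subquotients of $F$ or of the corresponding filtration of the pair. Proposition~\ref{prop:slice} (closure under extensions and pure quotients), applied to the pure quotients, gives the slope bound $\alpha$ on every graded factor. Finiteness of types follows from the bounds above, as in the proof of Proposition~\ref{prop:wall}. A rank zero factor of degree zero would have slope $+\infty$, so it could only appear as the first subobject. Such subobjects $T[-1]$ with $T$ zero dimensional are excluded because $\Hom(T[-1],I)=0$ for framed objects of the pair type; this uses the purity of $F$. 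So all rank zero factors have positive degree, and the rank one factor has degree $e<d$. Because $t$ exceeds all walls for the lower classes, the stable rank one factor is PT stable.

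For (iv), the operations replace $G$ by $G/T_0(G)$, and enlarge the preceding subobject by $T_0(G)$. The resulting sheaves are still subquotients of the original data and have bounded numerical classes. Their HN factors therefore have slopes at most $c_0<t$, which gives (iv). The main obstacle is verifying that the backward operations keep every intermediate sheaf within a controlled finite set of classes, uniformly over all filtrations and not just HN ones. I would check this first by tracking Euler characteristics. Each enlargement moves length from a quotient into a subobject. The lower bound $\alpha$ on pure quotients then caps the Euler characteristic of every sub and quotient by a quantity depending only on $(\beta,\chi)$, which yields the uniform choice of $t$.
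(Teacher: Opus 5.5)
Your overall plan matches the paper: take $t$ above every relevant slope and beyond the last wall of finitely many rank one classes, then use Toda's large chamber theorem and Lemma~\ref{lem:slope}. However, the step you flag as the main obstacle rests on an invalid inference. The bound $\mu_{\min}(E)\ge\alpha$ bounds Euler characteristics of \emph{subsheaves} of $E$ from above and of pure quotients from below. It does not cap quotients or subquotients, and a bounded numerical class does not bound HN slopes. For example, on $\PP^2$ take $\beta=2H$, $\chi=2$, $\alpha=1/2$. The sheaf $E=\cO_C^{\oplus 2}$ on a line $C$ lies in $\bM(2;1/2)$, yet it has pure quotients $\cO_C(m)$ for every $m\ge 0$. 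So there is no constant $c_0(\beta,\chi)$ bounding HN slopes of arbitrary subquotients, and ``the lower bound $\alpha$ caps every sub and quotient'' is false.

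What does work for the sheaves in (iv) is the following. A subsheaf $A$ of $G_j=F_j/F_{j-1}$ lifts to $\tilde A\subset F$ containing $F_{j-1}$, so $\chi(A)\le\chi-\alpha(d-L\cdot[\tilde A])-\chi(F_{j-1})$. The same estimate bounds $\mathrm{length}\,T_0(G_j)$. This needs a \emph{lower} bound on $\chi(F_{j-1})$. The fixed numerical type supplies it, and it survives the absorptions because they only enlarge $F_{j-1}$ by zero dimensional sheaves. Repaired this way, your Euler characteristic bookkeeping is a more explicit alternative to the paper's route. The paper instead bounds the flags through properness of the stacks of Lemma~\ref{lem:flags}, relative Quot schemes for the absorptions, and a uniform bound $h^0(G(k))\le N$, giving $f(\mu(A)+k)\le N$.

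That lower bound, and your finiteness of rank one classes, both require a $t$-independent lower bound on $\chi_0$, which you assert without proof. The reference to Proposition~\ref{prop:wall} does not supply it, since there the wall slope fixes $\eta$. Without such a bound the list of rank one classes depends on $t$, and choosing $t$ beyond their last wall is circular. The paper obtains the bound from Toda's lower bound lemma~\cite[Lemma~4.2.7]{Tod24} combined with the filtration of Lemma~\ref{lem:lower}. The image of the section has Euler characteristic at least $\min_{0\le\delta\le\gamma}(-\delta\cdot(\delta+K_S)/2)$, and for $t>0$ the cokernel contributes non-negatively. The paper fixes the finite list $\cT$ first and only then chooses $t>\max\{b,b_{\mathrm{fl}},0\}$.

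Two smaller omissions. First, you exclude zero dimensional factors before the rank one factor, but not positive degree rank zero factors of slope $>t$. Excluding these is what $t>b$ and the subsheaf bound are for, and it is what puts the rank one factor first. Second, in (iii) the surface sheaf of the rank one factor is a \emph{subsheaf} of $E$, so closure under pure quotients says nothing about it. Its bound $\alpha$ comes from Lemma~\ref{lem:slope} applied to it as a PT pair of class $\gamma\le\beta$.
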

\begin{proof}
Let $E\in\bM(\chi;\alpha)$ and let $A\subset E$ be a pure subsheaf of degree $r>0$. Let $\ol{B}:=(E/A)/T_0(E/A)$ be its quotient by the maximal zero dimensional subsheaf; then $\chi(\ol{B})\ge\alpha(d-r)$ by the slope bound; when $r=d$ we put $\ol{B}=0$ and $\chi(\ol{B})=0$. Observe that the zero dimensional kernel of $E/A\to\ol{B}$ contributes a non-negative length to the Euler characteristic of $E/A$. Thus
\begin{align*}
\mu(A)\le\frac{\chi-\alpha(d-r)}{r},\qquad b:=\max_{1\le r\le d}\frac{\chi-\alpha(d-r)}{r}<\infty.
\end{align*}
Choose a positive generic parameter $t>\max\{b,0\}$.

Order a $\mu^\dagger_t$-HN filtration by decreasing slope. Suppose that rank zero factors occurred before the unique rank one factor. Combining all such factors and applying Toda's exact surface functor~\cite[Theorem~4.5.7, Corollary~4.5.9]{Tod24}, we would obtain a subsheaf of $E$. If this subsheaf had degree zero, purity of $E$ would make it zero. If it had positive degree $f$, every positive degree constituent would have slope greater than $t$, while zero dimensional constituents could only increase the total Euler characteristic; its slope would therefore exceed $t>b$, which contradicts the bound on subsheaf slopes. Thus the rank one factor comes first. Every subsequent rank zero factor has slope smaller than $t$, and in particular cannot be zero dimensional, since a zero dimensional sheaf has slope $+\infty$. The quotient after the rank one factor is consequently an extension of pure one dimensional sheaves and gives a pure quotient $Q$ of $E$. The lower slope bound passes to pure quotients, so $\mu_{\min}(Q)\ge\alpha$ and every subsequent HN factor has positive degree and slope at least $\alpha$.

Write $(1,\gamma,\chi_0)$ for the class of the rank one factor. For a positive parameter, Toda's lower bound lemma~\cite[Lemma~4.2.7]{Tod24}, together with the filtration by powers of the ideal of the zero section in Lemma~\ref{lem:lower}, bounds $\chi_0$ below by the least structure sheaf Euler characteristic in a class at most $\gamma$. Additivity then gives
\begin{align*}
\min_{0\le\delta\le\gamma}\Bigl(-\frac{\delta\cdot(\delta+K_S)}{2}\Bigr)\le\chi_0\le\chi-(d-e)\alpha,\qquad 0\le\gamma\le\beta,\quad e=L\cdot\gamma,
\end{align*}
so there are finitely many possible rank one classes. The degrees of the rank zero factors are positive and sum to $d-e$; their Euler characteristics are bounded below by $\alpha$ times their degrees and above by additivity. Hence the full list of numerical HN types is finite.

Let $\cT$ be the finite list defined by these inequalities. It contains the HN types for every positive generic $t>\max\{b,0\}$. For each ordered type, form the stack of filtrations in $\cA_X$ with the prescribed numerical classes of Lemma~\ref{lem:flags} over $t_0(T^*[-1]\bM^\dagger(\chi;\alpha))$. There are finitely many such stacks of filtrations, each proper over the ambient stack. Their graded coherent sheaves therefore form a bounded family.

To bound the modified flags, begin with the last quotient $G$ and write $T_0(G)$ for its maximal zero dimensional subsheaf. For a fixed sufficiently positive twist, $\mathrm{length}\,T_0(G)=h^0(T_0(G)(k))\le h^0(G(k))$, so these lengths have a uniform bound. For each length $\ell$ in this finite interval, the relative Quot scheme of quotients $G\twoheadrightarrow G/T$ with Hilbert polynomial $P_G-\ell$ parametrizes all possible subsheaves $T\subset G$ of that length. Pulling $T$ back to the preceding subobject in $\cA_X$ gives another finite type family. Repeat this construction at the preceding step with its new uniform length bound. There are at most $d$ positive degree steps, since each rank zero factor has degree at least one and their degrees sum to $d-e\le d$. The induction from the last step to the first therefore produces a bounded family containing the surface sheaves of every subobject and quotient arising from these modifications.

Fix a twist $k$ and a uniform bound $N$ for $h^0(G(k))$ on the finite type family. If $T_0(G)$ is the maximal zero dimensional subsheaf and $A$ is the maximal HN subsheaf of the pure quotient $G/T_0(G)$, then
\begin{align*}
f\bigl(\mu(A)+k\bigr)=\chi(A(k))\le h^0(A(k))\le h^0(G/T_0(G)(k))\le h^0(G(k))\le N
\end{align*}
whenever the left hand side is positive, where $f$ is the positive curve degree of $A$. This gives a uniform upper bound for $\mu(A)$, and all other positive dimensional HN slopes are smaller. Denote a common upper bound by $b_{\mathrm{fl}}$.

Now choose $t>\max\{b,b_{\mathrm{fl}},0\}$ beyond the last wall of the target class and of every rank one class in $\cT$. Toda's large chamber theorem~\cite[Theorem~4.2.10(i)]{Tod24} identifies the stable objects and all stable rank one factors with PT pairs. Lemma~\ref{lem:slope} places their surface sheaves in the open substack with bound $\alpha$; the rank zero factors satisfy this bound by the quotient argument above. A non-trivial HN type contains a positive degree rank zero factor, so its rank one factor has degree less than $d$.
\end{proof}

\begin{prop}\label{prop:HN}
For the parameter chosen in Lemma~\ref{lem:last-chamber}, the classical stack $t_0(T^*[-1]\bM^\dagger(\chi;\alpha))$ has a finite schematic HN stratification. Choose a linear extension of the order defined below in which $\sigma$ is treated before $\tau$ whenever $\sigma\succ\tau$. For every type $\tau$, after the preceding types in this order have been removed, the evaluation map from the derived stack $\Filt_\tau(\fU_\tau)$ of filtrations of type $\tau$ is an isomorphism
\begin{align*}
t_0\Filt_\tau(\fU_\tau)\xrightarrow{\ \sim\ }\fS_\tau
\end{align*}
onto the scheme-theoretic HN stratum. Here $\fU_\tau$ is the derived open remaining after the preceding types are removed, and $\fS_\tau$ is its classical HN stratum. The derived stack of associated graded objects of type $\tau$ is the product of the moduli stacks of the stable rank one factor and the semistable rank zero factors. For every classical base $T$, a family over $T$ factors through $\fS_\tau$ if and only if it admits the unique relative HN filtration of type $\tau$. This characterization is preserved by arbitrary base change.
\end{prop}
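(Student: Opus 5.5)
We follow the pattern of the proof of Proposition~\ref{prop:wall}, with the list $\cT$ of Lemma~\ref{lem:last-chamber} in place of the wall types. First we define the order. For two types $\sigma,\tau\in\cT$ with rank one classes $(1,\gamma_\sigma,\chi_\sigma)$ and $(1,\gamma_\tau,\chi_\tau)$, put $\sigma\succ\tau$ when the HN polygon of $\sigma$ lies above that of $\tau$. Concretely, the partial sums of the classes in decreasing $\mu^\dagger_t$-slope dominate those of $\tau$ and differ from them, with the rank one step weighted by $t$. This is the usual Shatz order. Lemma~\ref{lem:last-chamber}(iii) gives finitely many types, so a linear extension exists. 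By upper semicontinuity of the HN polygon, the union of the strata of types $\succeq\sigma$ is closed. Hence removing types in this order leaves opens $\fU_\tau$, and on each of them $\tau$ is the maximal remaining type.

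For a fixed $\tau$, consider the evaluation map $p_\tau\colon\fE_\tau\to t_0\fU_\tau$ from the stack of flags of type $\tau$. Lemma~\ref{lem:flags} makes it representable and proper, and compatible with base change. The next step is to prove the analogue of~\eqref{eq:HN-inverse}: every flag of type $\tau$ over $\fU_\tau$ has stable rank one factor and semistable rank zero factors. Suppose a factor were unstable. Refining the flag by that factor's own HN filtration produces a type with a strictly higher polygon, which has already been removed. Since the HN filtration is unique, the map $p_\tau$ is then injective on geometric points, and moreover radicial. Its relative tangent space is $\Hom_{\cA_X}(A,B)$ for the sub- and quotient pieces of the flag, and this vanishes by the strict slope decrease between HN factors. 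Therefore $p_\tau$ is a proper, radicial, unramified monomorphism, hence a closed immersion onto $\fS_\tau$. Combined with the Rees description of Lemma~\ref{lem:rees}(i), this gives $t_0\Filt_\tau(\fU_\tau)\simeq\fS_\tau$.

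For the functorial characterization, note that the closed immersion represents exactly the functor of flags of type $\tau$. A $T$-family factors through $\fS_\tau$ precisely when such a flag exists, and that flag is unique by the vanishing of $\Hom$. Base change is inherited from Lemma~\ref{lem:flags} and Toda's exactness results. The graded stack is treated as in Lemma~\ref{lem:rees}(ii), applied iteratively to multi-weight gradings with distinct weights: the weight zero part splits into factors. By Lemma~\ref{lem:last-chamber}(ii) and (iii), the rank one factor is a PT pair and the rank zero factors are semistable sheaves with slope at least $\alpha$. This identifies the graded stack with the product of the moduli stacks.

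The main obstacle is the passage from ``unramified monomorphism'' to ``schematic stratum with the right scheme structure'' when there are several rank zero factors. With more than two steps, the vanishing of the relative tangent space requires $\Hom$ from each earlier piece to each later quotient. Here we use that every graded factor satisfies the bound $\alpha$, and that slopes stay below $t$ by Lemma~\ref{lem:last-chamber}(iv). This is needed even after the zero-dimensional modifications, which might otherwise produce nonzero maps from the rank one factor into a rank zero quotient with torsion. We would check this first, using that all rank zero quotients are pure by Lemma~\ref{lem:last-chamber}(ii), and then invoke Nitsure's representability~\cite{Nit11} to conclude.
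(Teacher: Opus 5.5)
Your skeleton is the paper's: the finite list $\cT$, removal of the closed images of the proper maps $p_\sigma$ in a linear extension, the identity $p_\tau^{-1}(U_\tau)=\fE^{\sss}_\tau$, then proper, radicial and unramified gives a closed immersion, with Rees for $\Filt_\tau$ and Lemma~\ref{lem:rees} for $\Grad_\tau$. But the step that carries the content, $p_\tau^{-1}(U_\tau)=\fE^{\sss}_\tau$, is not proved.

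You reduce it to two claims. The first is ``upper semicontinuity of the HN polygon''. The second is that refining a flag by the HN filtration of an unstable factor gives a strictly higher type that has already been removed. Neither works as stated for Toda's weak stability on $\cA_X$. What has been removed are the images of $p_\sigma$ for $\sigma\in\cT$. Types in $\cT$ satisfy $t>\eta_1/f_1>\cdots>\eta_r/f_r$, and all their rank zero factors have positive degree. A refined flag is generally not of such a type:
\begin{itemize}
\item If a rank zero factor $G[-1]$ of the flag has zero dimensional torsion, its HN filtration begins with $T_0(G)[-1]$, of slope $+\infty$. This piece would then sit after the rank one factor of slope $t$.
\item If the rank one factor is unstable, its destabilizing rank zero pieces interleave with the later factors.
\end{itemize}
So you must show that the HN type of the object itself is $\succ\tau$. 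Every rank one object has slope $t$ whatever its $(\gamma,\chi_0)$, so this semicontinuity is not a black box here; it is precisely what has to be proved. Also, your order ``with the rank one step weighted by $t$'' is never actually specified.

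The missing argument is the paper's test~\eqref{eq:type-test}. It runs as follows.
\begin{itemize}
\item Starting from the last quotient, absorb $T_0(G)$ into the preceding subobject. This remains a subobject on $X$ because the Higgs field preserves the maximal zero dimensional subsheaf. It raises a polygon vertex without changing degrees.
\item The resulting quotient $Q$ is pure, and its Higgs-invariant Gieseker HN slopes lie below $t$ by Lemma~\ref{lem:last-chamber}(iv); that is what part (iv) is for.
\item If the new rank one subobject $A_0$ is stable, HN polygon dominance gives strict domination over $P_\tau$.
\item If $A_0$ is unstable, the subsheaf bound $t>b$ and purity exclude rank zero factors before its rank one HN factor $B_0$. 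Pure rank zero sheaves with $\mu_{\max}<t$ are closed under extensions, so $B_0$ is also the first HN factor of $I$, and it has strictly smaller degree.
\end{itemize}
This last case is why the paper orders types first by rank one degree and only then by polygons.

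Your appeal to Lemma~\ref{lem:last-chamber}(ii) for purity of the rank zero quotients does not replace this. Part (ii) concerns HN factors, not the factors of an arbitrary flag in $\fE_\tau$, and those arbitrary factors are exactly what needs controlling. By contrast, the obstacle you single out is routine: vanishing of $\Hom_{\cA_X}(A_i,A_j)$ for $i<j$ with several factors follows from the strict phase inequalities once the flag is known to lie in $\fE^{\sss}_\tau$.
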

\begin{proof}
Pull back Toda's stack of flat families in $\cA_X$ to the bounded pure sheaf open substack. Lemma~\ref{lem:last-chamber} gives finitely many types, all with the rank one factor first. For each type let $p_\tau\colon\fE_\tau\to t_0(T^*[-1]\bM^\dagger(\chi;\alpha))$ be its map from the stack of filtrations in $\cA_X$ with the prescribed numerical classes. The map is proper and representable by Lemma~\ref{lem:flags}.

We order the types by their polygons. Put $e=L\cdot\gamma$ and $f_j=L\cdot\delta_j$. For factor classes
\begin{align*}
(1,\gamma,\chi_0),\quad(0,\delta_1,\eta_1),\ \ldots,\ (0,\delta_r,\eta_r),\qquad t>\eta_1/f_1>\cdots>\eta_r/f_r,
\end{align*}
let $P_\tau$ be the polygon joining $(e,\chi_0)$ to the successive vertices
\begin{align*}
\Bigl(e+\sum_{i\le j}f_i,\ \chi_0+\sum_{i\le j}\eta_i\Bigr),\qquad 1\le j\le r,
\end{align*}
ending at $(d,\chi)$. Declare $\sigma\succ\tau$ if its rank one degree is smaller, or if those degrees agree and $P_\sigma$ is weakly above $P_\tau$ and strictly above somewhere. This defines a partial order.

Consider any geometric flag of type $\tau$. Starting at its last quotient $G[-1]$, absorb the maximal zero dimensional subsheaf $T\subset G$ into the preceding subobject in $\cA_X$ by taking the inverse image of $T[-1]$, and continue backwards. Under Toda's exact equivalence, each surface subsheaf is replaced by the inverse image of the zero dimensional torsion in its quotient. The Higgs field carries $T$ into $T\otimes K_S$, since its image is zero dimensional and $T$ is the maximal zero dimensional subsheaf. Thus the inverse image remains a subobject on the threefold. Absorbing a nonzero torsion subsheaf raises the corresponding polygon vertex without changing its curve degree. After these operations the quotients are pure.

Let $A_0$ be the resulting rank one subobject and write $I/A_0=Q[-1]$, where $Q$ is pure. The Gieseker HN filtration of its surface sheaf is preserved by the Higgs field, since the map from a maximal slope term to the quotient tensored with $K_S$ vanishes, and its slopes are below $t$ by Lemma~\ref{lem:last-chamber}~(iv).

If $A_0$ is stable, precede the lifted HN filtration of $Q$ by $A_0$; this is the HN filtration of $I$. The HN polygon of the whole quotient dominates every filtration polygon, with equality only for its HN flag. Removing nonzero torsion or replacing a non-semistable displayed quotient by its HN filtration therefore gives strict dominance over $P_\tau$.

If $A_0$ is unstable, the subsheaf bound $t>b$ and purity exclude rank zero factors before its unique rank one HN factor $B_0$. Its remaining factors have positive degree and slopes below $t$, so $\deg B_0<e$. The quotient $I/B_0$ is an extension of $A_0/B_0$ by $Q[-1]$, and pure rank zero sheaves with maximal HN slope below $t$ are closed under extensions. Thus $B_0$ is also the first HN factor of $I$, and its smaller degree gives $\mathrm{type}_{\mathrm{HN}}(I)\succ\tau$. In either case we have proved
\begin{align}\label{eq:type-test}
\text{if the displayed flag is not the HN flag, then } \mathrm{type}_{\mathrm{HN}}(I)\succ\tau.
\end{align}
Combining equal slope factors concatenates collinear segments and preserves strict dominance. Conversely, a stable rank one factor and pure semistable rank zero factors in strictly decreasing phases give the HN flag by uniqueness.

After the preceding types have been removed, denote the remaining open substack by $U_\tau$ and its derived inverse image by $\fU_\tau$. Equation~\eqref{eq:type-test} gives
\begin{align}\label{eq:shatz-inverse}
p_\tau^{-1}(U_\tau)=\fE^{\sss}_\tau,
\end{align}
where the right hand side imposes the stability and purity conditions on the factors. Indeed, a flag outside $\fE^{\sss}_\tau$ has a preceding HN type by~\eqref{eq:type-test}, and conversely an HN object of type $\tau$ cannot admit a flag of a preceding type $\sigma$, since the test would give $\tau=\sigma$ or $\tau\succ\sigma$. As both sides are open substacks, equality on geometric points proves~\eqref{eq:shatz-inverse}.

The restricted flag map is proper and radicial by HN uniqueness. Linearizing the equations that require the diagram arrows to preserve the filtration and successively forgetting its steps filters the relative tangent space by the groups $\Hom_{\cA_X}(A_i,A_j)$ for $i<j$, where the $A_i$ are the ordered graded factors. These groups vanish by the strict HN phase inequalities. The map is therefore unramified, hence a closed immersion by~\cite[Lemma~4]{Nit11}, checked after a smooth atlas. Its image $\fS_\tau$ carries the universal relative HN flag. Over any base, a family with such a flag lifts uniquely to $\fE^{\sss}_\tau$, and pull-back of the universal flag recovers the given one. These constructions prove the stated characterization and its compatibility with base change. Continuing through the finite order gives the schematic stratification. The Rees equivalence identifies the flag stack with $t_0\Filt_\tau(\fU_\tau)$.

Lemma~\ref{lem:rees} identifies the derived stack of graded objects of type $\tau$ with the product of the derived factor stacks, and~\eqref{eq:shatz-inverse} restricts it to the open substacks where the factors satisfy the required stability conditions. The rank one factor satisfies the lower slope bound by Lemma~\ref{lem:slope}; the rank zero factors do so because the lower slope bound passes to pure quotients.
\end{proof}

\begin{prop}\label{prop:filtration}
Inside $t_0(T^*[-1]\bM^\dagger(\chi;\alpha))$, the complement of the stable pair open substack $P_\chi(X,\beta)$ has a finite filtration by closed unions of HN strata. After the direct image to $B_\beta$ and rational realization, every successive term, in each bounded range of perverse cohomological degrees, is a finite direct sum of direct summands of the rational realization of
\begin{align}\label{eq:large-chamber-term}
\dR\mu_{\tau*}\bigl(K_{\chi_0,\gamma}\boxtimes H_{\eta_1,\delta_1}\boxtimes\cdots\boxtimes H_{\eta_r,\delta_r}\bigr)\otimes V_\tau.
\end{align}
Here $e=L\cdot\gamma$, $\delta_j>0$, $f_j=L\cdot\delta_j$, $\mu_\tau\colon B_\gamma\times\prod_jB_{\delta_j}\to B_\beta$ is the addition of divisors, and $V_\tau$ is a bounded below graded mixed Hodge structure with finite dimensional pieces, recording the localization shifts and stabilizer cohomology. For every HN stratum in this complement, $r>0$ and
\begin{align}\label{eq:large-chamber-numerics}
\beta=\gamma+\sum_{j=1}^r\delta_j,\qquad d=e+\sum_{j=1}^rf_j,\qquad \chi=\chi_0+\sum_{j=1}^r\eta_j.
\end{align}
In particular $e<d$. The case $e=0$ means the object $\cO_{\ol{X}}$ with its fixed framing, with $\chi_0=0$ and coefficient $\QQ_{B_0}$.
\end{prop}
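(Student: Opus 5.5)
The plan follows the proof of Proposition~\ref{prop:wall}, with the schematic stratification of Proposition~\ref{prop:HN} replacing Toda's wall stratification. By Lemma~\ref{lem:last-chamber}(i) the open stratum is $P_\chi(X,\beta)$. Its complement is the union of the finitely many strata $\fS_\tau$ of nontrivial type, and Lemma~\ref{lem:last-chamber}(iii) says there are only finitely many. Fix the linear extension of $\succ$ from Proposition~\ref{prop:HN}. Let the $j$-th closed set of the filtration be the union of the first $j$ strata in this order. Each such set is closed, because removing the strata in order always leaves an open $U_\tau$ in which $\fS_\tau$ is closed. Applying the localization triangle $i_*i^!\to\id\to \dR j_*j^*$ at each step gives a finite sequence of triangles for $\dR\rho_{\chi*}$ of the vanishing cycle module. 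The open term is the PT direct image $K_{\chi,\beta}$, and the successive closed terms are $\dR(\rho\circ\ev_\tau)_*\ev_\tau^!\phi_{\fU_\tau}$, using $t_0\Filt_\tau(\fU_\tau)\simeq\fS_\tau$ from Proposition~\ref{prop:HN}.

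Next I compute each closed term by hyperbolic localization~\cite[Corollary~7.19]{KPS26} on the multi-weight attractor correspondence. The weights are chosen to be strictly decreasing along the HN order: the rank one factor comes first, followed by $r$ rank zero factors. Lemma~\ref{lem:rees}(ii) extends to several weights, as already noted in its proof, so $\Grad_\tau(\fU_\tau)$ is the derived product $P_{\chi_0}(X,\gamma)\times\prod_j\cM^{\sss}_X(\delta_j,\eta_j)$. By Lemma~\ref{lem:orient}, using associativity for iterated filtrations, the localized orientation is the product orientation. Thom--Sebastiani~\cite[Corollary~4.4]{KPS26} then identifies $\phi_{\Grad_\tau}$ with the exterior product of the factor modules, up to the Tate object recording the localization shift. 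Rydh's cycle map gives additivity of fundamental cycles, so $\rho\circ\ev_\tau=\mu_\tau\circ(\Pi\times\prod h)\circ\gr_\tau$. Pushing forward and applying K\"unneth produces~\eqref{eq:large-chamber-term}. For the rank one factor I use Toda's large chamber theorem and Lemma~\ref{lem:last-chamber}(ii) to see that it is a PT pair, so the chamber complex is $K_{\chi_0,\gamma}$. The equalities~\eqref{eq:large-chamber-numerics} follow from additivity of classes. The conditions $r>0$ and $e<d$ are Lemma~\ref{lem:last-chamber}(ii). When $e=0$, the rank one factor is $\cO_{\ol X}$ with its framing, whose moduli space is a point with trivial DT module.

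The step I expect to be the main obstacle is the stabilizer cohomology. The ambient stack is an Artin stack rather than an algebraic space, so $\gr_{\tau*}$ of the product module may carry extra factors, namely $H^*(B\Gm)$-type factors from the torus gradings together with the Tate objects. Because of these factors the terms are bounded only below, which is why the statement says ``in each bounded range of perverse degrees'' and allows a graded $V_\tau$ in place of a one-dimensional Tate object. My first approach is to use the left completion $\widehat{D}\MHM$ and the finite type direct image result~\cite[Theorem~3.8(2)]{Tub25}. Each perverse truncation then involves only finitely many graded pieces of $V_\tau$. I would also use conservativity and compatibility of $\rat$ with $f_*$ and $f^!$~\cite[Proposition~3.16]{Tub25} to transfer the localization isomorphism from its rational form. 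Finally, ``direct summands'' covers the passage from the semistable stacks to the symmetric-group invariants that appear when several $\delta_j$ are combined in Lemma~\ref{lem:rank-zero}.
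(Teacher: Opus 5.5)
Your proposal is correct and follows essentially the same route as the paper's proof. Both arguments use the finite schematic HN stratification from Lemma~\ref{lem:last-chamber} and Proposition~\ref{prop:HN}, the localization construction of Proposition~\ref{prop:wall} applied to each $\Filt_\tau(\fU_\tau)$ with the associative orientation of Lemma~\ref{lem:orient}, Thom--Sebastiani, additivity of cycles with K\"unneth, and the numerics and $e<d$ from additivity of classes in the heart. Your extra discussion of stabilizer cohomology and the left completion fits the paper's framework (the graded $V_\tau$ and the completed category of Subsection~\ref{sub:categories}) and does not change the argument.
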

\begin{proof}
Lemma~\ref{lem:last-chamber} and Proposition~\ref{prop:HN} give the finite schematic filtration and the products of graded factors. Extension closure keeps these filtrations in the prescribed open substack.

Apply the localization construction of Proposition~\ref{prop:wall} to each derived stack $\Filt_\tau(\fU_\tau)$ of filtrations, using the associative orientation of Lemma~\ref{lem:orient}. The mixed Hodge localization isomorphism~\eqref{eq:wall-iso}, Thom-Sebastiani~\cite[Corollary~4.4]{KPS26}, and additivity of cycles give~\eqref{eq:large-chamber-term} by the K\"unneth formula. Factors of equal slope are treated as one semistable factor.

For the numerics, in the heart one has $[\cO_X\to F]=[\cO_X\to F_0]+\sum_j[Q_j[-1]]$. Comparing degrees and Euler characteristics, with $[Q_j[-1]]=-[Q_j]$, gives~\eqref{eq:large-chamber-numerics}. Every unstable object has a semistable rank zero HN factor with nonzero curve class, so $e<d$.
\end{proof}

\begin{proof}[Proof of Theorem~\ref{thm:bounded}]
Let $j\colon P_\chi(X,\beta)\hookrightarrow\wt Y$ be the stable open substack of the shifted cotangent truncation defined in~\eqref{eq:Y}. If $i$ is the inclusion of its complement, localization gives the triangle
\begin{align*}
i_*i^!\varphi_\chi\to\varphi_\chi\to\dR j_*j^*\varphi_\chi\xrightarrow{+1}.
\end{align*}
Theorem~\ref{thm:oriented} and Proposition~\ref{prop:DR} give $j^*\varphi_\chi\cong\phi^{\PT}_{\chi,\beta}[v](v)$, where $v=\beta^2+\chi$. Shifts and Tate twists preserve $\cS_\beta$.

Proposition~\ref{prop:filtration} expresses the successive terms of the complement as convolutions~\eqref{eq:large-chamber-term}. Their PT factors have degree $e<d$ and satisfy the induction hypothesis; their semistable sheaf factors belong to $\cG$ by Lemma~\ref{lem:rank-zero}. Lemma~\ref{lem:serre} therefore places every convolution in $\cS_\beta$. Since the filtration is finite and $\cS_\beta$ is closed under cones, the direct image of the complement lies in $\cS_\beta$. Now take the direct image of the localization triangle to obtain~\eqref{eq:bounded-reduction}.
\end{proof}

\section{Hecke correspondences}\label{sec:hecke}
Point correspondences on Hilbert schemes of a smooth surface give the Heisenberg actions of Grojnowski and Nakajima~\cite{Gro96,Nak97}. For a fixed locally planar curve, related operators were constructed by Rennemo and Kivinen~\cite{Ren18,Kiv19}. Here point modifications on bounded pair stacks give operators over the Chow variety.

\subsection{Point-modification operators}\label{sub:operators}
Fix $S$ and a class $\beta$ with $g_\beta>0$, and put $d=L\cdot\beta$. We use the bounded pair stacks of Subsection~\ref{sub:slice}. Let $h_\chi(c)\colon M^\dagger(\chi;c)\to B_\beta$ be their Hilbert-Chow maps, so $h_\chi(\alpha)=h_\chi$ in~\eqref{eq:rho}. Set
\begin{align}\label{eq:K-chi-c}
K_\chi(c):=\dR h_\chi(c)_*\,\omega_{M^\dagger(\chi;c)}\langle-\chi\rangle,\qquad \alpha=\frac{1-g_\beta}{d}.
\end{align}
With the twist $\langle-\chi\rangle$, both point-modification operators have degree zero. Proposition~\ref{prop:DR} gives
\begin{align*}
K_\chi(\alpha)\cong\bigl(\dR\rho_{\chi*}\phi_{\bY}\bigr)[\beta^2-\chi](\beta^2).
\end{align*}

For $c\le c'$, restriction along the open inclusion $M^\dagger(\chi;c')\hookrightarrow M^\dagger(\chi;c)$ gives a morphism
\begin{align}\label{eq:res}
\res_\chi(c,c')\colon K_\chi(c)\to K_\chi(c').
\end{align}
Proposition~\ref{prop:slice} applies to any fixed lower bound, so these stacks are of finite type. The notation will also be used over open subsets of $B_\beta$.

Choose a smooth irreducible divisor $D\subset S$ in $|rL|$ with $r>d$. A \emph{point modification} is an exact sequence of pure sheaves
\begin{align}\label{eq:point-modification}
0\to F_-\to F_+\to\cO_x\otimes\cL\to 0
\end{align}
with compatible sections, where $\cL$ is a line bundle on the parameter scheme. Note that the sequence is non-split, since $F_+$ is pure. The line $\cL$ records the scalar automorphisms of the quotient. Let
\begin{align}\label{eq:hecke-diagram}
\xymatrix{ & \fH_\chi\ar[dl]_-{p_-}\ar[d]^-{q}\ar[dr]^-{p_+} & \\ \bM^\dagger(\chi;c) & S & \bM^\dagger(\chi+1;c')}
\end{align}
be the lower pair, point and upper pair maps from the derived Hecke stack. It is defined as a derived fiber product in Lemma~\ref{lem:diagrams}. On the stack of all pure pairs, Toda's two projectivization descriptions~\cite[Lemmas~5.6, 5.7]{Tod20} make the pair projections representable, proper and quasi-smooth, with virtual relative dimensions $1$ and $0$, respectively. Restricting the marked point to $D$ changes the latter dimension to $-1$. The slope bounds satisfy
\begin{align}\label{eq:hecke-slopes}
\mu_{\min}(F_-)\ge c\ \Longrightarrow\ \mu_{\min}(F_+)\ge c,\qquad
\mu_{\min}(F_+)\ge c\ \Longrightarrow\ \mu_{\min}(F_-)\ge c-1.
\end{align}
Indeed, for the first implication, the image of $F_-$ in any pure quotient of $F_+$ is a pure quotient of $F_-$ with colength at most one, so its slope is at least that of the corresponding quotient of $F_-$. For the second, extend a pure quotient of $F_-$ across the modification and remove its zero dimensional torsion; a degree $e$ quotient obtained this way has Euler characteristic at least $ce-1$, and hence slope at least $c-1$.

For creation, we require the upper pair to lie in $M^\dagger(\chi+1;\alpha)$; its lower pair then lies in $M^\dagger(\chi;\alpha-1)$. For removal, the lower pair lies in $M^\dagger(\chi;c)$ with $c=\alpha$ or $\alpha-1$, the removed point lies on $D$, and the upper pair then lies in $M^\dagger(\chi+1;c)$. Write $\fH^E_\chi$ and $\fH^F_\chi(c)$ for these two derived correspondences. In each case the bound on the source is automatic from the bound on the target by~\eqref{eq:hecke-slopes}. The target projection is therefore a base change of the corresponding proper projection on the stack of all pure pairs, while the source projection is quasi-smooth. The purity map for the source and proper trace for the target give the degree zero maps
\begin{align}\label{eq:EF}
\begin{aligned}
E_\chi&\colon K_\chi(\alpha-1)\to K_{\chi+1}(\alpha) &&\text{(add a point)},\\
F_\chi(c)&\colon K_{\chi+1}(c)\to K_\chi(c) &&\text{(remove a point on $D$)}.
\end{aligned}
\end{align}
Before normalizing by $\langle-\chi\rangle$, the target twists are $\langle-1\rangle$ for $E_\chi$ and $\langle 1\rangle$ for $F_\chi(c)$.

For a quasi-smooth map $f\colon Z\to Y$ of virtual dimension $v$, we use the purity transformation
\begin{align*}
\eta_f\colon f^*\omega_Y\langle v\rangle\to f^!\omega_Y=\omega_Z.
\end{align*}
Throughout this section we use the functoriality and derived base change of~\cite[Theorems~3.12, 3.13]{Kha19}, followed by the Hodge realization of~\cite{Tub25}. The Gysin morphisms use the complex orientation, and the trace of a finite \'etale map has its usual degree.

Removal commutes with restriction of the slope bound:
\begin{align}\label{eq:F-restriction}
\res_\chi(\alpha-1,\alpha)\circ F_\chi(\alpha-1)=F_\chi(\alpha)\circ\res_{\chi+1}(\alpha-1,\alpha).
\end{align}
Both sides use the correspondence of upper modifications of lower pairs with bound $\alpha$; the upper pairs retain this bound by~\eqref{eq:hecke-slopes}.

\subsection{Derived Hecke diagrams and purity}\label{sub:diagrams}
Write $\bM_S(0,1)$ for the derived stack of length one sheaves on $S$, and set
\begin{align*}
\cZ:=S\times B\Gm.
\end{align*}
The natural map $\cZ\to\bM_S(0,1)$ sends, on a derived affine scheme $T$, a point $x\colon T\to S$ and a line bundle $\cL$ to $\cO_{\Gamma_x}\otimes\cL$ on $S\times T$, where $\Gamma_x$ is the graph of $x$.

\begin{lem}\label{lem:diagrams}
Let $\fE_\chi$ be the derived stack of fiber sequences $F_-\to F_+\to Q$ of perfect complexes on $S$, with $Q$ of length one, together with a morphism $\cO_S\to F_-$. Require both $F_-$ and $F_+$ to be flat pure sheaves of their prescribed classes on classical truncations. Then Toda's derived Hecke stack is
\begin{align}\label{eq:hecke-basechange}
\fH_\chi=\fE_\chi\times_{\bM_S(0,1)}\cZ.
\end{align}
The data include the identification of $Q$ with the marked point sheaf, the induced upper section and its null-homotopy in $Q$. For points on $D$, replace $\cZ$ by $D\times B\Gm$ in the derived fiber product.
\end{lem}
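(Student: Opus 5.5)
We first make Toda's derived Hecke stack explicit and then compare it with the fiber product~\eqref{eq:hecke-basechange} on functors of points over derived affine schemes $T$. Following~\cite[Lemmas~5.6, 5.7]{Tod20}, a $T$-point of $\fH_\chi$ is a pair $\cO_S\to F_-$ with $F_-$ flat and pure, a point $x\colon T\to S$, and a fiber sequence $F_-\to F_+\to\cO_{\Gamma_x}\otimes\cL$ in which $F_+$ is flat and pure. The upper section $\cO_S\to F_+$ is the composite with $F_-\to F_+$. Its image in $Q$ carries the canonical null-homotopy coming from the fiber sequence.

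Next we write out a $T$-point of the right-hand side of~\eqref{eq:hecke-basechange}. It consists of a point of $\fE_\chi$, that is, a fiber sequence $F_-\to F_+\to Q$ with a section $\cO_S\to F_-$, together with a point $(x,\cL)$ of $\cZ$. It also includes an equivalence $Q\simeq\cO_{\Gamma_x}\otimes\cL$ in $\bM_S(0,1)(T)$. Forgetting this equivalence recovers the data above, and the equivalence supplies exactly the identification named in the statement. The upper section and its null-homotopy are not extra data: the section is the composite $\cO_S\to F_-\to F_+$, and the fiber sequence structure provides the null-homotopy. This gives the map of derived stacks together with its inverse on $T$-points. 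Both constructions are compatible with base change in $T$, so they define an equivalence.

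The step needing care is that $\cZ\to\bM_S(0,1)$ is an equivalence onto the classical truncation but not necessarily derived-equivalent. A priori, then, the fiber product could differ from Toda's stack in its derived structure. To handle this, we check that Toda's definition is literally the pull-back of the universal fiber-sequence stack along the marked point map, which is how~\cite{Tod20} builds the Hecke stack from $\cO_{\Gamma_x}\otimes\cL$. If Toda instead defines $\fH_\chi$ as a relative projectivization $\PP(\cExt^1(\cO_x,F_-))$, we compare the two tangent complexes. The fiber of $\fE_\chi\to\bM^\dagger\times\bM_S(0,1)$ is the extension space $\Rhom(Q,F_-)[1]$, and pulling it back along $\cZ$ yields the projectivization once the $B\Gm$ factor is quotiented out. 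Equivalence on classical truncations together with an isomorphism of cotangent complexes then gives a derived equivalence.

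The purity conditions on $F_\pm$ are open, so imposing them commutes with the fiber product. For points on $D$, the same argument applies with $D\times B\Gm\to\cZ$, since a base change of $\fH_\chi$ along $D\hookrightarrow S$ is again a derived fiber product. I expect the main obstacle to be this matching of derived structures with Toda's projectivization description, rather than the comparison on points.
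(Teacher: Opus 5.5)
Your proposal is correct and follows the paper's route. The substance is that Toda's Hecke stack is by definition the top Cartesian square of \cite[(5.5)]{Tod20}. There the rigidification $\Spec_S\Sym(K_S[1])$ of $\bM_S(0,1)$ is pulled back to its classical truncation $S$, which yields exactly the fiber product with $\cZ=S\times B\Gm$. The upper section and its null-homotopy come from the mapping-space equivalence for a fiber sequence, as you say.

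Your tangent-complex fallback is not needed. As stated it would also require an actual comparison morphism, not merely isomorphic truncations and cotangent complexes. Note too that the projectivizations of \cite[Lemmas~5.6, 5.7]{Tod20} describe the two projections of this stack rather than define it.
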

\begin{proof}
Toda defines the stack of extensions as the stack of fiber sequences of perfect complexes whose classical restrictions are flat exact sequences \cite[Section~3.1, (3.4)--(3.6)]{Tod20}, and his framed extension stack is its derived base change by the stack of lower sections \cite[Section~4.2, (4.10)--(4.11)]{Tod20}. Compose a lower section with the fiber sequence to obtain the upper section and its canonical null-homotopy in the cofiber. Conversely, an upper section with this null-homotopy is a map to the fiber. These are the inverse maps in the equivalence of mapping spaces for a fiber sequence over a derived base.

The top Cartesian square in~\cite[(5.5)]{Tod20} is~\eqref{eq:hecke-basechange}. In that diagram the rigidification of $\bM_S(0,1)$ is $\Spec_S\Sym(K_S[1])$, and the square below it pulls this rigidification back to $S$. Its pull-back in the point sheaf stack is $S\times B\Gm$, with the graph family just described.

Fixing the lower pair and $x$, the non-zero extension maps are maps $\cO_x\to F_-[1]$ modulo the scalar automorphisms of $\cL$, and relative Serre duality identifies the dual of their mapping complex with $F_-^\vee|_x\otimes K_S[1]$. Fixing the upper pair and $x$, the quotient map together with the null-homotopy of its section is a map $I_+[1]\to\cO_x$, where $I_+=[\cO_S\to F_+]$; its non-zero locus modulo scalars is the projectivization of $I_+[1]|_x$. These descriptions give the maps from the projectivizations of~\cite[Lemmas~5.6, 5.7]{Tod20} to the stack of marked exact diagrams. Both perfect complexes have fiber amplitude $[-1,0]$ and ranks $0$ and $-1$; their projectivizations consequently have virtual relative dimensions $-1$ and $-2$ over the pair stack times $S$, which gives the dimensions stated above.
\end{proof}

In a stable $\infty$-category, a square is \emph{bicartesian} if it is both a homotopy pull-back and a homotopy push-out. Its opposite edges have canonically equivalent cofibers, and a span or cospan has a contractible space of bicartesian completions.

\begin{lem}\label{lem:convolution}
Suppose that
\begin{align*}
\xymatrix{\fX & \fZ_1\ar[l]_-{s_1}\ar[r]^-{t_1} & \fY,} \qquad \xymatrix{\fY & \fZ_2\ar[l]_-{s_2}\ar[r]^-{t_2} & \fW}
\end{align*}
are derived correspondences whose classical truncations have compatible maps to a base $B$, with $s_i$ quasi-smooth of virtual dimension $v_i$ and $t_i$ representable and proper. On the classical truncations, use the purity maps for dualizing complexes and the proper traces. Then their composite is the correspondence given by the purity of
\begin{align*}
s:=s_1\circ\mathrm{pr}_1\colon\fZ_1\times_{\fY}\fZ_2\to\fX
\end{align*}
and the trace of $t:=t_2\circ\mathrm{pr}_2$. The source map has virtual relative dimension $v_1+v_2$. An equivalence of the derived fiber products over $\fX$ and $\fW$ identifies these correspondences, including their purity morphisms.
\end{lem}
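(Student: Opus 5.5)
The plan is to reduce the statement to the standard compatibilities of the six-functor formalism for dualizing complexes on derived stacks, in the form of \cite[Theorems~3.12, 3.13]{Kha19}. First, form the derived fiber product $\fZ:=\fZ_1\times_{\fY}\fZ_2$ with projections $\mathrm{pr}_1\colon\fZ\to\fZ_1$ and $\mathrm{pr}_2\colon\fZ\to\fZ_2$. Then $\mathrm{pr}_1$ is the base change of $s_2$ along $t_1$, and $\mathrm{pr}_2$ is the base change of $t_1$ along $s_2$. Consequently $\mathrm{pr}_1$ is quasi-smooth of virtual dimension $v_2$, since quasi-smoothness and relative virtual dimension are stable under derived base change because the relative cotangent complex pulls back. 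Likewise $\mathrm{pr}_2$ is representable and proper. The composite $s=s_1\circ\mathrm{pr}_1$ is quasi-smooth of virtual dimension $v_1+v_2$, by the transitivity triangle for relative cotangent complexes, and $t=t_2\circ\mathrm{pr}_2$ is representable and proper. All maps lie over $B$, since classical truncation commutes with the structure maps.

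Next, write the composite of the two correspondences as the string of maps
\begin{align*}
\dR t_{2*}\,\eta_{s_2}\,s_2^*\,\dR t_{1*}\,\eta_{s_1}\,s_1^*\omega_{\fX},
\end{align*}
with trace maps $\dR t_{i*}\omega_{\fZ_i}\to\omega$ applied after each purity. The key step is to commute $s_2^*$ past $\dR t_{1*}$ by proper base change, $s_2^*\dR t_{1*}\cong\dR\mathrm{pr}_{2*}\mathrm{pr}_1^*$. One then checks two compatibilities. The first is that the purity transformation is compatible with base change: under the base change isomorphism, $\eta_{s_2}$ applied to $\dR t_{1*}\omega_{\fZ_1}$ corresponds to $\dR\mathrm{pr}_{2*}\eta_{\mathrm{pr}_1}$, and the trace of $t_1$ corresponds to the trace of $\mathrm{pr}_2$. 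Both are part of the base change axioms for the fundamental class in Khan's formalism, and the trace statement is the exchange $s_2^*t_{1!}\cong\mathrm{pr}_{2!}\mathrm{pr}_1^*$ combined with $t_!=t_*$ for proper $t$. The second is functoriality of purity and of traces under composition: $\eta_{s_1\circ\mathrm{pr}_1}=\eta_{\mathrm{pr}_1}\circ\mathrm{pr}_1^*\eta_{s_1}$, up to the twist $\langle v_1\rangle$, which reflects the multiplicativity of fundamental classes; and $\mathrm{tr}_{t_2\circ\mathrm{pr}_2}=\mathrm{tr}_{t_2}\circ\dR t_{2*}\mathrm{tr}_{\mathrm{pr}_2}$. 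Assembling these identifies the composite with the correspondence for $(s,t)$. Since every map lies over $B$, the identification holds after direct image to $B$, and then after Hodge realization \cite{Tub25}.

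Finally, the last assertion follows from naturality. An equivalence $\fZ\simeq\fZ'$ over $\fX\times\fW$ intertwines the cotangent complexes, and hence the fundamental classes and purity morphisms, because these are defined intrinsically from the relative cotangent complex. The traces are intertwined as well, since they depend only on the proper maps.

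The main obstacle is the compatibility of purity with base change along a proper map that is not quasi-smooth. The purity maps are only defined on classical truncations, while the fiber product must be derived. I would first handle this by invoking Khan's statement that fundamental classes of quasi-smooth maps are stable under arbitrary derived base change. Then I would pass to truncations, using that $\omega$ and the six operations only see the classical truncation.
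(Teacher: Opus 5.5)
Your proposal is correct and follows the paper's proof essentially step for step. You identify $\mathrm{pr}_1$ and $\mathrm{pr}_2$ as the base changes of $s_2$ and $t_1$, use proper base change to move $\dR t_{1*}$ past $s_2^*$, and combine the functoriality and base-change compatibility of the purity transformation from \cite[Theorems~3.12, 3.13]{Kha19} with the composition of proper counits; the last assertion then follows, as in the paper, from naturality of the relative cotangent complexes under an equivalence over $\fX$ and $\fW$.
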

\begin{proof}
Put $\fZ:=\fZ_1\times_{\fY}\fZ_2$. The first projection is the derived base change of $s_2$ and the second the base change of the proper map $t_1$, so $s$ is quasi-smooth of virtual dimension $v_1+v_2$ and $t$ is proper. The purity transformation is functorial for compositions and compatible with derived base change \cite[Remark~3.8, Theorems~3.12, 3.13]{Kha19}; applied to $s=s_1\circ\mathrm{pr}_1$, with $\mathrm{pr}_1$ the base change of $s_2$, this expresses $\eta_s$ as the composite of $\mathrm{pr}_1^*\eta_{s_1}$ and the pull-back of $\eta_{s_2}$. Proper base change for $t_1$ moves the first proper direct image past $s_2^*$, and the two proper counits compose to the counit for $t$. An equivalence over $\fX$ and $\fW$ identifies the relative cotangent complexes and their purity transformations by naturality.
\end{proof}

The two compositions in the commutator are represented by the derived fiber products
\begin{align}\label{eq:hecke-composites}
\begin{split}
\bZ_{FE}&:=\fH^E_\chi\times_{\bM^\dagger(\chi+1;\alpha)}\fH^F_\chi(\alpha),\\
\bZ_{EF}&:=\fH^F_{\chi-1}(\alpha-1)\times_{\bM^\dagger(\chi-1;\alpha-1)}\fH^E_{\chi-1}.
\end{split}
\end{align}
The first fiber product uses the two upper-pair maps, and the second uses the two lower-pair maps. Both are correspondences from $\bM^\dagger(\chi;\alpha-1)$ to $\bM^\dagger(\chi;\alpha)$. We write $Z_{FE}:=t_0\bZ_{FE}$ and $Z_{EF}:=t_0\bZ_{EF}$ for their classical truncations.

\subsection{A relative Quot construction}\label{sub:quot}
Choose a pencil in $|L|$ generated by two general members, and let $Z$ be its finite base locus; it consists of one point on $\PP^2$ and two points on $\mathbb{F}_0$. We work over the open subset
\begin{align}\label{eq:B-Z}
B_{\beta,Z}:=\{C\in B_\beta : C\cap Z=\emptyset\}.
\end{align}
The pencil defines $\pi\colon S\setminus Z\to\PP^1$, and we denote its member over $t$ by $L_t$. Observe that every component of every $L_t$ meets $Z$: intersect it with another pencil member and use the ampleness of $L$. A curve avoiding $Z$ therefore has no component in common with any $L_t$. The equation of $L_t$ is a non-zero-divisor on every pure sheaf with such a support. The equation of $D\in|rL|$ has the same property, since $r>d$ prevents $D$ from being a component of the support.

Put $\ell=(r+1)d$ and $c=\alpha-r-1$. Let $(F,s)$ be the final pair in $M^\dagger(\chi;\alpha)$, and mark points $x\in C$ and $y\in C\cap D$, where $C$ is its fundamental divisor. The stack of these marked pairs is proper and representable over $M^\dagger(\chi;\alpha)$; the second marking is finite flat of degree $rd$. Define the relative effective Cartier divisor
\begin{align}\label{eq:moving-divisor}
H_x:=D+L_{\pi(x)}.
\end{align}
Since $y\in D$ and $x\in L_{\pi(x)}$, the divisor $H_x\in|(r+1)L|$ contains both points and its defining section kills both length one quotients. We use its relative line bundle and canonical section for all twists below.

To locate the markings scheme-theoretically on $C$, take a relative length one locally free resolution of the flat family of pure sheaves over a classical parameter scheme. The determinant cuts out the zeroth Fitting divisor and annihilates the sheaf. A length one modification preserves this determinant divisor, since the quotient on the graph of the marked point has its canonical trivial determinant from its relative Koszul resolution. Thus both point quotients in either composition are annihilated by the same relative equation of $C$.

\begin{prop}\label{prop:quot}
There is a representable projective morphism $q_2\colon T_\chi\to M^\dagger(\chi;\alpha)$ parametrizing the markings above and subsheaves
\begin{align}\label{eq:common-subsheaf}
F(-H_x)\subset G\subset F(H_x),\qquad \mathrm{length}\bigl(F(H_x)/G\bigr)=\ell,\qquad \sigma_{H_x}s\in H^0(G).
\end{align}
Here $\sigma_{H_x}\colon F\hookrightarrow F(H_x)$ is the canonical inclusion. The induced pair $(G,\sigma_{H_x}s)$ defines a map $q_1\colon T_\chi\to M^\dagger(\chi;c)$ over $B_{\beta,Z}$. The classical truncations $Z_{FE}$ and $Z_{EF}$ admit representable proper maps $a_{FE}\colon Z_{FE}\to T_\chi$ and $a_{EF}\colon Z_{EF}\to T_\chi$, compatible with their initial and final pairs.
\end{prop}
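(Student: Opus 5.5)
The plan has four steps: build $T_\chi$ as a relative Quot scheme, check that $q_1$ lands in $M^\dagger(\chi;c)$ over $B_{\beta,Z}$, construct $a_{FE}$ and $a_{EF}$, and verify properness and representability.

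\emph{Construction of $T_\chi$.} Let $M'\to M^\dagger(\chi;\alpha)$ be the proper representable stack of marked pairs $(F,s,x,y)$ described above. Restrict it to the preimage of $B_{\beta,Z}$, so that $H_x$ is a relative effective Cartier divisor on whose equation every pure sheaf in the family is a non-zero-divisor. Over $M'$, the universal sheaf $F(H_x)/F(-H_x)$ is flat; by the non-zero-divisor property and the Koszul sequence of $H_x$ it is an extension of two copies of $F|_{H_x}$. It has relative length $2(r+1)d$, which is $2\ell$. Let $\Sigma:=\Sigma_\ell(F(H_x)/F(-H_x))$ be the relative Quot scheme of length $\ell$ quotients; it is projective over $M'$ by Grothendieck. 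The extra condition $\sigma_{H_x}s\in H^0(G)$ says that the composite $\cO\to F(H_x)\to F(H_x)/G$ vanishes. This is the vanishing of a section of a flat finite family, hence a closed condition compatible with base change. Define $T_\chi$ as this closed subscheme. Then $q_2$ is projective and representable, being a composite of a closed immersion, a Quot scheme and the proper representable marking map.

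\emph{The map $q_1$.} Over $T_\chi$ the kernel $G$ is flat, since it is an extension of flat sheaves. It is pure because it is a subsheaf of the pure sheaf $F(H_x)$. Its class is $\beta$, and its Euler characteristic is $\chi+\chi(\cO_{H_x}\otimes F)-\ell$, which equals $\chi$. The section factors through $G$ by construction. The fundamental divisor of $G$ is $C$, so the map lies over $B_{\beta,Z}$. For the slope bound $c=\alpha-r-1$, I would use $F(-H_x)\subset G$. Any pure quotient of $G$ receives the image of $F(-H_x)$, which is a pure quotient of $F(-H_x)$ up to zero-dimensional cokernel. The argument of~\eqref{eq:hecke-slopes} then applies, and twisting by $-H_x\in|-(r+1)L|$ lowers slopes by exactly $r+1$. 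Hence $\mu_{\min}(G)\ge\alpha-r-1$, and $q_1$ factors through the open substack $M^\dagger(\chi;c)$.

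\emph{The maps $a_{FE}$ and $a_{EF}$.} A point of $Z_{FE}$ consists of an initial pair $F_0$, an upper modification $F_0\subset F_1$ at $x$, and a removal $F_2\subset F_1$ at $y\in D$, with final pair $F=F_2$. A point of $Z_{EF}$ consists of a removal $F_2\subset F_0$ at $y$ followed by an addition $F_2\subset F$ at $x$. In either case the final pair $F$ and the initial pair $F_0$ differ by two length-one modifications supported at $x$ and $y$. By the Fitting-ideal observation above, both point quotients are annihilated by the equation of $C$. They are also killed by the section of $H_x$, since $x\in L_{\pi(x)}$ and $y\in D$. I would then show that $F(-H_x)\subset F_0\subset F(H_x)$, with $F_0$ viewed inside $F(H_x)$ via the common intermediate sheaf. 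In $Z_{FE}$ we have $F_0\subset F_1$ and $F\subset F_1$, each with a quotient killed by $H_x$. This gives $F_1\subset F(H_x)$, hence $F_0\subset F(H_x)$, and $F_0\supset F_1(-H_x)\supset F(-H_x)$. The same argument with $F_2=F_0\cap F$ handles $Z_{EF}$. Setting $G:=F_0(H_x)$ has the wrong length, so the correct normalization is to take $G$ to be the image of $F_0$ under a twist making $\chi(G)=\chi$. Since $\chi(F_0)=\chi$ already, $G=F_0$ embedded via $\sigma_{H_x}$ has colength exactly $\ell$, and this determines the map. The section condition follows because the sections are compatible along each modification. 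Representability of $a_{FE}$ and $a_{EF}$ holds because the intermediate sheaves are recovered as subsheaves of $F(H_x)$, so automorphisms inject. Properness follows from properness of $Z_{FE}$ and $Z_{EF}$ over $M^\dagger(\chi;\alpha)$, which comes from Lemma~\ref{lem:diagrams}, together with separatedness of $T_\chi$ over it.

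\emph{Main obstacle.} I expect the hardest step to be making the embedding $F_0\hookrightarrow F(H_x)$ canonical and flat in families over an arbitrary classical base. This requires fixing the identifications through $\sigma_{H_x}$ so that the two compositions land compatibly, including the line bundles $\cL$ recording the scalars of the point quotients. The embedding must be uniquely determined so that the maps are genuinely maps over $M^\dagger(\chi;\alpha)\times M^\dagger(\chi;c)$. I would try to handle this by working with the quotient $F(H_x)/F(-H_x)$ and showing that the relevant submodules are flat of fixed colength. Flatness would follow from the local criterion, using that $H_x$ is a non-zero-divisor fibrewise.
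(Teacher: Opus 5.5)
Your proposal is correct and follows essentially the same route as the paper. You build $T_\chi$ as the closed section-vanishing locus in the relative Quot scheme of the length-$2\ell$ sheaf $F(H_x)/F(-H_x)$, and you get $\mu_{\min}(G)\ge\alpha-r-1$ from $F(-H_x)\subset G$ together with~\eqref{eq:hecke-slopes}. You obtain $a_{FE}$ and $a_{EF}$ from the unique factorization of multiplication by $\sigma_{H_x}$ through $F(H_x)$, with flatness from fiberwise injectivity, and properness from properness over the final pair stack plus separatedness of $T_\chi$. The only variation is minor: you prove representability by injectivity on stabilizers, whereas the paper factors each map through its closed graph and a base change of the representable proper map to $M^\dagger(\chi;\alpha)$.
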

\begin{proof}
Fiberwise purity and the non-zero-divisor property make $F(H_x)/F(-H_x)$ finite flat of length $2\ell$ over the stack of pairs with these marked points. Take its relative Quot scheme of quotients of length $\ell$ and impose the closed condition that the image of $\sigma_{H_x}s$ in the universal quotient vanishes. This is $T_\chi$. Its universal kernel $G\subset F(H_x)$ is flat, has pure fibers, and has class $\beta$ and Euler characteristic $\chi$. Since $G/F(-H_x)$ is zero dimensional, the first implication in~\eqref{eq:hecke-slopes}, applied with a zero dimensional quotient of arbitrary length, gives $\mu_{\min}(G)\ge\alpha-r-1=c$. The relative Quot scheme is projective, and the marked parameters are proper over the final pair stack, which proves the assertion about $q_2$.

Write a point of $Z_{FE}$ as
\begin{align*}
A\hookrightarrow U\hookleftarrow F,\qquad U/A=\cO_x\otimes\cL_x,\quad U/F=\cO_y\otimes\cL_y;
\end{align*}
here $F$ is the final pair, $U$ is obtained by adding the point $y$ on $D$, and $A$ by removing $x$. Multiplication by $\sigma_{H_x}$ annihilates both quotients, so the map $U\to U(H_x)$ factors uniquely through $F(H_x)$. The composite $A\to U\to F(H_x)$ is injective and its image is the subsheaf $G$. Multiplication also lifts $F(-H_x)\to U(-H_x)$ to $A$, so the canonical subsheaf $F(-H_x)$ is contained in $G$. Compatibility of the pair sections gives the last condition in~\eqref{eq:common-subsheaf}.

For $Z_{EF}$, write a point as
\begin{align*}
A\hookleftarrow J\hookrightarrow F,\qquad A/J=\cO_y\otimes\cL_y,\quad F/J=\cO_x\otimes\cL_x;
\end{align*}
now $J$ is obtained from $F$ by removing $x$, and $A$ by adding $y$. Multiplication by $\sigma_{H_x}$ lifts $A\to A(H_x)$ uniquely to $J(H_x)$, and composing with $J(H_x)\to F(H_x)$ again gives an injection whose image we call $G$. The lift $F(-H_x)\to J$ proves the other containment, and the sections are preserved as before.

Over a classical base these maps are fiberwise injective by the non-zero-divisor property. Their sources and targets are flat, so the cokernels are flat, of length $\ell$ by the Euler characteristics. The unique factorizations through the kernels pull back under base change, defining the required morphisms of stacks.

Both $Z_{FE}$ and $Z_{EF}$ are proper over the final pair stack. In the first order, one chooses an upper modification of $F$ at a point of $D$, then a lower modification at an unrestricted point; all such upper pairs have bound $\alpha$ and all such lower pairs have bound $\alpha-1$. In the second order, one chooses a lower modification of $F$, then an upper modification at a point of $D$; the bounds are again automatic by~\eqref{eq:hecke-slopes}. Since $T_\chi$ is separated over $M^\dagger(\chi;\alpha)$, each map to $T_\chi$ factors through its closed graph and the base change of the proper map to $M^\dagger(\chi;\alpha)$. It is representable and proper.
\end{proof}

Let $i\colon I_\chi\hookrightarrow T_\chi$ be the locus
\begin{align}\label{eq:exceptional-locus}
G=\sigma_{H_x}F\subset F(H_x),\qquad x=y.
\end{align}
The first equality is an equality of embedded subsheaves, with the indicated induced section; it defines a closed section of the relative Quot scheme over the stack of pairs with these marked points. The second equality is closed as well. If $\cC_{\beta,D}:=\cC_\beta\cap(D\times B_{\beta,Z})$ denotes the locus of a point of the universal curve lying on $D$, then
\begin{align}\label{eq:finite-exceptional}
I_\chi=M^\dagger(\chi;\alpha)\times_{B_{\beta,Z}}\cC_{\beta,D},\qquad q\colon I_\chi\to M^\dagger(\chi;\alpha)
\end{align}
is finite flat of degree $rd$: the only datum left is the choice of the common point on $C\cap D$. For the open inclusion $j\colon M^\dagger(\chi;\alpha)\hookrightarrow M^\dagger(\chi;c)$, one has $q_1\circ i=j\circ q$ and $q_2\circ i=q$.

\begin{lem}\label{lem:exchange}
Let $\bZ^\circ_{FE}\subset\bZ_{FE}$ and $\bZ^\circ_{EF}\subset\bZ_{EF}$ be the derived open substacks whose classical truncations are $a_{FE}^{-1}(T_\chi\setminus I_\chi)$ and $a_{EF}^{-1}(T_\chi\setminus I_\chi)$. There is an equivalence $\bZ^\circ_{FE}\simeq\bZ^\circ_{EF}$ over the initial and final pair stacks, identifying the quasi-smooth purity morphisms. Its classical truncation commutes with the maps to $T_\chi\setminus I_\chi$.
\end{lem}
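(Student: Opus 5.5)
The plan is to identify both open pieces with one derived stack: bicartesian squares of perfect complexes
\begin{align*}
\xymatrix{J\ar[r]\ar[d] & F\ar[d]\\ A\ar[r] & U}
\end{align*}
on $S$, with sections compatible along the square. Here $F$ is the final pair in $\bM^\dagger(\chi;\alpha)$, $A$ the initial pair in $\bM^\dagger(\chi;\alpha-1)$, and the opposite cofibers satisfy $U/A\simeq F/J\simeq\cO_x\otimes\cL_x$ and $U/F\simeq A/J\simeq\cO_y\otimes\cL_y$ with $y\in D$. By Lemma~\ref{lem:diagrams}, $\bZ_{FE}$ is the derived fiber product over the upper pair $U$ of two Hecke stacks, so a point is a cospan $A\to U\leftarrow F$ with the two point cofibers and the marked-point identifications. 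Likewise $\bZ_{EF}$ is fibered over the lower pair $J$, so a point is a span $A\leftarrow J\to F$. Since a span or cospan has a contractible space of bicartesian completions, both forgetful maps from the stack of bicartesian squares are equivalences of derived prestacks. The equivalence $\bZ^\circ_{FE}\simeq\bZ^\circ_{EF}$ is then completion followed by restriction. Opposite edges have canonically equivalent cofibers, so the point sheaves, their marked points, the scalar lines $\cL_x,\cL_y$ and the restriction $y\in D$ transfer. The section of $J$ with its null-homotopies is the pull-back of the compatible sections of $A$ and $F$ by the universal property of the fiber, and dually for $U$ and the push-out.

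The substantive step is to check that on the open loci the completed corner satisfies the flatness and purity conditions of Lemma~\ref{lem:diagrams}, so that the square really lies in the other Hecke fiber product. First I would show that on classical points the complement of $I_\chi$ is exactly the locus where $A\neq F$ as subsheaves of $U$, respectively of $F(H_x)$ via $a_{FE},a_{EF}$. Indeed $A=F$ forces $x=y$ and $G=\sigma_{H_x}F$, and the converse holds by definition of~\eqref{eq:exceptional-locus}. Next, for the $FE$ direction: when $A\neq F$ inside $U$ and both have colength one, $A\oplus F\to U$ is surjective on every fiber. Hence the derived pull-back is the honest intersection $J=A\cap F$, a subsheaf of the pure sheaf $F$, so it is pure; it is flat by the fiberwise criterion, and $F/J\cong U/A$, $A/J\cong U/F$. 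For the $EF$ direction, $J\to A\oplus F$ is fiberwise injective, so the push-out $U=A+F$ is a flat sheaf. It is pure because it is an extension of $\cO_y$ by $F$ that is non-split: otherwise $A$ would equal $F$ inside $U$. The slope bounds are automatic by~\eqref{eq:hecke-slopes}. Since open derived substacks are determined by truncations, these classical statements fix the derived open loci.

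Finally, the purity morphisms. Both source maps to $\bM^\dagger(\chi;\alpha-1)$ have the same virtual relative dimension, namely the sum of the two Hecke dimensions by Lemma~\ref{lem:convolution}. The equivalence is over both the initial and final pair stacks, so Lemma~\ref{lem:convolution} identifies the purity transformations by naturality of the relative cotangent complexes. For the last assertion, the map to $T_\chi\setminus I_\chi$ in Proposition~\ref{prop:quot} was built in both orders as the image of $A$ in $F(H_x)$ via multiplication by $\sigma_{H_x}$. That construction only uses $A$, $F$, $x$ and $y$, which the equivalence preserves, so the compatibility follows from the uniqueness of those factorizations.

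The main obstacle I expect is the families version of the second paragraph. I need to show that fiberwise non-equality $A\neq F$ really gives surjectivity of $A\oplus F\to U$, and fiberwise flatness of $J$ and $U$, over an arbitrary classical base, so that the classical truncation of the bicartesian square stack matches both open loci scheme-theoretically. I would handle this with the fiberwise exactness criterion together with Nakayama applied to the cokernel of $A\oplus F\to U$, which is supported on a length-one quotient.
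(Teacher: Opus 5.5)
Your proposal is correct and follows the paper's proof essentially step for step: the classical exchange of intersection and push-out away from $I_\chi$, with surjectivity and flatness checked on fibers; bicartesian completion in the arrow category of $\mathrm{Perf}(S\times T)$, with the constant source square carrying the sections and null-homotopies; the truncation criterion placing the fourth vertex in the pure sheaf stack; the slope bounds from~\eqref{eq:hecke-slopes}; and Lemma~\ref{lem:convolution} for the purity morphisms. One small correction: in the push-out one has $A\cap F=J$, so a splitting $U=F\oplus T_0$ does not make $A=F$ inside $U$; rather, $A$ and $F$ both map isomorphically onto $U/T_0$ compatibly with $J$ (each map is injective with cokernel of Euler characteristic zero), which forces $x=y$ and $G=\sigma_{H_x}F$, i.e.\ the point lies in $I_\chi$, exactly as the paper argues.
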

\begin{proof}
Start with $A\hookrightarrow U\hookleftarrow F$. If the images of $A$ and $F$ in $U$ differ, then their sum is $U$: two distinct colength one submodules generate, also when $x=y$. Their intersection $J$ then has colength one in each of $A$ and $F$, with the points $x$ and $y$, and we obtain $A\hookleftarrow J\hookrightarrow F$. If instead the images coincide, the induced rational identification between $A$ and $F$ extends to an identification of their images; this forces $G=\sigma_{H_x}F$ and $x=y$, the equations defining $I_\chi$.

Conversely, start with $A\hookleftarrow J\hookrightarrow F$ and form the push-out $U=(A\oplus F)/J$. Its kernel and quotient sequences show that $U$ is an extension of $A$ by the length one quotient at $x$, and of $F$ by the length one quotient at $y$. Any zero dimensional torsion $T_0\subset U$ injects into both point quotients, since $A$ and $F$ are pure, so $T_0\neq 0$ forces $x=y$ and $\mathrm{length}\,T_0=1$. In that case both maps $A\to U/T_0$ and $F\to U/T_0$ are injective with zero dimensional cokernel of Euler characteristic zero, hence isomorphisms; the identification they induce is the one defined away from $x$ by the common subobject $J$, and it gives $G=\sigma_{H_x}F$. So outside $I_\chi$ the push-out is pure, and we recover the first diagram.

Surjectivity of $A\oplus F\to U$ is detected on fibers, and its kernel is flat because source and target are flat. The push-out in the opposite direction is flat by either of its extension sequences, and its fibers are pure by the preceding argument. The new lower pair $J$ satisfies the bound $\alpha-1$, being a lower modification of $F$, and the new upper pair $U$ satisfies the bound $\alpha$, being an upper modification of $F$. Over a derived affine base $T$, work in the stable category of arrows of $\mathrm{Perf}(S\times T)$ and consider the square of target sheaves
\begin{align}\label{eq:bicartesian}
\xymatrix{J\ar[r]\ar[d] & A\ar[d]\\ F\ar[r] & U.}
\end{align}
The source square is constant at $\cO_{S\times T}$, with identity arrows. For the first composition, complete the cospan $A\to U\leftarrow F$ by homotopy pull-back. For the second, complete the span $A\leftarrow J\to F$ by homotopy push-out. These completions are inverse to each other because~\eqref{eq:bicartesian} is bicartesian, and the quotient objects satisfy the canonical equivalences
\begin{align*}
\cofib(A\to U)\simeq\cofib(J\to F),\qquad \cofib(F\to U)\simeq\cofib(J\to A).
\end{align*}
The specified identifications of the cofibers with $\cO_{\Gamma_x}\otimes\cL_x$ and $\cO_{\Gamma_y}\otimes\cL_y$ give an equivalence over $\cZ\times(D\times B\Gm)$. Equation~\eqref{eq:hecke-basechange} identifies these diagrams with the derived Hecke stacks. The constant source square completes to $\cO_{S\times T}$; hence the sections, their compatibility homotopies and the null-homotopies in both quotients pass through the equivalence.

The fourth vertex is a perfect complex whose derived geometric fibers, by the classical calculation, are pure sheaves in degree zero with the required slope bound. Additivity in the bicartesian square, whose two cofibers have length one, gives its class $\beta$ and prescribed Euler characteristic. The relative Tor-amplitude criterion makes its restriction to $t_0(T)$ a flat family of these sheaves. It therefore belongs to the derived open substack defining Toda's pure sheaf stack. The completions have contractible spaces of choices, so they give inverse equivalences of derived stacks over the initial and final pair stacks.

On a classical base, the map $A\to F(H_x)$ in either construction is the unique lift to $F(H_x)$ of $A\to U\xrightarrow{\sigma_{H_x}}U(H_x)$. The maps to $T_\chi$ consequently agree on the embedded subsheaf $G$, its lower subsheaf, section and markings.

Each convolution has source map of virtual relative dimension zero. Since the equivalence lies over the source, it identifies the relative cotangent complexes and their purity transformations. By Lemma~\ref{lem:convolution}, these are the convolution morphisms. Their proper push-forwards to $T_\chi\setminus I_\chi$ are equal.
\end{proof}

\subsection{Connectedness of the pair stacks}\label{sub:connected}
Since $g_\beta>0$, the class $\beta$ is ample on either surface. A surface pair is stable if its section has zero dimensional cokernel.

\begin{prop}\label{prop:connected}
Let $\beta$ be ample and $M^\dagger(\chi;c)$ non-empty. Then $M^\dagger(\chi;c)$ is connected, also after restricting to divisors avoiding a prescribed finite set $Z\subset S$. If $\chi\ge 1-g_\beta$, it contains stable surface pairs on smooth curves. In particular
\begin{align*}
\End\bigl(\omega_{M^\dagger(\chi;c)}\bigr)=\QQ
\end{align*}
in the derived category of mixed Hodge modules, also on the indicated open substack.
\end{prop}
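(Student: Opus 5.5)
The plan is to show that every geometric point of $M^\dagger(\chi;c)$ lies in the same connected component as a stable pair on a smooth connected curve, and that the locus of such pairs is irreducible. First, over the open $U_\beta\subset B_\beta$ of smooth connected curves (non-empty and irreducible since $\beta$ is ample, hence in $\Gamma$), a pure sheaf of class $\beta$ supported on a smooth curve $C$ is a line bundle of degree $\chi+g_\beta-1$. Any line bundle on a smooth curve has $\mu_{\min}=\mu$, and $\mu=\chi/d$. Non-emptiness of $M^\dagger(\chi;c)$ forces $\chi\ge cd$ through its HN factors, so these line bundles lie in the bounded stack. The pairs over $U_\beta$ then form the total space of the cone $\{(\mathcal{L},s)\}$ over the relative Picard stack of degree $\chi+g_\beta-1$. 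This is irreducible: it is a linear fibration over the irreducible relative Picard (fibers $H^0(\mathcal{L})$ over a stratification), and in particular its closure is irreducible. When $\chi\ge 1-g_\beta$, so that $n=\chi+g_\beta-1\ge 0$, the effective divisors of degree $n$ give stable pairs $\cO_C\to\cO_C(P)$, which is the second assertion. The same argument restricted to $U_\beta\cap B_{\beta,Z}$, still dense and irreducible, handles the restricted version.

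Second, I will prove the density statement: every point $(F,s)$ of $M^\dagger(\chi;c)$ deforms inside the stack to one over $U_\beta$. I plan to do this in two moves. The first is to scale the section to zero via $(F,ts)$, $t\to 0$, which connects $(F,s)$ to $(F,0)$ within the open bound, since the bound only involves $F$. The pairs with fixed sheaf form a vector space, so this keeps us in one component. The second is to connect $(F,0)$ to a line bundle on a smooth curve through the sheaf stack $\bM(\chi;c)$. This stack is smooth, because $\Ext^2(F,F)=\Hom(F,F\otimes K_S)^\vee=0$: the HN slopes of $F$ lie above $c$ and twisting by $K_S$ lowers them by $\kappa$. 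Hmm — that inequality only holds when the slope spread is less than $\kappa$, so in general I would instead use that each HN stratum lies in the closure of more generic strata. I will argue by induction on HN type: the extension stacks of semistable factors are irreducible by Maulik--Shen irreducibility of semistable moduli (via Lemma~\ref{lem:rank-zero}, with the ruling case handled by Remark~\ref{rmk:ruling-deform}). An unobstructed deformation of the extension class then moves toward a non-split extension. Ampleness of $\beta$ makes the positive-dimensional Ext-groups between factors $\Ext^1$ nonzero, since $\chi(A,B)=-[A]\cdot[B]<0$, so such deformations exist; they finish at a semistable sheaf, which Maulik--Shen deforms to a line bundle on a smooth curve. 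Avoiding $Z$ is an open condition preserved along these paths because the supports can be moved by Remark~\ref{rmk:ruling-deform} and the general smooth member avoids $Z$.

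Finally, given connectedness, the endomorphism statement follows formally. $\End(\omega_M)=H^0$ of $\Rhom(\omega_M,\omega_M)=H^{0}(M,\QQ)$ by Verdier duality, using $\mathcal{H}om(\omega,\omega)=\QQ_M$. For mixed Hodge modules this is $\Hom$ of Hodge structures from $\QQ(0)$ into $H^0(M,\QQ)$, which equals $\QQ^{\pi_0(M)}=\QQ$. The main obstacle is the second step, the density of smooth-support pairs in the bounded stack. The delicate point is that the HN-stratum degeneration argument must stay inside the slope bound $c$ and inside $B_{\beta,Z}$; I expect to rely on~\eqref{eq:hecke-slopes}-type estimates showing that generization only raises $\mu_{\min}$.
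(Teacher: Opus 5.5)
\textbf{Gap in the second step.} Your skeleton is the right one: scale the section to zero, connect the underlying sheaf to a line bundle on a smooth curve, then use $\End(\omega)=H^0(M,\QQ)$. The central step, connecting an unstable sheaf to the smooth-curve locus, is not established. You propose to reach a \emph{semistable} sheaf of class $\beta$ by deforming the extension class between HN factors, and this fails as stated.
\begin{itemize}
\item Extensions in $\Ext^1(B,A)$, with $A$ the maximal destabilizing subsheaf, never leave the HN stratum, since $A$ remains a subsheaf of slope $>\mu$.
\item Extensions in the reverse direction, $\Ext^1(A,B)$, can leave the stratum but need not become semistable. On $\PP^2$, take $A$ a line bundle with $\chi(A)=10$ on a line $C_A$, and $B$ one with $\chi(B)=0$ on a transverse smooth conic $C_B$; here $c=0$ is allowed. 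Any non-split extension $0\to B\to F'\to A\to 0$ contains $A(-p)$ or $A(-C_A\cap C_B)$, of slope at least $8>10/3$.
\end{itemize}
So the claim that such deformations ``finish at a semistable sheaf'' is false. The claim that each HN stratum lies in the closure of more generic strata is asserted without proof, and you give no well-founded induction that would make repeated degeneration and re-extension terminate. Also, $\chi(A,B)=-[A]\cdot[B]<0$ holds for two factors summing to an ample class, but not for pairs among three or more factors. On $\mathbb{F}_0$, two ruling factors of class $f$ have intersection zero and admit no extensions at all.

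\textbf{The missing idea.} You never need a semistable sheaf of class $\beta$. The paper's route is:
\begin{enumerate}
\item Degenerate $F$ by the Rees construction to the direct sum of its HN factors.
\item Move each factor to line bundles on smooth curves (Maulik--Shen for ample factor classes, Remark~\ref{rmk:ruling-deform} for multiples of a ruling). Arrange that all components are distinct, meet transversely without triple points, and avoid $Z$.
\item Observe that the intersection graph is connected, because a divisor $C$ in an ample class has $H^0(\cO_C)=\CC$.
\item Glue node by node along this graph with the local extensions $\coker\begin{pmatrix}u&t\\0&v\end{pmatrix}$ to obtain a line bundle on the nodal union. The lower slope bound survives because it is closed under extensions.
\item Deform this line bundle, stable or not, together with its support to a line bundle on a smooth curve. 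The obstruction lies in $H^2(C,\cO_C)=0$, and both the slope bound and avoidance of $Z$ are open conditions.
\end{enumerate}
This replaces your HN-stratum induction entirely.

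\textbf{A smaller point.} In your first step, the claim that the pairs over $U_\beta$ form an irreducible cone is false when $1-g_\beta\le\chi\le 0$. The locus $s=0$ has dimension $\dim U_\beta+g_\beta-1\ge\dim U_\beta+n$, so it is its own component. Only connectedness is needed, and scaling $s\to 0$ supplies it.
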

\begin{proof}
The Rees construction applied to the HN filtration connects a pure sheaf to the direct sum of its semistable factors while preserving the lower slope bound. For an ample factor class, the semistable stack is irreducible with a dense locus of line bundles on smooth curves~\cite[Theorem~2.3, Proposition~2.10, Section~2.6]{MS23}. On $\mathbb{F}_0$ the remaining factor classes are multiple rulings, and Remark~\ref{rmk:ruling-deform} deforms them to line bundles on distinct fibers. We can therefore move all factors to line bundles on distinct smooth components, with transverse intersections and no triple points; they may also be chosen to avoid $Z$. When several factors have the same class, we use that many distinct smooth members of that linear system, which exist because every effective linear system on either surface is base point free with smooth general member, and a general pair of members meets transversely.

On $\mathbb{F}_0$, connectedness of the union follows from $a,b>0$: $H^1(S,\cO_S(-a,-b))=0$ by the K\"unneth formula, so every divisor in the class has $H^0(\cO_C)=\CC$. On $\PP^2$ it follows from the corresponding exact sequence for $\cO_S(-d)$. So the intersection graph of the components is connected.

We now connect this direct sum to a line bundle on the nodal union by deforming the extensions at the intersection points. For two distinct components, the sheaf $\cHom$ between their line bundles is zero and the sheaf $\cExt^1$ is one dimensional at each intersection point, so the global $\Ext^1$ is the direct sum of those local spaces. At each intersection point, choose local equations $u=0$ and $v=0$ for the two components. The extension with parameter $t$ is represented by
\begin{align*}
\coker\begin{pmatrix}u & t\\ 0 & v\end{pmatrix}.
\end{align*}
At $t=0$ this is the direct sum; at $t\neq 0$ it is a line bundle on $uv=0$. Adding components one at a time along the connected intersection graph gives a line bundle on the whole nodal union, and every such extension retains the lower HN bound.

The obstruction to extending the line bundle to a smoothing lies in $H^2(C,\cO_C)=0$. The relative Picard stack is smooth there; after restricting to an open neighborhood, the slope bound and avoidance of $Z$ persist. Its fibers over the irreducible open subset of smooth curves are connected. Thus all the original sheaves are connected to this locus.

For pairs, scale the section to zero. If $n=\chi+g_\beta-1\ge 0$, an effective divisor $W$ of degree $n$ on a smooth curve $C$ gives the stable surface pair $(\cO_C(W),1)$; its slope is $\chi/d\ge c$ whenever the bounded stack is non-empty.

Finally, write $P$ for $M^\dagger(\chi;c)$ or its indicated open substack. Verdier duality identifies $\End(\omega_P)$ with $\End(\QQ_P)$, and the latter is $H^0(P,\QQ)=\QQ$ by connectedness. Checking on smooth atlases before completion gives the equality in mixed Hodge modules as well.
\end{proof}

\subsection{Localization and the commutator}\label{sub:scalar}
Lemma~\ref{lem:exchange} identifies the two correspondence morphisms away from $I_\chi$. Their difference is therefore supported on $I_\chi$.

\begin{lem}\label{lem:supported}
Suppose that $j\colon Y\hookrightarrow X$ is open, with compatible maps $g_X\colon X\to B$ and $g_Y\colon Y\to B$, and that $q_1\colon T\to X$, $q_2\colon T\to Y$ are maps over $B$ with $q_2$ proper. Let $i\colon I\hookrightarrow T$ be closed, with $q_1\circ i=j\circ q$, $q_2\circ i=q$ and $q\colon I\to Y$ proper. Then a correspondence
\begin{align*}
\zeta\colon q_1^*\omega_X\to q_2^!\omega_Y
\end{align*}
which vanishes on $T\setminus I$ induces on direct images a morphism of the form
\begin{align*}
\dR g_{Y*}u\circ\res(X,Y),\qquad u\in\End(\omega_Y),
\end{align*}
where $\res(X,Y)$ is the open restriction.
\end{lem}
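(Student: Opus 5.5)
The plan is to write out the morphism on direct images induced by $\zeta$, factor $\zeta$ through the closed piece $I$ using the localization triangle on $T$, and then check that the resulting composite splits as an open restriction followed by $\dR g_{Y*}$ of an endomorphism of $\omega_Y$.

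\textbf{The induced morphism.} Put $g_T:=g_X\circ q_1=g_Y\circ q_2$. The morphism induced by $\zeta$ is the composite
\begin{align*}
\dR g_{X*}\omega_X\xrightarrow{\mathrm{unit}}\dR g_{T*}q_1^*\omega_X\xrightarrow{\zeta}\dR g_{T*}q_2^!\omega_Y=\dR g_{Y*}\dR q_{2*}q_2^!\omega_Y\xrightarrow{\mathrm{counit}}\dR g_{Y*}\omega_Y.
\end{align*}
Here the first arrow is $g_{X*}$ applied to the unit $\omega_X\to\dR q_{1*}q_1^*\omega_X$. The last arrow uses $\dR q_{2*}=\dR q_{2!}$, which holds because $q_2$ is proper.

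\textbf{Factoring $\zeta$ through $I$.} Let $j'\colon T\setminus I\hookrightarrow T$ be the open complement. Apply $\Hom(q_1^*\omega_X,-)$ to the localization triangle $i_*i^!\to\id\to\dR j'_*j'^*$ on $M:=q_2^!\omega_Y$. The hypothesis says that the image of $\zeta$ in $\Hom(q_1^*\omega_X,\dR j'_*j'^*M)=\Hom(j'^*q_1^*\omega_X,j'^*M)$ vanishes. Hence $\zeta$ lifts to a morphism $\zeta'\colon q_1^*\omega_X\to i_*i^!q_2^!\omega_Y=i_*q^!\omega_Y$. The lift need not be unique, but any choice will do.

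By adjunction $\zeta'$ corresponds to
\begin{align*}
\zeta''\colon i^*q_1^*\omega_X=q^*j^*\omega_X=q^*\omega_Y\to q^!\omega_Y,
\end{align*}
using $q_1\circ i=j\circ q$ and $j^*\omega_X=\omega_Y$, which holds because $j$ is open. Define
\begin{align*}
u:=\mathrm{counit}_q\circ\dR q_*(\zeta'')\circ\mathrm{unit}_q\in\End(\omega_Y),
\end{align*}
where the counit is again $\dR q_*=\dR q_!$ for the proper map $q$.

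\textbf{Identifying the composite.} It remains to substitute $\zeta=(i_*i^!\to\id)\circ\zeta'$ into the composite above and match the pieces.

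\emph{Counit side.} Use $q_2\circ i=q$ and transitivity of counits: the counit of $q$ equals the counit of $q_2$ composed with $\dR q_{2*}$ of the counit of $i$. So the tail of the composite becomes $g_{Y*}$ of $\mathrm{counit}_q\circ\dR q_*(\,\cdot\,)$.

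\emph{Unit side.} The unit $\omega_X\to\dR q_{1*}i_*i^*q_1^*\omega_X$ agrees with the composite
\begin{align*}
\omega_X\to\dR j_*j^*\omega_X=\dR j_*\omega_Y\to\dR j_*\dR q_*q^*\omega_Y,
\end{align*}
again by transitivity of units for $q_1\circ i=j\circ q$. After $\dR g_{X*}$, the first arrow becomes $\res(X,Y)$, since $g_X\circ j=g_Y$.

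\emph{The middle term.} The middle term is the adjunction identity $\zeta'=i_*\zeta''\circ\mathrm{unit}_i$. The unit of $i$ is absorbed into the unit chain above.

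Assembling the three pieces, the induced morphism is $\dR g_{Y*}u\circ\res(X,Y)$, as claimed.

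\textbf{Main obstacle.} The only real work is bookkeeping: checking that the unit and counit chains of the composed maps commute with the adjunction isomorphisms and base change. On stacks and in the completed category $\widehat{D}\MHM$ these are the standard functoriality isomorphisms of~\cite{Kha19} and~\cite{Tub25}. I would verify them on a smooth atlas, or rely directly on their naturality.
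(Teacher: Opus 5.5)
Your proposal is correct and follows the paper's proof essentially step for step: lift $\zeta$ through $i_*i^!q_2^!\omega_Y=i_*q^!\omega_Y$ using the localization triangle, pass by adjunction to a map $q^*\omega_Y\to q^!\omega_Y$, define $u$ as unit, then $\dR q_*$ of that map, then proper counit, and match units via $q_1\circ i=j\circ q$ and counits via $q_2\circ i=q$. Your write-up is slightly more explicit than the paper's about the transitivity of units and counits, which the paper leaves to ``the two adjunctions.''
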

\begin{proof}
Apply $\Hom(q_1^*\omega_X,-)$ to the closed-open localization triangle for $q_2^!\omega_Y$. Since the restriction of $\zeta$ to the open complement is zero, exactness supplies a lift $\zeta_I\colon i^*q_1^*\omega_X=q^*\omega_Y\to i^!q_2^!\omega_Y=q^!\omega_Y$. Define $u$ as the composite
\begin{align*}
\omega_Y\to\dR q_*q^*\omega_Y\xrightarrow{\ \dR q_*\zeta_I\ }\dR q_*q^!\omega_Y\to\omega_Y,
\end{align*}
where the first arrow is the unit and the last is the proper counit. By the two adjunctions, $\zeta$ is the push-forward of $\zeta_I$ along $i$. Applying the direct image correspondence and using $q_1\circ i=j\circ q$ identifies its unit with the open restriction followed by the unit for $q$, and the counits compose by $q_2\circ i=q$.
\end{proof}

\begin{thm}\label{thm:relation}
Put $c=\alpha-r-1$. For $\chi\ge 1-g_\beta$, on $B_{\beta,Z}$ the operators in~\eqref{eq:EF} satisfy
\begin{align}\label{eq:mixed-relation}
\bigl(F_\chi(\alpha)E_\chi-E_{\chi-1}F_{\chi-1}(\alpha-1)\bigr)\circ\res_\chi(c,\alpha-1)=rd\cdot\res_\chi(c,\alpha).
\end{align}
The equality holds in $\widehat{D}\MHM(B_{\beta,Z})$.
\end{thm}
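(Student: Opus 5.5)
We realize both composites in~\eqref{eq:mixed-relation} as correspondences from $M^\dagger(\chi;c)$ to $M^\dagger(\chi;\alpha)$ that factor through $T_\chi$. Then we show that their difference is supported on $I_\chi$, and finally we compute the resulting scalar on smooth curves.

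\textbf{Step 1: composites as correspondences.} By Lemma~\ref{lem:convolution}, the composite $F_\chi(\alpha)E_\chi$ is the correspondence $\bZ_{FE}$ of~\eqref{eq:hecke-composites}. Its source map has virtual dimension $0$, since it is the sum of $1$ and $-1$. Similarly, $E_{\chi-1}F_{\chi-1}(\alpha-1)$ is the correspondence $\bZ_{EF}$; here~\eqref{eq:F-restriction} lets us place the removal at bound $\alpha-1$. Precomposing with $\res_\chi(c,\alpha-1)$ amounts to pulling $\omega$ back from $M^\dagger(\chi;c)$. Using the maps $a_{FE}$, $a_{EF}$ of Proposition~\ref{prop:quot} and proper push-forward, each composite becomes a morphism
\begin{align*}
\zeta_{FE},\ \zeta_{EF}\colon q_1^*\omega_{M^\dagger(\chi;c)}\to q_2^!\omega_{M^\dagger(\chi;\alpha)}
\end{align*}
on $T_\chi$. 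Its direct image to $B_{\beta,Z}$ recovers the corresponding operator.

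\textbf{Step 2: the difference is supported on $I_\chi$.} By Lemma~\ref{lem:exchange}, $\bZ^\circ_{FE}\simeq\bZ^\circ_{EF}$ over $T_\chi\setminus I_\chi$, compatibly with the purity morphisms. Hence $\zeta_{FE}-\zeta_{EF}$ vanishes on the complement of $I_\chi$. We apply Lemma~\ref{lem:supported} with $X=M^\dagger(\chi;c)$, $Y=M^\dagger(\chi;\alpha)$, $I=I_\chi$, and the finite map $q$ of~\eqref{eq:finite-exceptional}. This writes the left side of~\eqref{eq:mixed-relation} as $\dR h_{\chi*}u\circ\res_\chi(c,\alpha)$ for some $u\in\End(\omega_{M^\dagger(\chi;\alpha)})$. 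Proposition~\ref{prop:connected}, applied over the open subset of divisors avoiding $Z$, gives $u=\lambda\cdot\id$ for a scalar $\lambda\in\QQ$.

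\textbf{Step 3: computing $\lambda$.} Since $\chi\ge 1-g_\beta$, Proposition~\ref{prop:connected} provides stable pairs $(\cO_C(W),1)$ on smooth curves $C$ avoiding $Z$. We compute $\lambda$ by restricting to a small open substack there, where everything is smooth. Over a smooth curve, pairs of fixed $\chi$ are divisors $W\in C^{(n)}$ together with line bundles. On this locus, $E$ is the Nakajima-type creation correspondence and $F$ is annihilation at the $rd$ points of $C\cap D$. The locus $I_\chi$ is the diagonal term $x=y\in C\cap D$, and over it the finite map $q$ has degree $rd$.

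So one must check that the local contribution of each point of $I_\chi$ is exactly $+1$. This is the same calculation that gives $[f,e]=\mathrm{id}$ for one point in symmetric products: the excess intersection along the diagonal produces the Euler class of a trivial bundle of rank zero, hence the number $1$. Summed over the $rd$ points, this gives $\lambda=rd$.

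To carry this out, I would compare with the classical computation on $C^{(n)}$. There $F\circ E$ restricted to $x=y$ is the trace of the finite étale map $q$ composed with its pullback, which yields multiplication by $\deg q=rd$. The mixed Hodge statement then follows because $u$ is already known to be a scalar, and a scalar is detected at a single point.

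The main obstacle is Step 3: fixing the sign and the normalization of the localized class $\zeta_I$ on $I_\chi$. This requires tracking the purity transformations of the two virtual dimensions, the $B\Gm$ factors $\cL_x,\cL_y$, and the twists $\langle\pm1\rangle$, so that the contribution is $+rd$ rather than $-rd$ or some multiple of it. My first attempt would be to reduce to the case where $\bZ_{FE}$ and $\bZ_{EF}$ are classical and smooth of the expected dimension over the smooth locus, and then evaluate both sides on the fundamental class of a fiber $C^{(n)}$.
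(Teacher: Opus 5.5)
Your proposal is correct and follows the paper's proof. You push the two purity classes forward to $T_\chi$, use Lemma~\ref{lem:exchange} and Lemma~\ref{lem:supported} to reduce the difference to an endomorphism of $\omega_{M^\dagger(\chi;\alpha)}$, make it a scalar by Proposition~\ref{prop:connected}, and evaluate on smooth curves. The normalization obstacle you flag is settled in the paper the way you suggest: with $C$ smooth and transverse to $D$ and $W$ reduced and disjoint from $D$, both composites are derived \'etale over this locus (vanishing relative tangent spaces plus virtual dimension zero), so the purity maps are fundamental classes. The two degrees are then $(n+1)rd$ and $nrd$, and their difference $rd$ is exactly your count of the diagonal points $x=y$.
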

\begin{proof}
We calculate with dualizing complexes and restore the common twist $\langle-\chi\rangle$ at the end.

Let $a_{FE}\colon Z_{FE}\to T_\chi$ and $a_{EF}\colon Z_{EF}\to T_\chi$ be the proper maps of Proposition~\ref{prop:quot}. The source maps from $\bZ_{FE}$ and $\bZ_{EF}$ to $\bM^\dagger(\chi;c)$ are quasi-smooth of virtual dimension zero. Pushing their purity morphisms forward to $T_\chi$ gives two elements $\zeta_{FE},\zeta_{EF}\in\Hom(q_1^*\omega_{M^\dagger(\chi;c)},q_2^!\omega_{M^\dagger(\chi;\alpha)})$. Explicitly, for either $a_\bullet$ the element is the composite
\begin{align}\label{eq:pushed-purity}
\begin{split}
q_1^*\omega_{M^\dagger(\chi;c)}
&\to\dR a_{\bullet*}a_\bullet^*q_1^*\omega_{M^\dagger(\chi;c)}
\xrightarrow{\ \dR a_{\bullet*}\eta_{q_1a_\bullet}\ }\dR a_{\bullet*}\omega_{Z_\bullet}\\
&=\dR a_{\bullet*}a_\bullet^!q_2^!\omega_{M^\dagger(\chi;\alpha)}\to q_2^!\omega_{M^\dagger(\chi;\alpha)},
\end{split}
\end{align}
where the outer arrows are the pull-back unit and proper counit, and $q_1a_\bullet$ is the classical truncation of the quasi-smooth source map. Lemma~\ref{lem:exchange} and proper base change show that $\zeta_{FE}-\zeta_{EF}$ vanishes on $T_\chi\setminus I_\chi$.

By Lemma~\ref{lem:supported}, the morphism induced by this difference after direct image to $B_{\beta,Z}$ has the form $\dR h_\chi(\alpha)_*u\circ\res_\chi(c,\alpha)$ for some $u\in\End(\omega_{M^\dagger(\chi;\alpha)})$, and Proposition~\ref{prop:connected} gives $u=\lambda\,\id$ for a single rational number $\lambda$.

Lemma~\ref{lem:convolution} identifies the morphisms induced by $\zeta_{FE}$ and $\zeta_{EF}$ after direct image to $B_{\beta,Z}$ with the two compositions on the left hand side of~\eqref{eq:mixed-relation}, precomposed with $\res_\chi(c,\alpha-1)$: their initial pairs automatically lie in the open substack with lower slope bound $\alpha-1$, so the unit for the wider source factors through this open restriction. We compute $\lambda$ on smooth curves.

Take $C$ smooth and transverse to $D$, and let $W$ be an effective divisor of degree $n=\chi+g_\beta-1$ consisting of distinct points disjoint from $D$. The final pair is $(F,s)=(\cO_C(W),1)$. In the first order, an upper modification is $F(y)$ for some $y\in C\cap D$, and the lower initial pair is $F(y-x)$, where $x$ is one of the $n+1$ zeros of its upper section; there are therefore $(n+1)rd$ points over the final pair. In the second order, the lower pair is $F(-x)$ for one of the $n$ points $x\in W$, and the initial pair is $F(-x+y)$ for some $y\in C\cap D$; there are $nrd$ points. Both families are finite \'etale over this open subset. Note that for $n=0$ the second is empty.

To identify the purity classes, let $V\subset M^\dagger(\chi;\alpha)$ denote this open set of final pairs. Over $V$, the derived surface pair locus is the smooth relative symmetric power of dimension $N=\dim B_\beta+n=\beta^2+\chi$. Its deformation complex has amplitude $[0,1]$, tangent dimension $N$, and Euler characteristic $N$, so its obstruction group vanishes. The target projections of $\bZ_{FE}$ and $\bZ_{EF}$ are quasi-smooth of virtual dimension zero: in either order, the two projections have dimension zero after imposing the divisor condition on the appropriate point. Classically, their restrictions over $V$ are the finite \'etale families just counted. Their relative tangent spaces therefore vanish, and virtual dimension zero forces their relative obstruction spaces to vanish as well. Thus the restrictions are derived \'etale over $V$ and smooth of dimension $N$. Their source purity maps between the constant dualizing complexes $\QQ[2N](N)$ are the ordinary fundamental classes, so their proper traces have the degrees above.

Every $G$ in~\eqref{eq:common-subsheaf} over $V$ is a rank one torsion free sheaf on the smooth support curve of the final pair, with generically non-zero section. Its initial pair therefore lies in the smooth open locus of stable surface pairs on smooth curves in $M^\dagger(\chi;c)$, of dimension $N$. On $q_2^{-1}(V)$, the complexes $q_1^*\omega_{M^\dagger(\chi;c)}$ and $q_2^*\omega_V$ are consequently identified with $\QQ[2N](N)$. Precomposing $\zeta_{FE},\zeta_{EF}$ with this canonical identification and applying the unit and proper trace for $q_2$ gives the two degrees. Applied to the localized lift in Lemma~\ref{lem:supported}, this operation gives $u|_V$ by its defining unit-counit formula. Therefore $\lambda=(n+1)rd-nrd=rd$, which proves~\eqref{eq:mixed-relation}.
\end{proof}
\section{Proof of the support theorem}\label{sec:proof-support}
\subsection{Slope restriction and the commutator}

\begin{lem}\label{lem:slices}
Assume the support theorem in every smaller curve degree and every generic chamber. For any $c\le\alpha$, the restriction $\res_\chi(c,\alpha)$ becomes an isomorphism modulo $\cS_\beta$, and either side is isomorphic there, up to normalization, to the PT direct image. This holds also on $B_{\beta,Z}$, with the category defined by the strict supports $\Sigma_\lambda\cap B_{\beta,Z}$.
\end{lem}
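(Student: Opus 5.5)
The plan is to run, for each lower slope bound $c\le\alpha$, the same localization argument used for Theorem~\ref{thm:bounded}: filter the complement of $M^\dagger(\chi;\alpha)$ inside $M^\dagger(\chi;c)$ by HN strata, and show that every stratum contributes an object of $\cS_\beta$.

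\emph{Step 1: the open--closed triangle.} Let $j\colon M^\dagger(\chi;\alpha)\hookrightarrow M^\dagger(\chi;c)$ be the open inclusion and $i$ its closed complement. The complement consists of pairs $\cO_S\to E$ with $\alpha>\mu_{\min}(E)\ge c$. For dualizing complexes the localization triangle reads
\begin{align*}
\dR h_*\,i_*\omega_{\mathrm{cl}}\to K_\chi(c)\xrightarrow{\ \res_\chi(c,\alpha)\ }K_\chi(\alpha)\xrightarrow{+1},
\end{align*}
up to the twist $\langle-\chi\rangle$. So the first assertion is equivalent to the direct image of $\omega$ of the closed part lying in $\cS_\beta$.

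\emph{Step 2: the HN stratification of the closed part.} By Proposition~\ref{prop:slice}, there are only finitely many HN types with bound $c$. I will stratify the closed part by the type of the final HN factor $Q$ of slope below $\alpha$, or better by the whole tail of factors of slope below $\alpha$.

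On such a stratum, the pair $\cO_S\to E$ is an extension of the semistable tail by a pair $\cO_S\to E'$ with $\mu_{\min}(E')\ge\alpha$. The section of $E$ lands in $E'$, because $\Hom(\cO_S,Q)$ enters only through the tail and the section composed to the tail is a separate datum. To handle this, I will split the stratum further by whether the section maps to zero in the tail.

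Wherever it maps to zero, the stratum is a vector-bundle stack over a product of a pair stack in class $\gamma<\beta$ with a semistable sheaf stack. By the Rees construction and homotopy invariance, as in Proposition~\ref{prop:HN}, its $\omega$ direct image is a convolution $\dR\mu_*(\text{pair part}\boxtimes\text{semistable part})$ up to Tate objects.

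\emph{Step 3: identifying the two factors.} The semistable factor is the dualizing direct image from a smooth semistable surface stack. By dimensional reduction, this is $H_{\eta,\delta}$ up to normalization, so it lies in $\cG_\delta$ by Lemma~\ref{lem:rank-zero}.

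The pair factor is the bounded direct image in lower degree. By Proposition~\ref{prop:DR} and Theorem~\ref{thm:bounded} applied in class $\gamma$, which is licensed by the induction hypothesis, it lies in $\cS_\gamma$. Lemma~\ref{lem:serre}(ii) then gives that each stratum lies in $\cS_\beta$. Cones and the finite filtration then put the whole closed part in $\cS_\beta$, so $\res_\chi(c,\alpha)$ is an isomorphism modulo $\cS_\beta$.

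The second assertion follows from Proposition~\ref{prop:DR}: $K_\chi(\alpha)\cong\dR\rho_{\chi*}\phi_{\bY}$ up to shift and twist. The proof of Theorem~\ref{thm:bounded} shows that this agrees with $K_{\chi,\beta}$ modulo $\cS_\beta$, since the complement terms~\eqref{eq:large-chamber-term} lie in $\cS_\beta$.

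\emph{Step 4: restriction to $B_{\beta,Z}$.} Everything is compatible with restriction to the open set $B_{\beta,Z}$. Strict supports restrict to $\Sigma_\lambda\cap B_{\beta,Z}$, and membership in the Serre subcategory is checked on perverse constituents.

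\emph{Main obstacle.} I expect the hard part to be Step 2 when the section maps nonzero into the tail. There the pair does not reduce to a product with a pure semistable sheaf stack.

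The first remedy I will try is to swap the order of the filtration, as in~\eqref{eq:orient-reversed}. In that order the rank one factor is the quotient pair, namely the image of the section in the tail, and the subsheaf carries no section. This again gives a convolution of a lower-degree pair stack with a semistable stack.

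If that fails, I will instead push everything to the shifted cotangent side and reuse the Toda HN strata of Proposition~\ref{prop:filtration}, which treat both orders uniformly.
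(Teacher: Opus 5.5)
\textbf{Gap: the case where the section maps non-trivially to the tail is left open.} This is the case you flag as the main obstacle, and the proposal does not resolve it. Your stated reason that the section of $E$ lands in $E'$ is not a valid argument. The swap-order remedy does not close the case either. It produces rank one factors $(\cO_S\xrightarrow{\bar s}Q)$ with $\mu_{\max}(Q)<\alpha$ and $\bar s\neq 0$. That is a locally closed pair locus with an \emph{upper} slope bound, and neither the induction hypothesis nor Theorem~\ref{thm:bounded} says anything about it.

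\textbf{Missing idea: the case is empty.} The image of $\bar s$ is a pure one dimensional quotient of $\cO_S$, hence $\cO_{C'}$ for an effective divisor $C'$ with $0<\delta:=[C']\le[Q]\le\beta$. By the computation in the proof of Lemma~\ref{lem:slope}, $\mu(\cO_{C'})=(1-g_\delta)/(L\cdot\delta)\ge\alpha$. But every subsheaf of $Q$ has slope at most $\mu_{\max}(Q)<\alpha$, a contradiction. So the section always lies in $E'$. Each stratum of the complement is then a vector bundle stack, with fibres $\Ext^1(Q,E')/\Hom(Q,E')$ and $\Ext^2(Q,E')=0$. Its base is the product of the stack of pairs $(E',s)$ of class $\gamma=\beta-[Q]$ with $\mu_{\min}(E')\ge\alpha$ and the HN-type stack of $Q$.

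\textbf{Your route, once repaired.} With that observation the surface-side route goes through, and it is genuinely different from the paper's. One further correction is needed: the pair factor carries the bound $\alpha=\alpha_\beta\le\alpha_\gamma$, not the bound $(1-g_\gamma)/(L\cdot\gamma)$ used by Theorem~\ref{thm:bounded} in class $\gamma$. So you need the present lemma in class $\gamma$, available by induction on degree, to place its direct image in $\cS_\gamma$.

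\textbf{How the paper argues.} The paper never stratifies the locus $c\le\mu_{\min}<\alpha$. It runs the argument of Theorem~\ref{thm:bounded} separately for the bounds $c$ and $\alpha$ on the shifted cotangent side, using a common last chamber from Lemma~\ref{lem:last-chamber}. Both stable open substacks equal $P_\chi(X,\beta)$, because every PT sheaf already satisfies the bound $\alpha$. Restriction therefore gives a morphism of localization triangles that is the identity on the common PT term, while both closed terms lie in $\cS_\beta$. Proposition~\ref{prop:DR} identifies the full terms with $K_\chi(c)$ and $K_\chi(\alpha)$, so $\res_\chi(c,\alpha)$ is an isomorphism modulo $\cS_\beta$ and both sides agree with the normalized PT image. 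Your second fallback points toward this, but as written it lacks the two-triangle comparison, which is exactly what replaces any direct analysis of the difference locus.
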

\begin{proof}
Apply the argument of Theorem~\ref{thm:bounded} to the two lower bounds $c$ and $\alpha$. Proposition~\ref{prop:slice} bounds the HN types for each fixed lower bound, so Lemma~\ref{lem:last-chamber} provides a last chamber common to both finite type stacks. Their stable open substacks are both $P_\chi(X,\beta)$: every PT sheaf satisfies the stronger bound $\alpha$. Each HN stratum in the complement has a PT factor of smaller curve degree and semistable rank zero factors of positive curve degree. The induction hypothesis, Lemma~\ref{lem:rank-zero} and Lemma~\ref{lem:serre} therefore place these terms in $\cS_\beta$.

The inclusion of the two open substacks gives a map between their localization triangles, inducing the identity on the common PT open substack. By Proposition~\ref{prop:DR}, dimensional reduction followed by $\langle-\chi\rangle$ identifies the full terms with $K_\chi(c)$ and $K_\chi(\alpha)$. Their restriction map is consequently an isomorphism modulo $\cS_\beta$, and both terms identify there with the normalized PT direct image. Restricting the same triangles to $B_{\beta,Z}$ gives the second assertion.
\end{proof}

\begin{lem}\label{lem:weyl}
In a $\QQ$-linear additive category, let $V_i$ be objects, only finitely many of which are non-zero, with maps $e_i\colon V_i\to V_{i+1}$ and $f_i\colon V_{i+1}\to V_i$. If $f_ie_i-e_{i-1}f_{i-1}=m\,\id_{V_i}$ for a non-zero rational number $m$, then every $V_i$ is zero.
\end{lem}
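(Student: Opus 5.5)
We argue by downward induction from the top non-zero object, in the spirit of the classical proof that the Weyl algebra has no non-zero finite dimensional representations. Since only finitely many $V_i$ are non-zero, suppose some $V_i\neq0$ and let $a$ be the \emph{smallest} index with $V_a\neq 0$. Then $V_{a-1}=0$, so $f_{a-1}=0$ and $e_{a-1}=0$, and the relation at $i=a$ reads $f_ae_a=m\,\id_{V_a}$.

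The key step is the formula
\begin{align*}
f_{a+k-1}\cdots f_{a}\,e_{a}\cdots e_{a+k-1}
\end{align*}
being, suitably arranged, a non-zero multiple of $\id_{V_a}$. Concretely, write $e^k:=e_{a+k-1}\circ\cdots\circ e_a\colon V_a\to V_{a+k}$ and $f^k:=f_a\circ\cdots\circ f_{a+k-1}\colon V_{a+k}\to V_a$. We prove by induction on $k$ that
\begin{align*}
f_{a+k}\,e^{k+1}=(k+1)m\,e^{k},
\end{align*}
as maps $V_a\to V_{a+k}$. For $k=0$ this is $f_ae_a=m\,\id$. For the inductive step, use the relation at index $a+k$:
\begin{align*}
f_{a+k}e_{a+k}e^{k}=e_{a+k-1}f_{a+k-1}e^{k}+m\,e^{k}=e_{a+k-1}\cdot km\,e^{k-1}+m\,e^{k}=(k+1)m\,e^{k}.
\end{align*}
Iterating gives $f^ke^k=k!\,m^k\,\id_{V_a}$.

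Now choose $k$ large enough that $V_{a+k}=0$, which is possible by finiteness. Then $e^k$ factors through the zero object, so $f^ke^k=0$. Since $m\neq 0$ and the category is $\QQ$-linear, $k!\,m^k$ is invertible, so $\id_{V_a}=0$, i.e. $V_a=0$. This contradicts the choice of $a$, so all $V_i$ vanish.

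The only point requiring care is the bookkeeping of indices in the inductive identity, in particular that the vanishing of $V_{a-1}$ kills the correction term at the base. No property of the category beyond additivity and $\QQ$-linearity is used. This is what allows the lemma to be applied in the Verdier quotient of Lemma~\ref{lem:slices}, which is $\QQ$-linear additive.
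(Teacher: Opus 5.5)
Your proof is correct and is essentially the paper's argument. The paper packages the $V_i$ into $V=\bigoplus_iV_i$ with endomorphisms $e,f$ satisfying $fe-ef=m\,\id_V$, takes the least non-zero index, and derives $f^ke^k|_{V_i}=k!\,m^k\,\id_{V_i}$ against $e^k=0$ for large $k$; this is exactly your index-by-index computation, although your opening phrase ``downward induction from the top'' is a misnomer, since you actually start at the smallest non-zero index.
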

\begin{proof}
Form the finite direct sum $V=\bigoplus_iV_i$. The given maps define endomorphisms $e$, $f$ of $V$ which raise and lower the index, and the relation reads $fe-ef=m\,\id_V$. Suppose that $V\neq 0$ and choose the least $i$ with $V_i\neq 0$. Then $f$ vanishes on $V_i$, since $V_{i-1}=0$. Induction on $k$ using the commutator gives
\begin{align*}
fe^k=e^kf+kme^{k-1},\qquad f^ke^k|_{V_i}=k!\,m^k\,\id_{V_i}.
\end{align*}
For sufficiently large $k$, the map $e^k$ vanishes because only finitely many $V_j$ are non-zero. Since $k!\,m^k$ is invertible in $\QQ$, the displayed identity forces $\id_{V_i}=0$, a contradiction.
\end{proof}

\subsection{Induction on the curve degree}

\begin{thm}\label{thm:support}
For every effective $\beta>0$, every $\chi\in\ZZ$, and every generic chamber of Toda's stability conditions, the rank one Chow direct image belongs to $\cS_\beta$. In particular every simple constituent of every $\pH{i}K_{\chi,\beta}$ has strict support $\Sigma_\lambda$ for some $\lambda\vdash_\Gamma\beta$. Here the PT vanishing cycle sheaf has the determinant orientation, identified with the cotangent orientation by Theorem~\ref{thm:oriented}.
\end{thm}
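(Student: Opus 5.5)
We argue by induction on $d=L\cdot\beta$. The base case $\beta=0$ is trivial, since $\cS_0$ is the whole category. For $d\ge1$, assume that every rank one Chow direct image in every class $\gamma$ with $L\cdot\gamma<d$, every Euler characteristic and every generic chamber lies in $\cS_\gamma$. This is precisely the hypothesis of Theorem~\ref{thm:chamber}, Theorem~\ref{thm:bounded} and Lemma~\ref{lem:slices}. Every class $\gamma<\beta$ has $L\cdot\gamma<d$, because $L$ is ample. By Theorem~\ref{thm:chamber}, it therefore suffices to prove $K_{\chi,\beta}\in\cS_\beta$ in the PT chamber, for all $\chi$. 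By Corollary~\ref{cor:window}, this already holds outside the window $1-g_\beta\le\chi\le g_\beta-1$, and it holds for all $\chi$ when $g_\beta\le0$. We may thus assume $g_\beta>0$, so $\beta$ is ample.

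Fix a pencil with base locus $Z$ and a smooth $D\in|rL|$ with $r>d$. We work in the Verdier quotient $\cQ_Z$ of $\widehat{D}\MHM(B_{\beta,Z})$ by the thick subcategory of complexes with strict supports $\Sigma_\lambda\cap B_{\beta,Z}$. Set $V_\chi$ to be the image of $K_\chi(\alpha)$ in $\cQ_Z$. By Lemma~\ref{lem:slices}, $V_\chi$ is the normalized PT direct image modulo $\cS_\beta$, and every restriction $\res_\chi(c,c')$ with $c\le c'\le\alpha$ becomes invertible. In particular $\res_\chi(c,\alpha-1)$ and $\res_\chi(\alpha-1,\alpha)$ are invertible. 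We then define
\begin{align*}
e_\chi:=E_\chi\circ\res_\chi(\alpha-1,\alpha)^{-1}\colon V_\chi\to V_{\chi+1},\qquad f_\chi:=F_\chi(\alpha)\colon V_{\chi+1}\to V_\chi.
\end{align*}
Using \eqref{eq:F-restriction} to move $F_{\chi-1}(\alpha-1)$ across the restriction, and cancelling the invertible $\res_\chi(c,\alpha-1)$, Theorem~\ref{thm:relation} becomes
\begin{align*}
f_\chi e_\chi-e_{\chi-1}f_{\chi-1}=rd\,\id_{V_\chi}
\end{align*}
for $\chi\ge 1-g_\beta$.

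For $\chi<1-g_\beta$, the PT space is empty, so $V_\chi=0$. For $\chi>g_\beta-1$, Corollary~\ref{cor:window} gives $V_\chi=0$. Hence only finitely many $V_\chi$ are non-zero. The relation for $\chi<1-g_\beta$ holds trivially because $V_\chi=0$. Lemma~\ref{lem:weyl}, with $m=rd\ne0$, then shows that all $V_\chi$ vanish, so $K_{\chi,\beta}|_{B_{\beta,Z}}\in\cS_\beta$ for every $\chi$.

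Membership in $\cS_\beta$ is a condition on strict supports of simple constituents in each perverse degree, and it is local. As the pencil varies, the opens $B_{\beta,Z}$ cover $B_\beta$, since general $Z$ avoids any given curve. A simple constituent with strict support not of the form $\Sigma_\lambda$ would therefore be detected on some $B_{\beta,Z}$ meeting its support. This gives $K_{\chi,\beta}\in\cS_\beta$, hence all chambers by Theorem~\ref{thm:chamber}, and closes the induction. The consequence about the determinant orientation follows from Theorem~\ref{thm:oriented}.

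The main obstacle is the bookkeeping in the quotient category. Two points need care. First, the relation of Theorem~\ref{thm:relation}, stated with restrictions from slope bound $c$, has to descend to an honest commutator on the $V_\chi$; this requires invertibility of all restrictions from Lemma~\ref{lem:slices} on $B_{\beta,Z}$ and the compatibility \eqref{eq:F-restriction}. Second, the vanishing at the endpoints must be compatible with these normalizations; in particular $V_{\chi}=0$ for $\chi>g_\beta-1$ must be deduced from Corollary~\ref{cor:window} through the identification in Lemma~\ref{lem:slices}.
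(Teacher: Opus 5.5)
Your proposal is correct and follows the paper's proof step by step: reduction to the PT chamber and to the window via Theorem~\ref{thm:chamber} and Corollary~\ref{cor:window}, then the quotient objects $V_\chi$ on $B_{\beta,Z}$, the commutator obtained from Theorem~\ref{thm:relation} together with \eqref{eq:F-restriction}, Lemma~\ref{lem:weyl}, and the covering of $B_\beta$ by the opens $B_{\beta,Z}$. The only difference is the base case, and your version is valid: you start at $\beta=0$ and let $d=1$ fall under the $g_\beta\le 0$ case of Corollary~\ref{cor:window}, which works because lines and ruling fibers have $g_\beta=0$ and the lower-degree hypothesis is vacuous since $\cS_0$ is the whole category, whereas the paper checks $d=1$ directly by showing that the pair space is a projective bundle over $B_\beta$, so its direct image is a sum of shifted constant sheaves.
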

\begin{proof}
We induct on $d=L\cdot\beta$, proving the assertion simultaneously for every Euler characteristic and every generic chamber.

If $d=1$, the curve is a line or a ruling fiber. In the filtration of Lemma~\ref{lem:lower}, the zeroth layer is non-zero at a generic point of $C$ by Nakayama's lemma, so $L\cdot\beta_0\ge 1$. Since $\sum_kL\cdot\beta_k=1$, we have $\beta_k=0$ for $k\ge 1$. The ideal of the line or ruling fiber therefore annihilates the stable pair sheaf generically. Its image in the sheaf is zero dimensional, hence zero by purity, so the sheaf is scheme-theoretically supported on the smooth curve. It is a line bundle with a section of non-negative degree, so each non-empty $P_\chi(X,\beta)$ is a projective bundle over $B_\beta=\PP^2$ or $\PP^1$. Proposition~\ref{prop:hilb}, using the orientation comparison of Theorem~\ref{thm:oriented}, identifies its PT vanishing cycle sheaf with the shifted constant sheaf. Its direct image is a sum of shifted constant sheaves, with strict support $B_\beta=\Sigma_{(\beta)}$. Chamber independence gives the assertion in every chamber.

Fix $d>1$ and assume the theorem in smaller degrees. Corollary~\ref{cor:window} treats $g_\beta\le 0$. For $g_\beta>0$, the class $\beta$ is ample, so Section~\ref{sec:hecke} applies.

Choose a pencil with base locus $Z$ and work on $B_{\beta,Z}$. Let $\cQ$ be the Verdier quotient of $\widehat{D}\MHM(B_{\beta,Z})$ by the full triangulated subcategory of complexes whose simple perverse constituents have strict support $\Sigma_\lambda\cap B_{\beta,Z}$ for some $\lambda$. Write $V_\chi$ for the image of $K_\chi(\alpha)$ in $\cQ$. Lemma~\ref{lem:slices} identifies it with the image of $K_{\chi,\beta}$, with the specified normalization. We will show that $V_\chi=0$ for every $\chi$.

All restrictions $\res_\chi(c,c')$ with $c\le c'\le\alpha$ are invertible in $\cQ$. Creation and removal therefore give maps
\begin{align*}
e_\chi:=E_\chi\circ\res_\chi(\alpha-1,\alpha)^{-1}\colon V_\chi\to V_{\chi+1},\qquad
f_\chi:=F_\chi(\alpha)\colon V_{\chi+1}\to V_\chi.
\end{align*}
In $\cQ$, precompose the relation of Theorem~\ref{thm:relation} with $\res_\chi(c,\alpha)^{-1}$ and use
\begin{align*}
\res_\chi(\alpha-1,\alpha)\circ\res_\chi(c,\alpha-1)=\res_\chi(c,\alpha).
\end{align*}
Together with~\eqref{eq:F-restriction}, this identifies the two compositions with $f_\chi e_\chi$ and $e_{\chi-1}f_{\chi-1}$, giving
\begin{align}\label{eq:commutator}
f_\chi e_\chi-e_{\chi-1}f_{\chi-1}=rd\cdot\id_{V_\chi}\qquad(\chi\ge 1-g_\beta).
\end{align}

Lemma~\ref{lem:lower} makes the PT space empty for $\chi<1-g_\beta$, while chamber comparison and reflection give the support assertion for $\chi>g_\beta-1$. Thus only finitely many of the quotient objects can be non-zero:
\begin{align*}
V_\chi=0\qquad\text{unless}\qquad 1-g_\beta\le\chi\le g_\beta-1.
\end{align*}
Below this interval the objects are zero, so the commutator relation holds there as well. Lemma~\ref{lem:weyl}, applied with $m=rd\neq 0$, gives $V_\chi=0$ for every $\chi$.

The opens cover $B_\beta$. Indeed, given a curve, choose a first pencil member containing none of its components and a second member avoiding its finite intersection with the curve. The resulting pencil has base locus disjoint from the curve. If a simple constituent had strict support $W$ distinct from every $\Sigma_\lambda$, choose an open subset $B_{\beta,Z}$ meeting $W$. Its restriction would have strict support $W\cap B_{\beta,Z}$. The vanishing of $V_\chi$ makes this equal to $\Sigma_\lambda\cap B_{\beta,Z}$ for some $\lambda$; taking closures in $B_\beta$ gives $W=\Sigma_\lambda$, a contradiction. Thus $K_{\chi,\beta}\in\cS_\beta$. Chamber comparison completes the induction for every generic chamber.
\end{proof}

\begin{cor}\label{cor:nonreduced}\label{cor:middle-ext}
For every effective $\beta>0$ and $\chi\in\ZZ$, every simple constituent of the perverse cohomology of $K_{\chi,\beta}$ has support meeting $B_\beta^{\red}$. The same assertion holds in every generic chamber. In particular, with $j_\beta\colon B_\beta^{\red}\hookrightarrow B_\beta$, one has
\begin{align*}
K^{\sss}_{\chi,\beta}\cong{}^pj_{\beta!*}\bigl(j_\beta^*K_{\chi,\beta}\bigr)^{\sss}.
\end{align*}
\end{cor}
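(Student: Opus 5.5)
The corollary follows from Theorem~\ref{thm:support} once we check that each stratum $\Sigma_\lambda$ meets $B_\beta^{\red}$. The theorem places $K_{\chi,\beta}$, in every generic chamber, in $\cS_\beta$. So every simple constituent of every $\pH{i}K_{\chi,\beta}$ has strict support $\Sigma_\lambda$ for some $\lambda\vdash_\Gamma\beta$. The first step is to observe that the dense open $U_\lambda\subset\Sigma_\lambda$ of Subsection~\ref{sub:strata} lies in $B_\beta^{\red}$. A point of $U_\lambda$ is a union of distinct smooth connected curves, and such a divisor is reduced. For repeated classes, in particular for $m$ copies of a ruling $f$, the curves are distinct members, so no component has multiplicity. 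The strata are nonempty because each linear system $|\gamma|$ with $\gamma\in\Gamma$ has a smooth connected general member and is base point free. A general choice of members therefore meets pairwise transversely with no triple points, as already used in Proposition~\ref{prop:connected}. This gives $\Sigma_\lambda\cap B_\beta^{\red}\supset U_\lambda\neq\emptyset$, which is the first assertion, in every chamber.

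For the isomorphism, I work one perverse degree at a time. The semisimplification $(\pH{i}K_{\chi,\beta})^{\sss}$ is a finite direct sum of simple perverse sheaves $\IC_{\Sigma}(W)$, and each $\Sigma$ meets the open set $B_\beta^{\red}$. For any such simple object, restriction followed by intermediate extension recovers it:
\begin{align*}
{}^pj_{\beta!*}\bigl(j_\beta^*\IC_\Sigma(W)\bigr)\cong\IC_\Sigma(W).
\end{align*}
This holds because $j_\beta^*\IC_\Sigma(W)=\IC_{\Sigma\cap B_\beta^{\red}}(W)$ is nonzero and simple, and the intermediate extension of a simple object is the unique simple perverse sheaf with that restriction and no subobjects or quotients supported on the complement.

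Next I need to commute restriction with perverse cohomology and semisimplification. The functor $j_\beta^*$ is perverse t-exact, so $\pH{i}(j_\beta^*K)=j_\beta^*\pH{i}K$. It is also exact on perverse sheaves, so it carries Jordan-H\"older series to Jordan-H\"older series once zero factors are discarded. Since no constituent is supported off $B_\beta^{\red}$, none become zero, and we get $(j_\beta^*\pH{i}K)^{\sss}\cong j_\beta^*(\pH{i}K)^{\sss}$. The functor ${}^pj_{\beta!*}$ commutes with finite direct sums of semisimple objects. Summing over $i$ with the shifts $[-i]$ from~\eqref{intro:ss} then gives the stated formula.

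The only point needing care is this interchange of restriction with semisimplification. I would handle it by induction on the length of a composition series of $\pH{i}K_{\chi,\beta}$, using exactness of $j_\beta^*$. Everything else is formal from Theorem~\ref{thm:support}.
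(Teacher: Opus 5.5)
Your proposal is correct and takes essentially the same approach as the paper: Theorem~\ref{thm:support} gives strict supports $\Sigma_\lambda$, each of which meets $B_\beta^{\red}$, so every simple constituent is the intermediate extension of its restriction. The paper's proof is two lines; you additionally spell out that $U_\lambda\subset\Sigma_\lambda\cap B_\beta^{\red}$ is non-empty, and that the $t$-exactness of $j_\beta^*$ lets restriction commute with perverse cohomology and semisimplification.
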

\begin{proof}
By Theorem~\ref{thm:support}, each strict support is a $\Sigma_\lambda$, which meets the reduced locus. Each simple constituent is therefore the intermediate extension of its restriction to $B_\beta^{\red}$. This also proves Theorem~\ref{intro:thm-support}.
\end{proof}

\section{The correspondence over reduced curves}\label{sec:reduced}
Smoothness of the relative Hilbert schemes and surjectivity onto the node-smoothing directions allow us to apply~\cite[Section~3]{Zha26} and~\cite[Theorem~5.10]{MSV21}. Appendix~\ref{app:nodal-actions} proves the equivariant nodal formula; the comparison with the symmetric powers is given below. As before, $n=\chi+g_\beta-1$.

\subsection{The relative Hilbert schemes}\label{sub:hilb}
Let $\cC^{[n]}_{\beta,\red}$ be the relative Hilbert scheme of length $n$ subschemes of the universal divisor over $B^{\red}_\beta$.

\begin{prop}\label{prop:hilb}
For $n\ge 0$, the relative Hilbert scheme $\cC^{[n]}_{\beta,\red}$ is smooth of dimension $D_\beta+n$ and projective over $B^{\red}_\beta$. There is an isomorphism
\begin{align*}
P_{1-g_\beta+n}(X,\beta)|_{B^{\red}_\beta}\cong\cC^{[n]}_{\beta,\red},
\end{align*}
under which
\begin{align}\label{eq:Hilb}
\phi^{\PT}_{1-g_\beta+n,\beta}\big|_{B^{\red}_\beta}\cong\QQ_{\cC^{[n]}_{\beta,\red}}[D_\beta+n].
\end{align}
For $n<0$, the stable pair locus over $B_\beta^{\red}$ is empty.
\end{prop}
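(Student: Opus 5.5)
I will argue in three steps: first show that every stable pair over $B^{\red}_\beta$ lives scheme-theoretically on the surface curve, then identify the moduli with the relative Hilbert scheme and prove its smoothness, and finally compare the vanishing cycle sheaf with the constant sheaf.

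\emph{Pairs live on the surface.} Let $(F,s)$ be a stable pair whose fundamental cycle is a reduced divisor $C\subset S$. In the filtration of Lemma~\ref{lem:lower}, the layer classes $\beta_k$ sum to $\beta$. The zeroth layer is non-zero at the generic point of every component, so $\beta_0$ already contains every component with multiplicity one. Since $C$ is reduced, this forces $\beta_k=0$ for $k\ge1$. Hence the ideal $I$ of the zero section maps $\cO_C$ into a zero dimensional subsheaf of the pure sheaf $F$. The same holds for $F$ itself, because $I\cdot F$ is supported on finitely many points and therefore vanishes by purity. So $F=i_*F'$ for a pure sheaf $F'$ on $S$ with reduced, locally planar support $C$, and the pair is a stable pair on the Gorenstein curve $C$.

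\emph{Identification and smoothness.} By the Pandharipande--Thomas description of stable pairs on a Gorenstein curve (the relative version of~\cite[Appendix]{PT10}), such pairs correspond to length $n$ subschemes $W\subset C$ via $F'=\mathcal{H}om(I_W,\cO_C)$. Here $\chi(F')=\chi(\cO_C)+n=1-g_\beta+n$, which is where the normalization $n=\chi+g_\beta-1$ comes from. This construction works in flat families, using relative duality on the universal divisor, and gives $P_{1-g_\beta+n}(X,\beta)|_{B^{\red}_\beta}\cong\cC^{[n]}_{\beta,\red}$. Emptiness for $n<0$ then follows, since a stable pair has a section with zero dimensional cokernel, so $\chi(F')\ge\chi(\cO_C)$. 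Projectivity over $B^{\red}_\beta$ is inherited from the relative Quot construction, or alternatively from Lemma~\ref{lem:proper}.

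For smoothness I would follow~\cite{She12,MS13}: the complete linear system on $\PP^2$ or $\mathbb{F}_0$ induces surjections onto the versal deformation spaces of the singularities of reduced curves. The relevant vanishing is $H^1(\cO_S(\beta))=0$ on both surfaces, together with the fact that $\beta$ separates finitely many points of any length, at least in the relevant local sense. Given that surjectivity, smoothness of the relative Hilbert scheme of a versal family of planar singularities gives smoothness of total dimension $D_\beta+n$. \textbf{This is the main obstacle}: we need the surjection onto the local versal deformations at all singular points simultaneously, for every reduced divisor including reducible ones. I would check that the restriction map $H^0(\cO_S(\beta))\to H^0(\cO_C(\beta)\otimes\cO_S/J)$ is surjective for the relevant Tjurina-type length, via $H^1$ vanishing of $\cO_S(\beta)\otimes J$ on these surfaces. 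If that is not available in full generality, I would fall back on a direct tangent computation: $\Ext^1$ of the pair has dimension $D_\beta+n$, using $H^1(\cO_S(\beta))=0$ and the vanishing of the obstruction space $\Ext^2_S$ of the surface pair by Serre duality with $K_S$ negative.

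\emph{The sheaf comparison.} Once smoothness of the surface pair stack is established, the shifted cotangent $T^*[-1]$ of a smooth space is its $(-1)$-shifted cotangent bundle, whose classical truncation is the space itself; this is consistent with $\Ext^2=0$. Hence the vanishing cycle module for the cotangent orientation is the constant one, shifted by the dimension, and equals $\QQ[\dim]$ with $\dim=D_\beta+n=\beta^2+\chi$, using~\eqref{eq:common-numerics}. For the determinant orientation I would invoke Theorem~\ref{thm:oriented}, or directly dimensional reduction Proposition~\ref{prop:DR} restricted to this smooth open. Either route yields~\eqref{eq:Hilb}, with the Hodge twist fixed by~\eqref{eq:PT-hodge}.
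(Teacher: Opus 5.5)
\textbf{Where the proposal stands.} Your pointwise argument that the pairs live on the zero section is fine; it uses the filtration of Lemma~\ref{lem:lower} instead of the paper's vanishing of the Higgs field. The Gorenstein-duality identification, the emptiness for $n<0$, and the final appeal to Theorem~\ref{thm:oriented} are also fine.

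\textbf{The gap.} It is the step you flag yourself: smoothness of $\cC^{[n]}_{\beta,\red}$ at \emph{every} reduced divisor. Your primary route, versality of the complete linear system at all singular points, is false in general. Take $d\ge 5$ distinct concurrent lines in $\PP^2$. The singularity is given by a homogeneous $f$ of degree $d$. Its first-order deformation space $\CC[x,y]/(f_x,f_y)$ is graded, with a non-zero piece in degree $2d-4>d$. The Kodaira--Spencer image of $H^0(\cO_{\PP^2}(d))$ lies in degrees $\le d$. So no $H^1$-vanishing for $\cO_S(\beta)$ twisted by Tjurina ideals can hold.

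The missing idea is that full versality is not needed. The criterion \cite[Theorem~4.16(3)]{MSV21} used in the paper only asks that the Kodaira--Spencer image be transverse to the images of the local conductor ideals. This follows from $H^1(C,\fc_C\otimes\cO_C(C))=0$. Finite duality and adjunction give $\fc_C\otimes\cO_C(C)\cong\nu_*(K_{\wt{C}}\otimes\nu^*K_S^{-1})$, so this group is dual to $H^0(\wt{C},\nu^*K_S)$, which vanishes by negativity of $K_S$. The paper gets the dimension $D_\beta+n$ separately, from Luan's bound $\dim\Hilb^n(C)=n$ and a regular section on $B_\beta\times\Hilb^n(S)$.

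\textbf{The fallback.} As written it does not close the gap either. For $J=[\cO_S\to F]$ the obstruction space of the surface pair is $\Ext^1_S(J,F)$, and the triangle $J\to\cO_S\to F$ gives
\begin{align*}
\Ext^1_S(F,F)\xrightarrow{\ \circ s\ }H^1(S,F)\to\Ext^1_S(J,F)\to\Ext^2_S(F,F)\to 0.
\end{align*}
Serre duality and negativity of $K_S$ only kill $\Ext^2_S(F,F)=\Hom_C(F,F\otimes K_S)^\vee$. Even that needs reducedness, through $\cHom_C(F,F)\subset\nu_*\cO_{\wt{C}}$. The vanishing $H^1(\cO_S(\beta))=0$ is not the relevant input.

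You must also show the first map is onto. It is: the classes $a\cdot\id_F$ with $a\in H^1(C,\cO_C)$ map to $H^1(s)(a)$. The map $H^1(s)\colon H^1(\cO_C)\to H^1(F)$ is onto because $\cO_C\to F$ has zero dimensional cokernel. With both facts, $\Ext^1_S(J,F)=0$. Riemann--Roch then gives tangent dimension $\chi(\cO_S,F)-\chi(F,F)=\beta^2+\chi=D_\beta+n$. Completed this way, your fallback is a valid and more direct alternative to the paper's use of Luan and \cite{MSV21}.

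\textbf{Families.} Your first step is pointwise. The isomorphism of moduli spaces needs the Higgs field to vanish over Artinian bases. The paper proves this by induction on the order, using $\Hom_C(G,G\otimes K_S)\hookrightarrow H^0(\wt{C},\nu^*K_S)=0$. Alternatively, it follows from $t_0T^*[-1]U=U$ once the surface pair locus $U$ is known to be smooth.
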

\begin{proof}
Consider first the relative Hilbert scheme over the complete linear system. It is the zero locus of the universal restriction section of a rank $n$ bundle on $B_\beta\times\Hilb^n(S)$, which is a smooth variety of dimension $D_\beta+2n$~\cite{Fog68}. Every component of $\Hilb^n(C)$ has dimension $n$ for a planar curve, including a reducible or non-reduced one~\cite[Theorem~1.1]{Lua23}. To apply Luan's statement for affine plane curves, choose a line avoiding the finite subscheme on the plane, or a fiber of each ruling avoiding it on the quadric. In either case the subscheme lies in an affine plane. The fiber dimension bound and the number of defining equations therefore give pure dimension $D_\beta+n$; the section is regular, and miracle flatness gives relative dimension $n$. Projectivity follows from the relative Hilbert construction.

Now let $C\in B_\beta^{\red}$. Write $\nu\colon\wt{C}\to C$ for its normalization and $\fc_C=\cHom_C(\nu_*\cO_{\wt{C}},\cO_C)$ for its conductor ideal. Finite duality and adjunction give
\begin{align*}
\fc_C\otimes\cO_C(C)\cong\nu_*\bigl(K_{\wt{C}}\otimes\nu^*K_S^{-1}\bigr),\qquad
H^1\bigl(C,\fc_C\otimes\cO_C(C)\bigr)\cong H^0\bigl(\wt{C},\nu^*K_S\bigr)^\vee=0,
\end{align*}
where the last equality uses the negativity of $K_S$ on every component of the normalization. Locally at a singular point $x$, write $C=(f(u,v)=0)$. The conductor ideal maps to the first order deformation space $\cO_{C,x}/(\partial_uf,\partial_vf)$. The displayed vanishing says that the Kodaira-Spencer image of the complete linear system is transverse to the direct sum of these images over the singular points, as required by~\cite[Theorem~4.16(3)]{MSV21}. The theorem gives smoothness along $C^{[n]}$ for every relative Hilbert scheme.

Let a local surface stable pair have reduced fundamental divisor $C$. Its surface sheaf $G$ is torsion free of rank one on $C$: the generic multiplicity one along each component implies $I_CG=0$ generically, where $I_C\subset\cO_S$ is the ideal of $C$, and purity makes it zero everywhere. The Higgs field lies in
\begin{align*}
\Hom_C(G,G\otimes K_S)\hookrightarrow H^0\bigl(\wt{C},\nu^*K_S\bigr)=0,
\end{align*}
so it vanishes. Over an Artin local base, the successive leading terms lie in this zero vector space tensored with the successive powers of the maximal ideal. They vanish by induction; completion and finite presentation then give vanishing for arbitrary families. Relative Gorenstein duality identifies $\cO_C\to G$ with the colength $n$ ideal $\cHom_C(G,\cO_C)\subset\cO_C$. This identification commutes with base change and gives the isomorphism of moduli functors.

On this stable pair locus, the surface pair tangent complex has amplitude $[0,1]$ and Euler characteristic $\beta^2+\chi=D_\beta+n$. Its tangent space has this dimension by the smoothness just proved, so the obstruction space is zero, and the derived surface pair locus is classical and smooth. Theorem~\ref{thm:oriented} identifies $\phi^{\PT}_{1-g_\beta+n,\beta}$ with the vanishing cycle sheaf of the shifted cotangent stack with its canonical orientation. For a smooth base, this is the shifted constant sheaf in~\eqref{eq:Hilb}. For negative colength the locus is empty.
\end{proof}

\begin{lem}\label{lem:transv}
Let $C\in U_\lambda$ be a transverse union of smooth connected curves. The Kodaira-Spencer map of the linear system $B_\beta$ at $C$ surjects onto the direct sum of the first order smoothing spaces of the nodes of $C$.
\end{lem}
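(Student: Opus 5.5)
The argument reduces to the conductor computation already used in Proposition~\ref{prop:hilb}, now applied to a nodal curve. Let $C=C_1\cup\cdots\cup C_m$ with $C_i$ smooth and connected, meeting pairwise transversely and with no triple points, and let $N$ be its finite set of nodes. At a node $x$ with local equation $uv=0$, the first order deformation space $\cO_{C,x}/(\partial_uf,\partial_vf)=\cO_{C,x}/(u,v)$ is one dimensional; this is the smoothing direction. The Kodaira-Spencer map of $B_\beta$ at $C$ is the composite
\begin{align*}
H^0(S,\cO_S(C))\to H^0(C,\cO_C(C))\to\bigoplus_{x\in N}\cO_{C,x}/(\partial_uf,\partial_vf).
\end{align*}
The first arrow is surjective because $H^1(S,\cO_S)=0$ on both surfaces. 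It therefore suffices to show that $H^0(C,\cO_C(C))\to\bigoplus_x\CC_x$ is surjective.

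The kernel of the local map $\cO_{C,x}\to\cO_{C,x}/(u,v)$ is the maximal ideal at the node, and this is exactly the conductor $\fc_C$ there. So we have an exact sequence
\begin{align*}
0\to\fc_C\otimes\cO_C(C)\to\cO_C(C)\to\bigoplus_{x\in N}\CC_x\to 0,
\end{align*}
and surjectivity on global sections follows from $H^1(C,\fc_C\otimes\cO_C(C))=0$. This vanishing was established in Proposition~\ref{prop:hilb}. Finite duality and adjunction give
\begin{align*}
\fc_C\otimes\cO_C(C)\cong\nu_*\bigl(K_{\wt C}\otimes\nu^*K_S^{-1}\bigr),\qquad \wt C=\textstyle\coprod_i C_i.
\end{align*}
Since $\nu$ is finite, the $H^1$ equals $\bigoplus_i H^1(C_i,K_{C_i}\otimes K_S^{-1}|_{C_i})$, which by Serre duality is $\bigoplus_i H^0(C_i,K_S|_{C_i})^\vee$. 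Each summand vanishes because $K_S|_{C_i}$ has negative degree: $-K_S=L^\kappa$ is ample and $C_i$ is a curve.

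In the nodal case the conductor identification can also be checked directly. Here $\nu_*\cO_{\wt C}/\cO_C$ has length one at each node, and $K_{\wt C}=\nu^*K_C(-\sum(p_x+q_x))$, where $p_x,q_x$ are the two preimages of $x$. This avoids citing general finite duality.

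The only step needing care is identifying the image of the conductor with the kernel of the projection to the smoothing direction. For $f=uv$ the Jacobian ideal is $(u,v)$, which equals the conductor at a node, so this identification is immediate here.
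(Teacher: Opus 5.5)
Your proof is correct and takes the same route as the paper. Surjectivity of $H^0(S,\cO_S(C))\to H^0(C,\cO_C(C))$ comes from $H^1(S,\cO_S)=0$, and surjectivity onto the node-smoothing space comes from the conductor vanishing $H^1(C,\fc_C\otimes\cO_C(C))=H^0(\wt C,\nu^*K_S)^\vee=0$ of Proposition~\ref{prop:hilb}, using that at a node the conductor equals the Jacobian ideal.
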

\begin{proof}
The singularities of $C$ are its nodes, and the conductor $\fc_C$ is their ideal, so the smoothing space of the nodes is $H^0(C,\cO_C(C)/\fc_C\otimes\cO_C(C))$. The vanishing $H^1(S,\cO_S)=0$ makes $H^0(S,\cO_S(C))\to H^0(C,\cO_C(C))$ surjective. The next map, from $H^0(C,\cO_C(C))$ to the node-smoothing space, is surjective by the conductor vanishing $H^1(C,\fc_C\otimes\cO_C(C))=H^0(\wt{C},\nu^*K_S)^\vee=0$ proved in Proposition~\ref{prop:hilb}. Their composition is the required map. For the plane this is~\cite[Lemma~5.1]{Zha26}.
\end{proof}

\subsection{The identity on the complete Chow variety}\label{sub:identity}
Let $\cR_{\chi,\beta}$ be the coefficient of $q^\chi Q^\beta$ in the series~\eqref{intro:Exp}. For a partition $\lambda\vdash_\Gamma\beta$, list its classes with multiplicity as $(\gamma_1,\ldots,\gamma_\ell)$ and put $N(\lambda)=\sum_{i<j}\gamma_i\cdot\gamma_j$. We retain the notation $T_{\lambda,s}$ and $U_{\lambda,s}$ of~\cite[(12), (28)]{Zha26}, whose definitions we recall. For $s\ge 0$, on the cover $\wt{U}_\lambda\to U_\lambda$ ordering the components, let $\wt{\rho}_{\lambda,s}$ be the projection to $\wt{U}_\lambda$ from the relative length $s$ Hilbert scheme of their disjoint union. For the permutation group $G_\lambda$ in~\eqref{intro:mu}, set $\varepsilon_\lambda=\bigotimes_\gamma(\sgn_{\mathfrak S_{m_\gamma}})^{\gamma^2}$. Define $\cH^k_{\lambda,s}$ as the descent of $R^k\wt{\rho}_{\lambda,s*}\QQ$ under the graded K\"unneth permutation action tensored with $\varepsilon_\lambda$. Put
\begin{align}\label{eq:T}
T_{\lambda,s}:=\bigoplus_{k=0}^{2s}\IC_{\Sigma_\lambda}(\cH^k_{\lambda,s})[s-k].
\end{align}
For $\chi=1-g_\beta+N(\lambda)+s$, the other term is defined by
\begin{align}\label{eq:U}
U_{\lambda,s}[-\chi]:=[q^\chi]\Bigl[\dR\mu_{\lambda*}\Bigl(\boxtimes_{\gamma\in\Gamma}\cA_\gamma(q)^{\boxtimes m_\gamma}\Bigr)\Bigr]^{G_\lambda}.
\end{align}
Here $[q^\chi]$ means the coefficient extraction. We set both terms equal to zero for $s<0$.

\begin{lem}\label{lem:nodal-descent}
Let $\beta\in\Gamma$, $n\ge 0$, and $\chi=1-g_\beta+n$. For $\lambda\vdash_\Gamma\beta$, put $s=n-N(\lambda)$. If $s\ge 0$, the local system on $U_\lambda$ defining the summand of $\pH{i}(j_\beta^*K_{\chi,\beta})$ with strict support $\Sigma_\lambda\cap B^{\red}_\beta$ is $\cH^{s+i}_{\lambda,s}$. This summand is zero if $s<0$.
\end{lem}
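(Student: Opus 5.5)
Over $B^{\red}_\beta$, Proposition~\ref{prop:hilb} reduces $j_\beta^*K_{\chi,\beta}$ to $\dR\rho_*\QQ_{\cC^{[n]}_{\beta,\red}}[D_\beta+n]$, where $\rho\colon\cC^{[n]}_{\beta,\red}\to B^{\red}_\beta$ is the projection. So the statement is a computation of the generic stalk, on each stratum $U_\lambda$, of the $\Sigma_\lambda$-summand of the perverse cohomology of a proper direct image from a smooth total space. The plan follows the nodal Macdonald argument of~\cite[Section~3]{Zha26} and the support theorem of Migliorini--Shende--Viviani. Lemma~\ref{lem:transv} supplies the versality hypothesis of~\cite[Theorem~5.10]{MSV21} at every point of $U_\lambda$. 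That theorem then says the summand of $\pH{i}(\dR\rho_*\QQ[D_\beta+n])$ with strict support $\Sigma_\lambda\cap B^{\red}_\beta$ is governed, near a point $C\in U_\lambda$ with $\delta=N(\lambda)$ nodes, by the cohomology of the Hilbert scheme of the partial normalization at all nodes. Here that partial normalization is the disjoint union $\wt C=\bigsqcup_i C_i$ of the components.

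The key steps are as follows. First, work on the ordering cover $\wt U_\lambda$. There the partial normalization is the family $\bigsqcup_i\cC_{\gamma_i}$. By the MSV / Shende-type formula, the $\Sigma_\lambda$-summand is the top stratum contribution: the relative Hilbert scheme of length $n-\delta=s$ of the normalization, shifted so that the perverse degree $i$ corresponds to cohomological degree $s+i$ of $R\wt\rho_{\lambda,s*}\QQ$. This matching of degrees is a dimension count. The stratum has codimension $\delta$ in $B_\beta$, and the fibers have dimension $s$ rather than $n$. The shift $[D_\beta+n]$ restricted to $\Sigma_\lambda$ of dimension $D_\beta-\delta$ then gives $R^{k}$ in perverse degree $k-s$. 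If $s<0$, the Hilbert scheme is empty, so the summand vanishes.

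Second, descend from $\wt U_\lambda$ to $U_\lambda$. The monodromy permuting equal-class components acts on the ordered cohomology through the graded K\"unneth permutation. It also acts by an additional sign coming from the orientation/determinant of the local smoothing directions at the nodes. Permuting two components of class $\gamma$ permutes pairs of nodes, and the total sign is $\sgn^{\gamma^2}$. This is the character $\varepsilon_\lambda$, and it yields $\cH^{s+i}_{\lambda,s}$.

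I expect the main obstacle to be the equivariant identification in this second step: showing that the MSV isomorphism on $\wt U_\lambda$ is compatible with the $G_\lambda$-actions up to exactly $\varepsilon_\lambda$. My first attempt would be to invoke the equivariant nodal formula of Appendix~\ref{app:nodal-actions}. I would then compute the sign locally, using the product of one-parameter node smoothings $uv=t_p$. A transposition of two class-$\gamma$ curves permutes the $\gamma^2$ intersection points among the smoothing coordinates. On the top exterior power of the normal directions, each swapped pair contributes $-1$, which gives $(-1)^{\gamma^2}$. I would check the parity bookkeeping against the $\mathbb P^2$ case in~\cite{Zha26}.
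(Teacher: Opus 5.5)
Your reduction matches the paper's: Proposition~\ref{prop:hilb} and Lemma~\ref{lem:transv} put you in the Migliorini--Shende--Viviani nodal setting. On the ordering cover $\wt U_\lambda$ that gives the length $s=n-N(\lambda)$ Hilbert scheme of the normalization, with $k=s+i$ and vanishing for $s<0$. The gap is in the descent step, and the sign you propose to compute is the wrong one. The smoothing parameter $t_p$ in $uv=t_p$ is symmetric in the two branches. So a loop exchanging two components only permutes the smoothing coordinates, and their top exterior power sees at most the sign of the induced permutation of the nodes. That sign is $\det P_J$, exactly the factor the paper divides out in $\varepsilon_I=\det E_I\otimes(\det P_I)^{-1}$.

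A concrete check: take $\beta=2H$ on $\PP^2$, $\lambda=(H,H)$ and $n=1$. The swap fixes the single node, so your recipe gives the trivial character. But the $\Sigma_\lambda$-summand in perverse degree $0$ is $\IC_{\Sigma_\lambda}(\sgn)$, coming from the anti-invariant class $[\ell_1]-[\ell_2]$ in $H^2$ of the fiber $\ell_1\cup\ell_2$. The sign actually lives in the vanishing cycles. Exchanging the branches at a node reverses the orientation of its vanishing cycle, so the image of $N_I$ in the CKS complex carries $\det E_I^*$, while the exterior factor $\det P_I$ of that complex cancels the node-permutation sign. With this character, orient the edges from the two exchanged components to every other component in the same direction. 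Those edges are then permuted without reversal, and only the $\gamma^2$ edges between the exchanged components are reversed. Your count lands on $(-1)^{\gamma^2}$ only because you count one sign per node, which matches branch reversals, not smoothing directions.

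Second, even with the right character, a sign computed at one point does not make the ordered identification $G_\lambda$-equivariant. That identification comes from a trace comparison on $\wt U_\lambda$. The local systems $R^k\wt\rho_{\lambda,s*}\QQ$ are in general reducible, with summands permuted by $G_\lambda$, so two descents of isomorphic pull-backs need not differ by a single character.

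The paper instead redoes the comparison on the base of unordered components, in these steps:
\begin{enumerate}
\item It computes the CKS stalk of the intermediate extension together with the permutation actions.
\item It applies Lemma~\ref{lem:equivariant-nodal} at closed points where Frobenius permutes components and exchanges branches, after spreading out, and compares via Chebotarev.
\item It removes the larger supports successively.
\item Because this only identifies alternating classes, it uses purity of the direct image over $B^{\red}_\beta$ to separate degrees. The perverse degree $i$ summand on $\Sigma_\lambda$ has weight $D_\beta+n+i-\dim\Sigma_\lambda$, which equals the weight $k+2N(\lambda)$ of the twisted $R^k$ exactly when $k=s+i$.
\item Betti--\'etale comparison and descent from $\QQ_\ell$ to $\QQ$ finish the argument.
\end{enumerate}
Your proposal cites the appendix but gives no mechanism for passing from its identity of virtual Galois representations to an isomorphism of local systems in each degree.
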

The proof in Appendix~\ref{app:nodal-actions} computes the branch-exchange character in the nodal support formula of~\cite[Theorem~5.10]{MSV21}.

\begin{thm}\label{thm:reduced}
Let $\beta\in\Gamma$ on either surface. For every $\chi$ and every $\lambda\vdash_\Gamma\beta$, the summand of $K^{\sss}_{\chi,\beta}[-\chi]$ with strict support $\Sigma_\lambda$ is isomorphic to
\begin{align*}
T_{\lambda,n-N(\lambda)}[-\chi]\cong U_{\lambda,n-N(\lambda)}[-\chi].
\end{align*}
Consequently
\begin{align*}
j_\beta^*K^{\sss}_{\chi,\beta}[-\chi]\cong j_\beta^*\cR_{\chi,\beta}.
\end{align*}
\end{thm}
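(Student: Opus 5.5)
Since $\Sigma_\lambda$ meets $B^{\red}_\beta$ in the dense open $U_\lambda$, and both sides are sums of intersection complexes, it suffices to compare the semisimple perverse sheaves over $B^{\red}_\beta$, and on each stratum only the local systems on $U_\lambda$. The argument has three steps.

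\textbf{PT side.} By Proposition~\ref{prop:hilb}, over $B^{\red}_\beta$ the direct image $j_\beta^*K_{\chi,\beta}$ is $\dR\rho_*\QQ_{\cC^{[n]}_{\beta,\red}}[D_\beta+n]$, with $\rho$ proper from a smooth source. By the decomposition theorem it is semisimple up to shifts. Lemma~\ref{lem:transv} gives the transversality hypothesis of~\cite[Theorem~5.10]{MSV21}, so the possible supports are exactly the $\Sigma_\lambda\cap B^{\red}_\beta$. Lemma~\ref{lem:nodal-descent} identifies the local system in perverse degree $i$ as $\cH^{s+i}_{\lambda,s}$ with $s=n-N(\lambda)$, and shows the summand vanishes for $s<0$. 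Summing over $i$ and shifting by $-\chi$ gives $T_{\lambda,s}[-\chi]$, after checking that $k=s+i$ reproduces the shift $[s-k]=[-i]$ in~\eqref{eq:T}.

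\textbf{$\cA$ side.} Expand $U_{\lambda,s}$ from~\eqref{eq:U} over $U_\lambda$. There $\mu_\lambda$ restricted to the ordered cover $\wt U_\lambda$ is finite étale of group $G_\lambda$ onto $U_\lambda$. Each factor $\cA_{\gamma}(q)$, restricted to $U_\gamma$, is the Macdonald series for the smooth curve: by~\cite{Mac62},
\begin{align*}
\bigoplus_{s'}q^{s'+1-g_\gamma}H^*(C^{[s']})
\end{align*}
is the $\Sym$ of $H^0,H^1,H^2$ with indices $u,j,v$. Multiplying over components shows the coefficient of $q^{\chi}$ is the relative cohomology of the length-$s$ Hilbert scheme of the disjoint union $\bigsqcup_i C_i$, with exponent
\begin{align*}
\sum_i(1-g_{\gamma_i})+s=1-g_\beta+N(\lambda)+s,
\end{align*}
using $g_\beta-1=\sum_i(g_{\gamma_i}-1)+N(\lambda)$. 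This matches $\chi$. The shifts $[g_\gamma-1-j-2v]$ sum to the shift in~\eqref{eq:T} after accounting for $\dim\Sigma_\lambda=\sum D_{\gamma_i}$. Taking $G_\lambda$-invariants with Koszul signs then produces descent by the graded Künneth action. The Koszul sign from the shifts $[D_\gamma+\cdots]$, of parity $\gamma^2$ mod $2$ since $D_\gamma\equiv\gamma^2+g_\gamma-1$, is what should produce the twist $\varepsilon_\lambda$. Extending by $\IC_{\Sigma_\lambda}$ then gives $U_{\lambda,s}\cong T_{\lambda,s}$.

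\textbf{Assembly and main obstacle.} Summing over $\lambda\vdash_\Gamma\beta$ gives the final statement. Partitions using classes outside $\Gamma$ contribute zero, since $\cA=0$ there, and the $\mu_\lambda$ images for distinct $\lambda$ have distinct strict supports. The main obstacle is the sign and parity bookkeeping: matching $\varepsilon_\lambda$ and the branch-exchange character of Lemma~\ref{lem:nodal-descent} with the Koszul signs in Definition~\ref{intro:def-Exp}, including the degree-dependent shifts for repeated classes. I would first verify this on $\lambda$ with two equal components, where the parity of $\gamma^2$ and the odd classes $H^1$ interact.
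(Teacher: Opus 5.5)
Your proposal follows essentially the same route as the paper's proof. On the PT side it uses Proposition~\ref{prop:hilb}, Lemma~\ref{lem:transv} and Lemma~\ref{lem:nodal-descent}; on the other side it expands the Macdonald series of each $\cA_{\gamma_i}$ over the ordered cover $\wt U_\lambda$ with the same adjunction bookkeeping; then both sides are extended by intermediate extension from $U_\lambda$.

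The sign check you flag as the main obstacle closes as you predict. On $U_\gamma$ the length $r$ coefficient of $\cA_\gamma$ is $\dR\rho^{[r]}_{\gamma*}\QQ[\gamma^2]$, with a constant shift. So under the suspension isomorphisms, swapping equal factors is the graded K\"unneth swap times $(-1)^{\gamma^2}$, which gives $\varepsilon_\lambda$. The paper also checks that a transposition reverses branches at exactly $\gamma^2$ nodes, which identifies $\varepsilon_J$ with $\varepsilon_\lambda$; this input is already packaged into Lemma~\ref{lem:nodal-descent}.
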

\begin{proof}
By Proposition~\ref{prop:hilb} and~\cite[Proposition~2.11]{Zha26}, $j_\beta^*K_{\chi,\beta}$ is the direct image of the intersection complex of the smooth relative Hilbert scheme. Lemma~\ref{lem:transv} gives the node-smoothing surjectivity required in~\cite[Proposition~3.3]{Zha26}. Lemma~\ref{lem:nodal-descent} identifies the summand with strict support $\Sigma_\lambda\cap B^{\red}_\beta$ with the cohomology of the length $s=n-N(\lambda)$ Hilbert scheme of the normalization, with the descent and shifts in $T_{\lambda,s}$. Theorem~\ref{thm:support} then gives its intermediate extension to $\Sigma_\lambda$.

The length $r_i$ term of $\cA_{\gamma_i}(q)$ has $q$-degree $1-g_{\gamma_i}+r_i$. Adjunction gives
\begin{align*}
g_\beta=\sum_ig_{\gamma_i}+N(\lambda)-\ell+1,\qquad \sum_i(1-g_{\gamma_i}+r_i)=1-g_\beta+N(\lambda)+\sum_ir_i.
\end{align*}
Hence this choice contributes to $q^\chi=q^{1-g_\beta+n}$ precisely when $\sum_ir_i=s$. Relative to the unnormalized Macdonald coefficients of~\cite[(9)]{Zha26}, its total cohomological shift is $\sum_i(g_{\gamma_i}-1-r_i)=-\chi$. Sum over these length allocations and take the permutation invariants in~\eqref{intro:Exp}. This gives
\begin{align}\label{eq:reduced-expansion}
\cR_{\chi,\beta}\cong\bigoplus_{\lambda\vdash_\Gamma\beta}U_{\lambda,n-N(\lambda)}[-\chi].
\end{align}
Lemma~\ref{lem:serre} shows that $U_{\lambda,s}$ is a direct sum of intersection complexes with strict support $\Sigma_\lambda$.

Work over the cover $\wt{U}_\lambda$ ordering the smooth components. Write $\rho^{[r]}_\gamma\colon\cC^{[r]}_\gamma|_{U_\gamma}\to U_\gamma$ for the relative length $r$ Hilbert scheme of the smooth curve family. In the following products, each factor is pulled back to $\wt U_\lambda$, and the products are taken over that base. The relative Hilbert scheme of the normalization is
\begin{align*}
\coprod_{r_1+\cdots+r_\ell=s}\prod_i\cC^{[r_i]}_{\gamma_i}.
\end{align*}
On this cover, the K\"unneth formula identifies the graded local systems. Permuting the components permutes the allocation $(r_i)$ and acts on its cohomology by the graded K\"unneth rule. The descent of the nodal summand carries the character $\varepsilon_\lambda$. To compare this character with the symmetric powers, let $J$ be the nodes between distinct components and let $\varepsilon_J$ be the one dimensional representation recording branch reversals. Monodromy fixing each component only permutes the nodes and acts trivially on $\varepsilon_J$. This character therefore factors through $G_\lambda$; its determinant description is given in Appendix~\ref{app:nodal-actions}.

For a transposition of two components of class $\gamma$, orient edges to any other component in the same direction. These edges are permuted without reversal. The $\gamma^2$ nodes joining the exchanged components have their branches reversed, so the transposition acts on $\varepsilon_J$ by $(-1)^{\gamma^2}$. Since transpositions generate $G_\lambda$, the resulting character is $\varepsilon_\lambda$. In~\eqref{intro:Exp}, the normalized length $r$ coefficient on the smooth class $\gamma$ locus is $\dR\rho^{[r]}_{\gamma*}\QQ[\gamma^2]$, since $D_\gamma+g_\gamma-1=\gamma^2$. Under the usual suspension isomorphisms, permuting equal factors acts by the graded K\"unneth permutation and by the sign of the shifted one dimensional factors $\QQ[\gamma^2]$. This agrees with the character $\varepsilon_J$ and the action on length allocations above.

Their intermediate extensions therefore satisfy $T_{\lambda,s}\cong U_{\lambda,s}$, and~\eqref{eq:reduced-expansion} proves the theorem.
\end{proof}

\begin{lem}\label{lem:rulings}
For $S=\mathbb{F}_0$ and $\beta=mf$, the isomorphism of Theorem~\ref{thm:reduced} holds.
\end{lem}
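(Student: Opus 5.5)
The plan is to repeat the proof of Theorem~\ref{thm:reduced} directly for $\beta=mf$. Here the reduced locus is simple, so the nodal support formula and Lemma~\ref{lem:nodal-descent} can be replaced by an explicit product description. The case $m=1$ is already covered by Theorem~\ref{thm:reduced}, since $f\in\Gamma$, so assume $m>1$.

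\emph{Step 1: the geometry over $B_\beta^{\red}$.} We have $B_{mf}=\Sym^m\PP^1$, and its reduced locus $B^{\red}_{mf}$ is the complement of the big diagonal, parametrizing $m$ distinct fibers. Moreover $D_\beta=m$, $g_\beta=1-m$ and $n=\chi-m$. The only partition $\lambda\vdash_\Gamma mf$ is $m$ copies of $f$. For it, $U_\lambda=B^{\red}_{mf}$, $N(\lambda)=m(m-1)f^2/2=0$ and $\varepsilon_\lambda$ is trivial because $f^2=0$. Over $B^{\red}_\beta$ the universal curve is a disjoint union of smooth fibers, so there are no nodes. Proposition~\ref{prop:hilb} then identifies $P_\chi(X,\beta)|_{B^{\red}_\beta}$ with the relative Hilbert scheme $\cC^{[n]}_{\beta,\red}$, and identifies $\phi^{\PT}$ with $\QQ[D_\beta+n]=\QQ[\chi]$. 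After the normalization $[-\chi]$, the left side is therefore the plain direct image $\dR\rho_*\QQ$ of the relative Hilbert scheme. This direct image is already semisimple by the decomposition theorem, since the map is smooth and proper.

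\emph{Step 2: compute the left side on the ordering cover.} Pass to the Galois cover $\wt U_\lambda=(\PP^1)^m\setminus\Delta\to U_\lambda$, which has group $\fS_m$. There the Hilbert scheme of the disjoint union of the ordered fibers splits as
\begin{align*}
\coprod_{r_1+\cdots+r_m=n}\prod_i\PP^{r_i}.
\end{align*}
Indeed, $\Hilb^{r}(\PP^1)=\PP^{r}$, or directly: a stable pair on a fiber is $\cO(r)$ with a nonzero section. Hence the pushforward is the constant object
\begin{align*}
\bigoplus_{\sum r_i=n}\ \bigotimes_i H^*(\PP^{r_i})\otimes\QQ_{\wt U_\lambda}.
\end{align*}
The group $\fS_m$ acts by permuting the allocations and by the K\"unneth rule. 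All cohomology sits in even degrees, so no Koszul signs appear, and descent gives the summand on $U_\lambda$.

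\emph{Step 3: compute the right side and compare.} By~\eqref{intro:ruling-A}, the $q^{r+1}$-coefficient of $\cA_f(q)$ is $\bigoplus_{v=0}^{r}\QQ_{\PP^1}[-2v]=H^*(\PP^r)\otimes\QQ_{\PP^1}$. The terms with $\cA_{kf}$ for $k>1$ vanish, so only the partition into $m$ copies of $f$ contributes. The coefficient of $q^{\chi}$ in $[\dR\mu_*\cA_f^{\boxtimes m}]^{\fS_m}$ restricted to $B^{\red}$ therefore has three descriptions that agree:
\begin{enumerate}
\item[(a)] the pushforward along the étale map $\mu$ restricted to $\wt U_\lambda\to U_\lambda$;
\item[(b)] the sum over $\sum(r_i+1)=\chi$, equivalently $\sum r_i=n$;
\item[(c)] the $\fS_m$-invariants of the same graded K\"unneth object as in Step 2.
\end{enumerate}
Both sides are thus the $\fS_m$-descent of the same equivariant constant object, and the equality of $q$-degrees is exactly $\sum r_i=n$.

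\emph{Main obstacle.} I expect the main obstacle to be matching the permutation actions and normalizations precisely. This means checking that the orientation and Tate conventions in Proposition~\ref{prop:hilb} produce the trivial character on $\fS_m$. I would first verify this by noting that the branch-reversal character $\varepsilon_J$ has no nodes to act on, and that all shifts $\QQ[\gamma^2]=\QQ[0]$ are even.
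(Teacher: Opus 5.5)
Your proposal is correct and follows essentially the same route as the paper. On the ordering cover of $B^{\red}_{mf}$ the stable pair space is $\coprod_{\sum r_i=n}\prod_i\PP^{r_i}$, and the shift cancels since $D_{mf}+n-\chi=0$; descent with the trivial Koszul signs of the even shifts then matches the $m$-th symmetric power term built from~\eqref{intro:ruling-A}, and the $\cA_{kf}$ with $k>1$ contribute nothing. You leave the case $n<0$ implicit, but it is immediate: both sides vanish, since Proposition~\ref{prop:hilb} makes the stable pair locus empty and there is no allocation with $\sum r_i=n$.
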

\begin{proof}
A reduced member of $|mf|$ is a disjoint union of $m$ fibers, and $B^{\red}_{mf}$ is the complement of the diagonals in $\Sym^m\PP^1$. On the finite \'etale cover ordering the fibers, the stable pair space of Proposition~\ref{prop:hilb} is the disjoint union, over $\sum_in_i=n$, of the products $\prod_i\PP^{n_i}$ of symmetric powers of the fibers, with the constant sheaf shifted by $D_{mf}+n$. Here $D_{mf}=m$, $g_{mf}=1-m$ and $\chi=m+n$, so the common normalization cancels the shift: $D_{mf}+n-\chi=0$. The normalized direct image on the cover is therefore the exterior product of the single fiber complexes $\bigoplus_{v=0}^{n_i}\QQ_{\PP^1}[-2v]$, which are the coefficients of~\eqref{intro:ruling-A}. Descent along the cover, with the component permutations acting through the Koszul signs of the even shifts, gives exactly the $m$-th symmetric power term of the series~\eqref{intro:Exp}; the terms involving $\cA_{m'f}$ with $m'>1$ vanish by~\eqref{intro:ruling-A}. If $n<0$ both sides are zero.
\end{proof}

\begin{thm}\label{thm:gvpt}
For $S=\PP^2$ and $S=\mathbb{F}_0$, and for every $\chi\in\ZZ$ and $\beta>0$, there is an isomorphism
\begin{align*}
K^{\sss}_{\chi,\beta}[-\chi]\cong\cR_{\chi,\beta}
\end{align*}
of finite direct sums of shifts of semisimple perverse sheaves on $B_\beta$. In particular Theorem~\ref{intro:thm-gvpt} holds.
\end{thm}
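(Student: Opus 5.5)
Both sides of the claimed isomorphism are finite direct sums of shifted semisimple perverse sheaves whose simple constituents have strict supports among the $\Sigma_\lambda$, $\lambda\vdash_\Gamma\beta$. The plan is to compare them support by support over the reduced locus and then extend by intermediate extension.

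For the left side, Theorem~\ref{thm:support} places every simple constituent of every $\pH{i}K_{\chi,\beta}$ on some $\Sigma_\lambda$. Corollary~\ref{cor:middle-ext} then gives
\[
K^{\sss}_{\chi,\beta}\cong{}^pj_{\beta!*}\bigl(j_\beta^*K_{\chi,\beta}\bigr)^{\sss}
\]
in each perverse degree. For the right side, $\cR_{\chi,\beta}$ is a finite sum, over class partitions $\lambda\vdash\beta$, of $G_\lambda$-invariants of convolutions $\dR\mu_{\lambda*}$ of the $\cA_\gamma(q)$. Only classes $\gamma\in\Gamma$ contribute, since $\cA_\gamma=0$ otherwise. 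Each $\cA_\gamma$ is a sum of shifted intersection complexes with strict support $B_\gamma$, so Lemma~\ref{lem:serre} and the finiteness and t-exactness of addition show that each term is a sum of shifted intersection complexes with strict support $\Sigma_\lambda$. Each $\Sigma_\lambda$ meets $B^{\red}_\beta$, because $U_\lambda\subset B^{\red}_\beta$. Hence the right side is also determined by its restriction to $B^{\red}_\beta$:
\[
\cR_{\chi,\beta}\cong{}^pj_{\beta!*}j_\beta^*\cR_{\chi,\beta}.
\]

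It therefore suffices to prove $j_\beta^*K^{\sss}_{\chi,\beta}[-\chi]\cong j_\beta^*\cR_{\chi,\beta}$, compatibly with the decomposition by strict support. Intermediate extension of a semisimple perverse sheaf on the open set recovers the unique semisimple extension without constituents supported on the complement. Applying ${}^pj_{\beta!*}$ degree by degree then yields the global isomorphism. The restricted identity is split into cases.

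\textbf{Case $\beta\in\Gamma$.} This is Theorem~\ref{thm:reduced}, which already matches the summands on each $\Sigma_\lambda$ via $T_{\lambda,s}\cong U_{\lambda,s}$.

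\textbf{Case $\beta=mf$ on $\mathbb{F}_0$.} This is Lemma~\ref{lem:rulings}.

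\textbf{Case $\beta=(a,b)$ with $a,b>0$.} This class lies in $\Gamma$.

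\textbf{Case $\beta$ a multiple ruling with $m>1$.} This is again Lemma~\ref{lem:rulings}.

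These cases exhaust the effective classes on both surfaces. When $\chi<1-g_\beta$, both sides vanish on $B^{\red}_\beta$: the left by Proposition~\ref{prop:hilb}, and the right because $n-N(\lambda)<0$ for every $\lambda$.

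The main obstacle is bookkeeping rather than geometry. One must check that the perverse degrees and shifts match under ${}^pj_{!*}$, applied degree by degree to $K^{\sss}$ as defined in \eqref{intro:ss}. One must also confirm that the Koszul-sign conventions in \eqref{intro:Exp} for repeated classes agree with the descent characters $\varepsilon_\lambda$ used in Theorem~\ref{thm:reduced}. I would verify the first by noting that semisimplification commutes with restriction to an open set. For the second, I rely on the comparison already made in the proof of Theorem~\ref{thm:reduced}. Summing over $\chi$ and $\beta$ then yields Theorem~\ref{intro:thm-gvpt}.
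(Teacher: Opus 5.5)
Your proposal is correct and follows the paper's proof. Both sides are termwise intermediate extensions from $B^{\red}_\beta$: Corollary~\ref{cor:middle-ext} gives this on the left and Lemma~\ref{lem:serre} on the right, and Theorem~\ref{thm:reduced} together with Lemma~\ref{lem:rulings} identifies the restrictions. Your four-case split is redundant, since classes in $\Gamma$ and multiples of a ruling already exhaust the effective classes, but this is harmless.
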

\begin{proof}
Fix $(\chi,\beta)$. By Corollary~\ref{cor:middle-ext}, $K^{\sss}_{\chi,\beta}[-\chi]$ is the termwise perverse intermediate extension of its restriction to $B^{\red}_\beta$, and by~\cite[Theorem~1.1(i)]{Zha26}, or Lemma~\ref{lem:serre}, so is $\cR_{\chi,\beta}$. Theorem~\ref{thm:reduced} and Lemma~\ref{lem:rulings} identify the restrictions, hence
\begin{align*}
K^{\sss}_{\chi,\beta}[-\chi]\cong{}^pj_{\beta!*}j_\beta^*K^{\sss}_{\chi,\beta}[-\chi]\cong{}^pj_{\beta!*}j_\beta^*\cR_{\chi,\beta}\cong\cR_{\chi,\beta}.
\end{align*}
This follows the intermediate-extension argument of~\cite[Corollary~4.4]{Zha26}.
\end{proof}

\section{The refined product formula}\label{sec:lefschetz}
For the resolved conifold, Morrison-Mozgovoy-Nagao-Szendr\H{o}i~\cite{MMNS12} obtained motivic product formulas and a motivic DT/PT correspondence. Choi-Katz-Klemm~\cite{CKK14} proposed the refined stable pair product formula for more general local surfaces.

Pi-Shen-Si-Zhang~\cite{PSSZ24} proved an asymptotic product formula for the perverse-graded dimensions of the intersection cohomology of sheaf moduli in ample classes on del Pezzo surfaces. For smooth fine moduli spaces, they also obtained $P=C$ in the stable cohomological range, using the Fourier-transform results of Maulik-Shen-Yin~\cite{MSY25}. On $\PP^2$, this equality holds through cohomological degree $2d-4$ for sheaves of degree $d$ and coprime Euler characteristic.

\begin{cor}\label{cor:purity}
In the normalization of~\eqref{eq:Hilb}, $\pH{i}K_{\chi,\beta}$ is pure of weight $\beta^2+\chi+i$ and semisimple. For a relatively ample class $\ell$ on the stable pair space,
\begin{align*}
\ell^i\colon\pH{-i}K_{\chi,\beta}\xrightarrow{\ \sim\ }\pH{i}K_{\chi,\beta}(i),\qquad i\ge 0.
\end{align*}
\end{cor}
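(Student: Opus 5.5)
The plan is to prove both statements over the reduced locus $B^{\red}_\beta$, where the stable pair space is smooth, and then extend them to all of $B_\beta$. The extension rests on Theorem~\ref{thm:support}, in the form of Corollary~\ref{cor:middle-ext}: every simple constituent of every $\pH{i}K_{\chi,\beta}$ has strict support meeting $B^{\red}_\beta$. The key elementary fact is the following. If $M$ is a mixed Hodge module on $B_\beta$ all of whose simple constituents have strict support meeting $B^{\red}_\beta$, then $j_\beta^*M=0$ forces $M=0$. Indeed, restriction to an open set is exact, and a simple module whose strict support meets the open set restricts to a nonzero module. This fact passes to subquotients of $M$, because their constituents are among those of $M$.

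\textbf{Step 1 (reduced locus).} By Proposition~\ref{prop:hilb}, over $B^{\red}_\beta$ the map $\Pi_{\chi,\beta}$ is the projective morphism $\cC^{[n]}_{\beta,\red}\to B^{\red}_\beta$ from a smooth variety of dimension $D_\beta+n=\beta^2+\chi$. In the normalization~\eqref{eq:Hilb}, $\phi^{\PT}_{\chi,\beta}$ restricts to $\QQ^H[\beta^2+\chi]$, a pure Hodge module of weight $\beta^2+\chi$. Saito's decomposition theorem then applies. Each $\pH{i}(j_\beta^*K_{\chi,\beta})$ is pure of weight $\beta^2+\chi+i$ and semisimple. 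The restriction of a relatively ample class $\ell$ on $P_\chi(X,\beta)$ is relatively ample on the Hilbert scheme, so Saito's relative hard Lefschetz theorem gives isomorphisms $\ell^i\colon\pH{-i}\to\pH{i}(i)$ over $B^{\red}_\beta$. Globally, $\ell\in H^2(P_\chi(X,\beta),\QQ(1))$ acts on $\dR\Pi_*$ of any complex by cup product. It therefore defines a morphism $\pH{-i}K_{\chi,\beta}\to\pH{i}K_{\chi,\beta}(i)$ of mixed Hodge modules on $B_\beta$, and this morphism restricts to the isomorphism above.

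\textbf{Step 2 (purity).} Let $W_\bullet$ be the weight filtration of $M=\pH{i}K_{\chi,\beta}$. Each $\gr^W_kM$ is a subquotient of $M$, so its constituents have supports meeting $B^{\red}_\beta$. Restriction to $B^{\red}_\beta$ is exact and preserves weight filtrations. By Step~1, $j_\beta^*\gr^W_kM=0$ for $k\neq\beta^2+\chi+i$, and the elementary fact then gives $\gr^W_kM=0$ for these $k$. Hence $M$ is pure of weight $\beta^2+\chi+i$. Since polarizable pure Hodge modules are semisimple, $M$ is semisimple.

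\textbf{Step 3 (hard Lefschetz).} The kernel and cokernel of $\ell^i\colon\pH{-i}K_{\chi,\beta}\to\pH{i}K_{\chi,\beta}(i)$ are subquotients of modules of the allowed type. Both vanish after applying $j_\beta^*$, by Step~1. By the elementary fact they vanish, so $\ell^i$ is an isomorphism.

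The step I expect to need the most care is making sure that $K_{\chi,\beta}$ and its perverse cohomologies are honest mixed Hodge modules with a weight filtration, rather than merely monodromic ones. This should follow because the monodromy is trivial after the half-twist normalization~\eqref{eq:PT-hodge}. The cotangent orientation of Theorem~\ref{thm:oriented}, compared with the description in Proposition~\ref{prop:hilb}, shows the module is $\QQ^H[D_\beta+n]$ on the reduced part. I also need the cup product with $\ell$ to be a morphism in $D^b\MHM$, which holds because $\ell$ is the Chern class of a line bundle. I would check these normalization points first.
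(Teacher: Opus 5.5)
Your proof is correct and follows the paper's argument. Saito's purity and relative hard Lefschetz theorems over $B^{\red}_\beta$, where the stable pair space is the smooth projective relative Hilbert scheme, combine with the support theorem, which forces every simple constituent to meet $B^{\red}_\beta$, to kill the weight-graded pieces of the wrong weight and the kernel and cokernel of $\ell^i$; semisimplicity then follows from purity. Your extra care about the monodromic lift and about $\ell^i$ being a morphism of mixed Hodge modules makes explicit what the paper leaves implicit, without changing the route.
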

\begin{proof}
Over the reduced locus, smoothness and the projective direct image give purity and the relative hard Lefschetz theorem~\cite{Sai88}. A weight graded piece of any other weight would restrict to zero on the reduced locus, so its simple constituents would be supported on $B_\beta\setminus B_\beta^{\red}$, which the support theorem excludes. Thus purity holds globally, and semisimplicity follows~\cite{Sai90}. Similarly, the kernel and cokernel of each displayed Lefschetz map restrict to zero and are subquotients of the same perverse sheaves; the support theorem makes both zero.
\end{proof}

For $\beta\in\Gamma$, put
\begin{align*}
P_{\beta,i}:=\IC_{B_\beta}\bigl(\wedge^{g_\beta+i}V_\beta\bigr),\qquad -g_\beta\le i\le g_\beta.
\end{align*}
The polarization of the relative Jacobian gives the first Lefschetz action, which moves the index $i$, and an ample class on $B_\beta$ gives the second, which acts on hypercohomology. We define the integers $N^\beta_{j_L,j_R}$ by
\begin{align}\label{eq:N-jLjR}
\sum_{i,k}\dim H^k(B_\beta,P_{\beta,i})\,x^iy^k=\sum_{j_L,j_R}N^\beta_{j_L,j_R}\chi_{j_L}(x)\chi_{j_R}(y),\qquad \chi_j(z):=\sum_{m=-j}^jz^{2m}.
\end{align}
Here $j_L,j_R\in\frac12\ZZ_{\ge 0}$, and the summation index $m$ in $\chi_j$ increases by one. We set $N^\beta_{j_L,j_R}=0$ for $\beta\notin\Gamma$. These are the GV invariants in the refined sense of~\cite{KKV99, HST01}.

Corollary~\ref{cor:purity} and the Lefschetz splitting give $K_{\chi,\beta}\cong K^{\sss}_{\chi,\beta}$. Proper push-forward and the K\"unneth formula applied to Theorem~\ref{thm:gvpt} identify the hypercohomology of~\eqref{intro:Exp} with the corresponding graded symmetric algebra. Define
\begin{align*}
Z^{\mathrm{ref}}_{\PT}(q,Q;\LL^{1/2}):=1+\sum_{\beta>0,\,\chi,\,k}\dim H^k(B_\beta,K_{\chi,\beta})\,(-\LL^{1/2})^kq^\chi Q^\beta,
\end{align*}
Then, as in~\cite[Proposition~4.5]{Zha26},
\begin{align}\label{eq:refined-PTGV}
\begin{split}
Z^{\mathrm{ref}}_{\PT}=\prod_{\beta>0}\prod_{j_L,j_R}&\prod_{m_L=-j_L}^{j_L}\prod_{m_R=-j_R}^{j_R}\prod_{m\ge 1}\prod_{v=0}^{m-1}\\
&\Bigl(1-\LL^{-m/2+1/2+v-m_R}(-q)^{m-2m_L}Q^\beta\Bigr)^{(-1)^{2(j_L+j_R)}N^\beta_{j_L,j_R}}.
\end{split}
\end{align}
To see this, write $j=g_\beta+i$ and $u+v=m-1$ in~\eqref{intro:A}, so that the corresponding summand of $\cA_\beta(q)$ becomes $q^{m+i}P_{\beta,i}[-i-1-2v]$. Form its graded symmetric algebra, remove the normalization $[-\chi]$ by substituting $q\mapsto q/r$ in the cohomological series, where $r$ records the cohomological degree, and then put $r=-\LL^{1/2}$. The Lefschetz symmetries $(i,k)\mapsto(-i,-k)$ give~\eqref{eq:refined-PTGV}. Setting $\LL^{1/2}=1$ recovers the Euler characteristic version, hence the numerical PT/GV formula of~\cite{PT10}.
\appendix
\section{Orientation data on the local surface}\label{sec:orientation}
\subsection{The derived moduli problem}
For either surface $S$, let $\ol{X}=\PP_S(\cO_S\oplus K_S)$, with projection $\ol{p}$ and zero and infinity sections $S_0$, $S_\infty$, whose inclusions are $i_0,i_\infty\colon S\hookrightarrow\ol{X}$. Put $\cL:=\cO_{\ol{X}}(S_\infty)$, so that
\begin{align*}
\dR\ol{p}_*\cL=\cO_S\oplus K_S^{-1},\qquad \dR\ol{p}_*\cL^{-1}=0,\qquad K_{\ol{X}}=\cL^{-2}.
\end{align*}
Let $\bM^\dagger_X$ be the derived moduli stack of perfect complexes $I$ on $\ol{X}$ together with a specified equivalence $Li^*_\infty I\simeq\cO_S$, restricted to the derived open substack whose classical points are rank one objects of the heart $\cA_X$ in~\eqref{eq:heart} and have surface sheaf pure of dimension one. Let $\bM^\dagger_S$ be the derived stack of arrows $s\colon\cO_S\to F$ with $F$ pure of dimension one.

\begin{lem}\label{lem:derived-cotangent}
There is an equivalence of derived stacks
\begin{align*}
\eta\colon\bM^\dagger_X\xrightarrow{\ \sim\ }T^*[-1]\bM^\dagger_S
\end{align*}
whose classical truncation is Toda's isomorphism~\cite[Theorem~4.1.3(ii)]{Tod24}. It commutes with the Hilbert-Chow map, with arbitrary derived base change, and with scaling in the normal direction to $S_0$.
\end{lem}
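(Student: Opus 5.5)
We construct $\eta$ as the derived enhancement of Toda's diagram description, by identifying both sides as derived mapping stacks over $\bM^\dagger_S$ and comparing their relative cotangent complexes. The first step is to write down Toda's exact surface functor at the derived level. A framed $I$ is sent to the surface pair
\begin{align*}
\Phi(I):=\dR\ol{p}_*\bigl(I\otimes\cL^{-1}\bigr)[1],
\end{align*}
which uses $\dR\ol{p}_*\cL^{-1}=0$ together with the framing to produce an arrow $\cO_S\to F$. This gives a morphism $\bM^\dagger_X\to\bM^\dagger_S$ of derived stacks that commutes with base change, because $\ol{p}$ is flat and proper. On classical points it is the map of \cite[Theorem~4.1.3]{Tod24}. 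Applying $\dR\ol{p}_*(I\otimes-)$ to the sequence $0\to\cL^{-1}\to\cO\to\cO_{S_\infty}\to0$ and twisting by $\cL$ recovers Toda's extension $\cO_S\to U\to F\otimes K_S^{-1}$. The remaining datum is a morphism $\theta\colon F\otimes K_S^{-1}\to\fib(\cO_S\to F)[1]$, which is the cotangent coordinate.

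The second step identifies the fibers of $\bM^\dagger_X\to\bM^\dagger_S$. The framed objects with a fixed image $I_S=[\cO_S\to F]$ are classified by the extension data $\theta$, so we obtain
\begin{align*}
\bM^\dagger_X\simeq\Tot_{\bM^\dagger_S}\Bigl(\Rhom_S\bigl(F\otimes K_S^{-1},I_S[1]\bigr)\Bigr).
\end{align*}
Serre duality on $S$ rewrites this complex as $\Rhom_S(I_S,F)^\vee[-1]$, and $\Rhom_S(I_S,F)$ is the tangent complex $\mathbb{T}_{\bM^\dagger_S}$. The total space is therefore $\Spec\Sym(\mathbb{T}_{\bM^\dagger_S}[1])=T^*[-1]\bM^\dagger_S$. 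We make this rigorous by writing both stacks as derived mapping and section stacks over the universal pair, and then verifying that the natural map is an equivalence on tangent complexes and on classical truncations. The classical truncation is \cite[Theorem~4.1.3(ii)]{Tod24}. For the tangent comparison, adjunction for $\ol{p}$ together with $K_{\ol X}=\cL^{-2}$ reduces $\Rhom_{\ol X}(I,I)_{\mathrm{framed}}$ to the cone of $\mathbb{T}_{\bM^\dagger_S}\to\mathbb{T}_{\bM^\dagger_S}^\vee[1]$.

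The remaining compatibilities are checked one at a time. The Hilbert–Chow compatibility follows because the fundamental cycle depends only on $F$, which is preserved by $\Phi$. Base change holds because every construction consists of pullbacks along flat proper maps. Scaling in the normal direction acts on $\theta$ by weight one, and this matches the fiberwise $\Gm$ action on $T^*[-1]$.

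The main obstacle is the second step: producing the equivalence as a map of derived stacks rather than only on tangent complexes. We handle it by constructing the map directly, sending $(I_S,\theta)$ to the fiber $\fib\bigl(\ol{p}^*U\to\ol{p}^*F\otimes\cL\bigr)$ as in Lemma~\ref{lem:orient}. It then suffices to check that this map is étale, using the tangent computation above, and an isomorphism on $t_0$. Both checks are local on a smooth atlas of $\bM^\dagger_S$.
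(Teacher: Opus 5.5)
Your outline follows the paper's construction. It uses the same surface functor $F=\dR\ol p_*(I(-S_\infty))[1]$, the same framing sequence $\cO_S\to U\to F\otimes K_S^{-1}$ and the same reconstruction $I=\fib(\ol p^*U\to\ol p^*F\otimes\cL)$. The cotangent coordinate is also the same, $\vartheta\colon F\otimes K_S^{-1}\to J[1]$ with $J=\fib(s)$, and so is the Serre duality $\Rhom_S(F\otimes K_S^{-1},J[1])\simeq\Rhom_S(J,F)^\vee[-1]$.

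The difference is in how the equivalence is certified. The paper works on functors of points over an arbitrary derived base:
\begin{itemize}
\item the relative resolution of the diagonal makes $I\mapsto(U,F,a,b)$ and $(U,F,a,b)\mapsto\fib(\ol p^*U\xrightarrow{(a,b)}\ol p^*F\otimes\cL)$ mutually inverse;
\item $(u,a,b)\leftrightarrow(s,\vartheta)$ is the pull-back/push-out identity, with $U$ recovered as the homotopy pull-back $F\times_{J[1]}(F\otimes K_S^{-1})$.
\end{itemize}
This uses no deformation theory, gives base change for free, and produces Toda's classical isomorphism as an output.

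You construct only the direction $T^*[-1]\bM^\dagger_S\to\bM^\dagger_X$. You then use the criterion that a morphism of derived Artin stacks with vanishing relative cotangent complex, which is an equivalence on classical truncations, is an equivalence, and you import the truncation from Toda. The criterion is valid and spares you the inverse construction. The cost is that you must show the differential of your specific map is a quasi-isomorphism; an abstract identification of the two tangent complexes is not enough. You can do this economically:
\begin{itemize}
\item the non-\'etale locus is the support of a perfect complex, hence closed, and it is $\Gm$-stable;
\item every $\Gm$-orbit closure in $t_0T^*[-1]\bM^\dagger_S$ meets the zero section, so it suffices to check at $[\cO_{\ol X}\to i_{0*}F]$;
\item there, the weight splitting of the framed deformation complex into $D=\Rhom_S(J,F)$ and $D^\vee[-1]$ (Lemma~\ref{lem:cyclic}) reduces the check to weight zero, which is the identity on the tangent complex of $\bM^\dagger_S$, and weight one, which is the linearization of $\vartheta\mapsto I$.
\end{itemize}

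Three details need repair.
\begin{itemize}
\item You should say how $U$, $a$ and $b$ are produced from $\vartheta$ (the homotopy pull-back above and its projections); without this your map is not yet defined.
\item The tangent complex of $T^*[-1]\bM$ is an extension of $\mathbb{T}$ by $\mathbb{T}^\vee[-1]$, i.e.\ the fiber of a map $\mathbb{T}\to\mathbb{T}^\vee$. It is not the cone of $\mathbb{T}\to\mathbb{T}^\vee[1]$.
\item The sequence that produces $U$ is the twist of $0\to\cL^{-1}\to\cO\to\cO_{S_\infty}\to 0$ by $\cO(-S_0)=\cL^{-1}\otimes\ol p^*K_S^{-1}$, not by $\cL$. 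This twist gives $U=\dR\ol p_*(I(-S_0-S_\infty))[1]$.
\end{itemize}
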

\begin{proof}
Work in the stable $\infty$-category of perfect complexes, with specified homotopies in the diagrams. The relative resolution of the diagonal of the projective bundle gives the two-term reconstruction of~\cite[Section~4.5.4, (4.5.8)--(4.5.10)]{Tod24}. As a resolution of Fourier-Mukai functors, it commutes with derived base change. It identifies $I$ with
\begin{align*}
I=\fib\bigl(\ol{p}^*U\xrightarrow{(a,b)}\ol{p}^*F\otimes\cL\bigr),\qquad a\colon U\to F,\quad b\colon U\to F\otimes K_S^{-1},
\end{align*}
where the notation $(a,b)$ refers to the displayed decomposition of $\dR\ol{p}_*\cL$. In the other direction, $F$ and $U$ are recovered as $\dR\ol{p}_*(I(-S_\infty))[1]$ and $\dR\ol{p}_*(I(-S_0-S_\infty))[1]$, and the resolution of the diagonal provides the unit and counit making the two constructions inverse to each other. Restriction to $S_\infty$ is $\fib(b)$, so the framing amounts to a specified fiber sequence
\begin{align*}
\cO_S\xrightarrow{u}U\xrightarrow{b}F\otimes K_S^{-1},\qquad a\colon U\to F.
\end{align*}
To recover the surface pair, put $s:=au$ and $J:=\fib(s)$. Taking cofibers in
\begin{align*}
\xymatrix{\cO_S\ar[r]^-{u}\ar@{=}[d] & U\ar[r]^-{b}\ar[d]^-{a} & F\otimes K_S^{-1}\ar[d]^-{\vartheta}\\ \cO_S\ar[r]^-{s} & F\ar[r] & J[1]}
\end{align*}
gives the arrow $\vartheta$. Conversely, from $(F,s,\vartheta)$ form the homotopy pull-back
\begin{align*}
U:=F\times_{J[1]}(F\otimes K_S^{-1}),
\end{align*}
where $F\to J[1]$ is the cofiber map of $s$. Its projection to $F$ is $a$, and its other projection has fiber $\cO_S$. The pull-back/push-out identity in a stable $\infty$-category shows that these constructions are inverse on objects and mapping spaces.

Finally, the tangent complex of $\bM^\dagger_S$ at $(F,s)$ is $D:=\Rhom_S(J,F)$, and relative Serre duality gives
\begin{align*}
D^\vee[-1]\simeq\Rhom_S(F\otimes K_S^{-1},J[1]),
\end{align*}
which identifies the possible arrows $\vartheta$ with the shifted cotangent vectors. On classical truncations the universal fiber sequences give Toda's map. Since the construction preserves $F$, it preserves the fundamental cycle. Normal scaling acts by scaling $\vartheta$.
\end{proof}

\subsection{Comparison of the symplectic forms}
\begin{lem}\label{lem:cyclic}
Under the equivalence of Lemma~\ref{lem:derived-cotangent}, the Calabi-Yau and cotangent closed $(-1)$-shifted symplectic forms agree formally along the zero section $\bM^\dagger_S\subset T^*[-1]\bM^\dagger_S$; in particular their $d$-critical sections agree there. The comparison is compatible with the universal surface pair family and with derived base change.
\end{lem}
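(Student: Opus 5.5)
The plan is to compare the two shifted symplectic forms through their underlying nondegenerate pairings on the tangent complex, and then to promote that agreement to closed forms using the formal structure along the zero section. Both forms live on $\bM^\dagger_X\simeq T^*[-1]\bM^\dagger_S$.

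\emph{The Calabi--Yau pairing.} The Calabi--Yau form comes from the $(-1)$-shifted structure on framed perfect complexes on the log Calabi--Yau pair $(\ol{X},S_\infty)$. Its underlying $2$-form is
\begin{align*}
\Ext^1(I,I(-S_\infty))^{\otimes 2}\to\Ext^3(I,I\otimes K_{\ol X}(S_\infty))\xrightarrow{\tr}H^3(\ol X,K_{\ol X})=\CC,
\end{align*}
using $K_{\ol X}(S_\infty)=\cL^{-1}$. I would first compute this pairing on the tangent complex $\Rhom_{\ol X}(I,I(-S_\infty))$ through the reconstruction $I=\fib(\ol p^*U\to\ol p^*F\otimes\cL)$ of Lemma~\ref{lem:derived-cotangent}. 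Pushing forward with $\dR\ol p_*\cL^{-1}=0$ and $\dR\ol p_*\cO=\cO_S$ splits the tangent complex as an extension of $D=\Rhom_S(J,F)$ by $D^\vee[-1]\simeq\Rhom_S(F\otimes K_S^{-1},J[1])$. The $\vartheta$-direction is the fiber of the projection $\eta$.

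\emph{The cotangent pairing and the scaling argument.} The cotangent form pairs $D$ with $D^\vee[-1]$ by evaluation. The goal of this step is to show that the Calabi--Yau pairing has zero $D\otimes D$ and $D^\vee\otimes D^\vee$ blocks, and that its off-diagonal block is Serre duality on $S$. I will use the $\Gm$-action scaling the normal direction to $S_0$, which scales $\vartheta$ by Lemma~\ref{lem:derived-cotangent}. Both forms are homogeneous of weight one for this action: the cotangent form tautologically, and the Calabi--Yau form because the trace on $H^3(K_{\ol X})$ has weight one under fiber scaling, as $K_{\ol X}$ contains the vertical cotangent. Weight considerations then kill the two diagonal blocks. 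For the off-diagonal block, I compute at $\vartheta=0$, where $I$ is the pushforward-type object. There the trace map reduces by adjunction along $S_0$ and Grothendieck duality for $\ol p$ to Serre duality $H^2(S,K_S)\to\CC$, with the sign fixed by the cone convention.

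\emph{From $2$-forms to closed forms.} Once the underlying $2$-forms agree, I promote this to closed forms. A weight-one closed $(-1)$-shifted form on $T^*[-1]\bM^\dagger_S$ is determined by its underlying $2$-form on the formal neighborhood of the zero section. This follows because the Hodge-tower corrections have positive weight in the de Rham complex and are determined by the underlying form through the formal Darboux/cotangent normal form of Calaque--Safronov type. Hence the two closed forms agree formally along the zero section, and the $d$-critical sections, which depend only on the closed form to first order, agree as well. Compatibility with the universal family and with base change holds because every identification is a natural transformation built from Fourier--Mukai kernels and trace maps that commute with derived base change.

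The main obstacle is the closedness step. A naive comparison of underlying $2$-forms does not by itself identify the closed structures, so I expect to need the equivariant uniqueness argument carefully. What I would try first is to exhibit both forms as the transgression of the same $\Gm$-equivariant Lagrangian structure on $\bM^\dagger_S\to\mathrm{pt}$, namely the zero section, which is Lagrangian for both forms by the weight argument.
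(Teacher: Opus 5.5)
Your first two steps follow the paper's route. These are the reconstruction $I=\fib(\ol p^*U\to\ol p^*F\otimes\cL)$, the weight splitting of the tangent complex into $D$ and $D^\vee[-1]$ at points of the zero section, and the reduction of the mixed block to Serre duality on $S$. The paper carries out the last of these as a commuting trace square over an arbitrary derived test scheme, normalized by $\Tr_{\ol X/S}[dz/z]=1$.

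The gap is the passage to closed forms, which you flag but do not supply. A formal Darboux theorem produces \emph{some} cotangent chart, not one compatible with $\eta$. The zero section being Lagrangian for both forms does not single out the ambient form either: $c\,\Omega$ for any scalar $c\neq 1$ has the same Lagrangians. And ``corrections of positive weight'' is not the mechanism, since in a weight-one representative $\Omega=\omega_0+\omega_1+\cdots$ every $\omega_i$ has weight exactly one.

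The missing idea is the Euler contraction. Let $\mathsf{E}$ be the Euler derivation in the fiber coordinates. The Cartan identity $[\mathsf{d},\iota_{\mathsf{E}}]=\mathcal{L}_{\mathsf{E}}$, together with $\mathsf{d}\Omega=0$ and $\mathcal{L}_{\mathsf{E}}\Omega=\Omega$, gives $\Omega=\mathsf{d}(\iota_{\mathsf{E}}\Omega)$. Split $\iota_{\mathsf{E}}\Omega=\alpha+\xi$ with $\alpha=\iota_{\mathsf{E}}\omega_0$ and $\xi$ of de Rham degree at least two. Then $\Omega-d_{\mathrm{dR}}\alpha=\mathsf{d}\xi$, which is a homotopy inside closed $2$-forms. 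So a weight-one closed form is determined up to homotopy by the one-form $\iota_{\mathsf{E}}\omega_0$.

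This also repairs an inconsistency in your plan. You compute the pairing only at $\vartheta=0$, where the weights split the tangent complex, but you then invoke uniqueness from the underlying $2$-form on the whole formal neighborhood. Away from the zero section, a weight-one $2$-form can contain a term with two base differentials and coefficients linear in the fiber coordinates. That term vanishes at $\vartheta=0$, so your computation does not see it.

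The contraction $\iota_{\mathsf{E}}$ kills exactly this term. The remaining term, with one base and one fiber differential, has weight-zero coefficients and is determined by its restriction to the zero section. Hence $\alpha$ depends only on the block you compute, with one proviso. You must compare that block as a map of relative perfect complexes over the pair stack, through the identification $D^\vee[-1]\simeq\Rhom_S(F\otimes K_S^{-1},J[1])$ used to define $\eta$, and not point by point. With that done, $\alpha=\lambda_{\mathrm{can}}$, and $\Omega\simeq d_{\mathrm{dR}}\lambda_{\mathrm{can}}$ is the cotangent form.

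A smaller slip: the framed pairing pairs $\Ext^1$ with $\Ext^2$ and lands in $\Rhom(I,I(-2S_\infty))=\Rhom(I,I\otimes K_{\ol X})$. A twist by $K_{\ol X}(S_\infty)=\cL^{-1}$, as you wrote it, would make it vanish, since $\dR\ol p_*\cL^{-1}=0$.
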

\begin{proof}
The anticanonical divisor $2S_\infty\in|-K_{\ol{X}}|$ and the trace form on perfect complexes give a shifted symplectic form on the framed moduli stack. Apply~\cite[Theorem~3.3]{Spa16} with framing divisor $S_\infty$, additional divisor zero, and target the $2$-shifted symplectic stack of perfect complexes~\cite[Section~2.3]{PTVV13}. The restriction map for the additional divisor is the identity. Bounded derived Artin open substacks of the perfect complex mapping stacks satisfy the local finite presentation hypothesis; the resulting $(-1)$-shifted symplectic forms glue to a form on $\bM^\dagger_X$. On the PT open substack, the trace-free endomorphism complex is supported in $X$. Excision identifies its framed trace with compactly supported Serre trace for the canonical volume form on $X$, and therefore identifies the universal trace construction of the closed form with the Calabi-Yau construction. This form has weight one under normal scaling.

Fix a surface pair $(F,s)$, put $J=[\cO_S\xrightarrow{s}F]$, and let $I_0=[\cO_{\ol{X}}\to i_{0*}F]$. Write
\begin{align*}
B:=\Rhom_S(F,F),\qquad V:=\Rgam(S,F),\qquad D:=\Cone\bigl(B\xrightarrow{\,b\mapsto b\circ s\,}V\bigr).
\end{align*}
The tangent complex with fixed framing at infinity is
\begin{align*}
\Rhom_{\ol{X}}(I_0,I_0(-S_\infty))[1]\simeq\Rhom_{\ol{X}}(I_0,I_0)_0[1].
\end{align*}
Here the subscript $0$ denotes the fiber of the trace map, that is, the trace-free endomorphism complex. For the second identification use the restriction triangle and $\Rgam(\ol{X},\cO)=\Rgam(S,\cO)$; the trace splits off the restriction to $\End(\cO_S)$.

Endomorphisms of a locally free resolution of $I_0$ vanishing on $S_\infty$ give a differential graded Lie algebra representing $\Rhom(I_0,I_0(-S_\infty))$. Its bracket is the graded commutator of composition, using $\cO(-2S_\infty)\to\cO(-S_\infty)$; its Maurer-Cartan elements and gauge equivalences describe deformations preserving the framing. The pairing is
\begin{align*}
\Rhom(I_0,I_0(-S_\infty))^{\otimes 2}\to\Rhom(I_0,I_0\otimes K_{\ol{X}})\xrightarrow{\ \tr\ }\CC[-3].
\end{align*}
Serre duality makes the pairing non-degenerate. It is cyclic by the graded trace identity: both sides use the inclusion from the third power of the framing ideal. This is the trace used in the shifted symplectic construction above.

Interpreting the zero section resolution $0\to\ol{p}^*F(-S_0)\to\ol{p}^*F\to i_{0*}F\to 0$ through a perfect resolution of $F$ gives, with the natural scaling weights,
\begin{align*}
\Rhom_{\ol{X}}(i_{0*}F,i_{0*}F)&\simeq B\oplus B^\vee[-3],\\
\Rhom_{\ol{X}}(\cO,i_{0*}F)&\simeq V,\\
\Rhom_{\ol{X}}(i_{0*}F,\cO)&\simeq V^\vee[-3].
\end{align*}
The first member of each dual pair has weight zero and the second weight one. Put $\fg:=\Rhom_{\ol{X}}(I_0,I_0)_0$. Keeping the differential induced by $s$, we get
\begin{align}\label{eq:weighted-cyclic}
\fg\simeq D[-1]\oplus D^\vee[-2],
\end{align}
with weights zero and one.

Let $T$ be a derived test scheme in the surface pair stack, let $p\colon S\times T\to T$, and put $Q:=F\otimes K_S^{-1}$. All Hom complexes below are relative perfect complexes, with the derived direct image by $p$ understood. Set
\begin{align*}
\cB:=\dR p_*\dR\cHom(F,F),\qquad \cV:=\dR p_*F,\\
\cB':=\dR p_*\dR\cHom(Q,F),\qquad \cV':=\dR p_*\dR\cHom(Q,\cO).
\end{align*}
The two complexes are
\begin{align}\label{eq:two-cones}
\cD:=\Cone\bigl(\cB\xrightarrow{\,b\mapsto b\circ s\,}\cV\bigr),\qquad
\cC:=\Cone\bigl(\cV'\xrightarrow{\,e\mapsto s\circ e\,}\cB'\bigr).
\end{align}
Here $\cC=\dR p_*\dR\cHom(Q,J[1])$. We use the cohomological cone convention $d(b,a)=(db+f(a),-da)$ for $\Cone(f)$. In a differential graded presentation, an infinitesimal variation of the shifted cotangent vector is a pair $(\varphi,e)$ with
\begin{align*}
\varphi\colon Q\to F,\qquad e\colon Q\to\cO[1],\qquad d\varphi+se=0,\quad de=0.
\end{align*}
The inverse construction in Lemma~\ref{lem:derived-cotangent} is represented by
\begin{align}\label{eq:vertical-chain-map}
U=\cO\oplus Q,\qquad d_U=\begin{pmatrix}d_{\cO} & -e\\ 0 & d_Q\end{pmatrix},\qquad a=(s,\varphi),\qquad b=(0,1).
\end{align}
The equation $d\varphi+se=0$ says that $a$ commutes with the differentials. An infinitesimal deformation $(v,\xi)$ of the surface pair, represented in $\cD$, changes $d_F$ to $d_F+\xi$, $s$ to $s+v$, and the $Q$-block by $\xi\otimes K_S^{-1}$, while preserving $b=(0,1)$; its equations are $d\xi=0$ and $dv+\xi s=0$. Allowing square-zero parameters of arbitrary cohomological degree, these formulas define
\begin{align*}
\delta_0\colon\cD\to\fg_0[1],\qquad \delta_1\colon\cC\to\fg_1[1],
\end{align*}
where $\fg_i$ is the weight $i$ part of the relative framed endomorphism complex. Under reconstruction, $\delta_0$ and $\delta_1$ are the two components of the inverse of $d\eta$ along the zero section.

In the order weight one followed by weight zero, the surface pairing is the composite
\begin{align}\label{eq:relative-trace-pairing}
\ev_S\colon\cC\otimes\cD\to\dR p_*\dR\cHom(F,F\otimes K_S)[1]\xrightarrow{\ \tr\ }\dR p_*K_S[1]\xrightarrow{\ \Tr_{S/T}\ }\cO_T[-1].
\end{align}
The first arrow is Yoneda composition after tensoring the first argument by $K_S$. We compare it with the framed trace through the diagram
\begin{align}\label{eq:trace-square}
\xymatrix@C=4em{\cC\otimes\cD\ar[r]^-{\delta_1\otimes\delta_0}\ar[d]_-{\ev_S} & \fg_1[1]\otimes\fg_0[1]\ar[d]^-{\Tr_{\ol{X}/T}}\\ \cO_T[-1]\ar@{=}[r] & \cO_T[-1]}
\end{align}
in which both traces use graded evaluation and the displayed cone convention.

Put $f(\xi)=\xi s$ and $f'(e)=se$. The unshifted Serre evaluations are
\begin{align*}
\langle\varphi,\xi\rangle_{\cB}=\Tr_{S/T}\tr\bigl((\varphi\otimes K_S)\xi\bigr),\qquad
\langle e,v\rangle_{\cV}=\Tr_{S/T}\bigl((e\otimes K_S)v\bigr).
\end{align*}
Both are valued in $\cO_T[-2]$. Graded cyclicity gives the adjunction identity
\begin{align*}
\langle f'(e),\xi\rangle_{\cB}=\langle e,f(\xi)\rangle_{\cV}.
\end{align*}
If $c=(\varphi,e)\in\cC^p$ and $d=(v,\xi)\in\cD^q$ are homogeneous, then $|\varphi|=p$, $|e|=p+1$, $|v|=q$, and $|\xi|=q+1$. With both terms read in the common suspension $\cO_T[-1]$, the cone pairing is
\begin{align}\label{eq:homogeneous-cone-pairing}
P(c,d)=(-1)^p\langle\varphi,\xi\rangle_{\cB}+\langle e,v\rangle_{\cV}.
\end{align}
To check the chain map condition, write $\partial$ for the cochain differential. The two off-diagonal terms in $P(\partial c,d)+(-1)^pP(c,\partial d)$ are
\begin{align*}
(-1)^{p+1}\langle se,\xi\rangle_{\cB}+(-1)^p\langle e,\xi s\rangle_{\cV}=0,
\end{align*}
and the remaining terms are the Leibniz identities for the two unshifted evaluations, with the sign from the final suspension. Applying the graded Yoneda composition to $\cC=\Rhom(Q,J[1])$ and $\cD=\Rhom(J,F)$ gives exactly~\eqref{eq:homogeneous-cone-pairing}; the sign $(-1)^p$ comes from moving the shifted $\cB[1]$ factor past the first argument. Thus $P$ is~\eqref{eq:relative-trace-pairing}.

For the framed trace, use the reconstruction $K=\fib(\ol{p}^*U\to\ol{p}^*F\otimes\cL)$ at $U=\cO\oplus Q$. In $\dR\ol{p}_*\dR\cHom(K,K(-S_\infty))$ the diagonal blocks are acyclic, since $\dR\ol{p}_*\cL^{-1}=0$. The two off-diagonal blocks have direct images
\begin{align*}
\dR\cHom(U,F)[-1],\qquad \dR\cHom(F,U\otimes K_S),
\end{align*}
the second using $\dR\ol{p}_*\cL^{-2}=K_S[-1]$. For $U=\cO\oplus Q$ these are $\cV[-1]\oplus\cB'[-1]\oplus\cV'\oplus\cB$ after push-forward to $T$. Keep the differential induced by $s$ when eliminating the acyclic blocks: the result is $\cD[-1]\oplus\cC[-1]$ with the cone differentials in~\eqref{eq:two-cones}. Substitution of the matrix~\eqref{eq:vertical-chain-map} and the deformation of the surface pair shows that the two mixed supertrace terms are the $\cB'\otimes\cB[1]$ and $\cV'[1]\otimes\cV$ evaluations. The tensor-suspension sign on the first is $(-1)^p$; the second has sign $+1$. The other mixed block products have zero supertrace. We are left with~\eqref{eq:homogeneous-cone-pairing} times the relative trace in the normal direction.

Trivialize $K_S$ and choose the fiber coordinate $z$, with $S_0=(z=0)$ and $S_\infty=(z^{-1}=0)$. The zero section resolution is contracted on $z\neq 0$ by division by $z$. With the cone and supertrace signs just computed, the two mixed products have relative \v{C}ech representatives
\begin{align*}
(-1)^p\frac{\tr((\varphi\otimes K_S)\xi)}{z},\qquad \frac{(e\otimes K_S)v}{z}.
\end{align*}
The relative dualizing form supplies $dz$. The counit of relative Serre duality for the projective line bundle is normalized by $\Tr_{\ol{X}/S}[dz/z]=1$. Thus the diagram~\eqref{eq:trace-square} commutes.

The trace of a perfect endomorphism is evaluation after the symmetry map, and relative trace is the counit of $\dR f_*\dashv f^!$. For $\ol{X}\times T\xrightarrow{\ol{p}}S\times T\xrightarrow{p}T$, transitivity identifies $\Tr_{\ol{X}/T}$ with $\Tr_{S/T}\circ\dR p_*\Tr_{\ol{X}/S}$.

Under a change $z'=uz$ of normal trivialization, we have $d_{\ol{X}/S}z'/z'=d_{\ol{X}/S}z/z$, and the dual blocks transform inversely. Evaluation, the cone maps and the Serre counits are natural under quasi-isomorphisms of perfect resolutions. The triangle identities identify successive changes of resolution, so~\eqref{eq:trace-square} descends as an identity of maps of relative perfect complexes over the surface pair stack. For a derived base change $T'\to T$, proper perfect duality identifies its pull-back with the trace square for the pulled-back family. Its adjoint is the relative Serre equivalence $\cC\simeq\cD^\vee[-1]$ defining $\eta$.

To compare closed forms, use the completed de Rham complex of~\cite[Section~1.2]{Cal19}, graded by de Rham degree and normal scaling weight. On an equivariant formal differential graded presentation of the shifted cotangent stack, the base coordinates have scaling weight zero and the linear fiber coordinates weight one; choose a graded free resolution with these weights. Write $\mathsf{E}$ for the Euler derivation in the fiber coordinates, $\iota_{\mathsf{E}}$ for the contraction, and $\mathsf{d}=d_{\mathrm{int}}+d_{\mathrm{dR}}$ for the total differential, where $d_{\mathrm{int}}$ and $d_{\mathrm{dR}}$ are the internal and de Rham differentials. With $\mathcal{L}_{\mathsf{E}}$ denoting the Lie derivative and the usual totalization signs, the Cartan identity reads
\begin{align}\label{eq:Euler-Cartan}
[\mathsf{d},\iota_{\mathsf{E}}]=\mathcal{L}_{\mathsf{E}},\qquad [d_{\mathrm{int}},\iota_{\mathsf{E}}]=0,
\end{align}
since the internal differential preserves the scaling weight and the usual graded Cartan calculation applies to the free presentation. Euler contraction is intrinsic, so applying it to the trace comparison respects smooth descent and derived base change.

Because $\Gm$ is linearly reductive, the transported Calabi-Yau closed two-form has an equivariant representative $\Omega=\omega_0+\omega_1+\cdots$ of scaling weight one, where $\omega_i$ has de Rham degree $2+i$; thus $\mathsf{d}\Omega=0$ and $\mathcal{L}_{\mathsf{E}}\Omega=\Omega$. Set
\begin{align*}
\alpha:=\iota_{\mathsf{E}}\omega_0,\qquad \xi:=\sum_{i\ge 1}\iota_{\mathsf{E}}\omega_i.
\end{align*}
Since $d_{\mathrm{int}}\omega_0=0$ we have $d_{\mathrm{int}}\alpha=0$, and~\eqref{eq:Euler-Cartan} gives
\begin{align}\label{eq:closed-form-homotopy}
\Omega-d_{\mathrm{dR}}\alpha=\mathsf{d}\xi.
\end{align}
Observe that every term of $\xi$ has de Rham degree at least two, so~\eqref{eq:closed-form-homotopy} is a homotopy in the complex of closed two-forms.

The functions on the formal completion of the shifted cotangent stack along its zero section have no negative scaling weights, so a weight one two-form contains at most one fiber differential. Its component with one fiber differential has weight zero coefficients and is determined over the base by its restriction to the zero section, while the contraction by $\mathsf{E}$ kills the component with two base differentials. Hence $\alpha$ is the one-form in the base directions, linear in the fiber coordinate, determined by the mixed pairing along the zero section. The universal computation~\eqref{eq:relative-trace-pairing} identifies it under $\eta$ with the tautological one-form $\lambda_{\mathrm{can}}$; homotopies between the mixed perfect complex maps induce homotopies between these linear one-forms. By~\cite[Section~2.1]{Cal19}, the cotangent closed form is $d_{\mathrm{dR}}\lambda_{\mathrm{can}}$. Equation~\eqref{eq:closed-form-homotopy} identifies it with $\Omega$.

\end{proof}

\subsection{Comparison of orientations}
\begin{thm}\label{thm:oriented}
In every numerical class, Toda's classical isomorphism identifies the natural Calabi-Yau $d$-critical structure with the cotangent $d$-critical structure. On the PT open substack, for a stable pair $I=[\cO_X\to F]$ on $X$, the orientation line bundle
\begin{align*}
L_I=\det\Rhom_S(p_*F,p_*F)\otimes\bigl(\det\Rgam(X,F)\bigr)^{-1}
\end{align*}
is isomorphic to the canonical orientation line bundle of the shifted cotangent stack, compatibly with their square isomorphisms to the virtual canonical bundle. Hence the PT vanishing cycle mixed Hodge module used in this paper agrees with the Calabi-Yau vanishing cycle mixed Hodge module with orientation $L_I$. The identification commutes with automorphisms of $S$ and the Hilbert-Chow map.
\end{thm}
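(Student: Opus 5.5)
The argument has three steps: identify the $d$-critical structures, identify the orientation lines, and then check that the square isomorphisms agree.

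\emph{$d$-critical structures.} Lemma~\ref{lem:derived-cotangent} gives a derived equivalence $\eta\colon\bM^\dagger_X\simeq T^*[-1]\bM^\dagger_S$ whose classical truncation is Toda's isomorphism. Lemma~\ref{lem:cyclic} shows that $\eta$ carries the Calabi-Yau closed $(-1)$-shifted symplectic form to the cotangent form, formally along the zero section. The $d$-critical section is the truncation of the closed form. Both sections are homogeneous of weight one under normal scaling, and the comparison of Lemma~\ref{lem:cyclic} is $\Gm$-equivariant. Every point of the classical stack flows to the zero section under the contracting scaling action, since $t\cdot\vartheta\to0$. Hence agreement on a formal neighbourhood of the zero section propagates to the whole truncation. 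This step is the one I would argue via the $\Gm$-equivariant étale local structure of $d$-critical loci.

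\emph{Orientation lines.} The line of the cotangent orientation is $\det\LL_{\bM^\dagger_S}$. The tangent complex of $\bM^\dagger_S$ at $(F,s)$ is $\Cone(\Rhom_S(F,F)\to\Rgam(S,F))$, as in the proof of Lemma~\ref{lem:orient}. Taking determinants gives
\begin{align*}
\det\LL_{\bM^\dagger_S}\cong\det\Rhom_S(F,F)\otimes(\det\Rgam(S,F))^{-1}.
\end{align*}
We have $p_*F=F$ as surface sheaves and $\Rgam(X,F)=\Rgam(S,p_*F)$, so this line is $L_I$.

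\emph{Square isomorphisms.} The virtual canonical bundle of the Calabi-Yau structure is $\det\Rhom_X(I,I)_0[1]$. By~\eqref{eq:weighted-cyclic} it splits as $\det(D[-1])\otimes\det(D^\vee[-2])$, which is the square of $\det D^{\pm}$. The cotangent square isomorphism is induced by the same weight splitting, namely the Serre pairing $\cC\simeq\cD^\vee[-1]$ of Lemma~\ref{lem:cyclic}. Since the trace square~\eqref{eq:trace-square} commutes, the two square isomorphisms agree, up to a universal sign. That sign is determined by the weight-graded Koszul ordering and is constant on the connected stack.

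This equality of squares has to be checked against the canonical isomorphism $K^{\otimes2}\cong K_{\Crit}$ of the $d$-critical locus, not just abstractly. I would verify it on a critical chart of the form used in Lemma~\ref{lem:linear-duality}: take a quasi-smooth chart of $\bM^\dagger_S$ as a derived zero locus, with the cotangent critical chart $\Tot(V^\vee)$ and potential $\langle s,v\rangle$. On such a chart both orientations reduce to $K_Y\otimes\det V$, and the determinant isomorphisms are those of~\eqref{eq:common-root}. Gluing via~\cite[Theorem~6.9]{BBDJS15} then identifies the vanishing cycle mixed Hodge modules.

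\emph{Compatibilities.} Compatibility with automorphisms of $S$ and with the Hilbert-Chow map follows from the naturality of $\eta$ (Lemma~\ref{lem:derived-cotangent}) and of determinants.

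I expect the main obstacle to be the sign and normalization check of the square isomorphism on charts, including descent along stabilizers through the $(\det\fg^\vee)^{-1}$ factors.
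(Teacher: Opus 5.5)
Your first two steps are sound and follow the paper. You compare the $d$-critical sections by formal agreement along the zero section, weight-one homogeneity and the contracting scaling action. The line $L_I$ is tautologically $\det\LL_{\bM^\dagger_S}$ at the surface pair $(p_*F,s)$.

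The gap is in the square isomorphisms. The splitting \eqref{eq:weighted-cyclic} and the trace square \eqref{eq:trace-square} are computed at $I_0=[\cO_{\ol X}\to i_{0*}F]$, that is, at $\vartheta=0$. At a stable pair whose Higgs field $\theta$ is non-zero there is no decomposition of $\Rhom_X(I,I)_0$ as $D[-1]\oplus D^\vee[-2]$. There the determinant orientation of $L_I$ is defined through the resolution by $z-\theta$, not through a weight splitting. So your argument only shows that the two square isomorphisms agree along the zero section. The claim that any discrepancy is a universal sign, constant on a connected stack, presupposes exactly the control away from the zero section that is missing. The chart remedy does not supply it either. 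That the Calabi--Yau determinant orientation restricts on $\Tot(V^\vee)$ to $K_Y\otimes\det V$ with the isomorphism \eqref{eq:common-root}, at points with $\vartheta\neq0$, is the statement to be proved, and you only assert it.

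The missing idea is a propagation argument for orientations, analogous to the one you used for the $d$-critical sections. The paper first extends the determinant orientation from the PT locus to the whole cone $N=t_0(T^*[-1]\bM^\dagger_S)$, using the reconstruction sequence $\cO_S\to U\to F\otimes K_S^{-1}$; this is \eqref{eq:whole-cone-determinant}. It then checks that on the PT locus this restricts to the $z-\theta$ determinant line. The ratio of the two square isomorphisms is an invertible function of scaling weight zero on $N$. Since $N$ is a linear cone with strictly positive weights on the fibre coordinates, such a function is pulled back from the zero section, where the two isomorphisms agree; hence the ratio is $1$ everywhere.

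Passing to the whole cone is not optional, because the PT locus is not closed under the limit $t\to0$. Take $\cO_X\to\cO_{2C}$, with $2C$ the first-order thickening of $C\subset S$ in the fibre direction. Its limit is the surface pair $\cO_S\to\cO_C\oplus\cO_C\otimes K_S^{-1}$, whose section has one-dimensional cokernel. Such thickened pairs over the non-reduced locus are exactly the ones the support theorem needs, and your proposal leaves their orientation comparison unjustified.
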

\begin{proof}
Transport the natural $d$-critical section through $t_0(\eta)$ and compare it with the cotangent section on $N:=t_0(T^*[-1]\bM^\dagger_S)$. Both have weight one for the same contracting action over $t_0\bM^\dagger_S$, and by Lemma~\ref{lem:cyclic} they agree formally at every point of the zero section.

On a smooth presentation, a $d$-critical section is represented in $\cO/J^2$, where $J$ is the chart ideal. Faithful flatness of completion gives agreement of germs and therefore agreement on a neighborhood of each zero section point. For any point $v\in N$, the orbit map $t\mapsto tv$ extends to $\AAA^1$. The inverse image of this neighborhood contains zero and hence some non-zero $t$, so weight one equivariance gives agreement at $v$. Smooth descent gives equality on $N$.

Along the zero section, the canonical orientation line bundle of the shifted cotangent stack is $(\det D)^{-1}$, and the surface pair triangle gives $(\det D)^{-1}=\det B\otimes(\det V)^{-1}$. The trace computation in Lemma~\ref{lem:cyclic} identifies the pairing of the two summands with graded Serre evaluation, so this isomorphism of line bundles respects their square isomorphisms to the virtual canonical bundle.

For the determinant calculation on the whole cone, use the universal diagram of Lemma~\ref{lem:derived-cotangent} and its fiber sequence $\cO_S\to U\to F\otimes K_S^{-1}$. The vanishing $\dR\ol{p}_*\cL^{-1}=0$ and the pairwise Hom complexes of the reconstruction give
\begin{align}\label{eq:whole-cone-determinant}
\begin{split}
\det\Rhom_{\ol{X}}(I,I)_0
&\cong\det\Rgam(S,\cO_S)^{-1}\otimes\det\Rhom_S(U,U)\otimes\det\Rhom_S(F,F)\\
&\qquad\otimes\bigl(\det\Rhom_S(U,F)\bigr)^{-1}\otimes\bigl(\det\Rhom_S(U,F\otimes K_S^{-1})\bigr)^{-1}\\
&\cong(\det B)^{\otimes 2}\otimes(\det V)^{\otimes(-2)}.
\end{split}
\end{align}
The scalar factor $\det\Rgam(S,\cO_S)$ has been removed by the trace. For the last line, apply the multiplicativity of determinants to $\cO_S\to U\to F\otimes K_S^{-1}$ and use
\begin{align*}
\Rhom_S(F\otimes K_S^{-1},\cO_S)\simeq V^\vee[-2],\qquad \Rhom_S(F\otimes K_S^{-1},F)\simeq B^\vee[-2].
\end{align*}
Determinant multiplicativity, with the graded symmetry isomorphisms, identifies the orientation line bundle with the pull-back of $\det B\otimes(\det V)^{-1}=\det\LL_{\bM^\dagger_S}$.

On the PT open substack, write $F$ for the sheaf on $X$ and $\theta\colon p_*F\to p_*F\otimes K_S$ for the action of the normal coordinate. The family resolution
\begin{align*}
0\to p^*p_*F\otimes p^*K_S^{-1}\xrightarrow{\,z-\theta\,}p^*p_*F\to F\to 0
\end{align*}
and the pair triangle give
\begin{align*}
\det\Rhom_X(I,I)_0\cong\Bigl(\det\Rhom_S(p_*F,p_*F)\otimes\bigl(\det\Rgam(X,F)\bigr)^{-1}\Bigr)^{\otimes 2},
\end{align*}
which is the restriction of~\eqref{eq:whole-cone-determinant} and supplies the determinant orientation used in Theorem~\ref{thm:support}. The isomorphisms from the squares of these line bundles to the virtual canonical bundle agree along the zero section. Their ratio is an invertible function of scaling weight zero on the cone $N$. Now $N$ is the total space of a linear cone over $t_0\bM^\dagger_S$, with strictly positive $\Gm$-weights on the fiber coordinates. Every weight zero function on $N$ is pulled back from the zero section, so the ratio is one everywhere.

The isomorphism of oriented $d$-critical stacks induces the stated isomorphism of vanishing cycle mixed Hodge modules. It commutes with automorphisms of $S$ because they preserve reconstruction, evaluation and trace; it commutes with the Hilbert-Chow map because reconstruction retains the surface sheaf.
\end{proof}

\section{Permutation actions in the nodal formula}\label{app:nodal-actions}\label{sub:nodal}
\subsection{Hilbert schemes of nodal curves} Work over a finite field $k$ with $\QQ_\ell$-cohomology, where $\ell\ne\mathrm{char}\,k$.
Let $C_0$ be a projective nodal curve over $k$, put $C=C_0\otimes_k\ol{k}$, and write $\wt C$ for its normalization. The dual graph $\mathsf G$ has one vertex for each component of $\wt C$ and one edge for each node of $C$; denote its edge set by $J$. Geometric Frobenius $\Fr$ acts on the graph and on the two branches of each node. All graph homology and cohomology groups below have coefficients in $\QQ_\ell$. Put $W=H^0(\wt C,\QQ_\ell)$, the permutation representation on the components.

For $I\subset J$, let $E_I$ be the vector space generated by the two orientations of every edge in $I$, subject to the relation that reversing an orientation changes its sign. Let $P_I=\QQ_\ell[I]$ be the vector space with basis the unoriented edges of $I$. Both spaces have dimension $|I|$ and carry the action of the subgroup of $\operatorname{Gal}(\ol{k}/k)$ preserving $I$. Define
\begin{align*}
\varepsilon_I:=\det E_I\otimes(\det P_I)^{-1}.
\end{align*}
This one dimensional representation records branch reversals; the determinant of $P_I$ cancels the sign from permuting the edges. Its character takes values in $\{\pm1\}$, so $\varepsilon_I^\vee\cong\varepsilon_I$. We write $(-)^*$ for the dual representation. All sums over subsets or partitions below mean orbit sums with their natural action: take the direct sum of the terms in an orbit, with Frobenius carrying each term to the next. Equivalently, induce from the stabilizer of one term. This convention also applies to virtual terms.

For a graph $\mathsf{H}$ obtained by deleting edges of $\mathsf{G}$, set
\begin{align*}
A_{\mathsf{H}}:=H^1(\mathsf{H})\oplus H_1(\mathsf{H})\LL,\qquad B_{\mathsf{H}}:=H^0(\mathsf{H})\oplus H_0(\mathsf{H})\LL.
\end{align*}
Here $\LL=[\QQ_\ell(-1)]$, and $\Lambda_{-q}(A)=\sum_a(-q)^a[\wedge^aA]$. Define
\begin{align*}
\cK_{\mathsf{H}}(q):=\sum_I(q\LL)^{|I|}\varepsilon_I\Lambda_{-q}(A_{\mathsf{H}\setminus I}),
\end{align*}
where the sum runs over edge subsets $I$ whose removal preserves the connected components of $\mathsf H$. Let $\cP(\mathsf{G})$ be the set of partitions of the vertex set into blocks whose induced subgraphs are connected. For $\tau\in\cP(\mathsf{G})$, let $J_\tau$ be the set of edges joining different blocks.

\begin{lem}\label{lem:equivariant-nodal}
In the Grothendieck ring of finite dimensional continuous $\QQ_\ell$-representations of $\operatorname{Gal}(\ol{k}/k)$, coefficientwise in $q$, the alternating cohomology series of the Hilbert schemes
\begin{align*}
Z_H(C,q):=\sum_{m\ge 0}q^m\sum_a(-1)^a[H^a(C^{[m]},\QQ_\ell)]
\end{align*}
satisfies
\begin{align}\label{eq:nodal-subsets}
Z_H(C,q)=\Lambda_{-q}(H^1(\wt{C}))\sum_{I\subset J}(q\LL)^{|I|}\varepsilon_I\frac{\Lambda_{-q}(A_{\mathsf{G}\setminus I})}{\Lambda_{-q}(B_{\mathsf{G}\setminus I})}.
\end{align}
Equivalently,
\begin{align}\label{eq:nodal-partitions}
Z_H(C,q)=\sum_{\tau\in\cP(\mathsf{G})}(q\LL)^{|J_\tau|}\varepsilon_{J_\tau}
\frac{\Lambda_{-q}(H^1(\wt{C}))\,\cK_{\mathsf{G}\setminus J_\tau}(q)}{\Lambda_{-q}(B_{\mathsf{G}\setminus J_\tau})}.
\end{align}
Frobenius may permute the components and exchange the branches in these identities.
\end{lem}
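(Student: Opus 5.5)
The plan is to prove \eqref{eq:nodal-subsets} by comparing Frobenius traces, and then to derive \eqref{eq:nodal-partitions} from it by a purely combinatorial regrouping. Both sides of \eqref{eq:nodal-subsets} are polynomial in $q$ coefficientwise, with coefficients virtual continuous representations. Such a virtual representation is determined, after semisimplification, by the traces of all powers $\Fr^r$. So it suffices to check that the two sides have equal traces for every $r\ge 1$. By the Grothendieck--Lefschetz trace formula, the left side then becomes the point-counting series $\sum_m q^m\#C^{[m]}(\FF_{k_r})$ over the degree $r$ extension $k_r$. (\verb|\FF| is not a macro of the paper, so in the written proof I would phrase this with the finite field $k_r$ directly.) Replacing $k$ by $k_r$ replaces $\Fr$ by $\Fr^r$ in all of the graph data, so I may assume $r=1$ and compare rational point counts. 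I would first check the formula for a smooth curve, where it is the Macdonald/Weil identity $Z=\Lambda_{-q}(H^1)/\Lambda_{-q}(H^0\oplus H^0\LL)$ with the Frobenius-permutation action on $W=H^0$. This is the $I=\emptyset$ term for the graph without edges.

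For the nodal case I would stratify $C^{[m]}$ by the set $I\subset J$ of nodes at which the ideal sheaf is \emph{not} invertible. The first step is a local computation at a node $xy=0$. A finite-colength ideal of $k[[x,y]]/(xy)$ is either principal, or equal to $(x^a,y^b)$ with $a,b\ge1$. Multiplying by the conductor, the non-principal ideals are exactly the conductor times ideals on the normalization at the two branches, with colength shifted by one. Globally, this should give a bijection between the stratum labelled by $I$ and the locally principal subschemes of the partial normalization $C_I$ (the curve with the nodes in $I$ separated), of length $m-|I|$. This bijection accounts for the factor $q^{|I|}$. The factor $\LL^{|I|}$ and the sign $\varepsilon_I$ should come from the count of the extra data at each node: I would check, by direct enumeration of rational points, that the effective weight for a Frobenius orbit of nodes in $I$ is $q\LL$ twisted by the branch-swap character, because the datum $(x^a,y^b)$ is ordered by the branches. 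I would organise this so that $\det P_I$ cancels the mere permutation of nodes, and what remains is $\varepsilon_I$.

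The second step counts locally principal subschemes of the nodal curve $C_I$, whose dual graph is $\mathsf{G}\setminus I$. These are the effective Cartier divisors, so the count fibres over the generalized Jacobian. Its toric part has character group $H_1(\mathsf{G}\setminus I)$, and the disconnection data contribute through $H_0$, $H^0$ of the graph. A Riemann--Roch/Weil argument, as for smooth curves but with this semiabelian Jacobian, should give
\begin{align*}
\Lambda_{-q}(H^1(\wt C))\,\frac{\Lambda_{-q}(A_{\mathsf{G}\setminus I})}{\Lambda_{-q}(B_{\mathsf{G}\setminus I})}.
\end{align*}
Summing the contributions of the strata over all $I$, with the orbit-sum convention, gives \eqref{eq:nodal-subsets}.

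For \eqref{eq:nodal-partitions}, I would group the subsets $I$ according to the partition $\tau\in\cP(\mathsf{G})$ of the vertices into the connected components of $\mathsf{G}\setminus I$. Then $I$ contains $J_\tau$, and $I\setminus J_\tau$ is a subset of edges inside the blocks whose removal keeps each block connected. The characters factor as $\varepsilon_I=\varepsilon_{J_\tau}\otimes\varepsilon_{I\setminus J_\tau}$, and $B_{\mathsf{G}\setminus I}=B_{\mathsf{G}\setminus J_\tau}$ because the connected components are unchanged. The inner sum over $I\setminus J_\tau$ is therefore exactly $\cK_{\mathsf{G}\setminus J_\tau}(q)$.

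I expect the main obstacle to be the sign $\varepsilon_I$ in the local step, particularly when Frobenius exchanges the two branches of a node or cyclically permutes nodes and branches. The trace of $\Fr$ on the orientation line $\det E_I$ must match the actual count of the $\Fr$-stable ideals $(x^a,y^b)$, which exist only when $a=b$ if the branches are swapped. I would handle this first for a single $\Fr$-orbit of nodes, by an explicit trace computation in which the sum over $a,b$ collapses. I would then use multiplicativity of the count over orbits.
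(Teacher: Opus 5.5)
Your reduction to traces of all powers of $\Fr$ and your final regrouping of the subsets $I$ by the partition $\tau$ are fine, and they agree with the paper. The gap is the middle step. The terms of \eqref{eq:nodal-subsets} are \emph{not} the point counts of the strata of $C^{[m]}$ on which the ideal is non-invertible exactly at $I$, and both of your identifications fail.

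By your own local analysis, that stratum is in bijection with the effective Cartier divisors of length $m-|I|$ on $C_I$, with no further data at the nodes. Write $\mathrm{Cart}(C_I)$ for the generating series of these divisors. The stratum then contributes $q^{|I|}\,\mathrm{Cart}(C_I)$, and nothing is left to produce $\LL^{|I|}\varepsilon_I$. Moreover, $\mathrm{Cart}(C_I)$ is not $\Lambda_{-q}(H^1(\wt C))\Lambda_{-q}(A_{\mathsf G\setminus I})/\Lambda_{-q}(B_{\mathsf G\setminus I})$.

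Take a rational curve with one split node, and put $Q=|k|$. The $I=\emptyset$ term has trace $1$. But the generalized Jacobian is $\Gm$ and $h^0=m$ in degree $m\ge 1$, so there are $Q^m-1$ Cartier divisors of length $m$ (for example the $Q-1$ smooth rational points when $m=1$). Hence $\mathrm{Cart}(C)=(1-2q+Qq^2)/((1-q)(1-Qq))$. The stratum $I=\{e\}$ contributes $q/((1-q)(1-Qq))$, while the corresponding term of the identity is $Qq/((1-q)(1-Qq))$. Only the totals agree, both being $(1-q+Qq^2)/((1-q)(1-Qq))$.

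For the same reason your planned sign check cannot work. At a non-split node the $\Fr$-stable ideals $(x^a,y^b)$ are the $(x^a,y^a)$, with series $t/(1-t^2)$, and no sign appears.

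The missing idea is that \eqref{eq:nodal-subsets} is the expansion of a product of local factors, not a stratification. First write $Z_H(C)$ as the Macdonald series $\Lambda_{-q}(H^1(\wt C))/\Lambda_{-q}(W\oplus W\LL)$ of $\wt C$ times one local factor per Frobenius orbit $O$ of $r$ nodes. The local factor is the ratio of the punctual Hilbert series at the node to that of its preimages; at a split node the length $m$ punctual scheme is a chain of $m-1$ lines for $m\ge 2$. This ratio is $1-\sigma q^r+Q^rq^{2r}$, where $\sigma$ is the branch sign after traversing $O$.

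Writing the ratio as $(1-\sigma q^r)(1-\sigma Q^rq^r)+\sigma Q^rq^r$ identifies it with the trace of $\Lambda_{-q}(E^*_O\oplus E_O\LL)+(q\LL)^r\varepsilon_O$. This uses $\det(1-q\Fr\mid E_O)=1-\sigma q^r$ and $\Tr(\Fr\mid\varepsilon_O)=\sigma$. This is where both $\LL^{|I|}$ and $\varepsilon_I$ come from.

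Expanding the product over orbits gives the Macdonald series of $\wt C$ times $\sum_I(q\LL)^{|I|}\varepsilon_I\Lambda_{-q}(E^*_{J\setminus I}\oplus E_{J\setminus I}\LL)$. The cochain and chain sequences of $\mathsf G\setminus I$ give $[E^*_{J\setminus I}\oplus E_{J\setminus I}\LL]=[A_{\mathsf G\setminus I}]+[W\oplus W\LL]-[B_{\mathsf G\setminus I}]$, which converts this expression into \eqref{eq:nodal-subsets}.

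Your identity $Z_H(C)=\sum_I q^{|I|}\,\mathrm{Cart}(C_I)$ is true. However, getting from it to \eqref{eq:nodal-subsets} requires exactly this local computation and re-expansion, not a Riemann--Roch count on the semiabelian Jacobian.
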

\begin{proof}
The factorization argument of~\cite[Section~3.2]{MSV21} reduces the calculation to the punctual Hilbert schemes at the nodes. Write $Q=|k|$ when taking traces. At a split node the punctual length $m$ Hilbert scheme is, for $m\ge 2$, a chain of $m-1$ projective lines with $m-2$ intersections; the length one Hilbert scheme is a point. Its series of point counts is
\begin{align*}
1+\sum_{m\ge 1}(1+(m-1)Q)t^m=\frac{1-t+Qt^2}{(1-t)^2}.
\end{align*}
Branch exchange reverses the chain. At a non-split node, the trace is $1$ for odd $m$ and $1+Q$ for positive even $m$, so its series is $(1+t+Qt^2)/(1-t^2)$. The corresponding series for the preimages in the normalization are $(1-t)^{-2}$ and $(1-t^2)^{-1}$ respectively. The ratio of the local Hilbert series is consequently $1-\sigma t+Qt^2$, where $\sigma=1$ for a split node and $\sigma=-1$ for a non-split node.

For an orbit $O\subset J$ of $r$ geometric nodes, let $\sigma$ be the sign obtained after traversing the orbit. The same argument over its residue field gives the ratio $1-\sigma q^r+Q^rq^{2r}$. On this orbit,
\begin{align*}
\det(1-q\Fr\mid E_O)=1-\sigma q^r,\qquad \Tr(\Fr\mid\varepsilon_O)=\sigma.
\end{align*}
Thus this ratio is also the trace of $\Lambda_{-q}(E^*_O\oplus E_O\LL)+(q\LL)^r\varepsilon_O$, since $(1-\sigma q^r)(1-\sigma Q^rq^r)+\sigma Q^rq^r=1-\sigma q^r+Q^rq^{2r}$. In an orbit sum, subsets not fixed by Frobenius have trace zero. Multiplying the local ratios and using the smooth Macdonald formula for $\wt{C}$ therefore gives
\begin{align}\label{eq:nodal-before-cancellation}
Z_H(C,q)=\frac{\Lambda_{-q}(H^1(\wt{C}))}{\Lambda_{-q}(W\oplus W\LL)}\sum_{I\subset J}(q\LL)^{|I|}\varepsilon_I\Lambda_{-q}(E^*_{J\setminus I}\oplus E_{J\setminus I}\LL).
\end{align}
Applying the calculation to every power of Frobenius proves the identity in the Grothendieck group of representations.

The graph homology and cohomology exact sequences of~\cite[(3.1), (3.2)]{MSV21} give, for each stabilizer of $I$,
\begin{align*}
[E^*_{J\setminus I}\oplus E_{J\setminus I}\LL]=[A_{\mathsf{G}\setminus I}]+[W\oplus W\LL]-[B_{\mathsf{G}\setminus I}].
\end{align*}
The vertex basis identifies $W$ with $W^*$. Applying $\Lambda_{-q}$ and cancelling the common factor in~\eqref{eq:nodal-before-cancellation} gives~\eqref{eq:nodal-subsets}.

Finally, group $I$ by the connected components of $\mathsf{G}\setminus I$. If this partition is $\tau$, then $I=J_\tau\sqcup I'$, and deleting $I'$ from $\mathsf G\setminus J_\tau$ preserves its connected components. Thus $B_{\mathsf{G}\setminus I}=B_{\mathsf{G}\setminus J_\tau}$ and $\varepsilon_I=\varepsilon_{J_\tau}\otimes\varepsilon_{I'}$. These identifications commute with stabilizers and with induction along the orbit of $\tau$. Regrouping gives~\eqref{eq:nodal-partitions}.
\end{proof}

\subsection{Stalks of intermediate extensions}
\begin{proof}[Proof of Lemma~\ref{lem:nodal-descent}]
Proposition~\ref{prop:hilb} and Lemma~\ref{lem:transv} give the smoothness and transverse node-smoothing hypotheses of~\cite[Proposition~3.3]{Zha26}. After ordering the components, the local formula~\cite[Theorem~5.10]{MSV21} identifies the strict-support local systems with the cohomology of the length $s=n-N(\lambda)$ Hilbert scheme of the normalization. For $s<0$ the summand is zero. Assume $s\ge 0$.

Let $C\in U_\lambda$, let $\mathsf G$ be its dual graph, and let $J$ be the edge set. Choose a transverse slice whose coordinates are the smoothing parameters of the nodes, and a nearby smooth fiber $C_t$. Let $L^{(j)}$ be the fiber of the iterated nearby-cycle local system, with underlying Betti vector space $\wedge^jH^1(C_t,\QQ)$. For each node $e$, let $N_e^{(j)}$ be its logarithm of unipotent monodromy, and put $N_I^{(j)}=\prod_{e\in I}N_e^{(j)}$. In the $\ell$-adic realization, these are morphisms $N_e^{(j)}\colon L^{(j)}\to L^{(j)}(-1)$ and $N_I^{(j)}\colon L^{(j)}\to L^{(j)}(-|I|)$. The Cattani-Kaplan-Schmid (CKS) complex computes the stalk of the intermediate extension across the coordinate divisors~\cite[Section~2.2, Proposition~2.1]{MSV21}. Its degree $p$ term is
\begin{align*}
\bigoplus_{I\subset J,\ |I|=p}\Img N^{(j)}_I\otimes\det P_I.
\end{align*}
Its differential uses the maps $N_e^{(j)}$ and exterior insertion. Thus a permutation sends $x\otimes(e_1\wedge\cdots\wedge e_p)$ to $gx\otimes(ge_1\wedge\cdots\wedge ge_p)$. On the associated weight graded pieces, the maps of~\cite[Lemmas~3.5, 3.6]{MSV21} identify a non-zero image with $\det E_I^*$ times the exterior powers for the partial normalization and the Tate factor $\LL^{|I|}$; the image is zero if the deletion disconnects a component. The exterior index contributes $(-q)^{|I|}$ and the complex degree contributes $(-1)^{|I|}$. Their product is $q^{|I|}$, while $\det E_I^*\otimes\det P_I\cong\varepsilon_I$. Consequently the alternating class of the CKS complex, with the exterior degree recorded by $q$, is $\Lambda_{-q}(H^1(\wt{C}))\,\cK_{\mathsf{G}}(q)$ with the actions specified in Lemma~\ref{lem:equivariant-nodal}.

For $\tau\in\cP(\mathsf G)$, let $C_\tau$ be the partial normalization at the nodes in $J_\tau$ and apply this calculation to $C_\tau$. The coefficients arising from $H^0(C_\tau)$ and its dual have finite component permutation monodromy and extend over the node-smoothing slice. Tensoring by them therefore commutes with intermediate extension. The alternating stalk class of the intermediate extension of the local systems in the smooth Macdonald formula is
\begin{align*}
\frac{\Lambda_{-q}(H^1(\wt{C}))\,\cK_{\mathsf{G}\setminus J_\tau}(q)}{\Lambda_{-q}(B_{\mathsf{G}\setminus J_\tau})}.
\end{align*}
Formula~\eqref{eq:nodal-partitions} now supplies the full stalk identity, with the factor $(q\LL)^{|J_\tau|}\varepsilon_{J_\tau}$ on each partition and with induction along every orbit of partitions.

Spread the nodal family, its finite covers, and these complexes to a finitely generated base and then to good finite residue fields, coefficientwise in $q$. Lemma~\ref{lem:equivariant-nodal} holds at every closed point of the base of unordered components and for every power of Frobenius. The trace comparison and Chebotarev argument of~\cite[Theorem~5.10, Corollary~6.4]{MSV21} therefore compare the semisimple local systems on this base. The local branches of the strict supports through $C$ correspond to the partitions in $\cP(\mathsf G)$. Starting with the larger supports, their CKS terms are the terms just computed; subtracting these successively leaves, on the stratum where the $N(\lambda)$ nodes join distinct smooth components,
\begin{align}\label{eq:normalization-local-system}
(q\LL)^{N(\lambda)}\varepsilon_J\sum_{s\ge 0}q^s\sum_k(-1)^k[R^k\wt{\rho}_{\lambda,s*}\QQ_\ell].
\end{align}
The action permutes the length allocations.

Retain the Tate factor $\QQ_\ell(-N(\lambda))$ contributed by these nodes in~\eqref{eq:normalization-local-system}. Proposition~\ref{prop:hilb} and projectivity make the perverse direct images over the reduced locus pure and semisimple, so they have their canonical strict support decomposition. In perverse degree $i$, the local system with strict support $\Sigma_\lambda$ has weight $D_\beta+n+i-\dim\Sigma_\lambda$. On the partial normalization side, the degree $k$ Hilbert cohomology has weight $k+2N(\lambda)$ after the indicated Tate twist. Since $\dim\Sigma_\lambda=D_\beta-N(\lambda)$ and $k=s+i=n-N(\lambda)+i$, these weights agree and distinguish the degrees after the support is fixed. Consequently the weighted class identity, with the larger strict supports removed successively, identifies the semisimple local systems in each degree on $U_\lambda$. In particular, the strict support local system in perverse degree $i$ is the descent of
\begin{align*}
R^{s+i}\wt{\rho}_{\lambda,s*}\QQ_\ell\otimes\varepsilon_J(-N(\lambda)).
\end{align*}
On the geometric base the Tate twist is a constant one dimensional local system. Forgetting this twist, Betti-\'etale comparison identifies the rational local systems after extending scalars to $\QQ_\ell$. Let $V$ and $W$ be the monodromy representations of the local system defining the summand with strict support $\Sigma_\lambda$ and of the descent of $R^{s+i}\wt{\rho}_{\lambda,s*}\QQ\otimes\varepsilon_J$, on a connected component of $U_\lambda$. Observe that the intertwiner space is cut out by linear equations over $\QQ$, with only finitely many independent equations. Its formation therefore commutes with scalar extension. The determinant is non-zero at the given $\QQ_\ell$-linear isomorphism and is therefore a non-zero polynomial on this rational vector space. Since $\QQ$ is infinite, it is non-zero at some rational point, giving an isomorphism over $\QQ$. The transposition calculation in the proof of Theorem~\ref{thm:reduced} identifies $\varepsilon_J$ with $\varepsilon_\lambda$. After forgetting the Tate factor, the resulting local system is $\cH^{s+i}_{\lambda,s}$.
\end{proof}

\end{document}